\newcommand{\wrlab}[1]{\label{#1}}
\newtheorem{thm}{THEOREM}[section]
\newtheorem{lem}[thm]{LEMMA} 
\newtheorem{cor}[thm]{COROLLARY} 
 \newtheorem{thm*}{THEOREM}[]
\newcommand{\secref}[1]{\S\ref{#1}}
\newcommand{\tref}[1]{Theorem~\ref{#1}}
\newcommand{\cref}[1]{Corollary~\ref{#1}}
\newcommand{\lref}[1]{Lemma~\ref{#1}}
\def\N{{\mathbb N}} \def\Z{{\mathbb Z}} \def\Q{{\mathbb Q}}
\def\R{{\mathbb R}}
\def\scrd{{\mathcal D}} \def\scrf{{\mathcal F}} 
\def\scri{{\mathcal I}}  
\def\scrm{{\mathcal M}} \def\scrp{{\mathcal P}} 
  \def\scrt{{\mathcal T}}
\def\scru{{\mathcal U}} \def\scrv{{\mathcal V}} 
 \def\scrx{{\mathcal X}} 
  \def\scrc{{\mathcal C}}
\def\bfzero{{\bf 0}}
\font\tenolde=eufm10 at 10pt
\font\sevenolde=eufm7
\font\fiveolde=eufm5
\def\barscru{{\overline{\scru}}}
\def\barA{{\overline{A}}}
\def\barB{{\overline{B}}}
\def\barC{{\overline{C}}}
\def\barI{{\overline{I}}}
\def\barJ{{\overline{J}}}
\def\barN{{\overline{N}}}
\def\barP{{\overline{P}}}
\def\barV{{\overline{V}}}
\def\barW{{\overline{W}}}
\def\barX{{\overline{X}}}
\def\bargamma{{\overline{\gamma}}}
\def\hatI{{\widehat{I}}}
\def\cksigma{{\widecheck{\sigma}}}
\def\Stab{\hbox{Stab}}
\def\barI{{\overline{I}}}
\def\Id{\hbox{Id}}
\begin{document}
\bibliographystyle{alpha}


\title[Freeness in higher order frame bundles]
{Freeness in higher order frame bundles}

\keywords{prolongation, moving frame, dynamics}
\subjclass{57Sxx, 58A05, 58A20, 53A55}

\author{Scot Adams}
\address{School of Mathematics\\ University of Minnesota\\Minneapolis, MN 55455
\\ adams@math.umn.edu}

\date{September 4, 2015\qquad Printout date: \today}

\begin{abstract}
  We provide counterexamples to P.~Olver's freeness conjecture for
  $C^\infty$ actions. In fact, we show that a counterexample exists
  for any connected real Lie group with noncompact center, as well as
  for the additive group of the integers.
\end{abstract}

\maketitle

 

\section{Introduction\wrlab{sect-intro}}

P.~Olver's freeness conjecture (in his words) asserts: ``If a Lie
group acts effectively on a manifold, then, for some $n<\infty$, the
action is free on [a nonempty] open subset of the jet bundle of order $n$.''
There is some ambiguity in this wording:
No mention is made of connectedness of the group or manifold,
the particular choice of jet bundle isn't made precise and
the smoothness of the action is left unspecified.

In this note, we provide counterexamples to one interpretation of~the
freeness conjecture for $C^\infty$ actions and higher order frame bundles.
Those who know Olver's work will understand that there are a family
of~associated jet bundles to which he generally refers,
and work in frame bundles then informs results in these jet bundles,
through the associated bundle construction.
In the $C^\infty$ context, Olver has noted that,
to avoid elementary counterexamples,
``effective'' must be strengthened to ``fixpoint rare'',
which we define in \secref{sect-notation} below.
In \tref{thm-induction-gives-ctrx} and \lref{lem-noncpt-center-gives-inf-cyclic},
we show that a counterexample exists for any connected real Lie group
with noncompact center, as well as for the additive group of the integers.
We also prove (\cite{adams:fAbcentstab}) the validity of the conjecture for
connected real Lie groups with compact center.
Finally, in \cite{ao:prolgpactsfree},
we describe a certain ``meager'' modification of the $C^\omega$ conjecture,
and prove it holds for all connected real Lie groups.

For any fixed group, the $C^\infty$ conjecture implies the $C^\omega$ conjecture,
so the $C^\omega$ conjecture (on frame bundles) is now proved for connected real Lie groups with compact center.
In \cite{adams:comegactx}, we offer a $\Z$-action on a manifold with infinitely generated fundamental group
which, after induction of actions, provides a $C^\omega$ counterexample for any connected Lie group with
{\it noncompact} center.
There is the possibility that the construction in
\tref{thm-induction-gives-ctrx} could be modified to make
counterexamples to the $C^\omega$ conjecture on a contractible manifold, {\it e.g.}, $\R^4$.
The main difficulty in~such an extension appears to be technical, and revolves around
developing a~good understanding of convergence of sequences in $C^\omega$ with~respect to~some well-chosen topology.
For this, D.~Morris' unpublished note~\cite{morris:tosandp} may be useful.

The present writeup, also, is not intended for publication.

\section{Miscellaneous notation and terminology\wrlab{sect-notation}}

A subset of a topological space is {\bf meager} (a.k.a.~{\bf of first category}) if it is a countable union of nowhere dense sets.
A subset of a topological space is {\bf nonmeager} (a.k.a.~{\bf of second category}) if it is not meager.
A subset of a topological space is {\bf comeager} (a.k.a.~{\bf residual}) if its complement is meager.

Let $\N:=\{1,2,3,\ldots\}$.
Let $\scri:=\{(-a,a)\subseteq\R\,|\,a\in\N\}$.
For every $I\in\scri$, let $a_I:=\sup I$, so $a_I\in\N$ and $I=(-a_I,a_I)$.
For every $I\in\scri$, for every integer $n\ge1$, we define $nI:=(-na_I,na_I)\in\scri$;
then $a_{nI}=na_I$.
For every $I\in\scri$, let $\barI:=[-a_I,a_I]$ be the closure in $\R$ of $I$.
For every $I,J\in\scri$, let $I+J:=(-a_I-a_J,a_I+a_J)\in\scri$;
then $a_{I+J}=a_I+a_J$.
We define $I_0:=(-1,1)\in\scri$; then $a_{I_0}=1$.
For all $I,J\in\scri$, we have:
\begin{itemize}
\item[]$[\,(I\subseteq J)\,\,\Leftrightarrow\,\,(a_I\le a_J)\,]$
\quad and \quad
$[\,(\,\barI\subseteq J)\,\,\Leftrightarrow\,\,(a_I<a_J)\,]$.
\end{itemize}

For this entire note, fix a $C^\infty$ function $\zeta:\R\to\R$ such that
\begin{itemize}
\item$\zeta=1$ on $(-\infty,0]$, \qquad $\zeta=0$ on $[1,\infty)$ \qquad and
\item$\zeta'<0$ on $(0,1)$.
\end{itemize}
Then $0<\zeta<1$ on $(0,1)$
and $\zeta$ is decreasing on $(0,1)$.
Moreover, $\zeta$ is nonincreasing on $\R$.
For every $I\in\scri$, define $\zeta_I:\R\to\R$ by
\begin{itemize}
\item$\zeta_I(x)=\zeta(-x-a_I)$ on $x\le0$ \qquad and
\item$\zeta_I(x)=\zeta(x-a_I)$ on $x\ge0$.
\end{itemize}
Then, for all $I\in\scri$, we have
\begin{itemize}
\item$\zeta_I=1$ on $\barI$, \qquad $\zeta_I=0$ on $\R\backslash(I+I_0)$,
\item$0<\zeta_I<1$ on $(I+I_0)\backslash\barI$,
\item$\zeta_I'>0$ on $(-a_I-1,-a_I)$ \qquad and
\item$\zeta_I'<0$ on $(a_I,a_I+1)$.
\end{itemize}

For all integers $d\ge1$, let $\Id_d:\R^d\to\R^d$ be the identity map, defined by $\Id_d(\sigma)=\sigma$.
For every subset $S\subseteq\R$, for every integer $d\ge1$, we define
$S^d:= S\times S\times\cdots\times S\subseteq\R^d$.

Let $d\ge1$ be an integer.
A function $V:\R^d\to\R^d$ will be said to be {\bf complete}
if it is $C^\infty$ and represents a complete vector field on $\R^d$.
For any complete $V:\R^d\to\R^d$, we will use the notation $\Phi_t^V:\R^d\to\R^d$
to denote the time~$t$ flow of $V$,
defined by the ODE $(d/dt)(\Phi_t^V(\sigma))=V(\Phi_t^V(\sigma))$
and by the initial value condition $\Phi_0^V=\Id_d$.

Let $d\ge1$ be an integer.
Let $V:\R^d\to\R^d$ be complete.
For any~$A\subseteq\R$, for any $B\subseteq\R^d$, let $\Phi_A^V(B):=\{\Phi_a^V(b)\,|\,a\in A,b\in B\}$.
For any $A\subseteq\R$, for any $b\in\R^d$, let $\Phi_A^V(b):=\{\Phi_a^V(b)\,|\,a\in A\}$.
For any $a\in\R$, for any $B\subseteq\R^d$, let $\Phi_a^V(B):=\{\Phi_a^V(b)\,|\,b\in B\}$.

Let $d\ge1$ be an integer.
Let $V:\R^d\to\R^d$ be complete.
Let $\sigma\in\R^d$.
We say that $(V,\sigma)$ is {\bf periodic} if there exists an integer $n\ne0$
such that $\Phi_n^V(\sigma)=\sigma$.
For any integer $k\ge0$,
we say that $(V,\sigma)$ is {\bf periodic to order $k$} if there is an integer $n\ne0$
such that the map $\Phi_n^V:\R^d\to\R^d$ agrees with the identity $\Id_d:\R^d\to\R^d$ to order $k$ at~$\sigma$.
We say that $(V,\sigma)$ is {\bf periodic to all orders} if there is an integer $n\ne0$
such that the map $\Phi_n^V:\R^d\to\R^d$ agrees with $\Id_d:\R^d\to\R^d$ to all orders at $\sigma$.

Let $d\ge1$ be an integer.
Let $V:\R^d\to\R^d$ be complete.
Let $S\subseteq\R^d$.
We say $S$ is {\bf$V$-invariant} if,
for all $\sigma\in S$, $\Phi_\R^V(\sigma)\subseteq S$.
We say $S$~is {\bf locally $V$-invariant} if,
for all $\sigma\in S$, there is an open neighborhood $N$ in $\R$ of $0$
such that $\Phi_N^V(\sigma)\subseteq S$.

For all integers $j\in[1,4]$, let $\Pi_j:\R^4\to\R$ be projection onto the $j$th coordinate,
defined by $\Pi_j(x_1,x_2,x_3,x_4)=x_j$.
Let $\scrc$ be the set of~$C^\infty$ maps $V:\R^4\to\R^4$
such that $V(\R^4)\subseteq\barI_0^4$.
Note that, for all $V\in\scrc$, the function $V:\R^4\to\R^4$ is complete.
We define $V_0:\R^4\to\R^4$ by~the rule: for all $\sigma\in\R^4$,
$V_0(\sigma)=(0,0,0,1)$.
Then $V_0\in\scrc$.
For all $a\in\R$, we define
$\scrv(a):=\{V\in\scrc\,|\,V=V_0\hbox{ on }\R^3\times(-\infty,-a)\}$.

For all $I\in\scri$, we define
\begin{eqnarray*}
T(I)&:=&\barI^3\times\{a_I\}\quad\subseteq\quad\barI^4,\\
B(I)&:=&\barI^3\times\{-a_I\}\quad\subseteq\quad\barI^4,\\
T_\circ(I)&:=&I^3\times\{a_I\}\quad\subseteq\quad I^3\times\barI,\\
B_\circ(I)&:=&I^3\times\{-a_I\}\quad\subseteq\quad I^3\times\barI\qquad\hbox{and}\\
\xi_I&:=&(0,0,0,-a_I)\quad\in\quad B_\circ(I).
\end{eqnarray*}
The {\bf straight up map for $I$} is the bijection $SU_I:B(I)\to T(I)$
defined by $SU_I(w,x,y,-a_I)=(w,x,y,a_I)$.
Note that $SU_I(B_\circ(I))=T_\circ(I)$.

Let $V\in\scrc$.
We say $\sigma\in\R^4$ is {\bf undeterred by $V$} if
$\Pi_4(\Phi_\R^V(\sigma))=\R$.
The {\bf undeterred set for $V$} is
the set $\scru(V)$ of all $\sigma\in\R^4$ such that $\sigma$~is undeterred by $V$.
This set $\scru(V)$ is $V$-invariant.
We say $V$ is {\bf porous} if $\scru(V)$ is dense in $\R^4$.
For example, $\scru(V_0)=\R^4$, so $V_0$ is porous.
By a {\bf deterrence system} we mean an element $(V,I)\in\scrc\times\scri$
such that $V=V_0$ on $(\R^4)\backslash(I^4)$.
Let $\scrd$ be the set of all deterrence systems.
Note, for all $(V,I)\in\scrd$, that $V\in\scrv(a_I)$.
For all $(V,I)\in\scrd$, we define
\begin{eqnarray*}
\scru_\circ(V,I)&:=&(\scru(V))\cap(I^4),\\
\barscru(V,I)&:=&(\scru(V))\cap(\,\barI^4\,),\\
\scru_B(V,I)&:=&(\scru(V))\cap(B(I)),\\
\scru_B^\circ(V,I)&:=&(\scru(V))\cap(B_\circ(I)).
\end{eqnarray*}

For any $I\in\scri$, we define $\scrc_I:=\{P\in\scrc\,|\,P=V_0\hbox{ on }(\,\overline{3I}\,)^4\}$.
For any $(V,I)\in\scrd$, for any $P\in\scrc_I$,
we will denote by $\scrx_I(P,V)$ the function $X\in\scrc$ defined by:
\quad$X:=V$ on $(2I)^4$ \quad and \quad $X:=P$ on $\R\backslash(\,\barI^4\,)$.

\noindent
The ``$\scrx$'' stands for ``exchange'':
As $(V,I)\in\scrd$, we know that $V=V_0$ on $(\R^4)\backslash(I^4)$.
As $P\in\scrc_I$, we know that $P=V_0$ on $(\,\overline{3I}\,)^4$.
To construct $\scrx_I(P,V)$,
we start with~$P$, and then ``exchange'' $V_0$ for $V$ on~$I^4$.

Let $(V,I)\in\scrd$.
For any $(W,J)\in\scrd$, we say
$(W,J)$ is a {\bf modification of~$(V,I)$} if: \qquad
both \qquad [ $a_I<a_J$ ] \qquad and \qquad [ $W=V$ on $\barI^4$ ].

\noindent
We define
\begin{eqnarray*}
\scrm(V,I)&:=&\{\,(W,J)\in\scrd\,\,\,|\,\,(W,J)\hbox{ is a modification of }(V,I)\,\},\\
\scrm_*(V,I)&:=&\{\,(W,J)\in\scrm(V,I)\,\,\,|\,\,W\in\scrv(a_I)\,\}.
\end{eqnarray*}

Let $I\in\scri$.
We define $\scrd_I^\times:=\{(P,K)\in\scrd\,|\,4I\subseteq K\hbox{ and }P\in\scrc_I\}$.
We denote by $\scrp_I$ the set of $(P,K)\in\scrd_I^\times$ such that,
for some integer $m>2a_I$, we have:
\begin{itemize}
\item$\Phi_m^P$ agrees with $\Id_4$ to all orders at $\xi_I$ \qquad and
\item for all $\tau\in B_\circ(I)$, \, for all $t\in(0,m)$,
      $$[\,\Phi_t^P(\tau)\in I^4\,]\qquad\Leftrightarrow\qquad[\,t<2a_I\,].$$
\end{itemize}

For any $C^\infty$ manifold $M$, for any integer $k\ge0$,
let $\pi_k^M:F_kM\to M$ denote the $k$th order frame bundle of $M$.

An action of a group $G$ on a topological space $X$ is
{\bf fixpoint rare} if, for any nonempty open subset $U$ of $X$,
for all~$g\in G\backslash\{1_G\}$, there exists $u\in U$ such that $gu\ne u$.
Any fixpoint rare action is effective.
For a $C^\omega$ action on a connected manifold, fixpoint rare and effective are equivalent.
Any continuous transitive action of~a~real Lie group preserves a $C^\omega$ structure,
from which it follows that:
By Lemma 6.1 of \cite{adams:locfree},
a continuous action of a connected real Lie group $G$ on a topological space $X$ is fixpoint rare iff,
for every nonempty $G$-invariant open subset $V$ of~$X$,
the $G$-action on $V$ is effective.

\section{Miscellaneous results\wrlab{sect-misc-results}}

\begin{lem}\wrlab{lem-fn-with-growth-is-proper}
Let $f:\R\to\R$ be differentiable.
Assume, for all $t\ge0$, that $f'(t)\ge0$.
Let $K$ be be a compact subset of~$\R$.
Assume that $f([0,\infty))\subseteq K$.
Let $a\ge0$ and $b>0$.
Then there exists $r\ge0$ such that
both $f'(r)<b$ and $f'(r+a)<b$.
\end{lem}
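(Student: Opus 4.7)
The plan is to argue by contradiction, introducing an auxiliary function that converts the disjunctive condition ``$f'(r)\ge b$ or $f'(r+a)\ge b$'' into a pointwise lower bound on a single derivative, and then derive a contradiction with boundedness.

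First, I would record the setup. Since $f'\ge 0$ on $[0,\infty)$, $f$ is nondecreasing on $[0,\infty)$, and since $f([0,\infty))$ is contained in the compact set $K$, there is some $M>0$ with $|f(t)|\le M$ for all $t\ge 0$. Define $F:\R\to\R$ by $F(t):=f(t)+f(t+a)$. Because $f$ is differentiable on $\R$, so is $F$, with $F'(t)=f'(t)+f'(t+a)$.

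Next, suppose for contradiction that the conclusion fails, i.e., for every $r\ge 0$ we have $f'(r)\ge b$ or $f'(r+a)\ge b$. For any $r\ge 0$, since $a\ge 0$ we get $r+a\ge 0$, so both $f'(r)\ge 0$ and $f'(r+a)\ge 0$; combined with the assumption, at least one of the two summands is $\ge b$ while the other is $\ge 0$, whence $F'(r)\ge b$ for every $r\ge 0$.

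Finally, apply the Mean Value Theorem to $F$ on $[0,t]$ for any $t>0$: there exists $\xi\in(0,t)$ with $F(t)-F(0)=tF'(\xi)\ge bt$, so $F(t)\to+\infty$ as $t\to+\infty$. But by the boundedness of $f$ on $[0,\infty)$, one has $|F(t)|\le 2M$ for every $t\ge 0$, a contradiction. The only nonroutine step is the choice of the auxiliary $F$; once this is in place the argument is immediate, so I do not anticipate any real obstacle.
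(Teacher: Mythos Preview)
Your proof is correct and follows essentially the same approach as the paper: both argue by contradiction, form the auxiliary function $F(t)=f(t)+f(t+a)$ (the paper calls it $h=f+g$ with $g(t)=f(t+a)$), observe that the contradiction hypothesis forces $F'\ge b$ on $[0,\infty)$, and then use the Mean Value Theorem to conclude $F(t)\to\infty$, contradicting boundedness. Your endgame is slightly cleaner, bounding $|F|\le 2M$ directly rather than routing through $h\le 2g\Rightarrow g\to\infty\Rightarrow f\to\infty$ as the paper does.
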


\begin{proof}
Suppose, for a contradiction, that, for all $t\ge0$, we have:
\begin{itemize}
\item[$(*)$] \qquad either \qquad $f'(t)\ge b$ \qquad or \qquad $f'(t+a)\ge b$.
\end{itemize}

Define $g:\R\to\R$ by $g(t)=f(t+a)$.
Since $f'\ge0$ on $[0,\infty)$, by the Mean Value Theorem,
$f$ is nondecreasing on $[0,\infty)$.
Then, because $a\ge0$, we have $f\le g$ on $[0,\infty)$.
Let $h:=f+g$.
Then $h\le 2g$ on $[0,\infty)$.
For all $t\ge0$, since $f'(t)\ge0$, since $f'(t+a)\ge0$
and since $(*)$ holds,
it follows that $(f'(t))+(f'(t+a))\ge b$,
{\it i.e.}, that $h'(t)\ge b$.

Let $c:=h(0)$.
Then, by the Mean Value Theorem,
for all $t>0$, we have $[(h(t))-c]/t\ge b$.
Then, for all $t\ge0$, $h(t)\ge c+bt$.
Then, since $b>0$, we get $\displaystyle{\lim_{t\to\infty}[h(t)]=\infty}$.
So, because $h\le2g$ on $[0,\infty)$,
it follows that $\displaystyle{\lim_{t\to\infty}[g(t)]=\infty}$.
Then $\displaystyle{\lim_{t\to\infty}[f(t)]=\lim_{t\to\infty}[g(t-a)]=\infty}$.
However, $K$ is compact and $f([0,\infty))\subseteq K$, contradiction.
\end{proof}

\begin{lem}\wrlab{lem-single-preimages-give-interval-preimage}
Let $f:\R\to\R$ be continuous.
Let $a,c,s,u\in\R$.
Assume that $a<c$ and that $s<u$.
Assume that $f^{-1}(a)=\{s\}$ and that $f^{-1}(c)=\{u\}$.
Assume that $\displaystyle{\lim_{t\to-\infty}[f(t)]=-\infty}$
and that $\displaystyle{\lim_{t\to\infty}[f(t)]=\infty}$.
Then $f^{-1}((a,c))=(s,u)$.
\end{lem}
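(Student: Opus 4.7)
The plan is to prove both inclusions $(s,u)\subseteq f^{-1}((a,c))$ and $f^{-1}((a,c))\subseteq (s,u)$ using the Intermediate Value Theorem plus the one-point preimage hypotheses.

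First I would show $(s,u)\subseteq f^{-1}((a,c))$. Fix $t\in(s,u)$. To see $f(t)<c$, note that $f(s)=a<c$, so if $f(t)\ge c$, then either $f(t)=c$, forcing $t\in f^{-1}(c)=\{u\}$ and contradicting $t<u$, or $f(t)>c$, in which case the IVT applied on $[s,t]$ yields some $t'\in(s,t)$ with $f(t')=c$; then $t'=u$, again contradicting $t'<t<u$. A symmetric argument using $f(u)=c>a$ and the IVT on $[t,u]$ shows $f(t)>a$.

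Next I would show $f^{-1}((a,c))\subseteq(s,u)$. Fix $t$ with $a<f(t)<c$. Since $f(s)=a\ne f(t)$ and $f(u)=c\ne f(t)$, we have $t\ne s$ and $t\ne u$. Suppose for contradiction that $t<s$. Because $\lim_{\tau\to-\infty}f(\tau)=-\infty$, pick $t_0<t$ with $f(t_0)<a$. Then $f(t_0)<a<f(t)$, so the IVT on $[t_0,t]$ produces $t'\in(t_0,t)$ with $f(t')=a$, forcing $t'=s$; but $t'<t<s$, a contradiction. The case $t>u$ is handled identically using $\lim_{\tau\to\infty}f(\tau)=\infty$ and the IVT on $[t,t_0]$ for some $t_0>t$ with $f(t_0)>c$, yielding a second preimage of $c$ distinct from $u$.

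There is no real obstacle here; the only mild subtlety is that the proof uses all the hypotheses (the one-point preimages, the order relations $a<c$ and $s<u$, and the two limits at $\pm\infty$) — dropping the limit hypotheses, for example, would allow $f^{-1}((a,c))$ to contain points outside $(s,u)$. The hypothesis $s<u$ is only needed to make the conclusion $(s,u)$ well-formed as an interval and to rule out the degenerate case in the first inclusion.
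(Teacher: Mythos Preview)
Your proof is correct and follows essentially the same approach as the paper: both argue the two inclusions separately, using the Intermediate Value Theorem together with the single-preimage hypotheses for the inclusion $(s,u)\subseteq f^{-1}((a,c))$, and the limits at $\pm\infty$ plus IVT for the reverse inclusion. The only cosmetic difference is that the paper phrases the first inclusion directly (observing $a,c\notin f((s,u))$ and bounding $f((s,u))$ via IVT) rather than by contradiction, but the content is the same.
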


\begin{proof}
Since $[f^{-1}(a)]\cap(s,u)=\emptyset=[f^{-1}(c)]\cap(s,u)$,
it follows that $a,c\notin f((s,u))$.
Because $f(s)=a<c$ and $c\notin f((s,u))$, it follows,
from the Intermediate Value Theorem,
that $f([s,u))\subseteq(-\infty,c)$.
Because $f(u)=c>a$ and $a\notin f((s,u))$, it follows,
from the Intermediate Value Theorem,
that $f((s,u])\subseteq(a,\infty)$.
Then
$$f((s,u))\qquad\subseteq\qquad(-\infty,c)\,\cap\,(a,\infty)\qquad=\qquad(a,c).$$
Then $(s,u)\subseteq f^{-1}((a,c))$.
It remains to show that $f^{-1}((a,c))\subseteq(s,u)$.
Let $t_0\in f^{-1}((a,c))$.
We wish to prove $s<t_0<u$.
We will show $s<t_0$; the proof of $t_0<u$ is similar.

Since $\displaystyle{\lim_{t\to-\infty}[f(t)]=-\infty}$,
choose $p<t_0$ such that $f(p)<a$.
Since $t_0\in f^{-1}((a,c))$,
it follows that $f(t_0)>a$.
By the Intermediate Value Theorem,
choose $q\in(p,t_0)$ such that $f(q)=a$.
Then $q\in f^{-1}(a)=\{s\}$, so $q=s$.
Then $s=q<t_0$.
\end{proof}

\section{Results about $V_0$\wrlab{sect-res-V0}}

Recall, from \secref{sect-notation}, the definition of $V_0$.
For all $t,w,x,y,z\in\R$, if $\rho:=(w,x,y,z)\in\R^4$, then
$\Phi_t^{V_0}(\rho)=(w,x,y,z+t)$.

\begin{lem}\wrlab{lem-vert-is-V0-invar}
Let $S\subseteq\R^3$.
Then all of the following are true:
\begin{itemize}
\item[(i)]$\forall t\in\R$, \quad $\Phi_t^{V_0}(S\times\R)=S\times\R$.
\item[(ii)]$\forall t\in\R$, $\forall a\in\R$, \quad
$\Phi_t^{V_0}(S\times[a,\infty))=S\times[a+t,\infty)$.
\item[(iii)]$\forall t\in\R$, $\forall a\in\R$, \quad
$\Phi_t^{V_0}(S\times(-\infty,a])=S\times(-\infty,a+t]$.
\end{itemize}
\end{lem}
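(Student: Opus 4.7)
The plan is to observe that the lemma is an almost immediate consequence of the explicit formula
$$\Phi_t^{V_0}(w,x,y,z)\,=\,(w,x,y,z+t)$$
recorded just before the statement. Since the flow fixes the first three coordinates and merely translates the fourth, each assertion reduces to a statement about translation on $\R$.

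For part (i), I would unfold the definition of $\Phi_t^{V_0}(S\times\R)$ as
$\{\,(w,x,y,z+t)\,|\,(w,x,y)\in S,\,z\in\R\,\}$ and note that $z\mapsto z+t$ is a bijection $\R\to\R$; this identifies the image with $S\times\R$. Parts (ii) and (iii) are handled the same way, using that $z\mapsto z+t$ is a bijection from $[a,\infty)$ onto $[a+t,\infty)$, respectively from $(-\infty,a]$ onto $(-\infty,a+t]$.

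There is no real obstacle here; the only thing to be careful about is to verify the set equalities by double inclusion (``$\subseteq$'' using the formula directly, and ``$\supseteq$'' by exhibiting, for each point of the target, the appropriate preimage $z$ under the shift $z\mapsto z+t$). I would probably prove (ii) in full and then indicate that (i) and (iii) follow by the same argument, since (i) is the special case ``$a=-\infty$'' in spirit and (iii) is obtained by reversing the sign of the inequality.
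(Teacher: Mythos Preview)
Your proposal is correct and matches the paper's approach: the paper's proof is simply ``Straightforward,'' which is exactly the observation you make that everything follows immediately from the explicit flow formula $\Phi_t^{V_0}(w,x,y,z)=(w,x,y,z+t)$.
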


\begin{proof}
Straightforward.
\end{proof}

\begin{lem}\wrlab{lem-omnibus-V0}
Let $I\in\scri$.
Then
\begin{itemize}
\item[(i)]$\Phi_{(0,2a_I)}^{V_0}(B_\circ(I))=I^4$,
\item[(ii)]$\Phi_{(-2a_I,0)}^{V_0}(T_\circ(I))=I^4$,
\item[(iii)]$\Phi_{[0,2a_I]}^{V_0}(T_\circ(I))\subseteq[(\,\overline{3I}\,)^4]\backslash[I^4]$,
\item[(iv)]$\Phi_{[-2a_I,0]}^{V_0}(B_\circ(I))\subseteq[(\,\overline{3I}\,)^4]\backslash[I^4]$,
\item[(v)]$\Phi_{[-2a_I,4a_I]}^{V_0}(B_\circ(I))\subseteq(\,\overline{3I}\,)^4$ \qquad and
\item[(vi)]$\Phi_{[-2a_I,2a_I]}^{V_0}\,(\,[(\,\barI\,)^3\backslash(I^3)]\,\times\,\barI\,)
\,\subseteq[(\,\overline{3I}\,)^4]\backslash[I^4]$.
\end{itemize}
\end{lem}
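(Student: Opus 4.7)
The whole lemma is driven by the single fact, recorded just above the statement, that $\Phi_t^{V_0}(w,x,y,z)=(w,x,y,z+t)$. So each of the six claims reduces to tracking the fourth coordinate of points in $B_\circ(I)$, $T_\circ(I)$, or $\barI^3\times\barI$ under the translation $z\mapsto z+t$ while $t$ ranges over the indicated interval, and then comparing the resulting set to $I^4$, $(\,\overline{3I}\,)^4$, or their difference. Since the first three coordinates $(w,x,y)$ are unchanged by $\Phi_t^{V_0}$, one can dispose of the ``horizontal'' containment in each case by inspection: $I^3\subseteq(\,\overline{3I}\,)^3$ for (i)--(v), and the hypothesis $(w,x,y)\in\barI^3\backslash I^3$ for (vi) immediately gives that at least one of $w,x,y$ lies in $\{-a_I,a_I\}\subseteq\R\backslash I$, forcing the image to lie outside $I^4$.

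For the fourth-coordinate arithmetic, I would handle the six parts in parallel. For (i), for $\tau=(w,x,y,-a_I)\in B_\circ(I)$ and $t\in(0,2a_I)$, the fourth coordinate $-a_I+t$ ranges exactly over $(-a_I,a_I)=I$, yielding $\Phi_{(0,2a_I)}^{V_0}(B_\circ(I))=I^3\times I=I^4$; part (ii) is the mirror image with $T_\circ(I)$ and $t\in(-2a_I,0)$. For (iii), $t\in[0,2a_I]$ sends the fourth coordinate $a_I$ of $T_\circ(I)$ into $[a_I,3a_I]\subseteq\overline{3I}$, which gives the containment in $(\,\overline{3I}\,)^4$, while the condition $z+t\ge a_I$ shows $z+t\notin I$, ruling out $I^4$; part (iv) is the symmetric statement for $B_\circ(I)$ with $t\in[-2a_I,0]$. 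Part (v) is immediate: for $\tau\in B_\circ(I)$ and $t\in[-2a_I,4a_I]$ the fourth coordinate lies in $[-3a_I,3a_I]=\overline{3I}$. Part (vi) combines the $z+t\in\overline{3I}$ observation (for $z\in\barI$ and $t\in[-2a_I,2a_I]$) with the horizontal argument sketched above to exclude $I^4$.

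\textbf{Main obstacle.} There is none of substance; the only nuisance is bookkeeping and being careful with open vs.\ closed endpoints so that (i) and (ii) produce the \emph{open} cube $I^4$ while (iii)--(vi) land in the \emph{closed} cube $(\,\overline{3I}\,)^4$ minus $I^4$. I would therefore present the proof compactly, by treating (i)--(ii) together, (iii)--(iv) together, and then (v)--(vi) separately, in each case citing \lref{lem-vert-is-V0-invar} (or merely the explicit translation formula) to pass from the fourth-coordinate computation to the stated set-theoretic containment.
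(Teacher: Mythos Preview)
Your proposal is correct and matches the paper's approach exactly: the paper proves (i) by the same fourth-coordinate translation computation you describe and then dismisses (ii)--(vi) with ``similarly straightforward.'' If anything, you have supplied more detail than the paper does.
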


\begin{proof}
{\it Proof of (i):}
Since $B_\circ(I)=I^3\times\{-a_I\}$, it follows that
$$\Phi_{(0,2a_I)}^{V_0}(B_\circ(I))\qquad=\qquad I^3\,\,\times\,\,(\,-a_I+0\,,\,-a_I+2a_I\,),$$
so $\Phi_{(0,2a_I)}^{V_0}(B_\circ(I))=I^3\times(-a_I,a_I)=I^3\times I=I^4$.
{\it End of proof of (i).}

The proofs of (ii)-(vi) are similarly straightfoward.
\end{proof}

\begin{lem}\wrlab{lem-drop-from-top}
Let $I\in\scri$.
Let $\rho_1\in T_\circ(I)$.
Let $\rho':=\Phi_{-2a_I}^{V_0}(\rho_1)$.
Then $\rho'\in B_\circ(I)$ and $\rho_1=SU_I(\rho')$.
\end{lem}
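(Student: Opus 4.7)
The plan is to just unwind the definitions. Since $\rho_1 \in T_\circ(I) = I^3 \times \{a_I\}$, I can write $\rho_1 = (w,x,y,a_I)$ for some $(w,x,y) \in I^3$. From the very first sentence of \secref{sect-res-V0}, the flow of $V_0$ acts by translation in the fourth coordinate, so $\Phi_{-2a_I}^{V_0}(\rho_1) = (w,x,y,a_I - 2a_I) = (w,x,y,-a_I)$.

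Then I would observe that $(w,x,y,-a_I) \in I^3 \times \{-a_I\} = B_\circ(I)$, which gives the first conclusion $\rho' \in B_\circ(I)$. For the second conclusion, I apply the definition of the straight up map: $SU_I(\rho') = SU_I(w,x,y,-a_I) = (w,x,y,a_I) = \rho_1$.

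There is no real obstacle here; the statement is an immediate consequence of three definitions (that of $T_\circ(I)$, of the flow $\Phi_t^{V_0}$, and of $SU_I$), and the only thing to check is that the formulas line up. I would present this as a single short paragraph, perhaps cross-referencing \lref{lem-vert-is-V0-invar}(iii) if I wanted to avoid explicit coordinates, but writing out the components is probably cleanest.
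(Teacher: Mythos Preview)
Your argument is correct and is exactly what the paper intends; the paper's own proof consists of the single word ``Straightforward,'' and your coordinate computation is the obvious way to make that precise.
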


\begin{proof}
Straightforward.
\end{proof}

\begin{lem}\wrlab{lem-midpoint-connection}
Let $I,J\in\scri$.
Assume $J\subseteq I$.
Let $s:=a_I-a_J$.
Give $B_\circ(I)$ and $B_\circ(J)$ their relative topologies, inherited from $\R^4$.
Then
\begin{itemize}
\item[(i)]$\Phi_s^{V_0}(\xi_I)=\xi_J$, \quad $\Phi_{-s}^{V_0}(\xi_J)=\xi_I$,
\item[(ii)]$\Phi_{[0,s]}^{V_0}(\xi_I)\,\,=\,\,\Phi_{[-s,0]}^{V_0}(\xi_J)\,\,\subseteq\,\,\{(0,0,0)\}\,\times\,[-a_I,-a_J]$ \qquad and
\item[(iii)]$\tau\mapsto\Phi_{-s}^{V_0}(\tau):B_\circ(J)\to B_\circ(I)$ is an open map.
\end{itemize}
\end{lem}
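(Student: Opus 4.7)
My plan is to treat all three parts as direct consequences of the explicit formula for the $V_0$-flow recalled at the start of \secref{sect-res-V0}, namely $\Phi_t^{V_0}(w,x,y,z)=(w,x,y,z+t)$, together with the definitions of $\xi_I$, $\xi_J$, $B_\circ(I)$, $B_\circ(J)$, and $s=a_I-a_J$. Since $J\subseteq I$ implies $a_J\le a_I$, we have $s\ge 0$, and since $\Phi^{V_0}$ only shifts the fourth coordinate, the whole lemma reduces to simple bookkeeping.

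For (i), I would just substitute: $\Phi_s^{V_0}(0,0,0,-a_I)=(0,0,0,-a_I+s)=(0,0,0,-a_J)=\xi_J$, and symmetrically $\Phi_{-s}^{V_0}(\xi_J)=\xi_I$ (either by another substitution or by the group law $\Phi_{-s}^{V_0}\circ\Phi_s^{V_0}=\Id_4$). For (ii), I would compute both orbit arcs directly: $\Phi_{[0,s]}^{V_0}(\xi_I)=\{(0,0,0,-a_I+t):t\in[0,s]\}=\{(0,0,0)\}\times[-a_I,-a_J]$ and $\Phi_{[-s,0]}^{V_0}(\xi_J)=\{(0,0,0,-a_J+t):t\in[-s,0]\}=\{(0,0,0)\}\times[-a_I,-a_J]$, so both equal $\{(0,0,0)\}\times[-a_I,-a_J]$, and the claimed containment is trivial.

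For (iii), a point $\tau=(w,x,y,-a_J)\in B_\circ(J)=J^3\times\{-a_J\}$ is sent by $\Phi_{-s}^{V_0}$ to $(w,x,y,-a_I)$, which lies in $J^3\times\{-a_I\}\subseteq I^3\times\{-a_I\}=B_\circ(I)$ since $J\subseteq I$. The obvious projections to the first three coordinates give homeomorphisms $B_\circ(J)\to J^3$ and $B_\circ(I)\to I^3$ (with the relative topologies from $\R^4$), and under these identifications the map in (iii) becomes the inclusion $J^3\hookrightarrow I^3$. Because $J^3$ is open in $\R^3$, hence open in $I^3$, this inclusion is an open map, and (iii) follows.

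There is really no obstacle here; the only mildly delicate point is making sure the codomain in (iii) is correct, which boils down to observing $J\subseteq I\Rightarrow J^3\subseteq I^3$. I would write the proof of (i) in full, note (ii) as a direct computation on both sides, and present (iii) via the identification with $J^3\hookrightarrow I^3$.
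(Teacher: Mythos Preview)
Your proposal is correct and is exactly the kind of direct computation the paper has in mind; the paper's own proof consists of the single word ``Straightforward.'' Your argument for (iii), reducing to the open inclusion $J^3\hookrightarrow I^3$, is a clean way to spell out the straightforward part.
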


\begin{proof}
Straightforward.
\end{proof}

\begin{lem}\wrlab{lem-Pi4-effect-of-V0}
Let $\sigma\in\R^4$.
Let $t\in\R$.
Then $\Pi_4(\Phi_t^{V_0}(\sigma))=(\Pi_4(\sigma))+t$.
\end{lem}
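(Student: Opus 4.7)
The plan is essentially to quote the explicit formula for $\Phi_t^{V_0}$ that was recorded at the beginning of \secref{sect-res-V0}. Since $V_0$ is the constant vector field $(0,0,0,1)$ on $\R^4$, solving the defining ODE $(d/dt)(\Phi_t^{V_0}(\sigma))=V_0(\Phi_t^{V_0}(\sigma))$ with initial condition $\Phi_0^{V_0}=\Id_4$ gives, for $\sigma=(w,x,y,z)$,
\[
\Phi_t^{V_0}(w,x,y,z)\;=\;(w,x,y,z+t).
\]
This formula is already stated in the opening sentence of \secref{sect-res-V0}, so I would simply invoke it.

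Writing $\sigma=(w,x,y,z)$, I would then compute $\Pi_4(\Phi_t^{V_0}(\sigma))=\Pi_4(w,x,y,z+t)=z+t$, using the definition of $\Pi_4$ as projection onto the fourth coordinate. Since $z=\Pi_4(\sigma)$, this equals $(\Pi_4(\sigma))+t$, which is exactly what we wanted.

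There is no real obstacle here; the lemma is a one-line unpacking of the explicit form of the flow of the constant vertical vector field. The only thing to be careful about is to cite the formula for $\Phi_t^{V_0}$ (rather than re-deriving it) so the proof matches the terse style of the neighboring lemmas in \secref{sect-res-V0}, whose proofs are labelled ``Straightforward.''
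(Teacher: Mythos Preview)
Your proposal is correct and matches the paper's proof essentially line for line: the paper also writes $\sigma=(w,x,y,z)$, invokes the formula $\Phi_t^{V_0}(\sigma)=(w,x,y,z+t)$, and reads off $\Pi_4$. There is nothing to add.
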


\begin{proof}
Choose $w,x,y,z\in\R$ such that $\sigma=(w,x,y,z)$.
Then $\Pi_4(\sigma)=z$.
Then $\Pi_4(\Phi_t^{V_0}(\sigma))=\Pi_4((w,x,y,z+t))=z+t=(\Pi_4(\sigma))+t$.
\end{proof}

\begin{lem}\wrlab{lem-either-up-outside-or-down-outside}
Let $I\in\scri$ and let $\rho\in(\R^4)\backslash(I^4)$.
Then
\begin{itemize}
\item[(i)]$\Pi_4(\rho)\ge0\qquad\Rightarrow\qquad\Phi_{[0,\infty)}^{V_0}(\rho)\subseteq(\R^4)\backslash(I^4)$\qquad and
\item[(ii)]$\Pi_4(\rho)\le0\qquad\Rightarrow\qquad\Phi_{(-\infty,0]}^{V_0}(\rho)\subseteq(\R^4)\backslash(I^4)$.
\end{itemize}
\end{lem}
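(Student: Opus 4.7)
The plan is to split on which coordinate of $\rho$ witnesses that $\rho \notin I^4$. Write $\rho = (w,x,y,z)$, so by hypothesis at least one of $w,x,y,z$ lies outside $I = (-a_I, a_I)$. Since the formula recalled at the start of \secref{sect-res-V0} gives $\Phi_t^{V_0}(w,x,y,z) = (w,x,y,z+t)$, the flow fixes the first three coordinates and translates the fourth. This suggests two cases.

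\smallskip

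In the first case, suppose $(w,x,y) \notin I^3$. Then some coordinate among the first three already lies outside $I$, and since the $V_0$-flow does not touch those coordinates, we have $\Phi_t^{V_0}(\rho) \notin I^4$ for \emph{every} $t \in \R$. This gives both (i) and (ii) for free, without using any hypothesis on $\Pi_4(\rho)$.

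\smallskip

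In the second case, $(w,x,y) \in I^3$ but $z \notin I$, so $z \geq a_I$ or $z \leq -a_I$; in particular $|z| \geq a_I \geq 1 > 0$. For (i), the hypothesis $\Pi_4(\rho) = z \geq 0$ combined with $|z| \geq a_I > 0$ rules out $z \leq -a_I$, leaving $z \geq a_I$; then for any $t \geq 0$, $\Pi_4(\Phi_t^{V_0}(\rho)) = z + t \geq a_I$ (using \lref{lem-Pi4-effect-of-V0}), so the fourth coordinate stays outside $I$ and hence $\Phi_t^{V_0}(\rho) \notin I^4$. Part (ii) is entirely symmetric, with $z \leq -a_I$ and $t \leq 0$ yielding $z + t \leq -a_I$.

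\smallskip

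There is no real obstacle; the lemma is pure bookkeeping exploiting the verticality of $V_0$ and the positivity $a_I \geq 1$. The only thing to be careful about is not to forget the trivial first case, where the sign assumption on $\Pi_4(\rho)$ is irrelevant.
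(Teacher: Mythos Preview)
Your proof is correct and follows essentially the same approach as the paper: both split on whether $(w,x,y)\in I^3$, handle the first case trivially since the $V_0$-flow fixes the first three coordinates, and in the second case use $z\notin(-a_I,a_I)$ together with the sign hypothesis on $z$ to pin down $z\ge a_I$ (respectively $z\le -a_I$), after which the translated fourth coordinate stays outside $I$. The only cosmetic difference is that you invoke \lref{lem-Pi4-effect-of-V0} and the positivity $a_I\ge1$ explicitly, whereas the paper works directly from the flow formula.
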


\begin{proof}
Fix $w,x,y,z\in\R$ such that $\rho=(w,x,y,z)$.
If $(w,x,y)\notin I^3$, then $\Phi_\R^{V_0}(\rho)=\{(w,x,y)\}\times\R\subseteq(\R^4)\backslash(I^4)$;
in this case,
$$\hbox{both}\quad\Phi_{(-\infty,0]}^{V_0}(\rho)\subseteq(\R^4)\backslash(I^4)\quad\hbox{and}\quad
\Phi_{[0,\infty)}^{V_0}(\rho)\subseteq(\R^4)\backslash(I^4)$$
are true, and we are done.
We therefore assume that $(w,x,y)\in I^3$.
Then, as $(w,x,y,z)=\rho\notin I^4$, we get $z\notin I$.
That is, $z\notin(-a_I,a_I)$.

If $\Pi_4(\rho)\ge0$, {\it i.e.}, if $z\ge0$, then, because $z\notin(-a_I,a_I)$, we get $z\ge a_I$,
so $\Phi_{[0,\infty)}^{V_0}(\rho)=\{(w,x,y)\}\,\times\,[z,\infty)\subseteq(\R^4)\backslash(I^4)$,
proving (i).

If $\Pi_4(\rho)\le0$, {\it i.e.}, if $z\le0$, then, as $z\notin(-a_I,a_I)$, we get $z\le-a_I$,
so $\Phi_{(-\infty,0]}^{V_0}(\rho)=\{(w,x,y)\}\,\times\,(-\infty,z]\subseteq(\R^4)\backslash(I^4)$,
proving (ii).
\end{proof}

\begin{lem}\wrlab{lem-nbd-in-B-saturates-to-nbd}
Let $I$ and $J$ be open subsets of $\R$.
Let $c\in\R$.
Let $t_0\in J$.
Give $I^3\times\{c\}$ the relative topology inherited from $\R^4$.
Let $N$ be an open subset of $I^3\times\{c\}$.
Then $\Phi_J^{V_0}(N)$ is an open subset of $\R^4$.
\end{lem}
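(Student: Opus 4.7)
The plan is to exploit the explicit form of the flow of $V_0$: for any $(w,x,y,z)\in\R^4$ and any $t\in\R$, we have $\Phi_t^{V_0}(w,x,y,z)=(w,x,y,z+t)$, which is simply translation in the fourth coordinate. Combined with the product structure $I^3\times\{c\}$, this reduces the statement to the elementary fact that a product of open sets is open.

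First I would argue that $N$ has the form $N=N''\times\{c\}$ for some open subset $N''$ of $\R^3$ contained in $I^3$. Since $N$ is open in the subspace $I^3\times\{c\}$ of $\R^4$, there is an open subset $U$ of $\R^4$ with $N=U\cap(I^3\times\{c\})$. Setting $N'':=\{(w,x,y)\in I^3\,|\,(w,x,y,c)\in U\}$, this $N''$ is open in $I^3$ (being the preimage of $U$ under the continuous map $(w,x,y)\mapsto(w,x,y,c)$ intersected with $I^3$), and $N=N''\times\{c\}$. Because $I$ is open in $\R$, $I^3$ is open in $\R^3$, and so $N''$ is open in $\R^3$.

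Next I would compute $\Phi_J^{V_0}(N)$ directly. By definition, $\Phi_J^{V_0}(N)=\{\Phi_t^{V_0}(\sigma)\,|\,t\in J,\,\sigma\in N\}$. Using the formula for $\Phi_t^{V_0}$ and the description of $N$, any element of $\Phi_J^{V_0}(N)$ has the form $(w,x,y,c+t)$ with $(w,x,y)\in N''$ and $t\in J$; conversely any such point lies in $\Phi_J^{V_0}(N)$. Hence
\[
\Phi_J^{V_0}(N)\,\,=\,\,N''\times(c+J).
\]
Since $N''$ is open in $\R^3$, and $c+J$ is open in $\R$ as a translate of the open set $J$, the product $N''\times(c+J)$ is open in $\R^4$.

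There is essentially no obstacle here; the only thing to be careful about is the relative-topology unpacking in the first step, and the role of $t_0\in J$ is just to guarantee $J$ is nonempty (otherwise $\Phi_J^{V_0}(N)=\emptyset$, which is of course open). Lemma~\ref{lem-vert-is-V0-invar}(i) could also be invoked to rephrase the computation as $\Phi_t^{V_0}(N''\times\{c\})=N''\times\{c+t\}$, but the direct calculation is already transparent.
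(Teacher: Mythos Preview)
Your proof is correct and follows essentially the same approach as the paper: write $N$ as $N''\times\{c\}$ for some open $N''\subseteq I^3$, compute $\Phi_J^{V_0}(N)=N''\times(c+J)$, and conclude openness from the product structure. The paper's version is terser (it simply asserts the existence of such an open $N''$ without the relative-topology unpacking), but the argument is the same.
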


\begin{proof}
Fix an open subset $U$ of $I^3$ such that $N=U\times\{c\}$.
Then $\Phi_J^{V_0}(N)=U\times(c+J)$.
Since $U$ is open in $I^3$, $U$ is open in $\R^3$.
Then, since $c+J$ is open in $\R$,
we see that $\Phi_J^{V_0}(N)$ is open in $\R^4$.
\end{proof}

\section{Coincidence of orbits\wrlab{sect-coinc-orbs}}

\begin{lem}\wrlab{lem-homothetic-flows-local}
Let $d\ge1$ be an integer.
Let $V,W:\R^d\to\R^d$ be complete.
Assume, for all $\sigma\in\R^d$, that there exists $c\in\R$
such that $W(\sigma)=c\cdot(V(\sigma))$.
Let $\sigma_0\in\R^d$.
Then there exists $\delta>0$ such that
$\Phi_{(-\delta,\delta)}^W(\sigma_0)\subseteq\Phi_\R^V(\sigma_0)$.
\end{lem}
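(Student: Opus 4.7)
The plan is to split on whether $V(\sigma_0)$ vanishes. If $V(\sigma_0)=0$, then by hypothesis there exists $c\in\R$ with $W(\sigma_0)=c\cdot V(\sigma_0)=0$, so $\sigma_0$ is a fixed point of both flows; in particular $\Phi_t^W(\sigma_0)=\sigma_0=\Phi_0^V(\sigma_0)$ for every $t\in\R$, so $\Phi_\R^W(\sigma_0)\subseteq\Phi_\R^V(\sigma_0)$, and any $\delta>0$ works. Henceforth assume $V(\sigma_0)\neq 0$; the idea is to construct a $C^\infty$ reparametrization $u$ of the time variable so that $t\mapsto\Phi^V_{u(t)}(\sigma_0)$ solves the defining ODE for the $W$-flow through $\sigma_0$, and then invoke uniqueness.

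The first step is to promote the pointwise scalar hypothesis to a local smooth scalar function. Since $V(\sigma_0)\neq 0$, pick an index $j\in\{1,\ldots,d\}$ with $V_j(\sigma_0)\neq 0$; by continuity of $V$, there is an open neighborhood $U$ of $\sigma_0$ on which $V_j$ is nonvanishing. Define $c:U\to\R$ by $c(\sigma):=W_j(\sigma)/V_j(\sigma)$. Then $c$ is $C^\infty$ on $U$, and the hypothesis that $W(\sigma)$ is some scalar multiple of $V(\sigma)$ forces $W(\sigma)=c(\sigma)\cdot V(\sigma)$ for every $\sigma\in U$ (the scalar is unique because $V(\sigma)\neq 0$ on $U$).

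Next, let $\gamma(s):=\Phi_s^V(\sigma_0)$. Since $\gamma$ is continuous and $\gamma(0)=\sigma_0\in U$, choose $\varepsilon>0$ with $\gamma((-\varepsilon,\varepsilon))\subseteq U$. The scalar ODE
\[
u'(t)\;=\;c(\gamma(u(t))),\qquad u(0)=0,
\]
has smooth right-hand side in $u\in(-\varepsilon,\varepsilon)$, so by Picard's theorem there is a $\delta>0$ and a $C^\infty$ solution $u:(-\delta,\delta)\to(-\varepsilon,\varepsilon)$. Define $\eta(t):=\gamma(u(t))$ for $t\in(-\delta,\delta)$. The chain rule gives
\[
\eta'(t)\;=\;u'(t)\cdot V(\gamma(u(t)))\;=\;c(\eta(t))\cdot V(\eta(t))\;=\;W(\eta(t)),
\]
and $\eta(0)=\sigma_0$. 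By uniqueness of solutions to the $W$-flow ODE, $\eta(t)=\Phi_t^W(\sigma_0)$ for all $t\in(-\delta,\delta)$. Therefore
\[
\Phi^W_{(-\delta,\delta)}(\sigma_0)\;=\;\{\gamma(u(t)):t\in(-\delta,\delta)\}\;\subseteq\;\gamma(\R)\;=\;\Phi^V_\R(\sigma_0),
\]
as required.

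The only nontrivial step is the first one, namely upgrading the pointwise scalar assumption to a $C^\infty$ scalar function near $\sigma_0$; the assumption alone gives no regularity, and at zeros of $V$ the scalar is not even well-defined. The hypothesis $V(\sigma_0)\neq 0$ is exactly what lets us divide componentwise to obtain a smooth $c$, after which the remainder of the argument is a standard reparametrization plus ODE uniqueness.
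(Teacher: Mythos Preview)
Your proof is correct and follows essentially the same approach as the paper's: reduce to a reparametrization $t\mapsto\Phi^V_{u(t)}(\sigma_0)$ by solving the scalar ODE $u'(t)=c(\Phi^V_{u(t)}(\sigma_0))$, then invoke uniqueness for the $W$-flow. The only cosmetic differences are that the paper splits on $W(\sigma_0)=\bfzero$ rather than $V(\sigma_0)=\bfzero$, and defines the scalar function globally on $M=\R^d\setminus V^{-1}(\bfzero)$ rather than locally on a neighborhood $U$; your explicit construction $c=W_j/V_j$ actually makes the smoothness of the scalar more transparent than the paper's bare assertion.
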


\begin{proof}
Define $\beta:\R\to\R^d$ by $\beta(t)=\Phi_t^W(\sigma_0)$.
We wish to prove that there exists $\delta>0$ such that,
for all $t\in(-\delta,\delta)$, we have
$\beta(t)\in\Phi_\R^V(\sigma_0)$.

Let $\bfzero:=(0,\ldots,0)\in\R^d$.
If $W(\sigma_0)=\bfzero$, then,
for all $t\in\R$, we have
$\beta(t)=\Phi_t^W(\sigma_0)=\sigma_0\in\Phi_\R^V(\sigma_0)$,
so, for any choice of $\delta>0$, we are done.
We therefore assume $W(\sigma_0)\ne\bfzero$.
Fix $c_0\in\R$ such that $W(\sigma_0)=c_0\cdot(V(\sigma_0))$.
Then $c_0\cdot(V(\sigma_0))\ne\bfzero$, so $V(\sigma_0)\ne\bfzero$.

Let $M:=(\R^d)\backslash(V^{-1}(\bfzero))$.
Since $V(\sigma_0)\ne\bfzero$,
we see that $\sigma_0\in M$.
Since $V^{-1}(\bfzero)$ is $V$-invariant,
it follows that $M$ is $V$-invariant as well.
Then $\Phi_\R^V(\sigma_0)\subseteq M$.
Define $f:M\to\R$ by
$$\hbox{for all }\sigma\in M,\qquad\qquad W(\sigma)\,\,=\,\,[f(\sigma)]\,[V(\sigma)].$$
Then $f:M\to\R$ is $C^\infty$.
Define $G:\R\to\R$ by $G(q)=f(\Phi_q^V(\sigma_0))$.
Then $G:\R\to\R$ is also $C^\infty$.
By local existence of solutions of ODEs,
fix $\delta>0$ and $s:(-\delta,\delta)\to\R$ such that
$$s(0)\,=\,0\quad\qquad\hbox{and}\quad\qquad\forall t\in(-\delta,\delta),\quad s'(t)\,=\,G(s(t)).$$
Let $I:=(-\delta,\delta)\subseteq\R$.
We wish to show, for all $t\in I$, that $\beta(t)\in\Phi_\R^V(\sigma_0)$.

Define $\alpha:I\to\R^4$ by $\alpha(t)=\Phi_{s(t)}^V(\sigma_0)$.
For all $t\in I$, we have $\alpha(t)\in\Phi_\R^V(\sigma_0)$.
So it suffices to show that, for all $t\in I$, $\beta(t)=\alpha(t)$.
We have $\beta(0)=\Phi_0^W(\sigma_0)=\sigma_0=\Phi_0^V(\sigma_0)=\alpha(0)$.
Also, for all $t\in I$, we have $\beta'(t)=W(\beta(t))$.
Then, by uniqueness of solutions of ODEs,
it suffices to show, for all $t\in I$, that $\alpha'(t)=W(\alpha(t))$.
Fix $t_0\in I$.
We wish to prove that $\alpha'(t_0)=W(\alpha(t_0))$.

Let $q_0:=s(t_0)$.
Then $\alpha(t_0)=\Phi_{q_0}^V(\sigma_0)$.
Then $\alpha(t_0)\in\Phi_\R^V(\sigma_0)\subseteq M$.
Then $W(\alpha(t_0))=[f(\alpha(t_0))][V(\alpha(t_0))]$.
By the Chain Rule, we have
$$\alpha'(t_0)\quad=\quad[\,\,s'(t_0)\,\,]\,\,[\,\,(d/dq)_{q=q_0}\,(\,\Phi_q^V(\sigma_0)\,)\,\,].$$
We have $s'(t_0)=G(s(t_0))=G(q_0)=f(\Phi_{q_0}^V(\sigma_0))=f(\alpha(t_0))$.
Also,
$$(d/dq)_{q=q_0}\,(\,\Phi_q^V(\sigma_0)\,)\quad=\quad V(\Phi_{q_0}^V(\sigma_0))\quad=\quad V(\alpha(t_0)).$$
Then $\alpha'(t_0)=[f(\alpha(t_0))][V(\alpha(t_0))]=W(\alpha(t_0))$, as desired.
\end{proof}

\begin{lem}\wrlab{lem-homothetic-flows}
Let $d\ge1$ be an integer.
Let $V,W:\R^d\to\R^d$ be complete.
Assume, for all $\sigma\in\R^d$, that there exists $c\in\R$
such that $W(\sigma)=c\cdot(V(\sigma))$.
Then, for all $\sigma\in\R^d$, we have $\Phi_\R^W(\sigma)\subseteq\Phi_\R^V(\sigma)$.
\end{lem}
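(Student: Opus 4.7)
The plan is to bootstrap the local statement \lref{lem-homothetic-flows-local} to a global statement via a standard connectedness argument. Fix $\sigma_0\in\R^d$ and set
$$T\,\,:=\,\,\{\,t\in\R\,\,|\,\,\Phi_t^W(\sigma_0)\in\Phi_\R^V(\sigma_0)\,\}.$$
The goal is $T=\R$. Since $\Phi_0^W(\sigma_0)=\sigma_0=\Phi_0^V(\sigma_0)$, $T$ is nonempty. Because $\R$ is connected, it suffices to prove $T$ is both open and closed in $\R$.

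For openness, fix $t_0\in T$ and let $\tau:=\Phi_{t_0}^W(\sigma_0)$. By \lref{lem-homothetic-flows-local} applied at $\tau$, there is $\delta>0$ with $\Phi_{(-\delta,\delta)}^W(\tau)\subseteq\Phi_\R^V(\tau)$. Because $V$-orbits partition $\R^d$, the inclusion $\tau\in\Phi_\R^V(\sigma_0)$ forces $\Phi_\R^V(\tau)=\Phi_\R^V(\sigma_0)$. Using $\Phi_{s+t_0}^W(\sigma_0)=\Phi_s^W(\tau)$, this gives $(t_0-\delta,t_0+\delta)\subseteq T$.

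For closedness, fix $t\in\overline{T}$ and let $\sigma_t:=\Phi_t^W(\sigma_0)$. Apply \lref{lem-homothetic-flows-local} at $\sigma_t$ to get $\delta>0$ with $\Phi_{(-\delta,\delta)}^W(\sigma_t)\subseteq\Phi_\R^V(\sigma_t)$. Choose $t_n\in T$ with $|t_n-t|<\delta$. Then $\Phi_{t_n}^W(\sigma_0)=\Phi_{t_n-t}^W(\sigma_t)$ lies in $\Phi_\R^V(\sigma_t)$, and by hypothesis it also lies in $\Phi_\R^V(\sigma_0)$. Two $V$-orbits sharing a point must coincide, so $\Phi_\R^V(\sigma_t)=\Phi_\R^V(\sigma_0)$, and in particular $\sigma_t\in\Phi_\R^V(\sigma_0)$, i.e., $t\in T$.

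The main obstacle I anticipate is precisely this closedness step: the $V$-orbit through $\sigma_0$ need not be closed in $\R^d$, so one cannot simply pass to limits inside $\Phi_\R^V(\sigma_0)$. The fix is to invoke the local lemma at the limit point $\sigma_t$ itself, which produces a nearby $W$-translate of $\sigma_t$ living in $\Phi_\R^V(\sigma_t)$ and hence certifies that the $V$-orbits through $\sigma_t$ and $\sigma_0$ are the same. Once $T=\R$ is established, unwinding the definition yields $\Phi_\R^W(\sigma_0)\subseteq\Phi_\R^V(\sigma_0)$, which is the desired conclusion.
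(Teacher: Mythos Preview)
Your argument is correct and follows essentially the same approach as the paper: both bootstrap \lref{lem-homothetic-flows-local} to a global statement via a connectedness argument on $\R$, using that $V$-orbits partition $\R^d$ to identify $\Phi_\R^V(\sigma_0)$ with $\Phi_\R^V(\tau)$ once they share a point. The only cosmetic difference is that the paper argues by contradiction via the infimum of the complement (handling one half-line at a time), whereas you package the same idea as a clean open-and-closed argument on all of $\R$; your closedness step, applying the local lemma at the limit point itself, is exactly the device the paper uses at its infimum $t_0$.
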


\begin{proof}
Fix $\sigma\in\R$. We will show that $\Phi_{[0,\infty)}^W(\sigma)\subseteq\Phi_\R^V(\sigma)$;
the proof that $\Phi_{(-\infty,0]}^W(\sigma)\subseteq\Phi_\R^V(\sigma)$ is similar.
Let $S:=\{t\in[0,\infty)\,|\,\Phi_t^W(\sigma)\in\Phi_\R^V(\sigma)\}$.
Assume, for a contradiction, that $S\subsetneq[0,\infty)$.

Because $\Phi_0^W(\sigma)=\sigma=\Phi_0^V(\sigma)\in\Phi_\R^V(\sigma)$, we see that $0\in S$.

Let $t_0:=\inf[0,\infty)\backslash S$.
Let $\sigma_0:=\Phi_{t_0}^W(\sigma)$.
By \lref{lem-homothetic-flows-local},
fix $\delta>0$ such that 
$\Phi_{(-\delta,\delta)}^W(\sigma_0)\subseteq\Phi_\R^V(\sigma_0)$.
Because $t_0=\inf[0,\infty)\backslash S$, we have
$$t_0\ge0,\qquad[0,t_0)\,\,\,\subseteq\,\,\,S\qquad\hbox{and}\qquad[t_0,t_0+\delta)\,\,\,\not\subseteq\,\,\,S.$$

Let $b:=\min\{\delta/2,t_0\}$.
Then $b\in[0,\delta)$, so $-b\in(-\delta,0]\subseteq(-\delta,\delta)$.

Either $t_0=0$ or $t_0>0$.
If $t_0=0$, then $b=0$, in which case $t_0-b=0\in S$.
If $t_0>0$, then $0<b\le t_0$, so $t_0-b\in[0,t_0)\subseteq S$.
In either case, $t_0-b\in S$.
That is, $\Phi_{t_0-b}^W(\sigma)\in\Phi_\R^V(\sigma)$.
Also,
$$\Phi_{t_0-b}^W(\sigma)\quad=\quad\Phi_{-b}^W(\sigma_0)\quad\in\quad
\Phi_{(-\delta,\delta)}^W(\sigma_0)\quad\subseteq\quad\Phi_\R^V(\sigma_0).$$
Then $\Phi_{t_0-b}^W(\sigma)\in(\Phi_\R^V(\sigma))\cap(\Phi_\R^V(\sigma_0))$,
so $\emptyset\ne(\Phi_\R^V(\sigma))\cap(\Phi_\R^V(\sigma_0))$,
so $\Phi_\R^V(\sigma)=\Phi_\R^V(\sigma_0)$.
Then
$$\Phi_{[t_0,t_0+\delta)}^W(\sigma)\,\,=\,\,\Phi_{[0,\delta)}^W(\sigma_0)\,\,\subseteq\,\,
\Phi_{(-\delta,\delta)}^W(\sigma_0)\,\,\subseteq\,\,\Phi_\R^V(\sigma_0)\,\,=\,\,\Phi_\R^V(\sigma),$$
so $[t_0,t_0+\delta)\subseteq S$, contradiction.
\end{proof}

\begin{cor}\wrlab{cor-homothetic-flows}
Let $d\ge1$ be an integer.
Let $V,W:\R^d\to\R^d$ be complete.
Assume, for all $\sigma\in\R^d$, that there exists $c\in\R\backslash\{0\}$
such that $W(\sigma)=c\cdot(V(\sigma))$.
Then, for all $\sigma\in\R^d$, we have $\Phi_\R^W(\sigma)=\Phi_\R^V(\sigma)$.
\end{cor}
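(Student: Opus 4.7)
The plan is to deduce the corollary from Lemma \ref{lem-homothetic-flows} by applying it twice, once in each direction. The previous lemma already gives the inclusion $\Phi_\R^W(\sigma)\subseteq\Phi_\R^V(\sigma)$ directly from the hypothesis, since the strengthened nonzero condition on $c$ is not needed for that direction; it implies the weaker hypothesis of Lemma \ref{lem-homothetic-flows}. So all the real work is in establishing the reverse inclusion $\Phi_\R^V(\sigma)\subseteq\Phi_\R^W(\sigma)$.

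For the reverse inclusion, I would reduce it to another application of Lemma \ref{lem-homothetic-flows}, this time with the roles of $V$ and $W$ swapped. The hypothesis we need in order to make that application is: for all $\sigma\in\R^d$, there exists $c'\in\R$ such that $V(\sigma)=c'\cdot(W(\sigma))$. Given our strengthened hypothesis, for each $\sigma$ we may pick $c=c(\sigma)\in\R\backslash\{0\}$ with $W(\sigma)=c\cdot(V(\sigma))$, and then $c':=1/c$ satisfies $V(\sigma)=c'\cdot(W(\sigma))$. This is the point where the nonzero assumption is essential: without it, $V(\sigma)$ might be nonzero while $W(\sigma)=0$, and no such $c'$ would exist.

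Having verified the swapped hypothesis, Lemma \ref{lem-homothetic-flows} yields $\Phi_\R^V(\sigma)\subseteq\Phi_\R^W(\sigma)$ for every $\sigma\in\R^d$. Combining this with the forward inclusion gives the equality $\Phi_\R^W(\sigma)=\Phi_\R^V(\sigma)$ claimed in the corollary.

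There is no serious obstacle: the entire argument is a symmetrization of the preceding lemma, with the nonzero hypothesis used only to guarantee that the relation ``$W$ is pointwise a scalar multiple of $V$'' can be inverted to ``$V$ is pointwise a scalar multiple of $W$.'' The proof should be only a few lines.
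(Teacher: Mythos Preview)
Your proposal is correct and matches the paper's proof essentially line for line: apply \lref{lem-homothetic-flows} once for $\Phi_\R^W(\sigma)\subseteq\Phi_\R^V(\sigma)$, then use $c\ne0$ to write $V(\sigma)=(1/c)\cdot(W(\sigma))$ and apply the lemma with $V$ and $W$ interchanged for the reverse inclusion.
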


\begin{proof}
By \lref{lem-homothetic-flows}, for all $\sigma\in\R^d$,
we have $\Phi_\R^W(\sigma)\subseteq\Phi_\R^V(\sigma)$;
we need to show that $\Phi_\R^V(\sigma)\subseteq\Phi_\R^W(\sigma)$.

For all $\sigma\in\R^d$, there exists $c\in\R\backslash\{0\}$ such that $V(\sigma)=(1/c)\cdot(W(\sigma))$.
So, interchanging $V$ and $W$ and replacing $c$ with $1/c$, \lref{lem-homothetic-flows} shows,
for all $\sigma\in\R^d$, that $\Phi_\R^V(\sigma)\subseteq\Phi_\R^W(\sigma)$,
as desired.
\end{proof}

\section{Coincidence of flows\wrlab{sect-gen-flow}}

\begin{lem}\wrlab{lem-orbits-agree}
Let $N$ be an interval in $\R$.
Assume that $0\in N$.
Let $d\ge1$ be an integer.
Let $V,W:\R^d\to\R^d$ be complete.
Let $\sigma\in\R^d$.
Assume, for all $t\in N$, that $W(\Phi_t^V(\sigma))=V(\Phi_t^V(\sigma))$.
Then, for all $t\in N$, we have $\Phi_t^W(\sigma)=\Phi_t^V(\sigma)$.
\end{lem}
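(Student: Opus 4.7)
The plan is to recognize that the curve $\alpha(t) := \Phi_t^V(\sigma)$, restricted to $N$, is an integral curve of $W$ starting at $\sigma$, and then invoke uniqueness of ODE solutions to conclude that $\alpha$ coincides on $N$ with $\beta(t) := \Phi_t^W(\sigma)$.

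First I would set up the two curves: define $\alpha: N \to \R^d$ by $\alpha(t) = \Phi_t^V(\sigma)$ and $\beta:\R\to\R^d$ by $\beta(t) = \Phi_t^W(\sigma)$. By the defining ODE of the flow $\Phi^V$, we have $\alpha(0)=\sigma$ and $\alpha'(t) = V(\Phi_t^V(\sigma)) = V(\alpha(t))$ for every $t \in N$. The hypothesis $W(\Phi_t^V(\sigma)) = V(\Phi_t^V(\sigma))$ on $N$ then rewrites this as $\alpha'(t) = W(\alpha(t))$ on $N$. Likewise $\beta(0)=\sigma$ and $\beta'(t) = W(\beta(t))$ on all of $\R$. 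So $\alpha$ and $\beta|_N$ both solve the initial value problem $x'(t) = W(x(t))$, $x(0)=\sigma$.

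Next I would finish by a standard connectedness argument, since the completeness assumption already packages in $C^\infty$-smoothness (hence local Lipschitz regularity) of $W$. Let $S := \{t \in N \,|\, \alpha(t) = \beta(t)\}$. Then $0 \in S$, and $S$ is closed in $N$ by continuity of $\alpha$ and $\beta$. For openness, fix $t_0 \in S$ and set $\tau := \alpha(t_0)=\beta(t_0)$. Both $t \mapsto \alpha(t)$ (on a neighborhood of $t_0$ in $N$) and $t \mapsto \Phi_{t-t_0}^W(\tau)$ satisfy $x'=W(x)$ with $x(t_0)=\tau$; local uniqueness of ODE solutions forces them to agree on a neighborhood of $t_0$. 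Since $\beta(t) = \Phi_{t-t_0}^W(\beta(t_0)) = \Phi_{t-t_0}^W(\tau)$ by the group property of $\Phi^W$, this shows $\alpha=\beta$ on a neighborhood of $t_0$ in $N$, so $S$ is open in $N$. Because $N$ is a connected interval and $S$ is a nonempty clopen subset, $S=N$, as desired.

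I do not expect a serious obstacle here: the statement is essentially a routine application of uniqueness of ODE solutions, and the only care required is to run the clopen argument inside the possibly bounded (or half-infinite) interval $N$ rather than on all of $\R$. The slight subtlety is to phrase the local uniqueness step using $\Phi^W$'s group law so that $\beta$ is recognized as the $W$-flow restarted at $\tau$, which then matches $\alpha$ via the uniqueness theorem applied near $t_0$.
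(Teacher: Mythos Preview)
Your proof is correct and follows essentially the same approach as the paper: define the two flow curves, use the hypothesis to verify that $\alpha$ satisfies the $W$-ODE on $N$, and conclude by uniqueness of solutions. The paper simply invokes ``uniqueness of solutions of ODEs'' in one line, whereas you spell out the clopen argument explicitly, but the content is the same.
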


\begin{proof}
Define $a,b:\R\to\R^d$ by
$a(t)=\Phi_t^V(\sigma)$ and $b(t)=\Phi_t^W(\sigma)$.
We wish to show, for all $t\in N$, that $b(t)=a(t)$.

We have $a(0)=\sigma=b(0)$.
Also, for all $t\in\R$, $b'(t)=W(b(t))$.
So, by uniqueness of solutions of ODEs,
it suffices to show, for all $t\in N$, that $a'(t)=W(a(t))$.

For all $t\in N$, we have
$W(a(t))=W(\Phi_t^V(\sigma))=V(\Phi_t^V(\sigma))=V(a(t))$;
then $a'(t)=V(a(t))=W(a(t))$, as desired.
\end{proof}

\begin{cor}\wrlab{cor-vect-agree-implies-flow-agree}
Let $N$ be an interval in $\R$.
Let $\barN$ denote the closure in $\R$ of $N$.
Assume that $0\in\barN$.
Let $d\ge1$ be an integer and let $A,B\subseteq\R^d$.
Let $V,W:\R^d\to\R^d$ be complete.
Assume $W=V$ on $B$.
Assume $\Phi_N^V(A)\subseteq B$.
Let $\barA$ denote the closure in $\R^d$ of $A$.
Then, for all $t\in\barN$, we have $\Phi_t^W=\Phi_t^V$ on $\barA$.
\end{cor}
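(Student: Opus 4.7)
The plan is to prove the conclusion in three stages: first establish $\Phi_t^W(\sigma)=\Phi_t^V(\sigma)$ for all $\sigma\in A$ and all $t\in N$, then extend in $t$ from $N$ to $\barN$ by continuity of the flows, and finally extend in $\sigma$ from $A$ to $\barA$ by continuity of the flows.

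For the main stage, fix $\sigma\in A$. If $0\in N$, I apply \lref{lem-orbits-agree} directly: for each $t\in N$, the point $\Phi_t^V(\sigma)$ lies in $\Phi_N^V(A)\subseteq B$, so $W(\Phi_t^V(\sigma))=V(\Phi_t^V(\sigma))$, and the lemma yields $\Phi_t^W(\sigma)=\Phi_t^V(\sigma)$. If instead $0\in\barN\backslash N$, then $N$ is nonempty and $0$ is a boundary endpoint of $N$. For any $t_0\in N$, set $\tau:=\Phi_{t_0}^V(\sigma)$ and $N_{t_0}:=\{s\in\R\,|\,s+t_0\in N\}$, which is an interval containing $0$. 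For $s\in N_{t_0}$, we have $\Phi_s^V(\tau)=\Phi_{s+t_0}^V(\sigma)\in B$, so $W(\Phi_s^V(\tau))=V(\Phi_s^V(\tau))$. Applying \lref{lem-orbits-agree} to $N_{t_0}$ and $\tau$ gives $\Phi_s^W(\tau)=\Phi_s^V(\tau)$ on $N_{t_0}$. Taking $s=t-t_0$ for $t\in N$, this reads
$$\Phi_{t-t_0}^W(\Phi_{t_0}^V(\sigma))\,\,=\,\,\Phi_t^V(\sigma).$$
Letting $t_0\to 0$ along a sequence in $N$ (possible since $0\in\barN$), continuity of $t\mapsto\Phi_t^V(\sigma)$ at $0$ gives $\Phi_{t_0}^V(\sigma)\to\sigma$, and joint continuity of the $W$-flow gives $\Phi_{t-t_0}^W(\Phi_{t_0}^V(\sigma))\to\Phi_t^W(\sigma)$. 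Thus $\Phi_t^W(\sigma)=\Phi_t^V(\sigma)$ for every $t\in N$.

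The remaining two extensions are routine continuity arguments. For fixed $\sigma\in A$, the maps $t\mapsto\Phi_t^W(\sigma)$ and $t\mapsto\Phi_t^V(\sigma)$ are continuous on $\R$ and agree on $N$, which is dense in $\barN$ (or trivially equal to $\barN$ in the degenerate case where $N=\{0\}$), so they agree on all of $\barN$. Then for fixed $t\in\barN$, the maps $\sigma\mapsto\Phi_t^W(\sigma)$ and $\sigma\mapsto\Phi_t^V(\sigma)$ are continuous on $\R^d$ and agree on $A$, hence on $\barA$. The main obstacle is the case $0\in\barN\backslash N$, where \lref{lem-orbits-agree} cannot be invoked at $\sigma$ itself since we have no way to conclude $\sigma\in B$. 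The resolution is the translation trick described above: apply the lemma at the point $\Phi_{t_0}^V(\sigma)\in B$ for small $t_0\in N$, then recover the assertion at $\sigma$ by passing to the limit $t_0\to 0$ using continuity of both flows.
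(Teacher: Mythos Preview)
Your proof is correct, but the paper's route is shorter and sidesteps your case split entirely. The paper passes to closures at the very start: since $W=V$ on $B$ and both are continuous, $W=V$ on $\barB$ as well; and since $\Phi_N^V(A)\subseteq B$, continuity of the flow map $(t,\sigma)\mapsto\Phi_t^V(\sigma)$ gives $\Phi_{\barN}^V(\barA)\subseteq\barB$. Now $\barN$ is an interval with $0\in\barN$, so for any $\sigma\in\barA$ one has $W(\Phi_t^V(\sigma))=V(\Phi_t^V(\sigma))$ for all $t\in\barN$, and \lref{lem-orbits-agree} applies directly on $\barN$, yielding $\Phi_t^W(\sigma)=\Phi_t^V(\sigma)$ for all $t\in\barN$ in one stroke.

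What you identified as the ``main obstacle'' --- the case $0\in\barN\setminus N$ --- evaporates once you realize that $\barB$ works just as well as $B$ for the hypothesis of \lref{lem-orbits-agree}, and that $0\in\barN$ is all the lemma requires. Your translation trick and the two-stage continuity extension are correct, but they are doing by hand what the single closure step accomplishes immediately. The paper's approach buys brevity; yours has the virtue of being more explicit about exactly where continuity enters, but at the cost of an unnecessary case analysis.
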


\begin{proof}
Let $\sigma\in\barA$.
We wish to show, for all $t\in\barN$, that
$\Phi_t^W(\sigma)=\Phi_t^V(\sigma)$.

Let $\barB$ be the closure in $\R^d$ of $B$.
By assumption, $\Phi_N^V(A)\subseteq B$,
so, by continuity, $\Phi_\barN^V(\,\barA\,)\subseteq\barB$.
Also, since $W=V$ on $B$, continuity yields
$W=V$ on $\barB$.
For all $t\in\barN$, we have $\Phi_t^V(\sigma)\in\Phi_{\barN}^V(\,\barA\,)\subseteq\barB$.
Then, for all~$t\in\barN$, we have $W(\Phi_t^V(\sigma))=V(\Phi_t^V(\sigma))$.

Then, by \lref{lem-orbits-agree},
for all $t\in\barN$, we have $\Phi_t^W(\sigma)=\Phi_t^V(\sigma)$.
\end{proof}

Let $d\ge1$ be an integer and let $M:=\R^d$.
Let $k\ge0$ be an integer.
Let $\pi:=\pi_k^M:F_kM\to M$ be the $k$th order frame bundle of $M$.
Any diffeomorphism $g:M\to M$ induces a bundle diffeomorphism $F_kg:F_kM\to F_kM$.
A complete $X:M\to\R^d$ therefore induces, for all $s\in\R$, 
a bundle diffeomorphism $F_k\Phi_s^X:F_kM\to F_kM$.
For any two diffeomorphisms $g,h:M\to M$, for any $\lambda\in F_kM$, we have:
\begin{itemize}
\item[] [$g$ and $h$ agree to order $k$ at $\pi(\lambda)$] \, iff \, [$(F_kg)(\lambda)=(F_kh)(\lambda)$].
\end{itemize}
The tangent bundle of $F_kM$ is denoted $TF_kM$.
The bundle of $k$-jets of germs of vector fields on $M$ is denoted $J_kTM$.
Any $C^\infty$ function $X:M\to\R^d$ represents a vector field $M\to TM$,
which ``prolongs'' to a section $J_kX:M\to J_kTM$.
If $X:M\to\R^d$ is complete and if $\mu\in F_kM$,
then someone who knows both $\mu$ and
\begin{itemize}
\item[]the $k$-jet of the germ of $X$ at $\pi(\mu)$
\end{itemize}has enough information to compute
\begin{itemize}
\item[]the tangent vector at $0$ of the curve $s\mapsto(F_k\Phi_s^X)(\mu):\R\to F_kM$.
\end{itemize}
More succinctly, there is a bundle map
$$\Psi_d^k\qquad:\qquad F_kM\,\,\,\times_M\,\,\,J_kTM\qquad\longrightarrow\qquad TF_kM$$
such that, for any complete $X:M\to\R^d$, for any $\mu\in F_kM$,
$$\Psi_d^k\,\,(\,\,\mu\,,\,(J_kX)(\pi(\mu))\,\,)\quad=\quad(d/ds)_{s=0}\,\,[\,\,(F_k\Phi_s^X)(\mu)\,\,].$$

\begin{lem}\wrlab{lem-orbits-agree-to-all-orders}
Let $N$ be an interval in $\R$.
Assume that $0\in N$.
Let $d\ge1$ be an integer.
Let $V,W:\R^d\to\R^d$ be complete.
Let $\sigma\in\R^d$.
Assume, for all $t\in N$, that $W$ agrees with $V$ to all orders at $\Phi_t^V(\sigma)$.
Then, for all $t\in N$, $\Phi_t^W$ agrees with $\Phi_t^V$ to all orders at $\sigma$.
\end{lem}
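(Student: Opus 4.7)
The plan is to lift both flows to the $k$th order frame bundle of $M:=\R^d$ and apply \lref{lem-orbits-agree} in that bundle. Fix an integer $k\ge0$ and a point $\mu\in F_kM$ with $\pi(\mu)=\sigma$; by the bulleted equivalence displayed just before the statement, it suffices, as $k$ and $\mu$ range, to prove that $(F_k\Phi_t^V)(\mu)=(F_k\Phi_t^W)(\mu)$ for every $t\in N$.

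Define $\alpha,\beta:\R\to F_kM$ by $\alpha(t):=(F_k\Phi_t^V)(\mu)$ and $\beta(t):=(F_k\Phi_t^W)(\mu)$, and prolong $V$ and $W$ to vector fields $\tilde V,\tilde W$ on $F_kM$ via
$$\tilde V(\lambda):=\Psi_d^k(\lambda,(J_kV)(\pi(\lambda))),\qquad\tilde W(\lambda):=\Psi_d^k(\lambda,(J_kW)(\pi(\lambda))).$$
Combining the defining property of $\Psi_d^k$ recalled just before the statement with the group law of the prolonged flow, one reads off $\alpha'(t)=\tilde V(\alpha(t))$ for all $t$, and similarly $\beta'(t)=\tilde W(\beta(t))$; also $\alpha(0)=\mu=\beta(0)$. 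Since $V$ and $W$ are complete on $\R^d$, the diffeomorphisms $F_k\Phi_s^V$ and $F_k\Phi_s^W$ of $F_kM$ exist for all $s\in\R$, so $\tilde V$ and $\tilde W$ are complete.

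Now $\pi(\alpha(t))=\Phi_t^V(\sigma)$, and for each $t\in N$ the hypothesis gives that $W$ agrees with $V$ to all orders at $\Phi_t^V(\sigma)$; in particular $(J_kW)(\pi(\alpha(t)))=(J_kV)(\pi(\alpha(t)))$, hence $\tilde W(\alpha(t))=\tilde V(\alpha(t))$ for every $t\in N$. Since $F_k(\R^d)$ is canonically an open subset of a Euclidean space (it consists of invertible $k$-jets at $0$ of maps $\R^d\to\R^d$), the argument of \lref{lem-orbits-agree} — which rests only on uniqueness of ODE solutions — applies verbatim to yield $\beta(t)=\alpha(t)$ for all $t\in N$. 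Because $k\ge0$ and $\mu\in\pi^{-1}(\sigma)$ were arbitrary, this proves that, for all $t\in N$, $\Phi_t^W$ agrees with $\Phi_t^V$ to all orders at $\sigma$.

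The main obstacle I anticipate is keeping the bundle map $\Psi_d^k$ honest: one needs (a) that $\tilde V$ is the genuine infinitesimal generator of $s\mapsto F_k\Phi_s^V$, and (b) that the value $\tilde W(\alpha(t))$ depends on $W$ only through the $k$-jet $(J_kW)(\pi(\alpha(t)))$. Both are encoded in the definition of $\Psi_d^k$, but must be invoked cleanly to replace the pointwise agreement of $k$-jets by the pointwise agreement of the prolonged vector fields along $\alpha$. A minor technicality is that \lref{lem-orbits-agree} was stated for $\R^d$ rather than for an arbitrary manifold; since $F_k(\R^d)$ embeds naturally as an open subset of a Euclidean space, this costs nothing.
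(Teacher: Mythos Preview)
Your proposal is correct and follows essentially the same approach as the paper: lift to the $k$th order frame bundle, prolong $V$ and $W$ via $\Psi_d^k$, observe that the prolonged fields agree along the $V$-curve $\alpha$ because the $k$-jets of $V$ and $W$ coincide there, and conclude by uniqueness of ODE solutions. The paper carries out the ODE-uniqueness step directly rather than citing \lref{lem-orbits-agree}, but this is only a cosmetic difference, and your remark that $F_k(\R^d)$ sits as an open subset of a Euclidean space handles that technicality cleanly.
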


\begin{proof}
Fix an integer $k\ge0$.
We wish to show, for all $t\in N$,
that $\Phi_t^W$~and~$\Phi_t^V$ agree at $\sigma$ to order $k$.

Let $M:=\R^d$.
Let $\pi:=\pi_k^M:F_kM\to M$ be the $k$th order frame bundle of $M$.
Fix $\sigma^*\in\pi^{-1}(\sigma)$.
We wish to show, for all $t\in N$, that
$(F_k\Phi_t^W)(\sigma^*)=(F_k\Phi_t^V)(\sigma^*)$.
For all $t\in\R$, let
$$\alpha_t:=F_k\Phi_t^V:F_kM\to F_kM\quad\hbox{and}\quad\beta_t:=F_k\Phi_t^W:F_kM\to F_kM.$$
We wish to show, for all $t\in N$, that $\beta_t(\sigma^*)=\alpha_t(\sigma^*)$.

Let $a,b:\R\to F_kM$ be defined by
$$a(t)=\alpha_t(\sigma^*)\qquad\hbox{and}\qquad b(t)=\beta_t(\sigma^*).$$
We wish to show, for all $t\in N$, that $b(t)=a(t)$. 

Let $V^*,W^*:F_kM\to TF_kM$ be the vector fields on $F_kM$ defined by
$$V^*(\rho)\,\,=\,\,(d/ds)_{s=0}\,[\,\alpha_s(\rho)\,],
\qquad
W^*(\rho)\,\,=\,\,(d/ds)_{s=0}\,[\,\beta_s(\rho)\,].$$
Then, for all $t\in\R$, we have
\begin{eqnarray*}
W^*(b(t))&=&(d/ds)_{s=0}[\beta_s(b(t))]\,=\,(d/ds)_{s=0}[\beta_s(\beta_t(\sigma^*))]\\
&=&(d/ds)_{s=0}[\beta_{t+s}(\sigma^*)]\,=\,(d/ds)_{s=0}[b(t+s)]=b'(t).
\end{eqnarray*}
Also, $a(0)=\alpha_0(\sigma^*)=\sigma^*=\beta_0(\sigma^*)=b(0)$.
So, by uniqueness of solutions of ODEs,
we wish to prove: for all $t\in N$, $a'(t)=W^*(a(t))$.
Fix $t_0\in N$ and let $\mu:=a(t_0)$.
We wish to prove $a'(t_0)=W^*(\mu)$.

We have $\mu=a(t_0)=\alpha_{t_0}(\sigma^*)$ and $\alpha_{t_0}=F_k\Phi_{t_0}^V$ and $\pi(\sigma^*)=\sigma$, so
$$\pi(\mu)\,=\,\pi(\alpha_{t_0}(\sigma^*))\,=\,\pi((F_k\Phi_{t_0}^V)(\sigma^*))\,=\,
\Phi_{t_0}^V(\pi(\sigma^*))\,=\,\Phi_{t_0}^V(\sigma).$$
Then, by assumption, $W$ and $V$ agree at $\pi(\mu)$ to all orders and, in particular, to order $k$.
Then $(J_kV)(\pi(\mu))=(J_kW)(\pi(\mu))$.
Also, by definition of $\Psi_d^k$, we have
\begin{eqnarray*}
\Psi_d^k\,\,(\,\,\mu\,,\,(J_kV)(\pi(\mu))\,\,)\,\,&=&\,\,(d/ds)_{s=0}\,\,[\,\,(F_k\Phi_s^V)(\mu)\,\,]\qquad\hbox{and}\\
\Psi_d^k\,\,(\,\,\mu\,,\,(J_kW)(\pi(\mu))\,\,)\,\,&=&\,\,(d/ds)_{s=0}\,\,[\,\,(F_k\Phi_s^W)(\mu)\,\,].
\end{eqnarray*}
Then
$(d/ds)_{s=0}[(F_k\Phi_s^V)(\mu)]=(d/ds)_{s=0}[(F_k\Phi_s^W)(\mu)]$.
For all $s\in\R$, $F_k\Phi_s^V=\alpha_s$ and $F_k\Phi_s^W=\beta_s$.
Then $(d/ds)_{s=0}[\alpha_s(\mu)]=(d/ds)_{s=0}[\beta_s(\mu)]$.

For all $s\in\R$, we have
$\alpha_s(\mu)=\alpha_s(\alpha_{t_0}(\sigma^*))=\alpha_{t_0+s}(\sigma^*)=a(t_0+s)$.
Therefore
$(d/ds)_{s=0}[\alpha_s(\mu)]=a'(t_0)$.
By definition of $W^*$, we have
$W^*(\mu)=(d/ds)_{s=0}[\beta_s(\mu)]$.

Then $a'(t_0)=(d/ds)_{s=0}[\alpha_s(\mu)]=(d/ds)_{s=0}[\beta_s(\mu)]=W^*(\mu)$.
\end{proof}

\begin{cor}\wrlab{cor-vanishing-to-periodic}
Let $d\ge1$ be an integer.
Let $\bfzero:\R^d\to\R^d$ be defined by:
for all $\tau\in\R^d$, $\bfzero(\tau)=(0,\ldots,0)\in\R^d$.
Let $V:\R^d\to\R^d$ be complete.
Let $\sigma\in\R^d$.
Assume that $V$ agrees with $\bfzero$ to all orders at $\sigma$.
Then, for all $t\in\R$, $\Phi_t^V$ agrees with $\Id_d$ to all orders at $\sigma$.
\end{cor}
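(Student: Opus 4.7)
The plan is to reduce this to \lref{lem-orbits-agree-to-all-orders} by taking $W := \bfzero$. First I would verify that $\bfzero$ is complete — this is immediate, since the zero vector field has flow $\Phi_t^\bfzero = \Id_d$ for all $t\in\R$.

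Next, I would observe that $\sigma$ is a fixed point of $\Phi^V$. Indeed, since $V$ agrees with $\bfzero$ to all orders at $\sigma$, in particular $V(\sigma)=(0,\ldots,0)$, so the constant curve $t\mapsto\sigma$ satisfies the defining ODE $(d/dt)[\Phi_t^V(\sigma)] = V(\Phi_t^V(\sigma))$ with initial condition $\sigma$; by uniqueness of solutions of ODEs, $\Phi_t^V(\sigma)=\sigma$ for all $t\in\R$.

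With these two facts in hand, I would set $N:=\R$ (which is an interval containing $0$) and apply \lref{lem-orbits-agree-to-all-orders} with $W:=\bfzero$. The hypothesis to check is that, for all $t\in N$, $\bfzero$ agrees with $V$ to all orders at $\Phi_t^V(\sigma)$. But $\Phi_t^V(\sigma)=\sigma$ by the previous step, and by assumption $V$ agrees with $\bfzero$ to all orders at $\sigma$, so this hypothesis is satisfied. The lemma then concludes that, for all $t\in\R$, $\Phi_t^\bfzero$ agrees with $\Phi_t^V$ to all orders at $\sigma$. Since $\Phi_t^\bfzero=\Id_d$ for every $t\in\R$, this is precisely the desired conclusion.

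There is no real obstacle here: the content of the corollary is essentially the observation that agreement of vector fields to all orders at a point lifts to agreement of flows at that point, which is exactly the statement of \lref{lem-orbits-agree-to-all-orders}, applied along the (necessarily constant) orbit of the stationary point $\sigma$.
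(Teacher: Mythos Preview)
Your proposal is correct and matches the paper's intended approach: the corollary is stated immediately after \lref{lem-orbits-agree-to-all-orders} with no proof, precisely because it follows from that lemma by taking $W=\bfzero$ along the constant orbit $\Phi_t^V(\sigma)=\sigma$. Your verification that $\sigma$ is a fixed point and that $\Phi_t^{\bfzero}=\Id_d$ fills in exactly the routine details the paper omits.
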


\section{Results about invariance and local invariance\wrlab{sect-local-invariance}}

Recall, from \secref{sect-notation}, the definitions of
\begin{itemize}
\item[]{\bf$V$-invariant} \qquad\qquad and \qquad\qquad {\bf locally $V$-invariant}.
\end{itemize}

\begin{lem}\wrlab{lem-locally-invar-criterion}
Let $V,W:\R^d\to\R^d$ both be complete.
Let $U$ be an open subset of $\R^d$.
Assume that $V=W$ on $U$.
Let $S\subseteq\R^d$ be $V$-invariant.
Then $S\cap U$ is locally $W$-invariant.
\end{lem}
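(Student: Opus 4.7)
The plan is to fix an arbitrary $\sigma\in S\cap U$ and produce an open neighborhood $N$ of $0$ in $\R$ satisfying $\Phi_N^W(\sigma)\subseteq S\cap U$. The strategy is first to find $N$ on which the $V$-flow of $\sigma$ stays inside $U$, then invoke \cref{cor-vect-agree-implies-flow-agree} to replace $\Phi_N^V$ by $\Phi_N^W$ at $\sigma$, and finally use $V$-invariance of $S$ to pin the orbit down in $S$.

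First I would apply continuity of the map $t\mapsto\Phi_t^V(\sigma)$ at $t=0$: since $\Phi_0^V(\sigma)=\sigma\in U$ and $U$ is open in $\R^d$, there exists $\varepsilon>0$ such that $N:=(-\varepsilon,\varepsilon)$ satisfies $\Phi_N^V(\sigma)\subseteq U$. This open interval $N$ will be the neighborhood of $0$ witnessing local $W$-invariance at $\sigma$.

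Next I would apply \cref{cor-vect-agree-implies-flow-agree} with $A:=\{\sigma\}$, $B:=U$, and the same interval $N$. Both hypotheses are immediate: $W=V$ on $B=U$ by assumption, and $\Phi_N^V(A)\subseteq B$ by construction. The corollary then yields $\Phi_t^W(\sigma)=\Phi_t^V(\sigma)$ for every $t\in\barN$, and in particular for every $t\in N$.

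Finally, since $\sigma\in S$ and $S$ is $V$-invariant, we have $\Phi_N^V(\sigma)\subseteq\Phi_\R^V(\sigma)\subseteq S$. Combining this with $\Phi_N^V(\sigma)\subseteq U$ from the first step and $\Phi_N^W(\sigma)=\Phi_N^V(\sigma)$ from the second, I conclude $\Phi_N^W(\sigma)\subseteq S\cap U$, as required. There is no substantive obstacle here; the lemma is essentially a packaging of flow-continuity together with \cref{cor-vect-agree-implies-flow-agree}.
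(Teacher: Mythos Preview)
Your proof is correct and follows essentially the same approach as the paper: find an interval $N$ around $0$ on which the orbit of $\sigma$ stays in $U$, invoke the coincidence-of-flows result (you use \cref{cor-vect-agree-implies-flow-agree}, the paper uses \lref{lem-orbits-agree} directly), and conclude via $V$-invariance of $S$. The only cosmetic difference is that the paper produces $N$ as the connected component of $0$ in $\{t:\Phi_t^W(\sigma)\in U\}$ using the $W$-flow, while you take a small $(-\varepsilon,\varepsilon)$ using continuity of the $V$-flow; either choice works.
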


\begin{proof}
Fix $\sigma\in S\cap U$.
We wish to show that there is an open neighborhood $N$ in $\R$ of $0$
such that $\Phi_N^W(\sigma)\subseteq S\cap U$.

Define $f:\R\to\R^d$ by $f(r)=\Phi_r^W(\sigma)$.
We have $f(0)=\sigma\in U$, so $0\in f^{-1}(U)$.
By continuity of $f$, $f^{-1}(U)$ is open in $\R$,
so each connected component of $f^{-1}(U)$ is open in $\R$ as well.
Let $N$ be the connected component of $f^{-1}(U)$ satisfying $0\in N$.
Then
$$\Phi_N^W(\sigma)\quad=\quad f(N)\quad\subseteq\quad f(f^{-1}(U))\quad\subseteq\quad U.$$
It remains to show that $\Phi_N^W(\sigma)\subseteq S$.

Since $S$ is $V$-invariant, and since $\sigma\in S$,
it follows that $\Phi_\R^V(\sigma)\subseteq S$.
Since $W=V$ on $U$ and since $\Phi_N^W(\sigma)\subseteq U$,
it follows, from \lref{lem-orbits-agree},
that $\Phi_N^V(\sigma)=\Phi_N^W(\sigma)$.
Then $\Phi_N^W(\sigma)=\Phi_N^V(\sigma)\subseteq\Phi_\R^V(\sigma)\subseteq S$.
\end{proof}

\begin{lem}\wrlab{lem-nw-dense-saturation}
Let $W:\R^d\to\R^d$ be complete.
Let $Z\subseteq\R^d$ be a locally $W$-invariant, meager subset of $\R^d$.
Then $\Phi_\R^W(Z)$ is meager in~$\R^d$.
\end{lem}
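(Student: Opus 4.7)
\textbf{Proof plan for \lref{lem-nw-dense-saturation}.} The plan is to exploit the fact that each time-$t$ flow map $\Phi_t^W$ is a homeomorphism of $\R^d$, and to use local $W$-invariance of $Z$ to reduce the uncountable union $\Phi_\R^W(Z) = \bigcup_{t \in \R} \Phi_t^W(Z)$ to a countable one indexed by the rationals.

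The first step is to establish the identity
\[
\Phi_\R^W(Z) \,\,=\,\, \bigcup_{q \in \Q} \Phi_q^W(Z).
\]
The inclusion $\supseteq$ is immediate. For the inclusion $\subseteq$, fix $\sigma \in Z$ and $t \in \R$; we want to rewrite $\Phi_t^W(\sigma)$ as $\Phi_q^W(\tau)$ for some $q \in \Q$ and $\tau \in Z$. By local $W$-invariance of $Z$, there is an open neighborhood $N$ in $\R$ of $0$ with $\Phi_N^W(\sigma) \subseteq Z$; choose $\epsilon > 0$ so that $(-\epsilon,\epsilon) \subseteq N$. Pick a rational $q$ with $|t - q| < \epsilon$, and let $\tau := \Phi_{t-q}^W(\sigma)$. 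Then $\tau \in \Phi_{(-\epsilon,\epsilon)}^W(\sigma) \subseteq Z$, and by the group law for the flow, $\Phi_q^W(\tau) = \Phi_q^W(\Phi_{t-q}^W(\sigma)) = \Phi_t^W(\sigma)$, as desired.

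The second step is routine: since $W$ is complete, each $\Phi_q^W : \R^d \to \R^d$ is a $C^\infty$ diffeomorphism and hence a homeomorphism, so it carries meager subsets to meager subsets (a homeomorphism sends nowhere dense sets to nowhere dense sets, and meager sets are countable unions of such). In particular, $\Phi_q^W(Z)$ is meager for every $q \in \Q$. A countable union of meager sets is meager, so the displayed equality above shows that $\Phi_\R^W(Z)$ is meager, completing the proof.

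There is no real obstacle here; the only mildly delicate point is the first step, where one has to remember to invoke local invariance to cover an arbitrary real time $t$ by a rational time $q$ followed by a small correction that lands back in $Z$. Everything else is standard Baire-category bookkeeping combined with the fact that flow maps are homeomorphisms.
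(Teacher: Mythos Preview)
Your proof is correct and follows essentially the same approach as the paper: both reduce $\Phi_\R^W(Z)$ to the countable union $\bigcup_{q\in\Q}\Phi_q^W(Z)$ via local $W$-invariance and density of $\Q$, then conclude by noting that each $\Phi_q^W$ is a homeomorphism and hence preserves meagerness. The only cosmetic difference is that you pass to an $\epsilon$-interval inside $N$, whereas the paper works directly with $N$ by choosing $q\in\Q\cap(t-N)$.
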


\begin{proof}
Since $Z$ is meager in $\R^d$, it follows, for all $t\in\R$, that $\Phi^W_t(Z)$ is meager as well.
So, since $\displaystyle{\Phi_\Q^W(Z)=\bigcup_{q\in\Q}\,\left(\Phi^W_q(Z)\right)}$,
the set $\Phi_\Q^W(Z)$ is also meager in $\R^d$.
It therefore suffices to show that $\Phi_\R^W(Z)=\Phi_\Q^W(Z)$.

Since $\Q\subseteq\R$, it follows that $\Phi_\Q^W(Z)\subseteq\Phi_\R^W(Z)$,
and it remains to prove that $\Phi_\R^W(Z)\subseteq\Phi_\Q^W(Z)$.
Let $\zeta\in Z$ and let $t\in\R$. We wish to show that $\Phi_t^W(\zeta)\in\Phi_\Q^W(Z)$.

As $\zeta\in Z$ and $Z$ is locally $W$-invariant, fix an open neighborhood $N$ in $\R$ of~$0$
such that $\Phi_N^W(\zeta)\subseteq Z$.
By density of $\Q$ in $\R$, choose $q\in\Q\cap(t-N)$.
Choose $r\in N$ such that $q=t-r$. Then $t=q+r$.

Then $\Phi_t^W(\zeta)=\Phi_q^W(\Phi_r^W(\zeta))\in\Phi_\Q^W(\Phi_N^W(\zeta))\subseteq\Phi_\Q^W(Z)$.
\end{proof}

\section{Results about deterrence systems\wrlab{sect-gen-deter}}

\begin{lem}\wrlab{lem-V-V0-agreement-for-D}
Let $(V,I)\in\scrd$.
Then all of the following are true:
\begin{itemize}
\item[(i)]For all $t\in\R$, \quad $\Phi_t^V=\Phi_t^{V_0}$ on $[(\R^3)\backslash(I^3)]\times\R$.
\item[(ii)]For all $t\ge0$, \quad $\Phi_t^V=\Phi_t^{V_0}$ on $\R^3\times[a_I,\infty)$.
\item[(iii)]For all $t\le0$, \quad $\Phi_t^V=\Phi_t^{V_0}$ on $\R^3\times(-\infty,-a_I]$.
\end{itemize}
\end{lem}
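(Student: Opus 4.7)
The plan is to deduce all three parts from Corollary \ref{cor-vect-agree-implies-flow-agree}, playing the role of $V$ in the corollary with $V_0$ and the role of $W$ with our vector field $V$. The hypothesis $W = V$ on $B$ in the corollary becomes: $V = V_0$ on $B$, which holds for $B := (\R^4)\backslash(I^4)$ since $(V,I)\in\scrd$. For each part, I would choose $A\subseteq B$ appropriately, verify that $\Phi_N^{V_0}(A)\subseteq B$ for the right interval $N$ containing $0$, and observe that $A$ is already closed, so $\barA = A$.

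For part (i), take $A := [(\R^3)\backslash(I^3)]\times\R$ and $N := \R$. Since any point whose first three coordinates do not all lie in $I$ cannot lie in $I^4$, we have $A\subseteq B$. By Lemma \ref{lem-vert-is-V0-invar}(i), $\Phi_t^{V_0}(A) = A$ for every $t$, so $\Phi_N^{V_0}(A) = A \subseteq B$. The set $A$ is closed (since $I^3$ is open), so $\barA = A$, and Corollary \ref{cor-vect-agree-implies-flow-agree} yields $\Phi_t^V = \Phi_t^{V_0}$ on $A$ for all $t\in\R$.

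For part (ii), take $A := \R^3\times[a_I,\infty)$ and $N := [0,\infty)$. Any point whose fourth coordinate is $\ge a_I$ has that coordinate outside $I = (-a_I,a_I)$, so $A\subseteq B$. Lemma \ref{lem-vert-is-V0-invar}(ii) gives, for $t\ge 0$, $\Phi_t^{V_0}(A) = \R^3\times[a_I+t,\infty)\subseteq A\subseteq B$, hence $\Phi_N^{V_0}(A)\subseteq B$. Since $A$ is closed and $0\in N = \barN$, the corollary delivers $\Phi_t^V = \Phi_t^{V_0}$ on $A$ for all $t\ge 0$. Part (iii) is completely analogous, using $A := \R^3\times(-\infty,-a_I]$, $N := (-\infty,0]$, and Lemma \ref{lem-vert-is-V0-invar}(iii).

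There is no real obstacle: once one recognizes that the three target sets all sit inside the ``agreement region'' $(\R^4)\backslash(I^4)$, and that $V_0$ moves points only in the positive fourth-coordinate direction (so these sets are forward-invariant in (ii), backward-invariant in (iii), and two-sided invariant in (i)), the corollary does all the work. The only small care needed is to match the sign of $t$ with the side of $A$ kept invariant by the flow of $V_0$.
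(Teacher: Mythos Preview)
Your proof is correct and takes essentially the same approach as the paper: both rely on \lref{lem-vert-is-V0-invar} to show that the relevant sets are carried into the region $(\R^4)\backslash(I^4)$ under the $V_0$-flow, then invoke the ``agreement on an invariant set implies agreement of flows'' principle. The only cosmetic difference is that the paper cites \lref{lem-orbits-agree} directly at each step, while you invoke its set-level wrapper \cref{cor-vect-agree-implies-flow-agree}; these are the same argument.
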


\begin{proof}
By \lref{lem-vert-is-V0-invar}(i), for all $t\in\R$, we have
$$\Phi_t^{V_0}\,(\,[(\R^3)\backslash(I^3)]\times\R\,)\quad\subseteq\quad[(\R^3)\backslash(I^3)]\,\times\,\R.$$
Also, $V=V_0$ on $[(\R^3)\backslash(I^3)]\times\R$.
Then \lref{lem-orbits-agree} yields (i).

By \lref{lem-vert-is-V0-invar}(ii), for all $t\ge0$, we have
$$\Phi_t^{V_0}\,(\,\R^3\times[a_I,\infty)\,)\quad\subseteq\quad\R^3\,\times\,[a_I,\infty).$$
Also, $V=V_0$ on $\R^3\times[a_I,\infty)$.
Then \lref{lem-orbits-agree} yields (ii).

By \lref{lem-vert-is-V0-invar}(iii), for all $t\le0$, we have
$$\Phi_t^{V_0}\,(\,\R^3\times(-\infty,-a_I]\,)\quad\subseteq\quad\R^3\,\times\,(-\infty,-a_I].$$
Also, $V=V_0$ on $\R^3\times(-\infty,-a_I]$.
Then \lref{lem-orbits-agree} yields (iii).
\end{proof}

\begin{lem}\wrlab{lem-I3-times-R-invar}
Let $(V,I)\in\scrd$.
Then $I^3\times\R$ is $V$-invariant.
\end{lem}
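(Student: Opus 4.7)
The plan is to exploit the fact that $V$ agrees with $V_0$ outside $I^4$, and in particular on the set $C:=[(\R^3)\backslash(I^3)]\times\R$, which is the complement in $\R^4$ of $I^3\times\R$. The idea is to show that $C$ is $V$-invariant, and then deduce $V$-invariance of $I^3\times\R$ from the fact that the $V$-orbits partition $\R^4$ (since $V$ is complete).

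First I would observe that $C\subseteq(\R^4)\backslash(I^4)$, so $V=V_0$ on $C$ because $(V,I)\in\scrd$. By \lref{lem-vert-is-V0-invar}(i) applied with $S=(\R^3)\backslash(I^3)$, the set $C$ is $V_0$-invariant. Combined with \lref{lem-V-V0-agreement-for-D}(i), which gives $\Phi_t^V=\Phi_t^{V_0}$ on $C$ for every $t\in\R$, we conclude that for any $\sigma\in C$ and any $t\in\R$, $\Phi_t^V(\sigma)=\Phi_t^{V_0}(\sigma)\in C$, so $C$ is $V$-invariant.

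To finish, I would argue that $V$-invariance of $C$ forces $V$-invariance of its complement $I^3\times\R$. Concretely, if $\sigma\in I^3\times\R$ and some $\Phi_{t_0}^V(\sigma)$ lay in $C$, then applying $\Phi_{-t_0}^V$ and using $V$-invariance of $C$ would give $\sigma=\Phi_{-t_0}^V(\Phi_{t_0}^V(\sigma))\in C$, contradicting $\sigma\in I^3\times\R$.

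There is no real obstacle here: the content of the lemma has essentially already been packaged into \lref{lem-V-V0-agreement-for-D}(i) and \lref{lem-vert-is-V0-invar}(i), and the only remaining step is the standard observation that invariance of a set under a complete flow is equivalent to invariance of its complement.
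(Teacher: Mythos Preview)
Your proof is correct and follows essentially the same route as the paper: define the complement $C=[(\R^3)\backslash(I^3)]\times\R$, use \lref{lem-vert-is-V0-invar}(i) to see it is $V_0$-invariant, use \lref{lem-V-V0-agreement-for-D}(i) to conclude it is $V$-invariant, and then pass to the complement. The paper's argument is terser but identical in substance.
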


\begin{proof}
Let $P:=[(\R^3)\backslash(I^3)]\times\R$.
By \lref{lem-vert-is-V0-invar}(i), $P$ is $V_0$-invariant
So, by \lref{lem-V-V0-agreement-for-D}(i), $P$ is $V$-invariant.
Then $(\R^4)\backslash P$ is $V$-invariant.
So, since $(\R^4)\backslash P=I^3\times\R$, we are done.
\end{proof}

\begin{lem}\wrlab{lem-undeterred-criterion}
Let $(V,I)\in\scrd$, $\rho\in\R^4$.
The following are equivalent:
\begin{itemize}
\item[(a)]$\rho\in\scru(V)$, \qquad {\it i.e.}, \qquad $\Pi_4(\Phi_\R^V(\rho))=\R$.
\item[(b)]$(-\infty,-a_I)\,\cap\,[\Pi_4(\Phi_\R^V(\rho))]
\,\,\,\ne\,\,\,\emptyset\,\,\,\ne\,\,\,
(a_I,\infty)\,\cap\,[\Pi_4(\Phi_\R^V(\rho))]$.
\item[(c)]$\displaystyle{\lim_{t\to\infty}\,[\Pi_4(\Phi_t^V(\rho))]=\infty}$
\qquad and \qquad
$\displaystyle{\lim_{t\to-\infty}\,[\Pi_4(\Phi_t^V(\rho))]=-\infty}$.
\end{itemize}
\end{lem}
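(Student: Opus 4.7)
The plan is to prove the cyclic chain (a)$\Rightarrow$(b)$\Rightarrow$(c)$\Rightarrow$(a). The implication (a)$\Rightarrow$(b) is immediate: if $\Pi_4(\Phi_\R^V(\rho))=\R$, then this set meets every nonempty open subset of $\R$, in particular the intervals $(-\infty,-a_I)$ and $(a_I,\infty)$.

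For (c)$\Rightarrow$(a), I would use that the map $t\mapsto\Pi_4(\Phi_t^V(\rho)):\R\to\R$ is continuous; combined with the assumed limits $\pm\infty$ at $\pm\infty$, the Intermediate Value Theorem forces its image to be all of $\R$, which is (a).

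The main step is (b)$\Rightarrow$(c). Here I would fix $t_1\in\R$ with $\Pi_4(\Phi_{t_1}^V(\rho))>a_I$ and set $\sigma_1:=\Phi_{t_1}^V(\rho)$, so that $\sigma_1\in\R^3\times[a_I,\infty)$. Applying \lref{lem-V-V0-agreement-for-D}(ii), I get $\Phi_u^V(\sigma_1)=\Phi_u^{V_0}(\sigma_1)$ for every $u\ge 0$. Since the $V_0$-flow simply adds $u$ to the fourth coordinate, this yields
\[
\Pi_4(\Phi_{t_1+u}^V(\rho))\,\,=\,\,\Pi_4(\Phi_u^V(\sigma_1))\,\,=\,\,\Pi_4(\sigma_1)+u\,\,\longrightarrow\,\,\infty
\]
as $u\to\infty$, which is the forward limit in (c). A perfectly symmetric argument, starting from a $t_2\in\R$ with $\Pi_4(\Phi_{t_2}^V(\rho))<-a_I$ and invoking \lref{lem-V-V0-agreement-for-D}(iii) for times $u\le 0$, delivers the backward limit $\Pi_4(\Phi_t^V(\rho))\to-\infty$ as $t\to-\infty$.

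The only substantive obstacle is selecting the right tool: once one notices that (b) forces the orbit to enter a region where \lref{lem-V-V0-agreement-for-D}(ii)--(iii) guarantees the $V$-flow coincides with the straight-upward (or straight-downward) $V_0$-flow in perpetuity, there is nothing left to do but read off the limit of the fourth coordinate, which is linear in time. So the proof should be short.
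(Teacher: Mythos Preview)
Your proposal is correct and follows essentially the same route as the paper: the same cyclic chain (a)$\Rightarrow$(b)$\Rightarrow$(c)$\Rightarrow$(a), with (a)$\Rightarrow$(b) immediate, (c)$\Rightarrow$(a) by the Intermediate Value Theorem, and (b)$\Rightarrow$(c) by choosing a time where the fourth coordinate exceeds $a_I$ (resp.\ is below $-a_I$), applying \lref{lem-V-V0-agreement-for-D}(ii)--(iii) to identify the $V$-flow with the $V_0$-flow thereafter, and reading off the linear growth of the fourth coordinate. The only cosmetic difference is that the paper phrases the growth as an inequality $\Pi_4(\Phi_t^V(\rho))>a_I+t-u_0$, whereas you keep the exact equality $\Pi_4(\sigma_1)+u$; both suffice.
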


\begin{proof}
The implication (a\,$\Rightarrow$b) is immediate.

{\it Proof of (b\,$\Rightarrow$c):}
Let $s_0,u_0\in\R$ and
assume $\Pi_4(\Phi_{s_0}^V(\rho))<-a_I$ and $\Pi_4(\Phi_{u_0}^V(\rho))>a_I$.
We wish to show
$$\hbox{that}\quad
\lim_{t\to\infty}\Pi_4(\Phi_t^V(\rho))=\infty
\quad\hbox{and that}\quad
\lim_{t\to-\infty}\Pi_4(\Phi_t^V(\rho))=-\infty.$$
We will prove the former; the latter is similar.
As $\displaystyle{\lim_{t\to\infty}(a_I+t-u_0)=\infty}$,
it suffices to show, for all $t\ge u_0$,
that $\Pi_4(\Phi_t^V(\rho))>a_I+t-u_0$.
So fix $t\ge u_0$.
We wish to prove that $\Pi_4(\Phi_t^V(\rho))>a_I+t-u_0$.

Let $\tau:=\Phi_{u_0}^V(\rho)$.
Then $\Pi_4(\tau)=\Pi_4(\Phi_{u_0}^V(\rho))>a_I$, so $\tau\in\R^3\times(a_I,\infty)$.
Then, as $t-u_0\ge0$, by \lref{lem-V-V0-agreement-for-D}(ii),
we have $\Phi_{t-u_0}^V(\tau)=\Phi_{t-u_0}^{V_0}(\tau)$.

Then $\Phi_t^V(\rho)=\Phi_{t-u_0}^V(\Phi_{u_0}^V(\rho))=\Phi_{t-u_0}^V(\tau)=\Phi_{t-u_0}^{V_0}(\tau)$.
By \lref{lem-Pi4-effect-of-V0},
$\Pi_4(\Phi_{t-u_0}^{V_0}(\tau))=(\Pi_4(\tau))+t-u_0$,
so, because $\Phi_t^V(\rho)=\Phi_{t-u_0}^{V_0}(\tau)$ and $\Pi_4(\tau)>a_I$,
we get $\Pi_4(\Phi_t^V(\tau))>a_I+t-u_0$.
{\it End of proof of (b\,$\Rightarrow$c).}

The Intermediate Value Theorem yields (c\,$\Rightarrow$a).
\end{proof}

\begin{cor}\wrlab{cor-U-open}
Let $(V,I)\in\scrd$.
Give $B(I)$ and $B_\circ(I)$ their relative topologies inherited from $\R^4$.
Then
\begin{itemize}
\item[(i)]$\scru(V)$ is open in $\R^4$,
\item[(ii)]$\scru_B(V,I)$ is open in $B(I)$ \qquad and
\item[(iii)]$\scru_B^\circ(V,I)$ is open in $B_\circ(I)$.
\end{itemize}
\end{cor}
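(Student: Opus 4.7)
The plan is to derive (i) from the openness criterion (b) of \lref{lem-undeterred-criterion}, and then obtain (ii) and (iii) as immediate consequences via the subspace topology.

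For (i), I would take $\rho \in \scru(V)$ and use the equivalence (a)$\Leftrightarrow$(b) of \lref{lem-undeterred-criterion} to fix real numbers $s_0, u_0$ with $\Pi_4(\Phi_{s_0}^V(\rho)) < -a_I$ and $\Pi_4(\Phi_{u_0}^V(\rho)) > a_I$. Since $V$ is $C^\infty$ and complete, the time-$s_0$ and time-$u_0$ flow maps $\Phi_{s_0}^V, \Phi_{u_0}^V : \R^4 \to \R^4$ are continuous, and so are the compositions $\Pi_4 \circ \Phi_{s_0}^V$ and $\Pi_4 \circ \Phi_{u_0}^V$. The sets $(-\infty, -a_I)$ and $(a_I, \infty)$ are open in $\R$, so their preimages under these continuous maps are open neighborhoods of $\rho$. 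Intersecting these two open neighborhoods gives an open neighborhood $U$ of $\rho$ such that every $\rho' \in U$ satisfies condition (b) of \lref{lem-undeterred-criterion} (with the same $s_0, u_0$). By the equivalence (b)$\Leftrightarrow$(a), $U \subseteq \scru(V)$.

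For (ii) and (iii), the point is purely topological. By definition, $\scru_B(V,I) = \scru(V) \cap B(I)$ and $\scru_B^\circ(V,I) = \scru(V) \cap B_\circ(I)$. Since $\scru(V)$ is open in $\R^4$ by part (i), its intersection with any subset $S \subseteq \R^4$ is open in $S$ with respect to the relative topology inherited from $\R^4$. Applying this with $S = B(I)$ gives (ii), and with $S = B_\circ(I)$ gives (iii).

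I do not foresee a real obstacle here: the entire content is packaged in \lref{lem-undeterred-criterion}(b), which reformulates the otherwise-awkward condition $\Pi_4(\Phi_\R^V(\rho)) = \R$ (an existence statement over all of $\R$) as the existence of \emph{two specific times}, and the latter is manifestly a pointwise-continuous condition on $\rho$. The only thing worth double-checking is that no uniformity in $s_0, u_0$ is needed; it is not, because in the openness argument we can hold $s_0$ and $u_0$ fixed while varying $\rho$ in a neighborhood.
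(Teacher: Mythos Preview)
Your proposal is correct and essentially identical to the paper's proof: both fix $s_0,u_0$ witnessing condition (b) of \lref{lem-undeterred-criterion}, take $U$ to be the open set of points satisfying the same two strict inequalities, and invoke (b$\Rightarrow$a) to get $U\subseteq\scru(V)$; (ii) and (iii) then follow from (i) by intersecting with $B(I)$ and $B_\circ(I)$.
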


\begin{proof}
Since (ii) and (iii) follow from (i), we need only prove (i).

Let $\sigma\in\scru(V)$.
We wish to show that there is an open subset $U$ of~$\R^4$
such that $\sigma\in U\subseteq\scru(V)$.

Since $\sigma\in\scru(V)$, we get $\Pi_4(\Phi_\R^V(\sigma))=\R$.
Fix $s_0,u_0\in\R$ such that
$$\Pi_4(\Phi_{s_0}^V(\sigma))<-a_I
\qquad\hbox{and}\qquad
\Pi_4(\Phi_{u_0}^V(\sigma))>a_I.$$
Let
$U:=\{\rho\in\R^4\,|\,\Pi_4(\Phi_{s_0}^V(\rho))<-a_I\hbox{ and }\Pi_4(\Phi_{u_0}^V(\rho))>a_I\}$.
By \lref{lem-undeterred-criterion}(b\,$\Rightarrow$a),
we have $U\subseteq\scru(V)$.
Then $U$~is an open subset of~$\R^4$ and $\sigma\in U\subseteq\scru(V)$, as desired
\end{proof}

\begin{cor}\wrlab{cor-porous-dense-comeager}
Let $(V,I)\in\scrd$.
Then
\begin{itemize}
\item[]$V$ is porous \qquad iff \qquad $\scru(V)$ is comeager in $\R^4$.
\end{itemize}
\end{cor}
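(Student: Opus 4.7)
The plan is to reduce this equivalence to a general fact about open sets in a Baire space, using that $\scru(V)$ is already known to be open by \cref{cor-U-open}. Since $\R^4$ is a complete metric space, the Baire Category Theorem applies: no nonempty open subset of $\R^4$ is meager, and every comeager subset of $\R^4$ is dense.

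For the direction ($\Rightarrow$), assume $V$ is porous, so that $\scru(V)$ is dense in $\R^4$. By \cref{cor-U-open}, $\scru(V)$ is open, so its complement $\R^4\backslash\scru(V)$ is closed. Density of $\scru(V)$ says that this complement has empty interior, making it closed and nowhere dense, hence meager. Therefore $\scru(V)$ is comeager.

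For the direction ($\Leftarrow$), assume $\scru(V)$ is comeager, i.e., $\R^4\backslash\scru(V)$ is a countable union of nowhere dense sets. By the Baire Category Theorem, $\R^4$ cannot be written as a countable union of nowhere dense sets, and more generally no nonempty open subset of $\R^4$ can be meager; it follows that a comeager subset of $\R^4$ must meet every nonempty open set, i.e., must be dense. Hence $\scru(V)$ is dense and $V$ is porous by definition.

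There is no real obstacle here: the only substantive input is \cref{cor-U-open}, which upgrades ``dense'' to ``open and dense'' and thereby makes the equivalence with ``comeager'' immediate from the Baire property of $\R^4$. The only care needed is to make the Baire-space appeal explicit in the ($\Leftarrow$) direction, since in a non-Baire space comeager sets need not be dense.
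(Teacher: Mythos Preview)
Your proof is correct and follows essentially the same approach as the paper: both use \cref{cor-U-open}(i) to know $\scru(V)$ is open, then invoke the Baire Category Theorem to conclude that for an open subset of $\R^4$, density and comeagerness are equivalent. The paper states this in a single sentence, while you spell out both directions explicitly, but the content is identical.
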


\begin{proof}
By definition, $V$ is porous iff $\scru(V)$ is dense in $\R^4$.
By the Baire Category Theorem, an open subset of $\R^4$ is dense iff it is comeager.
Thus this result follows from \cref{cor-U-open}(i).
\end{proof}

\begin{lem}\wrlab{lem-at-most-one-time-through}
Let $(V,I)\in\scrd$.
Let $\sigma\in\scru(V)$.
Define $\psi:\R\to\R^4$ by $\psi(s)=\Phi_s^V(\sigma)$.
Let $\psi(r),\psi(t)\in I^4$.
Say $r\le t$.
Then $\psi([r,t])\subseteq I^4$.
\end{lem}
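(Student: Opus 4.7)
The plan is to argue by contradiction: suppose there exists $u\in[r,t]$ with $\psi(u)\notin I^4$, and then show that either $\psi(r)$ or $\psi(t)$ must also fail to lie in $I^4$, contradicting the hypothesis.

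First I would reduce the obstruction $\psi(u)\notin I^4$ to a condition on $\Pi_4(\psi(u))$ alone. Since $\psi(r)\in I^4\subseteq I^3\times\R$ and $I^3\times\R$ is $V$-invariant by \lref{lem-I3-times-R-invar}, the whole orbit satisfies $\psi(\R)=\Phi_\R^V(\psi(r))\subseteq I^3\times\R$. In particular $\psi(u)\in I^3\times\R$, so $\psi(u)\notin I^4$ forces $\Pi_4(\psi(u))\notin I$. Hence either $\Pi_4(\psi(u))\ge a_I$ or $\Pi_4(\psi(u))\le -a_I$, and I would split into these two subcases.

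In the first subcase, $\psi(u)\in\R^3\times[a_I,\infty)$ and $t-u\ge 0$, so \lref{lem-V-V0-agreement-for-D}(ii) gives $\psi(t)=\Phi_{t-u}^V(\psi(u))=\Phi_{t-u}^{V_0}(\psi(u))$; then \lref{lem-Pi4-effect-of-V0} yields $\Pi_4(\psi(t))=\Pi_4(\psi(u))+(t-u)\ge a_I$, contradicting $\psi(t)\in I^4$. The second subcase is symmetric: with $r-u\le 0$ and $\psi(u)\in\R^3\times(-\infty,-a_I]$, \lref{lem-V-V0-agreement-for-D}(iii) combined with \lref{lem-Pi4-effect-of-V0} gives $\Pi_4(\psi(r))\le -a_I$, contradicting $\psi(r)\in I^4$.

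I do not anticipate any real obstacle; the argument is a clean case analysis on the sign of $\Pi_4(\psi(u))$ exploiting the fact that $V$ coincides with $V_0$ once the fourth coordinate leaves the slab $(-a_I,a_I)$, so the orbit can only drift monotonically upward (resp.\ downward) from then on. Notably, the hypothesis $\sigma\in\scru(V)$ is never actually used in the proof; it is included only because this lemma is meant to be applied in that setting.
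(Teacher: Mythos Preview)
Your proof is correct and follows essentially the same contradiction strategy as the paper: assume the orbit leaves $I^4$ at some intermediate time, then use that $V$ agrees with $V_0$ once the fourth coordinate exits the slab to push the contradiction to one of the endpoints $r$ or $t$. The only cosmetic difference is that the paper packages the case split via \lref{lem-either-up-outside-or-down-outside} together with \lref{lem-orbits-agree}, whereas you first invoke \lref{lem-I3-times-R-invar} to confine the issue to the fourth coordinate and then use \lref{lem-V-V0-agreement-for-D} and \lref{lem-Pi4-effect-of-V0}; your remark that the hypothesis $\sigma\in\scru(V)$ is unused is also correct.
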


\begin{proof}
Let $s_0\in[r,t]$
and assume, for a contradiction, that $\psi(s_0)\notin I^4$.

Let $\rho:=\psi(s_0)=\Phi_{s_0}^V(\sigma)$.
Then $\rho\notin I^4$.
By \lref{lem-either-up-outside-or-down-outside},
either
\begin{itemize}
\item[(i)]$\Phi_{[0,\infty)}^{V_0}(\rho)\subseteq(\R^4)\backslash(I^4)$ \qquad or
\item[(ii)]$\Phi_{(-\infty,0]}^{V_0}(\rho)\subseteq(\R^4)\backslash(I^4)$.
\end{itemize}
Since $(V,I)\in\scrd$, we have $V=V_0$ on $(\R^4)\backslash(I^4)$.

Assume (i).
Then, by \lref{lem-orbits-agree}, we have
$\Phi_{t-s_0}^V(\rho)=\Phi_{t-s_0}^{V_0}(\rho)$.
Then $\psi(t)=\Phi_t^V(\sigma)=\Phi_{t-s_0}^V(\rho)=\Phi_{t-s_0}^{V_0}(\rho)
\in\Phi_{[0,\infty)}^{V_0}(\rho)\subseteq(\R^4)\backslash(I^4)$.
However, $\psi(t)\in I^4$, contradiction.

Assume (ii).
Then, by \lref{lem-orbits-agree}, we have
$\Phi_{r-s_0}^V(\rho)=\Phi_{r-s_0}^{V_0}(\rho)$.
Then $\psi(r)=\Phi_r^V(\sigma)=\Phi_{r-s_0}^V(\rho)=\Phi_{r-s_0}^{V_0}(\rho)
\in\Phi_{(-\infty,0]}^{V_0}(\rho)\subseteq(\R^4)\backslash(I^4)$.
However, $\psi(r)\in I^4$, contradiction.
\end{proof}

\begin{lem}\wrlab{lem-endpts-in-I4-implies-coincidence}
Let $(V,I)\in\scrd$.
Let $\sigma\in I^4$, $s_0\in\R$, $W\in\scrc$.
Assume $\Phi_{s_0}^V(\sigma)\in I^4$.
Assume $W=V$ on $I^4$.
Then $\Phi_{s_0}^W(\sigma)=\Phi_{s_0}^V(\sigma)$.
\end{lem}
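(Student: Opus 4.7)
The plan is to show that the segment of the $V$-orbit of $\sigma$ from time $0$ to time $s_0$ lies entirely in $I^4$, and then to invoke \cref{cor-vect-agree-implies-flow-agree} with $A=\{\sigma\}$, $B=I^4$, and $N$ the closed interval from $0$ to $s_0$ to deduce $\Phi_{s_0}^W(\sigma)=\Phi_{s_0}^V(\sigma)$.

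By the symmetry of the statement, I may assume $s_0\ge0$; the case $s_0\le0$ will be handled by an analogous mirror-image argument. Suppose, for contradiction, that some $s_*\in(0,s_0)$ satisfies $\Phi_{s_*}^V(\sigma)\notin I^4$. Since $\sigma\in I^4\subseteq I^3\times\R$, and since \lref{lem-I3-times-R-invar} tells me $I^3\times\R$ is $V$-invariant, I get $\Phi_{s_*}^V(\sigma)\in I^3\times\R$, so $\Pi_4(\Phi_{s_*}^V(\sigma))\in\R\backslash I$. I then split on the sign of this fourth coordinate. If $\Pi_4(\Phi_{s_*}^V(\sigma))\ge a_I$, then $\Phi_{s_*}^V(\sigma)\in\R^3\times[a_I,\infty)$, so \lref{lem-V-V0-agreement-for-D}(ii), applied with time $s_0-s_*\ge0$, lets me rewrite $\Phi_{s_0}^V(\sigma)$ as $\Phi_{s_0-s_*}^{V_0}(\Phi_{s_*}^V(\sigma))$; the $V_0$-flow only raises the fourth coordinate, forcing $\Pi_4(\Phi_{s_0}^V(\sigma))>a_I$, which contradicts the hypothesis $\Phi_{s_0}^V(\sigma)\in I^4$. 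If instead $\Pi_4(\Phi_{s_*}^V(\sigma))\le-a_I$, then $\Phi_{s_*}^V(\sigma)\in\R^3\times(-\infty,-a_I]$, so \lref{lem-V-V0-agreement-for-D}(iii), applied with time $-s_*<0$, lets me rewrite $\sigma=\Phi_{-s_*}^V(\Phi_{s_*}^V(\sigma))$ as $\Phi_{-s_*}^{V_0}(\Phi_{s_*}^V(\sigma))$; the $V_0$-flow at time $-s_*$ lowers the fourth coordinate, forcing $\Pi_4(\sigma)<-a_I$, which contradicts $\sigma\in I^4$.

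Having established $\Phi_{[0,s_0]}^V(\sigma)\subseteq I^4$, the hypothesis $W=V$ on $I^4$ together with \cref{cor-vect-agree-implies-flow-agree} immediately yields $\Phi_{s_0}^W(\sigma)=\Phi_{s_0}^V(\sigma)$. For $s_0\le0$, one takes $N=[s_0,0]$ and runs the sign case analysis the other way around, again using the appropriate part of \lref{lem-V-V0-agreement-for-D} to reach a contradiction with either the hypothesis on $\sigma$ or on $\Phi_{s_0}^V(\sigma)$. The main obstacle, such as it is, is the bookkeeping of signs in the case analysis; morally this is the same argument as in \lref{lem-at-most-one-time-through}, but I cannot invoke that lemma directly because its hypothesis $\sigma\in\scru(V)$ is not available here.
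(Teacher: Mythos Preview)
Your proof is correct and follows the same strategy as the paper: show the $V$-orbit segment between the two times stays inside $I^4$, then conclude via agreement of flows. The paper does this by invoking \lref{lem-at-most-one-time-through} directly and then \lref{lem-orbits-agree}. You are right that the stated hypothesis $\sigma\in\scru(V)$ of \lref{lem-at-most-one-time-through} is not available here; what you may not have noticed is that this hypothesis is never actually used in the proof of \lref{lem-at-most-one-time-through}, so the paper's citation is harmless (if formally sloppy). Your decision to inline the argument is therefore more careful. Your version also differs slightly in mechanics: you use \lref{lem-I3-times-R-invar} to pin the escape to the fourth coordinate and then \lref{lem-V-V0-agreement-for-D}, whereas the paper's route via \lref{lem-either-up-outside-or-down-outside} handles any $\rho\notin I^4$ without first localizing to $I^3\times\R$. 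Both work equally well.
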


\begin{proof}
Let $r:=\min\{0,s_0\}$, $t:=\max\{0,s_0\}$.
Then $\{0,s_0\}=\{r,t\}$.

Define $\psi:\R\to\R^4$ by $\psi(s)=\Phi_s^V(\sigma)$.
Then $\psi(0)=\sigma\in I^4$
and $\psi(s_0)=\Phi_{s_0}^V(\sigma)\in I^4$.
Then $\psi(0),\psi(s_0)\in I^4$, so $\psi(r),\psi(t)\in I^4$.
So, by \lref{lem-at-most-one-time-through},
we have $\psi([r,t])\subseteq I^4$, {\it i.e.},
we have $\Phi_{[r,t]}^V(\sigma)\subseteq I^4$.
So, because $s_0\in\{0,s_0\}=\{r,t\}\subseteq[r,t]$,
and because $W=V$ on $I^4$, by~\lref{lem-orbits-agree},
we conclude that $\Phi_{s_0}^W(\sigma)=\Phi_{s_0}^V(\sigma)$,
as desired.
\end{proof}

\begin{lem}\wrlab{lem-single-crossing}
Let $(V,I)\in\scrd$.
Let $\sigma\in\R^4$.
Let $b\in\R\backslash I$.
Then there is at most one $t_0\in\R$ such that $\Pi_4(\Phi_{t_0}^V(\sigma))=b$.
\end{lem}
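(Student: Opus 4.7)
The plan is to argue by contradiction: suppose there exist distinct $t_0,t_1\in\R$ with $\Pi_4(\Phi_{t_0}^V(\sigma))=b=\Pi_4(\Phi_{t_1}^V(\sigma))$, and without loss of generality assume $t_0<t_1$. Since $b\in\R\backslash I=\R\backslash(-a_I,a_I)$, either $b\ge a_I$ or $b\le -a_I$. I would handle these two cases separately, as they are symmetric and each reduces immediately to \lref{lem-V-V0-agreement-for-D} together with \lref{lem-Pi4-effect-of-V0}.

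In the case $b\ge a_I$, I would set $\rho:=\Phi_{t_0}^V(\sigma)$, so that $\Pi_4(\rho)=b\ge a_I$ and hence $\rho\in\R^3\times[a_I,\infty)$. Since $t_1-t_0\ge 0$, \lref{lem-V-V0-agreement-for-D}(ii) gives $\Phi_{t_1-t_0}^V(\rho)=\Phi_{t_1-t_0}^{V_0}(\rho)$. Using the flow identity and then \lref{lem-Pi4-effect-of-V0}, I compute
$$\Pi_4(\Phi_{t_1}^V(\sigma))\,=\,\Pi_4(\Phi_{t_1-t_0}^{V_0}(\rho))\,=\,(\Pi_4(\rho))+(t_1-t_0)\,=\,b+(t_1-t_0)\,>\,b,$$
contradicting $\Pi_4(\Phi_{t_1}^V(\sigma))=b$.

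In the case $b\le -a_I$, I would symmetrically set $\rho:=\Phi_{t_1}^V(\sigma)$, so that $\rho\in\R^3\times(-\infty,-a_I]$. Since $t_0-t_1\le 0$, \lref{lem-V-V0-agreement-for-D}(iii) gives $\Phi_{t_0-t_1}^V(\rho)=\Phi_{t_0-t_1}^{V_0}(\rho)$, and then \lref{lem-Pi4-effect-of-V0} yields $\Pi_4(\Phi_{t_0}^V(\sigma))=b+(t_0-t_1)<b$, again a contradiction.

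There is no real obstacle here; the only subtlety is choosing which of $t_0,t_1$ to anchor the trajectory at so that the relevant hypothesis of \lref{lem-V-V0-agreement-for-D} applies (using the $[a_I,\infty)$ statement with forward time when $b$ is large, and the $(-\infty,-a_I]$ statement with backward time when $b$ is small). Once the time sign matches the half-space, the $V$-flow is rigidly the $V_0$-flow, which strictly shifts the fourth coordinate, precluding a second visit to the level $\Pi_4=b$.
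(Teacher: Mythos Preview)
Your proof is correct and follows essentially the same approach as the paper: anchor at one of the two times so that the point lies in the appropriate half-space, invoke \lref{lem-V-V0-agreement-for-D}(ii) or (iii) to replace the $V$-flow by the $V_0$-flow, and then use \lref{lem-Pi4-effect-of-V0} to see that the fourth coordinate shifts strictly. The only cosmetic difference is that the paper phrases it as showing the time difference must be zero rather than as a contradiction, and it treats only the case $b\ge a_I$ explicitly, declaring the other case similar.
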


\begin{proof}
Let $t_0,t'_0\in\R$.
Assume $t_0\le t'_0$ and
$\Pi_4(\Phi_{t_0}^V(\sigma))=b=\Pi_4(\Phi_{t'_0}^V(\sigma))$.
We wish to show that $t'_0=t_0$.

Since $b\notin I=(-a_I,a_I)$,
we know either that $b\le-a_I$ or that $b\ge a_I$.
We will assume that $b\ge a_I$;
the proof in the other case is similar.

Let $\sigma_0:=\Phi_{t_0}^V(\sigma)$.
Then $\Pi_4(\sigma_0)=b$.
Let $t_1:=t'_0-t_0$.
Then $\Phi_{t_1}^V(\sigma_0)=\Phi_{t'_0}^V(\sigma)$,
so $\Pi_4(\Phi_{t_1}^V(\sigma_0))=b$.
We have $\Pi_4(\sigma_0)=b\ge a_I$, {\it i.e.}, $\sigma_0\in\R^3\times[a_I,\infty)$.
So, since $t_1\ge0$, by \lref{lem-V-V0-agreement-for-D}(ii),
we conclude that $\Phi_{t_1}^V(\sigma_0)=\Phi_{t_1}^{V_0}(\sigma_0)$.
Then $b=\Pi_4(\Phi_{t_1}^V(\sigma_0))=\Pi_4(\Phi_{t_1}^{V_0}(\sigma_0))$,
so, by \lref{lem-Pi4-effect-of-V0},
$b=(\Pi_4(\sigma_0))+t_1$.
Then $b=b+t_1$, so $t_1=0$, so $t'_0=t_0$.
\end{proof}

\begin{lem}\wrlab{lem-defn-uf-df}
Let $(V,I)\in\scrd$.
Let $\sigma\in\barscru(V,I)$.
Then $(\Phi_\R^V(\sigma))\cap(T(I))$ and $(\Phi_\R^V(\sigma))\cap(B(I))$
both have exactly one element.
\end{lem}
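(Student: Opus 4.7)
The plan is to prove existence and uniqueness separately for each of $T(I)$ and $B(I)$; because the two arguments are parallel, I focus on $T(I)$ and then indicate the change for $B(I)$.

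For existence, since $\sigma\in\scru(V)$ we have $\Pi_4(\Phi_\R^V(\sigma))=\R$, so some $t_1\in\R$ satisfies $\Pi_4(\Phi_{t_1}^V(\sigma))=a_I$. It remains to verify that the first three coordinates of $\Phi_{t_1}^V(\sigma)$ lie in $\barI$, so that this point actually lands in $T(I)=\barI^3\times\{a_I\}$. Write $\sigma=(w,x,y,z)\in\barI^4$ and split on whether $(w,x,y)\in I^3$. If $(w,x,y)\in I^3$, then \lref{lem-I3-times-R-invar} shows $I^3\times\R$ is $V$-invariant, so the whole orbit lies in $I^3\times\R\subseteq\barI^3\times\R$, giving $\Phi_{t_1}^V(\sigma)\in I^3\times\{a_I\}\subseteq T(I)$. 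Otherwise $(w,x,y)\in\barI^3\backslash I^3$, so $\sigma\in[(\R^3)\backslash(I^3)]\times\R$, and \lref{lem-V-V0-agreement-for-D}(i) gives $\Phi_t^V(\sigma)=\Phi_t^{V_0}(\sigma)$ for every $t\in\R$. Since $V_0$ fixes the first three coordinates, $\Phi_{t_1}^V(\sigma)=(w,x,y,a_I)\in\barI^3\times\{a_I\}=T(I)$.

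For uniqueness, note that $I=(-a_I,a_I)$ is open in $\R$, so $a_I\in\R\backslash I$, and then \lref{lem-single-crossing} guarantees at most one $t_0\in\R$ with $\Pi_4(\Phi_{t_0}^V(\sigma))=a_I$. Hence at most one orbit point has last coordinate $a_I$, so $(\Phi_\R^V(\sigma))\cap(T(I))$ has at most one element, and combined with existence this gives exactly one. The identical argument with $a_I$ replaced by $-a_I\in\R\backslash I$ (using \lref{lem-single-crossing} for $b=-a_I$ and the same case analysis to land in $B(I)=\barI^3\times\{-a_I\}$) yields the claim for $B(I)$. No step is a serious obstacle; the only mild subtlety is handling the boundary case $(w,x,y)\in\barI^3\backslash I^3$, which is dispatched by the $V=V_0$ agreement on the complement of $I^3\times\R$.
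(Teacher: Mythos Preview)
Your proof is correct and follows essentially the same approach as the paper: existence via $\Pi_4(\Phi_\R^V(\sigma))=\R$ together with the constraint that the first three coordinates remain in $\barI$, and uniqueness via \lref{lem-single-crossing}. The one minor stylistic difference is that where you handle the boundary case $(w,x,y)\in\barI^3\backslash I^3$ by a case split invoking \lref{lem-V-V0-agreement-for-D}(i), the paper instead argues in one stroke that since $I^3\times\R$ is $V$-invariant (\lref{lem-I3-times-R-invar}), its closure $\barI^3\times\R$ is $V$-invariant by continuity, so the orbit of any $\sigma\in\barI^4$ stays in $\barI^3\times\R$; this avoids the case split but is equivalent in content.
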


\begin{proof}
We will show that $(\Phi_\R^V(\sigma))\cap(B(I))$
has exactly one element;
the proof for $(\Phi_\R^V(\sigma))\cap(T(I))$ is similar.
Let $b:=-a_I$.
Then $b\notin I$ and $B(I)=\barI^3\times\{b\}$.
Since $\sigma\in\barscru(V,I)\subseteq\scru(V)$, $\Pi_4(\Phi_\R^V(\sigma))=\R$,
so fix $t_0\in\R$ such that $\Pi_4(\Phi_{t_0}^V(\sigma))=b$.
Let $\rho:=\Phi_{t_0}^V(\sigma)$.
Then $\rho\in\Phi_\R^V(\sigma)$.
Also, $\Pi_4(\rho)=b$, {\it i.e.}, $\rho\in\R^3\times\{b\}$.

By \lref{lem-I3-times-R-invar},
$I^3\times\R$ is $V$-invariant.
Then, by continuity, $\barI^3\times\R$ is also $V$-invariant.
So, since $\sigma\in\barscru(V,I)\subseteq\barI^4\subseteq\barI^3\times\R$,
we see that $\rho\in\barI^3\times\R$.
Then $\rho\in(\,\barI^3\times\R)\cap(\R^3\times\{b\})=\barI^3\times\{b\}=B(I)$.
Thus $\rho\in(\Phi_\R^V(\sigma))\cap(B(I))$,
so $(\Phi_\R^V(\sigma))\cap(B(I))$ has at least one element.
Assume $\tau\in(\Phi_\R^V(\sigma))\cap(B(I))$.
We wish to show that $\tau=\rho$.

Choose $t_1\in\R$ such that $\tau=\Phi_{t_1}^V(\sigma)$.
We have $\rho,\tau\in B(I)\subseteq\R^3\times\{b\}$.
So $\Pi_4(\rho)=b=\Pi_4(\tau)$.
That is, $\Pi_4(\Phi_{t_1}^V(\sigma))=b=\Pi_4(\Phi_{t_0}^V(\sigma))$.
So, by \lref{lem-single-crossing}, $t_1=t_0$.
Then $\tau=\Phi_{t_1}^V(\sigma)=\Phi_{t_0}^V(\sigma)=\rho$.
\end{proof}

\begin{lem}\wrlab{lem-single-crossing-refined}
Let $(V,I)\in\scrd$.
Let $\sigma\in\R^4$.
Let $b\in\R\backslash I$.
Let $t_0\in\R$.
Assume $\Pi_4(\Phi_{t_0}^V(\sigma))=b$.
Then both of the following are true:
\begin{itemize}
\item[(i)]For all $t>t_0$, we have $\Pi_4(\Phi_t^V(\sigma))>b$.
\item[(ii)]For all $t<t_0$, we have $\Pi_4(\Phi_t^V(\sigma))<b$.
\end{itemize}
\end{lem}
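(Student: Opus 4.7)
The plan is to reduce to the flow of $V_0$ on an appropriate half-space and then read off the conclusion from the explicit formula $\Pi_4(\Phi_t^{V_0}(\rho)) = (\Pi_4(\rho)) + t$ of \lref{lem-Pi4-effect-of-V0}. Since $b \in \R \backslash I$, either $b \ge a_I$ or $b \le -a_I$, and these two cases are handled symmetrically using parts (ii) and (iii) of \lref{lem-V-V0-agreement-for-D} respectively. In each case, one of (i) and (ii) follows from a direct computation; the other is proved by contradiction with essentially the same computation run backwards.

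Set $\sigma_0 := \Phi_{t_0}^V(\sigma)$, so $\Pi_4(\sigma_0) = b$. I first treat the case $b \ge a_I$, so that $\sigma_0 \in \R^3 \times [a_I, \infty)$. For (i), given $t > t_0$, since $t - t_0 \ge 0$, \lref{lem-V-V0-agreement-for-D}(ii) gives $\Phi_{t-t_0}^V(\sigma_0) = \Phi_{t-t_0}^{V_0}(\sigma_0)$, hence
$$\Pi_4(\Phi_t^V(\sigma)) \,\,=\,\, \Pi_4(\Phi_{t-t_0}^{V_0}(\sigma_0)) \,\,=\,\, b + (t-t_0) \,\,>\,\, b$$
by \lref{lem-Pi4-effect-of-V0}. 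For (ii), suppose toward a contradiction that some $t < t_0$ satisfied $\Pi_4(\Phi_t^V(\sigma)) \ge b \ge a_I$. Then $\tau := \Phi_t^V(\sigma)$ lies in $\R^3 \times [a_I,\infty)$, and since $t_0 - t \ge 0$, \lref{lem-V-V0-agreement-for-D}(ii) and \lref{lem-Pi4-effect-of-V0} give
$$b \,\,=\,\, \Pi_4(\sigma_0) \,\,=\,\, \Pi_4(\Phi_{t_0-t}^{V_0}(\tau)) \,\,=\,\, \Pi_4(\tau) + (t_0 - t) \,\,>\,\, \Pi_4(\tau) \,\,\ge\,\, b,$$
a contradiction.

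The case $b \le -a_I$ is entirely analogous: now $\sigma_0 \in \R^3 \times (-\infty,-a_I]$, and \lref{lem-V-V0-agreement-for-D}(iii) applies for nonpositive times. The direct computation (for $t < t_0$, with $t - t_0 \le 0$) proves (ii), and the contradiction argument (assuming some $t > t_0$ has $\Pi_4(\Phi_t^V(\sigma)) \le b$, so $\Phi_t^V(\sigma) \in \R^3 \times (-\infty,-a_I]$, and then flowing backwards by $t_0 - t \le 0$) proves (i). I expect no real obstacle beyond bookkeeping; the one point that requires care is matching the sign of the time increment $t - t_0$ (or $t_0 - t$) to the correct part of \lref{lem-V-V0-agreement-for-D}, since parts (ii) and (iii) each only apply for one sign of the time parameter.
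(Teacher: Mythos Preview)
Your proof is correct, but it takes a different route from the paper's. The paper proves (i) by defining $f(t)=\Pi_4(\Phi_t^V(\sigma))$, invoking \lref{lem-single-crossing} to get $f(\R\backslash\{t_0\})\subseteq\R\backslash\{b\}$, and then using connectedness of $f((t_0,\infty))$ together with the computation $f'(t_0)=1$ (from $V(\tau)=V_0(\tau)=(0,0,0,1)$ at $\tau=\Phi_{t_0}^V(\sigma)\notin I^4$) to rule out the wrong branch. You instead bypass \lref{lem-single-crossing} entirely and go straight to the half-space flow identities of \lref{lem-V-V0-agreement-for-D}, obtaining in the direct half the explicit formula $\Pi_4(\Phi_t^V(\sigma))=b+(t-t_0)$, which is slightly more than is needed. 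Your contradiction argument for the other half is essentially the proof of \lref{lem-single-crossing} itself, run in the relevant direction. The paper's approach is a bit more conceptual (connectedness plus a local derivative), while yours is more hands-on and self-contained; both are clean and of comparable length.
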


\begin{proof}
We prove only (i); the proof of (ii) is similar.
Define $f:\R\to\R$ by $f(t)=\Pi_4(\Phi_t^V(\sigma))$.
We wish to show that $f((t_0,\infty))\subseteq(b,\infty)$.

We have $f(t_0)=b$.
By \lref{lem-single-crossing},
we get $f(\R\backslash\{t_0\})\subseteq\R\backslash\{b\}$.
Then $f((t_0,\infty))\subseteq f(\R\backslash\{t_0\})\subseteq\R\backslash\{b\}=(-\infty,b)\cup(b,\infty)$.
Then, because $f((t_0,\infty))$ is connected, we have
$$\hbox{either}\qquad f((t_0,\infty))\subseteq(-\infty,b)\qquad
\hbox{or}\qquad f((t_0,\infty))\subseteq(b,\infty).$$
It therefore suffices to prove $f((t_0,\infty))\not\subseteq(-\infty,b)$.

Let $\tau:=\Phi_{t_0}^V(\sigma)$.
Then $\Pi_4(\tau)=f(t_0)=b\notin I$.
It follows that $\tau\in(\R^4)\backslash(I^4)$,
and so $V(\tau)=V_0(\tau)$.
Thus $V(\tau)=(0,0,0,1)$.
Then
$$(d/dt)_{t=t_0}[\Phi_t^V(\sigma)]\quad=\quad V(\Phi_{t_0}^V(\sigma))\quad=\quad V(\tau)\quad=\quad(0,0,0,1).$$
We have $f'(t_0)=(d/dt)_{t=t_0}[\Pi_4(\Phi_t^V(\sigma))]=\Pi_4((d/dt)_{t=t_0}[\Phi_t^V(\sigma)])$.
It follows that $f'(t_0)=\Pi_4(0,0,0,1)=1$.

Since $f(t_0)=b$ and $f'(t_0)>0$, it follows, for some $t>t_0$, that $f(t)>b$.
That is, $f((t_0,\infty))\not\subseteq(-\infty,b)$.
\end{proof}

\begin{cor}\wrlab{cor-single-crossing-refined}
Let $(V,I)\in\scrd$.
Let $\sigma\in\R^4$.
Let $b\in\R\backslash I$.
Let $t,t_0\in\R$.
Assume $\Pi_4(\Phi_{t_0}^V(\sigma))=b$.
Then both of the following are true:
\begin{itemize}
\item[(i)]$t>t_0\qquad\Leftrightarrow\qquad\Pi_4(\Phi_t^V(\sigma))>b$
\item[(ii)]$t<t_0\qquad\Leftrightarrow\qquad\Pi_4(\Phi_t^V(\sigma))<b$.
\end{itemize}
\end{cor}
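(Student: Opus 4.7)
The plan is to derive both biconditionals directly from \lref{lem-single-crossing-refined}, using the three-way trichotomy on the real line: for any $t\in\R$ one has exactly one of $t<t_0$, $t=t_0$, $t>t_0$, and correspondingly, by \lref{lem-single-crossing-refined} together with the hypothesis $\Pi_4(\Phi_{t_0}^V(\sigma))=b$, the value $\Pi_4(\Phi_t^V(\sigma))$ lies in exactly one of $(-\infty,b)$, $\{b\}$, $(b,\infty)$.

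First I would dispatch the forward implications in (i) and (ii): these are literally the content of parts (i) and (ii) of \lref{lem-single-crossing-refined}, so nothing new is needed.

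For the reverse implications, I would argue by contraposition. For (i), assume $\Pi_4(\Phi_t^V(\sigma))>b$; we must show $t>t_0$. If $t=t_0$, then $\Pi_4(\Phi_t^V(\sigma))=\Pi_4(\Phi_{t_0}^V(\sigma))=b$, contradicting the strict inequality. If $t<t_0$, then \lref{lem-single-crossing-refined}(ii) forces $\Pi_4(\Phi_t^V(\sigma))<b$, again a contradiction. Hence $t>t_0$. The argument for (ii) is symmetric, using the $t=t_0$ case and \lref{lem-single-crossing-refined}(i) in place of (ii).

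There is no real obstacle here; the corollary is a bookkeeping restatement of the preceding lemma once one notes that the three strict alternatives for $t$ versus $t_0$ are mapped by \lref{lem-single-crossing-refined} into the three strict alternatives for $\Pi_4(\Phi_t^V(\sigma))$ versus $b$, so each forward implication automatically yields its converse. I would write the whole proof in two short paragraphs, one per part, without further lemmas.
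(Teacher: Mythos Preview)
Your proposal is correct and mirrors the paper's proof essentially verbatim: the paper also quotes \lref{lem-single-crossing-refined} for the forward implications and, for the converse of (i), assumes $\Pi_4(\Phi_t^V(\sigma))>b$, rules out $t=t_0$ (since the value is not $b$) and rules out $t<t_0$ via \lref{lem-single-crossing-refined}(ii), concluding $t>t_0$; part (ii) is declared similar. The only cosmetic difference is that the paper packages the $t=t_0$ and $t<t_0$ cases into a single contradiction hypothesis $t\le t_0$, but the logic is identical.
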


\begin{proof}
We prove (i);
the proof of (ii) is similar.
By (i) of \lref{lem-single-crossing-refined},
we have $\Rightarrow$ of (i).
We wish to prove $\Leftarrow$ of (i).
Assume $\Pi_4(\Phi_t^V(\sigma))>b$
and assume, for a contradiction, that $t\le t_0$.

Since $\Pi_4(\Phi_t^V(\sigma))\ne b=\Pi_4(\Phi_{t_0}^V(\sigma))$,
we get $t\ne t_0$.
Then $t<t_0$.
Then, by (ii) of \lref{lem-single-crossing-refined},
$\Pi_4(\Phi_t^V(\sigma))<b$, contradiction.
\end{proof}

\begin{lem}\wrlab{lem-invariance-of-halfspaces}
Let $(V,I)\in\scrd$.
Let $b\in\R\backslash I$.
Let $H:=\R^3\times(-\infty,b)$.
Then $\Phi_{(-\infty,0]}^V(H)\subseteq H$.
\end{lem}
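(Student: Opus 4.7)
The plan is to unwind the definitions, fix $\rho\in H$ and $t\le 0$, and show $\Phi_t^V(\rho)\in H$, i.e., $\Pi_4(\Phi_t^V(\rho))<b$. So I would set up a continuous function $f:\R\to\R$ by $f(s):=\Pi_4(\Phi_s^V(\rho))$ and record the endpoint value $f(0)=\Pi_4(\rho)<b$, which is what $\rho\in H$ buys us.

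The proof is then by contradiction: suppose $f(t)\ge b$ for some $t\le 0$. Since $f(0)<b\le f(t)$ and $f$ is continuous, the Intermediate Value Theorem supplies some $t_0\in[t,0]$ with $f(t_0)=b$. The strict inequality $f(0)<b=f(t_0)$ forces $t_0\ne 0$, so $t_0<0$. Now the key input is that $b\in\R\backslash I$, which allows us to apply Corollary~\ref{cor-single-crossing-refined}(i) with the parameter $\sigma:=\rho$ and the time parameter $0$: it asserts $0>t_0\Leftrightarrow\Pi_4(\Phi_0^V(\rho))>b$. The left side holds, so $\Pi_4(\rho)>b$, contradicting $\rho\in H$. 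Hence no such $t$ exists and we are done.

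I expect essentially no obstacle here; the hard work has already been packaged into Corollary~\ref{cor-single-crossing-refined}, which encodes the ``flow outside $I^4$ moves monotonically upward in the fourth coordinate'' principle. The only mild subtlety is the case analysis around whether $t_0=0$ or $t_0<0$, which is resolved by the strict inequality $\Pi_4(\rho)<b$ coming from the open half-space $H$.
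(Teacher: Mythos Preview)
Your proof is correct and follows essentially the same approach as the paper: find a time $t_0$ with $\Pi_4(\Phi_{t_0}^V(\rho))=b$ via the Intermediate Value Theorem, then invoke Corollary~\ref{cor-single-crossing-refined} to derive a contradiction. Your version is in fact slightly more direct than the paper's, which shifts the basepoint to the crossing point $\sigma=\Phi_s^V(\tau)$ and applies the corollary twice, whereas you apply it once with $\sigma=\rho$ and $t=0$.
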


\begin{proof}
Let $\tau\in H$, $t_1\le0$, $\rho:=\Phi_{t_1}^V(\tau)$.
We wish to prove that $\rho\in H$.

If $t_1=0$, then $\rho=\tau\in H$, and we are done, so we assume $t_1<0$.

We have $\tau\in H$, so $\Pi_4(\tau)<b$.
If $b\notin\Pi_4(\Phi_\R^V(\tau))$,
then, by the Intermediate Value Theorem,
we have $\Pi_4(\phi_\R^V(\tau))\subseteq(-\infty,b)$,
from which we get
$\rho\in\Phi_\R^V(\tau)\subseteq\Pi_4^{-1}((-\infty,b))=H$,
and we are done.
We therefore assume that $b\in\Pi_4(\Phi_\R^V(\tau))$.

Fix $s\in\R$ such that $b=\Pi_4(\Phi_s^V(\tau))$.
Let $\sigma:=\Phi_s^V(\tau)$.
Let $t_0:=0$.
Then $\Pi_4(\Phi_{t_0}^V(\sigma))=\Pi_4(\sigma)=\Pi_4(\Phi_s^V(\tau))=b$.
Also, $\Phi_{-s}^V(\sigma)=\tau$.

We have $\Pi_4(\Phi_{-s}^V(\sigma))=\Pi_4(\tau)<b$.
Then, by \cref{cor-single-crossing-refined}(ii), we get $-s\le t_0$.
So $-s+t_1\le t_0+t_1$.
As $t_1<0$, $t_0+t_1<t_0$.
Then $-s+t_1\le t_0+t_1<t_0$.
Then, by \cref{cor-single-crossing-refined}(i),
$\Pi_4(\Phi_{-s+t_1}^V(\sigma))<b$.
Then $\rho=\Phi_{t_1}^V(\tau)=\Phi_{t_1}^V(\Phi_{-s}^V(\sigma))=\Phi_{-s+t_1}^V(\sigma)\in\Pi_4^{-1}((-\infty,b))=H$.
\end{proof}

\begin{lem}\wrlab{lem-trap-per-orb}
Let $(V,I)\in\scrd$ and let $\sigma\in\R^4$.
Assume that $(V,\sigma)$ is periodic.
Then $\Phi_\R^V(\sigma)\subseteq I^4$.
\end{lem}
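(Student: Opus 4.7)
The main idea is that $V$ coincides with $V_0$ on $(\R^4)\backslash(I^4)$, and $V_0$ translates the fourth coordinate at unit speed; hence an orbit that sits on a fixed side of $I^4$ in $\Pi_4$ for an infinite time interval must have $\Pi_4$ diverging to $\pm\infty$, which is incompatible with periodicity. Since periodicity is inherited along orbits --- $\Phi_n^V(\sigma)=\sigma$ implies $\Phi_n^V(\Phi_{t_0}^V(\sigma))=\Phi_{t_0}^V(\sigma)$ for every $t_0\in\R$ --- it suffices to show that every $V$-periodic point of $\R^4$ lies in $I^4$. So I would suppose, for a contradiction, that some $\rho\in(\R^4)\backslash(I^4)$ satisfies $\Phi_n^V(\rho)=\rho$ for some integer $n\ne0$, and, replacing $n$ by $-n$ if necessary, assume $n>0$.

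Next I would split on the sign of $\Pi_4(\rho)$. If $\Pi_4(\rho)\ge0$, then \lref{lem-either-up-outside-or-down-outside}(i) gives $\Phi_{[0,\infty)}^{V_0}(\rho)\subseteq(\R^4)\backslash(I^4)$. Since $V=V_0$ on $(\R^4)\backslash(I^4)$, applying \cref{cor-vect-agree-implies-flow-agree} (with the corollary's ``$V$'' replaced by $V_0$, ``$W$'' by $V$, $A:=\{\rho\}$, $B:=(\R^4)\backslash(I^4)$, $N:=[0,\infty)$) yields $\Phi_n^V(\rho)=\Phi_n^{V_0}(\rho)$; combining this with \lref{lem-Pi4-effect-of-V0} gives $\Pi_4(\rho)=\Pi_4(\Phi_n^V(\rho))=\Pi_4(\rho)+n$, forcing $n=0$, a contradiction. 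If instead $\Pi_4(\rho)\le0$, the symmetric argument using \lref{lem-either-up-outside-or-down-outside}(ii), $N:=(-\infty,0]$, and the observation that $\Phi_n^V(\rho)=\rho$ also yields $\Phi_{-n}^V(\rho)=\rho$ produces the same contradiction from below.

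There is no real obstacle here: the proof is a direct bookkeeping exercise that chains together the ``$V$-agrees-with-$V_0$-on-good-sets'' machinery already built up in \secref{sect-gen-deter}. The only mild care required is in the case split on $\Pi_4(\rho)$, so that the direction of time used matches the side on which \lref{lem-either-up-outside-or-down-outside} confines the $V_0$-orbit to $(\R^4)\backslash(I^4)$.
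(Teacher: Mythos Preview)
Your proof is correct and follows essentially the same approach as the paper: reduce to showing that any $V$-periodic point $\rho$ lies in $I^4$, and derive a contradiction from $\rho\notin I^4$ by comparing $\Phi_{\pm n}^V(\rho)$ with $\Phi_{\pm n}^{V_0}(\rho)$ and invoking \lref{lem-Pi4-effect-of-V0}. The only cosmetic difference is that the paper organizes the contradiction via the three half-spaces of \lref{lem-V-V0-agreement-for-D}, whereas you package the same reasoning through \lref{lem-either-up-outside-or-down-outside} and \cref{cor-vect-agree-implies-flow-agree}.
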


\begin{proof}
Fix $t\in\R$.
Let $\rho:=\Phi_t^V(\sigma)$.
We wish to show that $\rho\in I^4$.

As $(V,\sigma)$ is periodic,
fix an integer $n\ne0$ such that $\Phi_n^V(\sigma)=\sigma$.
Then
$$\Phi_n^V(\rho)\,\,=\,\,\Phi_n^V(\Phi_t^V(\sigma))\,\,=\,\,
\Phi_t^V(\Phi_n^V(\sigma))\,\,=\,\,\Phi_t^V(\sigma)\,\,=\,\,\rho.$$
Then $\rho=\Phi_{-n}^V(\Phi_n^V(\rho))=\Phi_{-n}^V(\rho)$.
Since $n\ne0\ne-n$, by \lref{lem-Pi4-effect-of-V0},
we have $\Pi_4(\Phi_n^{V_0}(\rho))\ne\Pi_4(\rho)\ne\Pi_4(\Phi_{-n}^{V_0}(\rho))$.
Then $\Phi_n^{V_0}(\rho)\ne\rho\ne\Phi_{-n}^{V_0}(\rho)$.

Then
$\Phi_n^V(\rho)=\rho\ne\Phi_n^{V_0}(\rho)$
and
$\Phi_{-n}^V(\rho)=\rho\ne\Phi_{-n}^{V_0}(\rho)$.
Then
$$\Phi_n^V(\rho)\ne\Phi_n^{V_0}(\rho),\qquad
\Phi_{|n|}^V(\rho)\ne\Phi_{|n|}^{V_0}(\rho),\qquad
\Phi_{-|n|}^V(\rho)\ne\Phi_{-|n|}^{V_0}(\rho).$$

Let $S_1:=I^3\times\R$.
If $\rho\notin S_1$, then, by \lref{lem-V-V0-agreement-for-D}(i), for all $t\in\R$,
$\Phi_t^V(\rho)=\Phi_t^{V_0}(\rho)$,
and, in particular, $\Phi_n^V(\rho)=\Phi_n^{V_0}(\rho)$, contradiction.
Thus $\rho\in S_1$.
Let $S_2:=\R^3\times(-\infty,a_I)$.
If $\rho\notin S_2$, then, by \lref{lem-V-V0-agreement-for-D}(ii), for all $t\ge0$,
$\Phi_t^V(\rho)=\Phi_t^{V_0}(\rho)$,
and, in particular, $\Phi_{|n|}^V(\rho)=\Phi_{|n|}^{V_0}(\rho)$, contradiction.
Thus $\rho\in S_2$.
Let $S_3:=\R^3\times(-a_I,\infty)$.
If $\rho\notin S_3$, then, by \lref{lem-V-V0-agreement-for-D}(iii), for all $t\le0$,
$\Phi_t^V(\rho)=\Phi_t^{V_0}(\rho)$,
and, in particular, $\Phi_{-|n|}^V(\rho)=\Phi_{-|n|}^{V_0}(\rho)$, contradiction.
Thus $\rho\in S_3$.

Then $\rho\in S_1\cap S_2\cap S_3=I^4$.
\end{proof}

\section{Downflow, upflow and timeflow\wrlab{sect-dtuflow}}

Let $(V,I)\in\scrd$.
By \cref{cor-U-open}(i), $\scru(V)$ is open in $\R^4$.
Also, as $I\in\scrc$, $I^4$ is open in $\R^4$.
Then, since $\scru_\circ(V,I)=(\scru(V))\cap(I^4)$, we see that
$\scru_\circ(V,I)$ is open in $\R^4$.
By \lref{lem-defn-uf-df}, for all $\sigma\in\barscru(V,I)$,
each of
$$(\Phi_\R^V(\sigma))\,\cap\,(B(I))\qquad\hbox{and}\qquad(\Phi_\R^V(\sigma))\,\cap\,(T(I))$$
has exactly one element.
The {\bf downflow map} of $(V,I)$ is the function
$DF_I^V:\barscru(V,I)\to B(I)$
which maps each $\sigma\in\barscru(V,I)$ to the unique element in the set
$(\Phi_\R^V(\sigma))\cap(B(I))$.
This map is a kind of projection, in the sense that,
for all $\sigma\in\scru_B(V,I)$, we have $DF_I^V(\sigma)=\sigma$.
The {\bf upflow map} of $(V,I)$ is the function
$UF_I^V:\barscru(V,I)\to T(I)$
which maps each $\sigma\in\barscru(V,I)$ to the unique element in the set
$(\Phi_\R^V(\sigma))\cap(T(I))$.

By the definitions of $DF_I^V$ and $UF_I^V$,
for each $\sigma\in\barscru(V,I)$,
there exists $t_\sigma\in\R$ such that
$\Phi_{t_\sigma}^V(DF_I^V(\sigma))=UF_I^V(\sigma)$;
moreover, by \lref{lem-single-crossing}, this $t_\sigma$ is unique.
By \cref{cor-single-crossing-refined}(i),
for all $\sigma\in\barscru(V,I)$, $t_\sigma>0$.
The {\bf timeflow map} of $(V,I)$ is the function
$TF_I^V:\barscru(V,I)\to(0,\infty)$
defined by $TF_I^V(\sigma)=t_\sigma$.
Then $TF_I^V$ is $C^0$ on $\barscru(V,I)$
and is $C^\infty$ on $\scru_\circ(V,I)$.

\begin{lem}\wrlab{lem-SU-and-Phi-V0}
Let $I\in\scri$.
Then
\begin{itemize}
\item[(i)]$\scru_B(V_0,I)=B(I)$ \qquad and
\item[(ii)]on $B(I)$, we have $SU_I=\Phi_{2a_I}^{V_0}=UF_I^{V_0}$.
\end{itemize}
\end{lem}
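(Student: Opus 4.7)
The plan is to unwind the definitions and observe that for $V_0$ everything about the flow is completely explicit, since $\Phi_t^{V_0}(w,x,y,z) = (w,x,y,z+t)$ as noted at the start of \secref{sect-res-V0}.

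For (i), let $\sigma = (w,x,y,-a_I) \in B(I)$, so $(w,x,y) \in \barI^3$. Then $\Pi_4(\Phi_t^{V_0}(\sigma)) = -a_I + t$, which exhausts $\R$ as $t$ ranges over $\R$. Hence $\sigma \in \scru(V_0)$, giving $B(I) \subseteq \scru(V_0) \cap B(I) = \scru_B(V_0,I)$. The reverse inclusion is immediate from the definition of $\scru_B(V_0,I)$.

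For (ii), fix $\sigma = (w,x,y,-a_I) \in B(I)$. The straight-up computation
\[
\Phi_{2a_I}^{V_0}(\sigma) = (w,x,y,\,-a_I+2a_I) = (w,x,y,a_I) = SU_I(\sigma)
\]
gives the first equality. For the second, note that $B(I) \subseteq \barI^4$ and, by (i), $B(I) \subseteq \scru(V_0)$, so $\sigma \in \barscru(V_0,I)$, making $UF_I^{V_0}(\sigma)$ well-defined. Since $(w,x,y) \in \barI^3$, the point $SU_I(\sigma) = (w,x,y,a_I)$ lies in $T(I)$, and it also lies in $\Phi_\R^{V_0}(\sigma)$ by the displayed computation (take $t = 2a_I$). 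Thus $SU_I(\sigma) \in (\Phi_\R^{V_0}(\sigma)) \cap (T(I))$, and by the uniqueness assertion of \lref{lem-defn-uf-df} this intersection equals $\{UF_I^{V_0}(\sigma)\}$, forcing $SU_I(\sigma) = UF_I^{V_0}(\sigma)$.

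There is no real obstacle: the whole statement is a bookkeeping check that the explicit vertical translation $\Phi_t^{V_0}$ does exactly what the maps $SU_I$ and $UF_I^{V_0}$ are defined to do. The only point worth double-checking is that the hypotheses of \lref{lem-defn-uf-df} apply, i.e.\ that $(V_0, I) \in \scrd$ (which holds trivially since $V_0 = V_0$ on $(\R^4)\setminus(I^4)$) and that $B(I) \subseteq \barscru(V_0,I)$ (handled above).
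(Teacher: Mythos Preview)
Your proof is correct and follows essentially the same approach as the paper. The only cosmetic difference is in (i): the paper invokes the already-noted fact $\scru(V_0)=\R^4$ to get $\scru_B(V_0,I)=(\R^4)\cap(B(I))=B(I)$ in one line, whereas you re-verify undeterredness pointwise on $B(I)$; and in (ii) the paper appeals directly to the definition of $UF_I^{V_0}$ as the unique element of $(\Phi_\R^{V_0}(\rho))\cap(T(I))$ rather than citing \lref{lem-defn-uf-df}, but this is the same uniqueness.
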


\begin{proof}
Since $\scru(V_0)=\R^4$, we have
$\scru_B(V_0,I)=(\R^4)\cap(B(I))=B(I)$.
Let $\rho\in B(I)$.
We wish to prove that
$SU_I(\rho)=\Phi_{2a_I}^{V_0}(\rho)=UF_I^{V_0}(\rho)$.

Choose $w,x,y\in\barI$ such that $\rho=(w,x,y,-a_I)$.
Then we have $SU_I(\rho)=(w,x,y,a_I)$.
Also, we have
$$\Phi_{2a_I}^{V_0}(\rho)\quad=\quad(w,x,y,-a_I+2a_I)\quad=\quad(w,x,y,a_I).$$
It therefore remains to prove that
$UF_I^{V_0}(\rho)=(w,x,y,a_I)$.

We have
$(w,x,y,a_I)=\Phi_{2a_I}^{V_0}(\rho)\in\Phi_\R^{V_0}(\rho)$
and $(w,x,y,a_I)\in T(I)$.
Then $(w,x,y,a_I)\in(\Phi_\R^{V_0}(\rho))\cap(T(I))$,
so $UF_I^{V_0}(\rho)=(w,x,y,a_I)$.
\end{proof}

\begin{lem}\wrlab{lem-undeterred-orbit}
Let $(V,I)\in\scrd$, let $\sigma\in\scru_B^\circ(V,I)$
and let $t_0:=TF_I^V(\sigma)$.
Then, for all $t\in\R$, we have:
\quad $[\,\Phi_t^V(\sigma)\in I^4\,]\,\,\Leftrightarrow\,\,[\,0<t<t_0\,]$.
\end{lem}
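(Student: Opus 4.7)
The plan is to reduce the claim to a statement about the fourth coordinate of $\Phi_t^V(\sigma)$ and then apply the single-crossing corollary (\cref{cor-single-crossing-refined}) at the two endpoints $t=0$ and $t=t_0$.

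First I would unpack the hypotheses. Since $\sigma\in\scru_B^\circ(V,I)\subseteq B(I)\cap\scru(V)$, the ``projection'' property of $DF_I^V$ noted in \secref{sect-dtuflow} gives $DF_I^V(\sigma)=\sigma$. By definition of $TF_I^V$, this makes $\Phi_{t_0}^V(\sigma)=UF_I^V(\sigma)\in T(I)$. Write $\sigma=(w,x,y,-a_I)$ with $(w,x,y)\in I^3$, so that $\Pi_4(\sigma)=-a_I$ and $\Pi_4(\Phi_{t_0}^V(\sigma))=a_I$.

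Next I would separate the ``horizontal'' and ``vertical'' parts. By \lref{lem-I3-times-R-invar}, $I^3\times\R$ is $V$-invariant, so $\Phi_t^V(\sigma)\in I^3\times\R$ for all $t\in\R$. Therefore, setting $f(t):=\Pi_4(\Phi_t^V(\sigma))$, one has the equivalence
\[
\Phi_t^V(\sigma)\in I^4\qquad\Longleftrightarrow\qquad f(t)\in I=(-a_I,a_I).
\]
This reduces the problem to showing that $f(t)\in(-a_I,a_I)$ if and only if $0<t<t_0$.

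Now I would apply \cref{cor-single-crossing-refined} twice. First, with $b:=-a_I\in\R\backslash I$ and crossing time $0$ (since $f(0)=-a_I$): one gets $f(t)>-a_I\Leftrightarrow t>0$ and $f(t)<-a_I\Leftrightarrow t<0$. Second, with $b:=a_I\in\R\backslash I$ and crossing time $t_0$ (since $f(t_0)=a_I$): one gets $f(t)<a_I\Leftrightarrow t<t_0$ and $f(t)>a_I\Leftrightarrow t>t_0$. Intersecting the two ``open'' directions yields
\[
f(t)\in(-a_I,a_I)\qquad\Longleftrightarrow\qquad(t>0)\ \text{and}\ (t<t_0)\qquad\Longleftrightarrow\qquad 0<t<t_0,
\]
which combined with the coordinate reduction finishes the proof.

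I do not expect any real obstacle here; the only subtlety is remembering that $\sigma$ lies in $B_\circ(I)$ (so its horizontal part is in $I^3$, which is what makes \lref{lem-I3-times-R-invar} convert the condition $\Phi_t^V(\sigma)\in I^4$ into a condition on $f(t)$ alone), and that both $-a_I$ and $a_I$ lie in $\R\backslash I$, which is exactly what is required to invoke \cref{cor-single-crossing-refined} at each endpoint.
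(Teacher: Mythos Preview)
Your proof is correct and follows the same initial reduction as the paper: both argue that, since $\sigma\in B_\circ(I)\subseteq I^3\times\R$, \lref{lem-I3-times-R-invar} gives $\Phi_\R^V(\sigma)\subseteq I^3\times\R$, so the condition $\Phi_t^V(\sigma)\in I^4$ reduces to the one-variable condition $f(t):=\Pi_4(\Phi_t^V(\sigma))\in(-a_I,a_I)$, with $f(0)=-a_I$ and $f(t_0)=a_I$.

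The two proofs diverge in the final step. The paper invokes \lref{lem-single-crossing} to get $f^{-1}(-a_I)=\{0\}$ and $f^{-1}(a_I)=\{t_0\}$, then uses \lref{lem-undeterred-criterion}(a\,$\Rightarrow$c) to obtain $f(t)\to\pm\infty$ as $t\to\pm\infty$, and finally appeals to the general real-analysis lemma \lref{lem-single-preimages-give-interval-preimage} to conclude $f^{-1}((-a_I,a_I))=(0,t_0)$. You instead apply \cref{cor-single-crossing-refined} twice, once at $b=-a_I$ (crossing time $0$) and once at $b=a_I$ (crossing time $t_0$), and simply intersect the resulting equivalences. Your route is shorter and does not require establishing the limits of $f$ at $\pm\infty$; the paper's route is more ``first principles'' in that it goes back to the bare single-crossing lemma and an IVT-style interval argument rather than the refined corollary. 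Both are valid, and yours is arguably the more efficient packaging of the same underlying facts.
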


\begin{proof}
Define $\psi:\R\to\R^4$ by $\psi(t)=\Phi_t^V(\sigma)$.
We wish to prove that $\psi^{-1}(I^4)=(0,t_0)$.
Define $f:\R\to\R$ by $f(t)=\Pi_4(\psi(t))$.
Because $\sigma\in\scru_B^\circ(V,I)\subseteq B_\circ(I)\subseteq I^3\times\R$,
by \lref{lem-I3-times-R-invar}, we get
$\psi(\R)\subseteq I^3\times\R$.
Then $\psi^{-1}(I^4)=f^{-1}(I)$.
We wish to prove that $f^{-1}(I)=(0,t_0)$.

We have $\sigma\in B_\circ(I)\subseteq\R^3\times\{-a_I\}$,
so $\Pi_4(\sigma)=-a_I$.
Also, $\psi(0)=\sigma$.
Then $f(0)=\Pi_4(\psi(0))=\Pi_4(\sigma)=-a_I$.

We have
$\sigma\in\scru_B^\circ(V,I)\subseteq\scru_B(V,I)$,
so $\sigma=DF_I^V(\sigma)$.
Therefore, because $t_0=TF_I^V(\sigma)$,
we get $\Phi_{t_0}^V(\sigma)=UF_I^V(\sigma)$.
It follows that $\Phi_{t_0}^V(\sigma)\in T(I)\subseteq\R^3\times\{a_I\}$.
Then $f(t_0)=\Pi_4(\Phi_{t_0}^V(\sigma))=a_I$.

By \lref{lem-single-crossing},
$f^{-1}(-a_I)$ and $f^{-1}(a_I)$ both have at most one element.
So, since $0\in f^{-1}(-a_I)$ and since $t_0\in f^{-1}(a_I)$,
it follows that $f^{-1}(-a_I)=\{0\}$ and $f^{-1}(a_I)=\{t_0\}$.
We have $\sigma\in\scru_B^\circ(V,I)\subseteq\scru(V)$, so,
by \lref{lem-undeterred-criterion}(a\,$\Rightarrow$c), we have
$\displaystyle{\lim_{t\to\infty}\,[f(t)]=\infty}$ and
$\displaystyle{\lim_{t\to-\infty}\,[f(t)]=-\infty}$.
Then, by \lref{lem-single-preimages-give-interval-preimage},
we have $f^{-1}((-a_I,a_I))=(0,t_0)$.
That is, we have $f^{-1}(I)=(0,t_0)$, as desired.
\end{proof}

\begin{lem}\wrlab{lem-agree-inside-I4}
Let $(V,I)\in\scrd$ and let $W\in\scrc$.
Assume that $W=V$ on~$I^4$.
Let $\sigma\in\scru_B^\circ(V,I)$.
Let $t_0:=TF_I^V(\sigma)$.
Then, for all $t\in[0,t_0]$, we have $\Phi_t^W(\sigma)=\Phi_t^V(\sigma)$.
\end{lem}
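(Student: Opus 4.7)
The plan is to reduce the statement directly to \cref{cor-vect-agree-implies-flow-agree}, using \lref{lem-undeterred-orbit} to locate the $V$-orbit of $\sigma$ inside $I^4$ on the relevant time interval.

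First, I would set $A:=\{\sigma\}$, $B:=I^4$, and $N:=(0,t_0)$. The closures are $\barA=\{\sigma\}$ and $\barN=[0,t_0]$, and in particular $0\in\barN$, as \cref{cor-vect-agree-implies-flow-agree} requires. The hypothesis $W=V$ on $B$ is exactly the assumption of the lemma. The key remaining hypothesis to verify is that $\Phi_N^V(A)\subseteq B$, i.e., $\Phi_{(0,t_0)}^V(\sigma)\subseteq I^4$. This, however, is immediate from \lref{lem-undeterred-orbit}, which asserts that for all $t\in\R$, $\Phi_t^V(\sigma)\in I^4$ iff $0<t<t_0$; restricting to $t\in(0,t_0)$ gives exactly the required containment.

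With all hypotheses of \cref{cor-vect-agree-implies-flow-agree} in place, the corollary yields, for all $t\in\barN=[0,t_0]$, $\Phi_t^W=\Phi_t^V$ on $\barA=\{\sigma\}$, which is precisely the conclusion $\Phi_t^W(\sigma)=\Phi_t^V(\sigma)$ for all $t\in[0,t_0]$.

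There is essentially no obstacle here: the work has all been absorbed into the earlier results. The only small point worth noting is the passage from the open interval $N=(0,t_0)$ (on which \lref{lem-undeterred-orbit} gives us cleanly that the orbit sits inside $I^4$) to the closed interval $[0,t_0]$ (where we want the agreement of flows); this is automatic because \cref{cor-vect-agree-implies-flow-agree} is stated in exactly this closure-taking form, so that the values at the endpoints $t=0$ and $t=t_0$ come along for free by continuity.
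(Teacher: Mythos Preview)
Your proof is correct and is essentially identical to the paper's own proof: the paper invokes \lref{lem-undeterred-orbit} to get $\Phi_{(0,t_0)}^V(\sigma)\subseteq I^4$ and then applies \cref{cor-vect-agree-implies-flow-agree}. You have simply made explicit the choices $A=\{\sigma\}$, $B=I^4$, $N=(0,t_0)$ that the paper leaves to the reader.
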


\begin{proof}
By \lref{lem-undeterred-orbit},
for all $t\in(0,t_0)$, we have 
$\Phi_t^V(\sigma)\subseteq I^4$.
The result then follows from \cref{cor-vect-agree-implies-flow-agree}.
\end{proof}

\begin{lem}\wrlab{lem-UF-SU-agreement-orbit-agreement}
Let $(V,I)\in\scrd$.
Then the following are equivalent:
\begin{itemize}
\item[(a)]$UF_I^V=SU_I$ on $\scru_B(V,I)$.
\item[(b)]For all $\sigma\in\scru_B^\circ(V,I)$, \quad $(\Phi_\R^V(\sigma))\cup(I^4)\,=\,(\Phi_\R^{V_0}(\sigma))\cup(I^4)$.
\item[(c)]For all $\sigma\in(\scru(V))\backslash(I^4)$, \quad $(\Phi_\R^V(\sigma))\cup(I^4)\,=\,(\Phi_\R^{V_0}(\sigma))\cup(I^4)$.
\end{itemize}
\end{lem}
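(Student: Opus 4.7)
The plan is to establish (c)\,$\Rightarrow$\,(a), (a)\,$\Leftrightarrow$\,(b), and (a)\,$\Rightarrow$\,(c). Throughout, write $L_\rho:=\{(\Pi_1(\rho),\Pi_2(\rho),\Pi_3(\rho))\}\times\R=\Phi_\R^{V_0}(\rho)$ for the vertical line through a point $\rho\in\R^4$. The orbit-agreement displayed in (b) and (c) is plainly equivalent to $\Phi_\R^V(\rho)\setminus I^4=L_\rho\setminus I^4$, a reformulation I will use throughout.

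For (c)\,$\Rightarrow$\,(a), given $\sigma\in\scru_B(V,I)$, note that $B(I)\cap I^4=\emptyset$ (as $-a_I\notin I$), so $\sigma\in\scru(V)\setminus I^4$, and (c) yields $\Phi_\R^V(\sigma)\setminus I^4=L_\sigma\setminus I^4$. Since $T(I)\cap I^4=\emptyset$ and $UF_I^V(\sigma)\in\Phi_\R^V(\sigma)\cap T(I)$, we get $UF_I^V(\sigma)\in L_\sigma\cap T(I)=\{SU_I(\sigma)\}$.

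For (a)\,$\Leftrightarrow$\,(b), first observe that if $\sigma\in\scru_B(V,I)\setminus\scru_B^\circ(V,I)$, then the first three coordinates of $\sigma$ lie in $\barI^3\setminus I^3$, so $L_\sigma\subseteq\R^4\setminus I^4$, $V=V_0$ on $L_\sigma$, and \lref{lem-SU-and-Phi-V0} forces $UF_I^V(\sigma)=SU_I(\sigma)$ automatically; thus (a) reduces to its restriction to $\scru_B^\circ(V,I)$. Next, for $\sigma=(w,x,y,-a_I)\in\scru_B^\circ(V,I)$, set $t_0:=TF_I^V(\sigma)>0$; by \lref{lem-undeterred-orbit}, $\Phi_t^V(\sigma)\in I^4$ iff $t\in(0,t_0)$. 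Using \lref{lem-V-V0-agreement-for-D}(iii) for $t\le 0$, and \lref{lem-V-V0-agreement-for-D}(ii) from the point $\tau:=UF_I^V(\sigma)=(w',x',y',a_I)$ for $t-t_0\ge0$ (with $(w',x',y')\in I^3$ by \lref{lem-I3-times-R-invar}), one computes
\[\Phi_\R^V(\sigma)\setminus I^4\,=\,\bigl(\{(w,x,y)\}\times(-\infty,-a_I]\bigr)\,\cup\,\bigl(\{(w',x',y')\}\times[a_I,\infty)\bigr),\]
which equals $L_\sigma\setminus I^4$ iff $(w',x',y')=(w,x,y)$ iff $UF_I^V(\sigma)=SU_I(\sigma)$. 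Hence (a) at $\sigma$ is equivalent to (b) at $\sigma$, so (a)\,$\Leftrightarrow$\,(b).

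For (a)\,$\Rightarrow$\,(c), given $\sigma=(w,x,y,z)\in\scru(V)\setminus I^4$: if $(w,x,y)\notin I^3$ then $L_\sigma\cap I^4=\emptyset$, $V=V_0$ on $L_\sigma$, and $\Phi_\R^V(\sigma)=L_\sigma$, so (c) is immediate. Otherwise $(w,x,y)\in I^3$ and $z\notin I$. If $z\le-a_I$, apply \cref{cor-vect-agree-implies-flow-agree} with $B=\R^3\times(-\infty,-a_I]$ (where $V=V_0$) to obtain $\sigma':=\Phi_{-a_I-z}^V(\sigma)=(w,x,y,-a_I)\in\scru_B^\circ(V,I)$; then $\Phi_\R^V(\sigma)=\Phi_\R^V(\sigma')$ and $L_\sigma=L_{\sigma'}$, so (b) at $\sigma'$ transfers to $\sigma$. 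If $z\ge a_I$, the symmetric use of \cref{cor-vect-agree-implies-flow-agree} with $B=\R^3\times[a_I,\infty)$ gives $\sigma':=\Phi_{a_I-z}^V(\sigma)=(w,x,y,a_I)\in T_\circ(I)\cap\scru(V)\subseteq\barscru(V,I)$. Let $\sigma'':=DF_I^V(\sigma')\in B(I)\cap\scru(V)$; by \lref{lem-I3-times-R-invar}, in fact $\sigma''\in\scru_B^\circ(V,I)$, and by construction $UF_I^V(\sigma'')=\sigma'$. Applying (a) at $\sigma''$ forces $\sigma'=SU_I(\sigma'')$, so the first three coordinates of $\sigma''$ equal $(w,x,y)$, whence $L_{\sigma''}=L_\sigma$; then (b) at $\sigma''$ delivers $\Phi_\R^V(\sigma)\setminus I^4=L_\sigma\setminus I^4$. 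The main obstacle is this last sub-case: there is no forward $V$-flow from $\sigma$ directly into $\scru_B^\circ(V,I)$, so one must flow backward to $T_\circ(I)$, apply the $V$-downflow $DF_I^V$ to descend to $B_\circ(I)$, and invoke (a) specifically to certify that this round trip preserves the vertical line through $\sigma$.
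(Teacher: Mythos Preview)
Your argument is correct. The paper proves the cycle (a)$\Rightarrow$(b)$\Rightarrow$(c)$\Rightarrow$(a); you instead establish (c)$\Rightarrow$(a), the pointwise biconditional (a)$\Leftrightarrow$(b), and then (a)$\Rightarrow$(c). The (c)$\Rightarrow$(a) arguments coincide. Your (a)$\Leftrightarrow$(b) is a bit sharper than the paper's one-way (a)$\Rightarrow$(b): you observe that for each fixed $\sigma\in\scru_B^\circ(V,I)$ the two conditions are equivalent \emph{at that point}, and that on the boundary $\scru_B(V,I)\setminus\scru_B^\circ(V,I)$ condition (a) is automatic. The main divergence is in how (c) is reached. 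The paper's (b)$\Rightarrow$(c) avoids your case split on $z$: given $\sigma\in\scru(V)\setminus I^4$ with first three coordinates in $I^3$, it uses $\Pi_4(\Phi_\R^V(\sigma))=\R$ to pick $t_0$ with $\sigma_0:=\Phi_{t_0}^V(\sigma)\in B_\circ(I)$ directly, then applies (b) at $\sigma_0$; since $\sigma\in(\Phi_\R^V(\sigma_0))\setminus I^4=L_{\sigma_0}\setminus I^4$, the identification $L_{\sigma_0}=L_\sigma$ drops out immediately, with no need to treat $z\ge a_I$ separately. Your route is more explicit about where each piece of the orbit lies, at the cost of the extra descent through $DF_I^V$ and a second appeal to (a) in the $z\ge a_I$ sub-case; the paper's route is shorter but leans on (b) itself to recover the vertical line.
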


\begin{proof}
{\it Proof of (a\,$\Rightarrow$b):}
Assume that (a) is true.
Fix $\sigma\in\scru_B^\circ(V,I)$.
Let $S:=\Phi_\R^V(\sigma)$ and $S_0:=\Phi_\R^{V_0}(\sigma)$.
We wish to prove $S\cup(I^4)=S_0\cup(I^4)$.

We have $\sigma\in\scru_B^\circ(V,I)\subseteq\scru_B(V,I)$
and $\sigma\in\scru_B^\circ(V,I)\subseteq B(I)$.
Let
$$\tau:=SU_I(\sigma),\qquad t:=TF_I^V(\sigma),\qquad t_0:=TF_I^{V_0}(\sigma)$$
By (a), $\tau=UF_I^V(\sigma)$.
By \lref{lem-SU-and-Phi-V0}, we have $SU_I(\sigma)=UF_I^{V_0}(\sigma)$,
{\it i.e.}, $\tau=UF_I^{V_0}(\sigma)$.
Since $\sigma\in\scru_B(V,I)$, we get $\sigma=DF_I^V(\sigma)=DF_I^{V_0}(\sigma)$.

We have $\sigma=DF_I^V(\sigma)$ and $\tau=UF_I^V(\sigma)$
and $t=TF_I^V(\sigma)$.
It follows that $\tau=\Phi_t^V(\sigma)$.
We have $\sigma=DF_I^{V_0}(\sigma)$ and $\tau=UF_I^{V_0}(\sigma)$.
Moreover, we have $t_0=TF_I^{V_0}(\sigma)$.
It follows that $\tau=\Phi_{t_0}^{V_0}(\sigma)$.
Let
\begin{itemize}
\item[]$A:=\Phi_{(-\infty,0]}^V(\sigma)$,\quad$B:=\Phi_{(0,t)}^V(\sigma)$,\quad$C:=\Phi_{[t,\infty)}^V(\sigma)$,
\item[]$A_0:=\Phi_{(-\infty,0]}^{V_0}(\sigma)$,\quad$B_0:=\Phi_{(0,t_0)}^{V_0}(\sigma)$,\quad$C_0:=\Phi_{[t_0,\infty)}^{V_0}(\sigma)$.
\end{itemize}
By \lref{lem-undeterred-orbit}, we have $B\subseteq I^4$, so,
since $S=A\cup B\cup C$,
$$S\,\cup\,(I^4)\quad=\quad(A\,\cup\,B\,\cup\,C)\,\cup\,(I^4)\quad=\quad A\,\cup\,C\,\cup\,(I^4).$$
By \lref{lem-undeterred-orbit}, we have $B_0\subseteq I^4$, so,
since $S_0=A_0\cup B_0\cup C_0$,
$$S_0\,\cup\,(I^4)\quad=\quad(A_0\,\cup\,B_0\,\cup\,C_0)\,\cup\,(I^4)\quad=\quad A_0\,\cup\,C_0\,\cup\,(I^4).$$
It therefore suffices to show that $A=A_0$ and that $C=C_0$.

We have $\sigma\in B(I)\subseteq\R^3\times(-\infty,a_I]$.
Thus, by \lref{lem-V-V0-agreement-for-D}(iii),
we have $\Phi_{(-\infty,0]}^V(\sigma)=\Phi_{(-\infty,0]}^{V_0}(\sigma)$, {\it i.e.}, $A=A_0$.

We have $\tau=UF_I^V(\sigma)\in T(I)\subseteq\R^3\times[a_I,\infty)$.
Thus, by \lref{lem-V-V0-agreement-for-D}(ii),
we have $\Phi_{[0,\infty)}^V(\tau)=\Phi_{[0,\infty)}^{V_0}(\tau)$.
So, since $\tau=\Phi_t^V(\sigma)=\Phi_{t_0}^{V_0}(\sigma)$, this yields
$\Phi_{[t,\infty)}^V(\sigma)=\Phi_{[t_0,\infty)}^{V_0}(\sigma)$, {\it i.e.}, $C=C_0$.
{\it End of proof of (a\,$\Rightarrow$b).}

{\it Proof of (b\,$\Rightarrow$c):}
Assume that (b) is true.
Let $\sigma\in\scru(V)$ and assume that $\sigma\notin I^4$.
We wish to show that
$(\Phi_\R^V(\sigma))\cup(I^4)=(\Phi_\R^{V_0}(\sigma))\cup(I^4)$.

If $\sigma\notin I^3\times\R$,
{\it i.e.}, if $\sigma\in[(\R^3)\backslash(I^3)]\times\R$,
then, by \lref{lem-V-V0-agreement-for-D}(i),
we have $\Phi_\R^V(\sigma)=\Phi_\R^{V_0}(\sigma)$,
and so $(\Phi_\R^V(\sigma))\cup(I^4)=(\Phi_\R^{V_0}(\sigma))\cup(I^4)$,
and we are done.
We therefore assume that $\sigma\in I^3\times\R$.

Since $\sigma\in\scru(V)$, we have $\Pi_4(\Phi_\R^V(\sigma))=\R$.
Choose $t_0\in\R$ such that $\Pi_4(\Phi_{t_0}^V(\sigma))=-a_I$.
Let $\sigma_0:=\Phi_{t_0}^V(\sigma)$.
Then $\Pi_4(\sigma_0)=-a_I$.
That is, $\sigma_0\in\R^3\times\{-a_I\}$.
Since $\sigma\in I^3\times\R$,
by \lref{lem-I3-times-R-invar},
$\sigma_0\in I^3\times\R$.
Then
$$\sigma_0\,\,\,\in\,\,\,(I^3\times\R)\,\cap\,(\R^3\times\{-a_I\})\,\,\,=\,\,\, I^3\,\times\,\{-a_I\}\,\,\,=\,\,\, B_\circ(I).$$
Since $\sigma\in\scru(V)$ and $\sigma_0\in\Phi_\R^V(\sigma)$,
and since $\scru(V)$ is $V$-invariant,
it follows that $\sigma_0\in\scru(V)$.
Then $\sigma_0\in(\scru(V))\cap(B_\circ(I))=\scru_B^\circ(V,I)$.
By~(b), $(\Phi_\R^V(\sigma_0))\cup(I^4)=(\Phi_\R^{V_0}(\sigma_0))\cup(I^4)$.
It therefore suffices to show that
$\Phi_\R^V(\sigma_0)=\Phi_\R^V(\sigma)$ and that $\Phi_\R^{V_0}(\sigma_0)=\Phi_\R^{V_0}(\sigma)$.
Because we have $\sigma_0\in\Phi_\R^V(\sigma)$,
it follows that $\Phi_\R^V(\sigma_0)=\Phi_\R^V(\sigma)$.
It remains to show that $\Phi_\R^{V_0}(\sigma_0)=\Phi_\R^{V_0}(\sigma)$.

We have
$\sigma=\Phi_{-t_0}^V(\sigma_0)\in\Phi_\R^V(\sigma_0)\subseteq(\Phi_\R^V(\sigma_0))\cup(I^4)=(\Phi_\R^{V_0}(\sigma_0))\cup(I^4)$.
So, since $\sigma\notin I^4$,
it follows that $\sigma\in\Phi_\R^{V_0}(\sigma_0)$.
Then $\Phi_\R^{V_0}(\sigma_0)=\Phi_\R^{V_0}(\sigma)$.
{\it End of proof of (b\,$\Rightarrow$c).}

{\it Proof of (c\,$\Rightarrow$a):}
Assume that (c) is true.
Let $\sigma\in\scru_B(V,I)$.
We wish to show that $UF_I^V(\sigma)=SU_I(\sigma)$

We have $\sigma\in\scru_B(V,I)\subseteq B(I)$
and $(B(I))\cap(I^4)=\emptyset$.
Then $\sigma\notin I^4$.
So, since $\sigma\in\scru_B(V,I)\subseteq\scru(V)$,
we get $\sigma\in(\scru(V))\backslash(I^4)$.
By (c),
$$(\Phi_\R^V(\sigma))\,\cup\,(I^4)
\quad=\quad
(\Phi_\R^{V_0}(\sigma))\,\cup\,(I^4).$$
So, intersecting with $T(I)$, we get
$$[\,(\Phi_\R^V(\sigma))\cup(I^4)\,]\,\,\cap\,\,[\,T(I)\,]
\,\,=\,\,
[\,(\Phi_\R^{V_0}(\sigma))\cup(I^4)\,]\,\,\cap\,\,[\,T(I)\,].$$
So, because $I^4\cap(T(I))=\emptyset$,
we get
$$[\Phi_\R^V(\sigma)]\,\cap\,[T(I)]\quad=\quad[\Phi_\R^{V_0}(\sigma)]\,\cap\,[T(I)].$$
Thus $UF_I^V(\sigma)=UF_I^{V_0}(\sigma)$.
By \lref{lem-SU-and-Phi-V0},
$UF_I^{V_0}(\sigma)=SU_I(\sigma)$.
Then $UF_I^V(\sigma)=UF_I^{V_0}(\sigma)=SU_I(\sigma)$.
{\it End of proof of (c\,$\Rightarrow$a).}
\end{proof}

\begin{lem}\wrlab{lem-interior-dnflow}
Let $(V,I)\in\scrd$.
Then $DF_I^V(\scru_\circ(V,I))\subseteq B_\circ(I)$.
\end{lem}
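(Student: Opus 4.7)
The plan is to chase the definitions and invoke \lref{lem-I3-times-R-invar}. First I would observe that $DF_I^V$ is indeed defined at any $\sigma\in\scru_\circ(V,I)$: since $\scru_\circ(V,I)=(\scru(V))\cap(I^4)\subseteq(\scru(V))\cap(\,\barI^4\,)=\barscru(V,I)$, the downflow map applies. So fix $\sigma\in\scru_\circ(V,I)$ and let $\rho:=DF_I^V(\sigma)$. By the definition of $DF_I^V$, we have $\rho\in(\Phi_\R^V(\sigma))\cap(B(I))$.

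Next I would unpack $B(I)=\barI^3\times\{-a_I\}$, which tells us immediately that $\Pi_4(\rho)=-a_I$. The goal $\rho\in B_\circ(I)=I^3\times\{-a_I\}$ therefore reduces to showing that the first three coordinates of $\rho$ lie in $I^3$ (strictly), not merely in $\barI^3$.

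For this, I would apply \lref{lem-I3-times-R-invar}, which says $I^3\times\R$ is $V$-invariant. Since $\sigma\in I^4\subseteq I^3\times\R$, $V$-invariance yields $\Phi_\R^V(\sigma)\subseteq I^3\times\R$. In particular $\rho\in\Phi_\R^V(\sigma)\subseteq I^3\times\R$, so the first three coordinates of $\rho$ do lie in $I^3$. Combined with $\Pi_4(\rho)=-a_I$, we conclude $\rho\in I^3\times\{-a_I\}=B_\circ(I)$, as desired.

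I do not anticipate any real obstacle here; the argument is essentially immediate once one realizes that the containment $\sigma\in I^4$ forces the entire $V$-orbit of $\sigma$ to remain inside the ``cylinder'' $I^3\times\R$, so that when the orbit crosses the level $\{z=-a_I\}$ to produce $DF_I^V(\sigma)$, the crossing point automatically has its first three coordinates strictly inside $I$. The only bookkeeping to be careful about is the verification that $\scru_\circ(V,I)\subseteq\barscru(V,I)$, which ensures $DF_I^V$ is defined on the source of the inclusion being asserted.
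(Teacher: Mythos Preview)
Your proof is correct and follows essentially the same approach as the paper: both arguments use \lref{lem-I3-times-R-invar} to place the $V$-orbit of any $\sigma\in\scru_\circ(V,I)$ inside $I^3\times\R$, then intersect with $B(I)$ to land in $B_\circ(I)$. The only cosmetic difference is that the paper phrases the argument set-wise while you work with a single element $\sigma$.
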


\begin{proof}
Because $\scru_\circ(V,I)\subseteq I^4\subseteq I^3\times\R$,
by \lref{lem-I3-times-R-invar}, it follows that
$\Phi_\R^V(\scru_\circ(V,I))\subseteq I^3\times\R$.
Then
$DF_I^V(\scru_\circ(V,I))\subseteq\Phi_\R^V(\scru_\circ(V,I))\subseteq I^3\times\R$.

We have $\scru_\circ(V,I)\subseteq\barscru(V,I)$ and $DF_I^V(\,\barscru(V,I))\subseteq B(I)$.
Then
$$DF_I^V(\scru_\circ(V,I))\quad\subseteq\quad DF_I^V(\,\barscru(V,I))\quad\subseteq\quad B(I).$$
Then
$DF_I^V(\scru_\circ(V,I))\subseteq(I^3\times\R)\cap(B(I))=B_\circ(I)$.
\end{proof}

\section{Vector fields $V_\#^s$ on $\R$ with variable travel time\wrlab{sect-vertVF}}

For this section, let $I:=(-95,95)\subseteq\R$ and
define $W:\R\to\R$ by $W(y)=1-[1/2][\zeta_I(y)]$.
For all $y\in\barI$, we have $W(y)=1/2$.
Then $\Phi_{380}^W(-95)=95$.
For all $y\in\R$, we have $1/2\le W(y)\le1$.
For this section, fix $C>380$ such that $\Phi_C^W(-100)=100$.

For all $x\in\R$, define $V_*^x:\R\to\R$ by
$$V_*^x(y)\quad=\quad1\,\,-\,\,[1/2][\zeta_I(y)][1-(\zeta(x))];$$
then $(x,y)\mapsto V_*^x(y):\R\times\R\to\R$ is $C^\infty$.
Moreover,
\begin{itemize}
\item$\forall x\in\R$, \, $\forall y\in\R\backslash(-96,96)$, \qquad $V_*^x(y)=1$.
\item$\forall x,y\in\R$, \qquad\qquad\qquad\quad $1/2\le V_*^x(y)\le1$.
\end{itemize}
Moreover,
\begin{itemize}
\item[(i)]$\forall x\le0$, \, $\forall y\in\R$, \qquad $V_*^x(y)=1$ \qquad\qquad\qquad and
\item[(ii)]$\forall x\ge1$, \, $\forall y\in\R$, \qquad $V_*^x(y)=W(y)$.
\end{itemize}
By (i), for all $x\le0$, we have $\Phi_{200}^{V_*^x}(-100)=100$.
By (ii), for all $x\ge1$, we have $\Phi_C^{V_*^x}(-100)=\Phi_C^W(-100)=100$.

For all $y\in\R$, define $\beta_y:\R\to\R$ by $\beta_y(x)=V_*^x(y)$.
Then we have $\beta_y=1-[1/2][\zeta_I(y)][1-\zeta]$,
so $\beta_y':=[\zeta_I(y)][(\zeta')/2]$.
Then
\begin{itemize}
\item[(iii)]for all $y\in\R$, \qquad $\beta_y'\le0$ on~$(0,1)$ \qquad\qquad and
\item[(iv)]for all $y\in\barI$, \qquad $\beta_y'<0$ on~$(0,1)$.
\end{itemize}

For all $x\in\R$, let $\gamma(x)\in[200,C]$ be the unique real number
such that $\Phi_{\gamma(x)}^{V_*^x}(-100)=100$.
Define $F:\R^2\to\R$ by $F(x,t)=\Phi_t^{V_*^x}(-100)$.
Then, for all $x\in\R$, $\gamma(x)$ is defined implicitly by $F(x,\gamma(x))=100$.
Let $\partial_1,\partial_2$ be the standard framing of $\R^2$.
Then, for all $(x,t)\in\R^2$, we have $(\partial_2F)(x,t)=V_*^x(F(x,t))\ge1/2$.
That is, $\partial_2F\ge1/2$.
Then, by the Implicit Function Theorem,
$\gamma:\R\to\R$ is~$C^\infty$, and, moreover,
\begin{itemize}
\item[(v)]
$\forall x\in\R$, \quad\,\,
$\gamma'(x)\,\,\,=\,\,\,-\,\,[\,(\partial_1F)(x,\gamma(x))\,]\,\,/\,\,[\,(\partial_2F)(x,\gamma(x))\,]$.
\end{itemize}

From (iii) and (iv) above, we can prove:
\begin{itemize}
\item[(vi)]$\forall(x,t)\in(0,1)\times(5,\infty)$,\qquad $(\partial_1F)(x,t)<0$.
\end{itemize}
[Thanks to R.~Moeckel for showing me the following elementary proof of (vi):
Define $U:\R^2\to\R^2$ by $U(x,y)=(0,V_*^x(y))$.
Then $U$ is $C^\infty$ and bounded, and, therefore, complete.
The defining equation for $\Phi_t^U(x,y)$ is $(\partial/\partial t)(\Phi_t^U(x,y))=U(\Phi_t^U(x,y))$.
Differentiating this equation with respect to $x$ and $y$,
we obtain the defining equation for the induced flow in the tangent bundle $T\R^2$ of $\R^2$.
We identify $T\R^2$ with $\R^2\times\R^2$.
This induced flow then identifies with a flow on $\R^2\times\R^2$
that combines $\Phi_t^U$, operating on the first two coordinates,
with a time-dependent vector field on $\R^2$, whose flow operates on the last two.
For each $x,y,t\in\R$, that time-dependent vector field is linear
and is represented by the Jacobian
$J_t(x,y):=(DU)(\Phi_t^U(x,y)):\R^2\to\R^2$.
For all $x,y,t\in\R$,
$$(\,J_t(x,y)\,)\,(\,\{0\}\,\times\,\R\,)\qquad\subseteq\qquad\{0\}\,\times\,\R.$$
Geometrically, this says: For all $x,y,t\in\R$,
the vector field represented by $J_t(x,y)$,
when restricted to the line $\{0\}\times\R$,
consists of vectors that all vanish or point straight upward or point straight downward.
Using~(iii) above, we see that, on $0<x<1$, $y=-100$, $0\le t\le5$,
$$(\,J_t(x,y)\,)\,(\,(0,\infty)\,\times\,\{0\}\,)\qquad\subseteq\qquad\{0\}\,\times\,(-\infty,0].$$
Geometrically, this says: On $0<x<1$, $y=-100$, $0\le t\le5$,
the vector field represented by $J_t(x,y)$,
when restricted to the ray $(0,\infty)\times\{0\}$,
consists of vectors that all either vanish or point straight downward.
Using (iv) above, we see that, on $0<x<1$, $y=-100$, $t>5$,
$$(\,J_t(x,y)\,)\,(\,(0,\infty)\,\times\,\{0\}\,)\qquad\subseteq\qquad\{0\}\,\times\,(-\infty,0).$$
Geometrically, this says: On $0<x<1$, $y=-100$, $t>5$,
the vector field represented by $J_t(x,y)$,
when restricted to the ray $(0,\infty)\times\{0\}$,
consists of vectors that all point straight downward.
Let $E$ denote the restriction of the vector field $\partial_1$ to the segment $0<x<1$, $y=-100$.
The three geometric observations above show that, for time $t>5$,
any tangent vector in $E$ is constrained to flow to a vector pointing into
the open fourth quadrant $(0,\infty)\times(-\infty,0)$.
On $(x,t)\in(0,1)\times(5,\infty)$,
each value of $(\partial_1F)(x,t)$ is exactly the slope of such a vector.
A vector that points into the open fourth quadrant has negative slope.]

For all $x\in\R$, we have $\gamma(x)\ge200$.
Then, for all~$x\in(0,1)$, we have 
$(x,\gamma(x))\in(0,1)\times(5,\infty)$,
so, by (vi) above, $(\partial_1F)(x,\gamma(x))<0$.
Then, because $\partial_2F\ge1/2$, by (v) above,
we see, for all $x\in(0,1)$, that $\gamma'(x)>0$.
Also, $\gamma$ is $C^0$ on~$[0,1]$.
Then, by the Mean Value Theorem, $\gamma$ is increasing on $[0,1]$.
Also,
\begin{itemize}
\item[]$\gamma=200$ on $(-\infty,0]$ \qquad and \qquad $\gamma=C$ on $[1,\infty)$.
\end{itemize}
It follows, for all $s\in[200,C]$, that there is a unique $x_s\in[0,1]$ such that $\gamma(x_s)=s$.
By the Inverse Function Theorem, we have
\begin{itemize}
\item $s\mapsto x_s:[200,C]\to\R$ \,\, is $C^0$ \qquad and
\item$s\mapsto x_s:(200,C)\to\R$ \,\, is $C^\infty$.
\end{itemize}

Since $\gamma(0)=200$, it follows that $x_{200}=0$.
Recall that $C>380$.
For all $s\in[200,380]$, define $V_\#^s:\R\to\R$ by $V_\#^s(y):=V_*^{x_s}(y)$.
Then
\begin{itemize}
\item$(s,y)\,\,\mapsto\,\,V_\#^s(y)\,:\,[200,380]\,\,\times\,\,\R\to\R$ \,\, is $C^0$,
\item$(s,y)\,\,\mapsto\,\,V_\#^s(y)\,:\,(200,380)\,\,\times\,\,\R\to\R$ \,\, is $C^\infty$,
\item$\forall s\in[200,380]$, $\forall y\in\R$, \quad $1/2\le V_\#^s(y)\le1$,
\item$\forall s\in[200,380]$, $\forall y\in\R\backslash(-96,96)$, \quad $V_\#^s(y)=1$,
\item$\forall y\in\R$, \quad $V_\#^{200}(y)=1$ \qquad and
\item$\forall s\in[200,380]$, \quad $\Phi_s^{V_\#^s}(-100)=100$.
\end{itemize}

\section{Definitions of and results about $\scrd_+$, $\scrd^\#$, $\scrd_+^\#$ and $\scrd_*$\wrlab{sect-gettoD}}

Recall, from \secref{sect-dtuflow},
the definitions of $DF_I^V$, $UF_I^V$ and $TF_I^V$.
Let
\begin{eqnarray*}
\scrd_+&:=&\{(V,I)\in\scrd\,|\,UF_I^V=SU_I\hbox{ on }\scru_B(V,I)\},\\
\scrd^\#&:=&\{(V,I)\in\scrd\,|\,\xi_I\in\scru_B(V,I)\}\qquad\hbox{and}\\
\scrd_+^\#&:=&(\scrd_+)\,\cap\,(\scrd^\#).
\end{eqnarray*}
For all $(V,I)\in\scrd$, give $\scru_B^\circ(V,I)$ and $B_\circ(I)$
their relative topologies, inherited from $\R^4$;
then, by \cref{cor-U-open}(iii), $\scru_B^\circ(V,I)$ is open in $B_\circ(I)$,
so the open sets of $\scru_B^\circ(V,I)$ are all open in $B_\circ(I)$, as well.
Finally, let $\scrd_*$ denote the set of all $(V,I)\in\scrd_+^\#$ such that
\begin{itemize}
\item[] for some integer $j\ge1$ and some open neighborhood $N$ in~$\scru_B^\circ(V,I)$
of $\xi_I$, we have: \qquad $TF_I^V=j$ on $N$.
\end{itemize}

\begin{lem}\wrlab{lem-D0-stable}
Let $(V_1,I_1)\in\scrd$ and let $(V,I)\in\scrm(V_1,I_1)$.
Assume $V((\R^4)\backslash(I_1^4))\subseteq\{(0,0,0)\}\times(0,\infty)$.
Then all of the following are true:
\begin{itemize}
\item[(i)] For all $\sigma\in\R^4$, we have $\Phi_\R^{V_1}(\sigma)=\Phi_\R^V(\sigma)$.
\item[(ii)]$\scru(V_1)=\scru(V)$.
\item[(iii)]If $V_1$ is porous, then $V$ is porous.
\item[(iv)]If $(V_1,I_1)\in\scrd_+$, then $(V,I)\in\scrd_+$.
\item[(v)]If $(V_1,I_1)\in\scrd^\#$, then $(V,I)\in\scrd^\#$.
\end{itemize}
\end{lem}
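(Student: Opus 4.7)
The plan is to observe that $V$ and $V_1$ are pointwise positive scalar multiples of one another, obtain (i)--(iii) via \cref{cor-homothetic-flows}, and then transport the $I_1$-level properties to the $I$-level for (iv) and~(v).

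First I would verify that, for every $\sigma\in\R^4$, there exists $c(\sigma)>0$ with $V(\sigma)=c(\sigma)\cdot V_1(\sigma)$. On $\barI_1^4$, the modification condition gives $V=V_1$, so $c=1$ works. Off $I_1^4$, the fact that $(V_1,I_1)\in\scrd$ forces $V_1=V_0=(0,0,0,1)$, which is never zero, while the standing hypothesis gives $V(\sigma)\in\{(0,0,0)\}\times(0,\infty)$; writing $V(\sigma)=(0,0,0,z)$ with $z>0$ supplies the required positive scalar $c=z$. \cref{cor-homothetic-flows} then yields~(i). Part~(ii) is immediate from~(i) since $\scru(\cdot)$ depends only on the orbits through $\Pi_4$, and~(iii) is just the definition of porousness combined with~(ii).

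For~(iv), I would use the equivalence (a)$\Leftrightarrow$(c) of \lref{lem-UF-SU-agreement-orbit-agreement}. Given $(V_1,I_1)\in\scrd_+$, condition~(c) there asserts that, for all $\sigma\in\scru(V_1)\backslash I_1^4$, we have $\Phi_\R^{V_1}(\sigma)\cup I_1^4=\Phi_\R^{V_0}(\sigma)\cup I_1^4$. Fix $\sigma\in\scru(V)\backslash I^4$. Since $a_{I_1}<a_I$ forces $I_1^4\subseteq I^4$, we have $\sigma\notin I_1^4$; and by~(ii), $\sigma\in\scru(V_1)$, so the hypothesis applies. Taking the union of both sides of the resulting identity with $I^4$ absorbs the $I_1^4$ terms into $I^4$, and~(i) replaces $V_1$ with $V$ on the left. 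This produces condition~(c) of \lref{lem-UF-SU-agreement-orbit-agreement} for $(V,I)$, which by the same equivalence gives $(V,I)\in\scrd_+$.

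For~(v), I must show $\xi_I\in\scru_B(V,I)=(\scru(V))\cap(B(I))$. The inclusion $\xi_I\in B(I)$ is immediate from the definitions. For $\xi_I\in\scru(V)$, by~(ii) it suffices to show $\xi_I\in\scru(V_1)$. Let $s:=a_I-a_{I_1}>0$ and consider $\alpha(t):=\Phi_t^{V_0}(\xi_I)=(0,0,0,-a_I+t)$ for $t\in[0,s]$. Each such $\alpha(t)$ lies in $\R^3\times(-\infty,-a_{I_1}]$, hence outside $I_1^4$, where $V_1=V_0$; so by \lref{lem-orbits-agree}, $\alpha(t)=\Phi_t^{V_1}(\xi_I)$ on $[0,s]$. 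At the endpoint, $\Phi_s^{V_1}(\xi_I)=(0,0,0,-a_{I_1})=\xi_{I_1}$, so $\Phi_\R^{V_1}(\xi_I)=\Phi_\R^{V_1}(\xi_{I_1})$. Since $(V_1,I_1)\in\scrd^\#$ gives $\xi_{I_1}\in\scru(V_1)$, we conclude $\xi_I\in\scru(V_1)$. I do not foresee any serious obstacle; the only point requiring care is the strict positivity of the scalar in the homothety, which is supplied directly by the standing hypothesis on $V$.
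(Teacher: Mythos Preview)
Your proposal is correct and follows essentially the same route as the paper: both establish the pointwise positive-scalar relation between $V$ and $V_1$ (the paper writes $V_1=c\cdot V$, you write $V=c\cdot V_1$, which is immaterial for \cref{cor-homothetic-flows}), deduce (i)--(iii) from \cref{cor-homothetic-flows}, use the (a)$\Leftrightarrow$(c) equivalence of \lref{lem-UF-SU-agreement-orbit-agreement} for~(iv), and connect $\xi_I$ to $\xi_{I_1}$ along the $V_1=V_0$ region for~(v). The only cosmetic differences are that the paper cites \lref{lem-V-V0-agreement-for-D}(iii) and \lref{lem-midpoint-connection}(i) in~(v) where you invoke \lref{lem-orbits-agree} directly, and it flows backward from $\xi_{I_1}$ to $\xi_I$ rather than forward.
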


\begin{proof}
Let $S:=\{(0,0,0)\}\times(0,\infty)\subseteq\R^4$.
Then $V((\R^4)\backslash(I_1^4))\subseteq S$.
Also, since $(V_1,I_1)\in\scrd$, we get
$V_1=V_0$ on $(\R^4)\backslash(I_1^4)$, so
$$V_1((\R^4)\backslash(I_1^4))\quad=\quad\{(0,0,0,1)\}\quad\subseteq\quad S.$$
For all $s,s'\in S$, there exists $c>0$ such that $s'=cs$.
Then, for all $\sigma\in(\R^4)\backslash(I_1^4)$,
there exists $c>0$ such that $V_1(\sigma)=c\cdot(V(\sigma))$.
Since $(V,I)\in\scrm(V_1,I_1)$,
it follows that $V=V_1$ on $I_1^4$.
Then, for all $\sigma\in\R^4$, there exists $c>0$ such that $V_1(\sigma)=c\cdot(V(\sigma))$.
Then \cref{cor-homothetic-flows} yields (i).
From (i), we see, for all $\sigma\in\R^4$, that
$$[\,\,\Pi_4(\Phi_\R^{V_1}(\sigma))\,=\,\R\,\,]\qquad\Leftrightarrow\qquad[\,\,\Pi_4(\Phi_\R^V(\sigma))\,=\,\R\,\,],$$
{\it i.e.}, that
$[\sigma\in\scru(V_1)]\Leftrightarrow[\sigma\in\scru(V)]$.
This proves (ii).
If $V_1$ is porous, then $\scru(V_1)$ is dense in $\R^4$,
so, by (ii), $\scru(V)$ is dense in~$\R^4$, which means that $V$ is porous.
This proves (iii).

{\it Proof of (iv):}
Assuming $(V_1,I_1)\in\scrd_+$, it follows that $UF_{I_1}^{V_1}=SU_{I_1}$ on~$\scru_B(V_1,I_1)$.
Then, by \lref{lem-UF-SU-agreement-orbit-agreement}(a\,$\Rightarrow$c), we have
\begin{itemize}
\item[]for all $\sigma\in(\scru(V_1))\backslash(I_1^4)$, \quad
$(\Phi_\R^{V_1}(\sigma))\cup(I_1^4)\,=\,(\Phi_\R^{V_0}(\sigma))\cup(I_1^4)$.
\end{itemize}
We have $(V,I)\in\scrm(V_1,I_1)$, so $I_1\subseteq I$.
Then
\begin{itemize}
\item[]for all $\sigma\in(\scru(V_1))\backslash(I^4)$, \quad
$(\Phi_\R^{V_1}(\sigma))\cup(I^4)\,=\,(\Phi_\R^{V_0}(\sigma))\cup(I^4)$.
\end{itemize}
So, by (i) and (ii), we conclude that
\begin{itemize}
\item[]for all $\sigma\in(\scru(V))\backslash(I^4)$, \quad
$(\Phi_\R^V(\sigma))\cup(I^4)\,=\,(\Phi_\R^{V_0}(\sigma))\cup(I^4)$.
\end{itemize}
Also, $(V,I)\in\scrm(V_1,I_1)\subseteq\scrd$.
Then, by \lref{lem-UF-SU-agreement-orbit-agreement}(c\,$\Rightarrow$a),
we see that $UF_I^V=SU_I$ on $\scru_B(V,I)$.
Then $(V,I)\in\scrd_+$.
{\it End of proof of (iv).}

{\it Proof of (v):}
Assuming $(V_1,I_1)\in\scrd^\#$, we have $\xi_{I_1}\in\scru_B(V_1,I_1)$.
Let $t_0:=a_I-a_{I_1}$.
By \lref{lem-V-V0-agreement-for-D}(iii),
$\Phi_{-t_0}^{V_1}(\xi_{I_1})=\Phi_{-t_0}^{V_0}(\xi_{I_1})$.
By \lref{lem-midpoint-connection}(i),
$\Phi_{-t_0}^{V_0}(\xi_{I_1})=\xi_I$.
Then $\Phi_{-t_0}^{V_1}(\xi_{I_1})=\Phi_{-t_0}^{V_0}(\xi_{I_1})=\xi_I$.
So, because $\xi_{I_1}\in\scru_B(V_1,I_1)\subseteq\scru(V_1)$,
and because $\scru(V_1)$ is $V_1$-invariant,
we see that $\xi_I\in\scru(V_1)$.
So, by (ii), we conclude that $\xi_I\in\scru(V)$.
Moreover, $\xi_I\in B(I)$
and $(V,I)\in\scrm(V_1,I_1)\subseteq\scrd$.
Then
$$\xi_I\quad\in\quad(\scru(V))\,\cap\,(B(I))\quad=\quad\scru_B(V,I).$$
Then $(V,I)\in\scrd^\#$, as desired.
{\it End of proof of~(v).}
\end{proof}

\begin{lem}\wrlab{lem-upflow-const-start}
Let $(W,J)\in\scrd^\#$.
Assume that $W$ is porous.
Then there exists $(V_1,I_1)\in(\scrm_*(W,J))\cap(\scrd_+^\#)$
such that $V_1$ is porous.
\end{lem}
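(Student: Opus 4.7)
I would take $I_1 \in \scri$ with $a_{I_1} > a_J$ (say $a_{I_1} := a_J + 1$) and build $V_1$ by first setting $V_1 := W$ on $\overline{J}^4$, $V_1 := V_0$ outside $I_1^4$, and $V_1 := V_0$ on $\R^3 \times (-\infty, -a_J]$; these pieces agree on overlaps because $(W, J) \in \scrd$ forces $W = V_0$ on $\partial J^4$. It remains to specify $V_1$ on the ``upper annulus'' $A := [I_1^4 \setminus \overline{J}^4] \cap [\R^3 \times (-a_J, \infty)]$ so that the $V_1$-flow in $A$ undoes the horizontal displacement that $W$ produces inside $\overline{J}^4$, i.e., so that $UF_{I_1}^{V_1} = SU_{I_1}$ on $\scru_B(V_1, I_1)$. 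Porosity will come for free: $V_1 = V_0$ on the dense set $[\R^3 \setminus I_1^3] \times \R$, and every point there lies in $\scru(V_1)$ by \lref{lem-V-V0-agreement-for-D}(i), so $\scru(V_1)$ is dense in $\R^4$.

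To see what is required on $A$, take any $\sigma = (w, x, y, -a_{I_1}) \in \scru_B(V_1, I_1)$. If $(w, x, y) \notin \overline{J}^3$, then by \lref{lem-V-V0-agreement-for-D}(i) the orbit flows straight up to $SU_{I_1}(\sigma)$ automatically. If $(w, x, y) \in \overline{J}^3$, the orbit reaches $(w, x, y, -a_J) \in B(J)$, follows $W$, exits $\overline{J}^4$ at $\tau := UF_J^W((w, x, y, -a_J)) \in T(J)$, and must then flow under $V_1$ in $A$ to $(w, x, y, a_{I_1})$. Writing $H : T(J) \cap \barscru(W, J) \to \overline{J}^3$ for the map sending $\tau$ to the first three coordinates of $DF_J^W(\tau)$, the $V_1$-flow in $A$ from $T(J)$ to $T(I_1)$ should realize $\tau \mapsto (H(\tau), a_{I_1})$ on the relevant domain.

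I would construct $V_1$ on $A$ by suspending an isotopy. Near $\tau_* := UF_J^W(\xi_J)$, the map $(w, x, y) \mapsto UF_J^W((w, x, y, -a_J))$ is a local $C^\infty$ diffeomorphism (its $\Pi_4$-derivative is controlled by $\Pi_4(V_0(\xi_J)) = 1$), so $H$ is smooth near $\tau_*$. Extend $H$ from a small neighborhood of $\tau_*$ to a smooth self-diffeomorphism $\widetilde{H}$ of $\overline{I_1}^3$ that equals $\Id_3$ near $\partial I_1^3$, pick a smooth isotopy $\{\phi_z\}_{z \in [a_J, a_{I_1}]}$ of $\overline{I_1}^3$ with $\phi_{a_J} = \Id_3$ and $\phi_{a_{I_1}} = \widetilde{H}$, and define $V_1$ on the slab $I_1^3 \times [a_J, a_{I_1}]$ by $V_1(w', x', y', z) := ((\partial_z \phi_z)(\phi_z^{-1}(w', x', y')), 1)$, with bump-function cutoffs near $\partial A$ to match $V_0$ and to keep $V_1 \in \scrc$. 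Then the remaining verifications are routine: $(V_1, I_1) \in \scrm_*(W, J)$ by construction; the orbit of $\xi_{I_1}$ traces $\xi_{I_1} \to \xi_J \to \tau_* \to (0, 0, 0, a_{I_1})$ and then flows to $\pm\infty$ in $\Pi_4$ by \lref{lem-V-V0-agreement-for-D}(ii) and (iii), giving $(V_1, I_1) \in \scrd^\#$; and $UF_{I_1}^{V_1} = SU_{I_1}$ holds on an open neighborhood of $\xi_{I_1}$ in $\scru_B(V_1, I_1)$ by the design of $\widetilde{H}$ and $\{\phi_z\}$.

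The main obstacle is handling orbits that enter $\overline{J}^4$ but exit at a point $\tau$ outside the neighborhood of $\tau_*$ on which $\widetilde{H}$ is tailored; once $\widetilde{H}$ tapers to $\Id_3$, such orbits, if they remain undeterred under $V_1$, would not obey $UF_{I_1}^{V_1} = SU_{I_1}$. To remedy this, the bump-function cutoffs must be chosen so that these ``off-target'' orbits become deterred under $V_1$, for instance by using the variable-travel-time vector fields of \secref{sect-vertVF} to create trapping regions in $A$ that reflect those orbits back into $\overline{J}^4$. Arranging simultaneously that the central orbit (and a neighborhood of it) escapes correctly while the off-target orbits are trapped, and keeping $V_1$ smooth and bounded by $\overline{I_0}^4$, is the technical heart of the argument.
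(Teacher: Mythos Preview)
Your porosity claim is wrong: the set $[\R^3\setminus I_1^3]\times\R$ is closed with nonempty open complement $I_1^3\times\R$, so it is certainly not dense in $\R^4$. Knowing $\scru(V_1)\supseteq[\R^3\setminus I_1^3]\times\R$ tells you nothing about density inside $I_1^3\times\R$, which is exactly where all the interesting dynamics live.

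More seriously, there is a structural tension in your plan that you identify but do not resolve. Membership in $\scrd_+$ requires $UF_{I_1}^{V_1}=SU_{I_1}$ on \emph{all} of $\scru_B(V_1,I_1)$, not just on a neighborhood of $\xi_{I_1}$. Your isotopy only arranges this near the central orbit, and your proposed fix---trapping all ``off-target'' orbits so they become deterred---directly conflicts with porosity: once those orbits are deterred, the only undeterred orbits through $I_1^4$ are those threading the small neighborhood of $\xi_J$, and these do not form a dense subset of $I_1^4$. You cannot simultaneously kill enough orbits to force $\scrd_+$ and keep enough alive to stay porous, at least not by the mechanism you describe.

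The paper sidesteps both problems with a reflection. Set $c:=a_J+1$, let $R(w,x,y,z):=(w,x,y,2c-z)$, and define $V_1$ to equal $W$ on $\R^3\times(-\infty,c+1)$ and to equal $\barW(\tau):=\lambda(W(R(\tau)))$ (with $\lambda(w,x,y,z)=(-w,-x,-y,z)$) on the reflected half-space; take $I_1:=3J+3I_0$. Because $\barV_1=V_1$, every flowline through the mirror plane $\{z=c\}$ satisfies $\bargamma=\gamma$, which forces the $UF=SU$ condition \emph{globally} on $\scru_B(V_1,I_1)$---no isotopy, no case analysis, no trapping. Porosity then follows because $(\scru(W))\cap H\subseteq\scru(V_1)$ and the reflection symmetry spreads this over all of $\R^4$. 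The key insight you are missing is that a symmetry of the vector field can enforce the straight-up property everywhere at once.
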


\begin{proof}
Define $\lambda:\R^4\to\R^4$ by $\lambda(w,x,y,z)=(-w,-x,-y,z)$.
Let $c:=a_J+1$.
Define a reflection $R:\R^4\to\R^4$ by
$$R\,(\,w\,,\,x\,,\,y\,,\,z\,)\quad=\quad(\,\,w\,\,,\,\,x\,\,,\,\,y\,\,,\,\,2c-z\,\,).$$
For any $X:\R^4\to\R^4$, we define $\barX:\R^4\to\R^4$ by $\barX(\tau)=\lambda(X(R(\tau)))$.
For any $\gamma:\R\to\R^4$, we define $\bargamma:\R\to\R^4$ by $\bargamma(t)=R(\gamma(-t))$.
If $\gamma:\R\to\R^4$ and $X:\R^4\to\R^4$ are both $C^\infty$,
and if $\gamma$ is a flowline of~$X$,
then $\bargamma$ is a flowline of $\barX$.

Let $S:=\R^3\times[c-1,c+1]\subseteq\R^4$.
Then $S\subseteq(\R^4)\backslash(J^4)$ and $R(S)=S$.
We have $\lambda(0,0,0,1)=(0,0,0,1)$, so $\barV_0=V_0$.
Since $(W,J)\in\scrd^\#\subseteq\scrd$, we have $W=V_0$ on $(\R^4)\backslash(J^4)$.
Then $W=V_0$ on $S$, so $\barW=\barV_0$ on~$R(S)$, {\it i.e.}, on $S$.
Then, on $S$, we have $\barW=\barV_0=V_0=W$.

Let $H:=\R^3\times(-\infty,c+1)$.
Then $R(H)=\R^3\times(c-1,\infty)$.
Then $H\cup(R(H))=\R^4$ and $H\cap(R(H))\subseteq S$.
Define $V_1:\R^4\to\R^4$ by
\begin{itemize}
\item[]$V_1:=W$ on $H$ \qquad and \qquad $V_1:=\barW$ on $R(H)$.
\end{itemize}
Then $V_1:\R^4\to\R^4$ is $C^\infty$.
Moreover, $\barV_1=V_1$, so, for any flowline $\gamma:\R\to\R^4$ of $V_1$,
we see that $\bargamma:\R\to\R^4$ is also a flowline of $V_1$.

For all $\sigma\in\R^4$, if $\Pi_4(\sigma)=c$, then $R(\sigma)=\sigma$.
Therefore, for any map $\gamma:\R\to\R^4$, if $\Pi_4(\gamma(0))=c$, then $\bargamma(0)=R(\gamma(0))=\gamma(0)$.
Consequently, if $\gamma:\R\to\R^4$ is a flowline of $V_1$ and if $\Pi_4(\gamma(0))=c$,
then, by uniqueness of solutions of ODEs, we see that $\bargamma=\gamma$.

Since $(W,J)\in\scrd$, it follows that $W\in\scrc$, so $W(\R^4)\subseteq\barI_0^4$.
So, since $\lambda(\,\barI_0^4\,)=\barI_0^4$,
we see that $\barW(\R^4)\subseteq\barI_0^4$.
Then $V_1(\R^4)\subseteq\barI_0^4$.
Then $V_1\in\scrc$.

Let $I_1:=3J+3I_0$.
Then $a_{I_1}=3a_J+3a_{I_0}=3a_J+3=3c$.

As $W=V_0$ on $(\R^4)\backslash(J^4)$,
we see that $\barW=\barV_0=V_0$ on~$(\R^4)\backslash(R(J^4))$.
Then $V_1=V_0$ on $[\R^4]\backslash[(J^4)\cup(R(J^4))]$.
So, since $(J^4)\cup(R(J^4))\subseteq I_1^4$,
it follows that $V_1=V_0$ on $(\R^4)\backslash(I_1^4)$.
Then $(V_1,I_1)\in\scrd$.
As $J^4\subseteq H$, we get $V_1=W$ on $J^4$.
Also, $a_J<3a_J+3=a_{I_1}$.
Then $(V_1,I_1)\in\scrm(W,J)$.

Since $\R^3\times(-\infty,-a_J)\subseteq H$,
we see that $V_1=W$ on $\R^3\times(-\infty,-a_J)$.
As $(W,J)\in\scrd$, we get $W\in\scrv(a_J)$.
That is, $W=V_0$ on~$\R^3\times(-\infty,-a_J)$.
Therefore $V_1=W=V_0$ on~$\R^3\times(-\infty,-a_J)$,
and so we have $V_1\in\scrv(a_J)$.
So, because $(V_1,I_1)\in\scrm(W,J)$, we conclude that $(V_1,I_1)\in\scrm_*(W,J)$.
It remains to prove that $(V_1,I_1)\in\scrd_+^\#$
and that $V_1$ is porous.

{\it Claim 1:} Let $\tau\in H$ and let $t\in\R$.
Assume that $\Phi_t^W(\tau)\in H$.
Then $\Phi_t^{V_1}(\tau)=\Phi_t^W(\tau)$.
{\it Proof of Claim 1:}
Let $s:=\max\{0,t\}$.
Let $\sigma:=\Phi_s^W(\tau)$.
As $s\in\{0,t\}$, we get $\sigma\in\{\tau,\Phi_t^W(\tau)\}\subseteq H$.
We have $c+1=a_J+2>a_J$, so $c+1\in\R\backslash J$.
Also, recall that $(W,J)\in\scrd$.
Then, by \lref{lem-invariance-of-halfspaces},
$\Phi_{(-\infty,0]}^W(\sigma)\subseteq H$.
So, as $V_1=W$ on $H$,
we see, from \lref{lem-orbits-agree},
that $\Phi_{-|t|}^{V_1}(\sigma)=\Phi_{-|t|}^W(\sigma)$.
If $t\le0$, then $\sigma=\tau$ and $-|t|=t$,
and so $\Phi_t^{V_1}(\tau)=\Phi_{-|t|}^{V_1}(\sigma)=\Phi_{-|t|}^W(\sigma)=\Phi_t^W(\tau)$, as desired.
We therefore assume that $t>0$.
Then $\sigma=\Phi_t^W(\tau)$ and $|t|=t$.
Then
$$\Phi_{-t}^{V_1}(\sigma)\,=\,\Phi_{-|t|}^{V_1}(\sigma)\,=\,
\Phi_{-|t|}^W(\sigma)\,=\,\Phi_{-t}^W(\sigma)\,=\,
\Phi_{-t}^W(\Phi_t^W(\tau))\,=\,\tau.$$
Applying $\Phi_t^{V_1}$ to this yields $\sigma=\Phi_t^{V_1}(\tau)$.
That is, $\Phi_t^W(\tau)=\Phi_t^{V_1}(\tau)$, as desired.
{\it End of proof of Claim 1.}

{\it Claim 2:} $(\scru(W))\cap H\subseteq\scru(V_1)$.
{\it Proof of Claim 2:}
Fix $\tau\in\scru(W)$, and assume that $\tau\in H$.
We wish to show that $\tau\in\scru(V_1)$.

Since $\tau\in\scru(W)$, we have $\Pi_4(\Phi_\R^W(\tau))=\R$.
Fix $t_0,t_1\in\R$
such that $\Pi_4(\Phi_{t_0}^W(\tau))=c$
and such that $\Pi_4(\Phi_{t_1}^W(\tau))=-4c$.
Then
$$\Phi_{t_0}^W(\tau),\,\,\Phi_{t_1}^W(\tau)\quad\in\quad\Pi_4^{-1}((-\infty,c+1))\quad=\quad H.$$
Then, by Claim~1, we get
$\Phi_{t_0}^{V_1}(\tau)=\Phi_{t_0}^W(\tau)$
and $\Phi_{t_1}^{V_1}(\tau)=\Phi_{t_1}^W(\tau)$.
Then $\Pi_4(\Phi_{t_0}^{V_1}(\tau))=\Pi_4(\Phi_{t_0}^W(\tau))=c$
and $\Pi_4(\Phi_{t_1}^{V_1}(\tau))=\Pi_4(\Phi_{t_1}^W(\tau))=-4c$.

Define $\gamma_0:\R\to\R^4$ by $\gamma_0(t)=\Phi_{t_0+t}^{V_1}(\tau)$.
Then $\gamma_0(0)=\Phi_{t_0}^{V_1}(\tau)$ and $\gamma_0(t_1-t_0)=\Phi_{t_1}^{V_1}(\tau)$
and $\gamma_0(t_0-t_1)=\Phi_{2t_0-t_1}^{V_1}(\tau)$.
As $\gamma_0$ is a flowline of $V_1$ and
$\Pi_4(\gamma_0(0))=\Pi_4(\Phi_{t_0}^{V_1}(\tau))=c$,
it follows that $\gamma_0=\bargamma_0$.
Then $\gamma_0(t_0-t_1)=\bargamma_0(t_0-t_1)=R(\gamma_0(t_1-t_0))$.
So, by definition of $R$, we get
$\Pi_4(\gamma_0(t_0-t_1))=2c-[\Pi_4(\gamma_0(t_1-t_0))]$.
Also, we have
$$\Pi_4(\gamma_0(t_1-t_0))\,\,=\,\,\Pi_4(\Phi_{t_1}^{V_1}(\tau))\,\,=\,\,-4c.$$
Then
$\Pi_4(\Phi_{2t_0-t_1}^{V_1}(\tau))=\Pi_4(\gamma_0(t_0-t_1))=2c-[-4c]=6c$.
Thus
\begin{itemize}
\item$\Pi_4(\Phi_{2t_0-t_1}^{V_1}(\tau))\,\,=\,\,6c\,\,>\,\,3c\,\,=\,\,a_{I_1}$ \qquad\quad and
\item$\Pi_4(\Phi_{t_1}^{V_1}(\tau))\,\,=\,\,-4c\,\,<\,\,-3c\,\,=\,\,-a_{I_1}$.
\end{itemize}
So, since $(V_1,I_1)\in\scrd$,
by \lref{lem-undeterred-criterion}(b\,$\Rightarrow$a),
we get $\tau\in\scru(V_1)$, as desired.
{\it End of proof of Claim 2.}

Let $\scrf$ be the set of all flowlines $\gamma:\R\to\R^4$ of $V_1$ such that $\Pi_4(\gamma(0))=c$
and $\Pi_4(\gamma(\R))=\R$.
Then, for all $\gamma\in\scrf$, we have $\bargamma=\gamma$.
Let $\displaystyle{U:=\bigcup_{\gamma\in\scrf}\,[\gamma(\R)]}$ and $U_0:=\scru(V_1)$.

{\it Claim 3:} $U\subseteq U_0$.
{\it Proof of Claim 3:}
Fix $\gamma\in\scrf$ and $t_1\in\R$.
Let $\sigma_1:=\gamma(t_1)$.
We wish to show that $\sigma_1\in\scru(V_1)$.

Since $\gamma\in\scrf$,
we see that $\gamma$ is a flowline of $V_1$ and
that $\Pi_4(\gamma(\R))=\R$.
Let $\sigma_0:=\gamma(0)$.
Because $\gamma$ is a flowline of $V_1$,
it follows, for all $t\in\R$, that $\gamma(t)=\Phi_t^{V_1}(\sigma_0)$.
Then $\Phi_{t_1}^{V_1}(\sigma_0)=\gamma(t_1)=\sigma_1$.

Then, for all $t\in\R$, we have
$\Phi_t^{V_1}(\sigma_1)=\Phi_{t+t_1}^{V_1}(\sigma_0)=\gamma(t+t_1)$.
Then $\Phi_\R^{V_1}(\sigma_1)=\gamma(\R)$,
so $\Pi_4(\Phi_\R^{V_1}(\sigma_1))=\Pi_4(\gamma(\R))=\R$.
Then $\sigma_1\in\scru(V_1)$, as desired.
{\it End of proof of Claim 3.}

{\it Claim 4:} $U_0\subseteq U$.
{\it Proof of Claim 4:}
Fix $\sigma\in\scru(V_1)$.
We wish to show that there exists $\gamma\in\scrf$ such that $\sigma\in\gamma(\R)$.

Because $\sigma\in\scru(V_1)$,
it follows that $\Pi_4(\Phi_\R^{V_1}(\sigma))=\R$.
Choose $t_1\in\R$ such that $\Pi_4(\Phi_{t_1}^{V_1}(\sigma))=c$.
Define $\gamma:\R\to\R^4$ by $\gamma(t)=\Phi_{t+t_1}^{V_1}(\sigma)$.

Then $\gamma$ is a flowline of $V_1$ and $\Pi_4(\gamma(0))=\Pi_4(\Phi_{t_1}^{V_1}(\sigma))=c$.
Moreover, $\Pi_4(\gamma(\R))=\Pi_4(\Phi_\R^{V_1}(\sigma))=\R$.
Then $\gamma\in\scrf$.

Also, $\sigma=\Phi_0^{V_1}(\sigma)=\gamma(-t_1)\in\gamma(\R)$.
{\it End of proof of Claim~4.}

For any $\gamma\in\scrf$, we have $\bargamma=\gamma$, so $R(\gamma(\R))=\bargamma(\R)=\gamma(\R)$.
Then $\displaystyle{R(U)=\bigcup_{\gamma\in\scrf}\,[R(\gamma(\R))]=\bigcup_{\gamma\in\scrf}\,[\gamma(\R)]=U}$.
By Claim 3 and Claim 4, we have $U=U_0$.
Then $R(U_0)=R(U)=U=U_0$, so $U_0\cup(R(U_0))=U_0$.

Let $U_1:=(\scru(W))\cap H$.
By Claim 2, we have $U_1\subseteq U_0$.
Then $U_1\cup(R(U_1))\subseteq U_0\cup(R(U_0))=U_0$.
Since $W$ is porous, $\scru(W)$ is dense in $\R^4$.
Then, since $U_1=(\scru(W))\cap H$, it follows that $U_1$ is dense in $H$.
So, since $H\cup(R(H))=\R^4$,
we conclude that $U_1\cup(R(U_1))$ is dense in~$\R^4$.
So, since $U_1\cup(R(U_1))\subseteq U_0$,
we see that $U_0$ is dense in~$\R^4$.
So, as $U_0=\scru(V_1)$, $V_1$ is porous.
It remains to show that $(V_1,I_1)\in\scrd_+^\#$.

{\it Claim 5:} $(V_1,I_1)\in\scrd_+$.
{\it Proof of Claim 5:}
Recall that $(V_1,I_1)\in\scrd$.
Let $\rho\in\scru_B(V_1,I_1)$.
Let $\sigma:=SU_{I_1}(\rho)$.
We wish to prove
$UF_{I_1}^{V_1}(\rho)=\sigma$.
As $\rho\in\scru_B(V_1,I_1)\subseteq B(I_1)$,
we get $\sigma=SU_{I_1}(\rho)\in SU_{I_1}(B(I_1))=T(I_1)$.
So, as $\{UF_{I_1}^{V_1}(\rho)\}=(\Phi_\R^{V_1}(\rho))\cap(T(I_1))$,
we wish to prove $\sigma\in\Phi_\R^{V_1}(\rho)$.

We have $\rho\in\scru_B(V_1,I_1)\subseteq\scru(V_1)$,
so $\Pi_4(\Phi_\R^{V_1}(\rho))=\R$.
Fix $t_0\in\R$ such that $\Pi_4(\Phi_{t_0}^{V_1}(\rho))=c$.
Define $\gamma_1:\R\to\R^4$ by $\gamma_1(t)=\Phi_{t_0+t}^{V_1}(\rho)$.
Then $\gamma_1(0)=\Phi_{t_0}^{V_1}(\rho)$
and $\gamma_1(2c-t_0)=\Phi_{2c}^{V_1}(\rho)$.
As $\gamma_1$ is a flowline for $V_1$ and $\Pi_4(\gamma_1(0))=\Pi_4(\Phi_{t_0}^{V_1}(\rho))=c$,
we see that $\bargamma_1=\gamma_1$.
Then $\bargamma_1(\R)=\gamma_1(\R)$.

We have $\rho\in\scru_B(V_1,I_1)\subseteq B(I_1)\subseteq\R^3\times\{-a_{I_1}\}=\R^3\times\{-3c\}$.
Fix $w,x,y\in\R$ such that $\rho=(w,x,y,-3c)$.
Then $\Phi_{2c}^{V_0}(\rho)=(w,x,y,-c)$ and
$\sigma=SU_{I_1}(\rho)=(w,x,y,3c)=R(w,x,y,-c)$.
Moreover, because $\rho\in\R^3\times\{-3c\}$, we get
$\Phi_{[0,2c]}^{V_0}(\rho)\subseteq\R^3\times[-3c,-c]$.
Also, because $c=a_J+1$, it follows that
$\R^3\times[-3c,-c]\subseteq\R^3\times(-\infty,-a_J)$.
Therefore
$\Phi_{[0,2c]}^{V_0}(\rho)\subseteq\R^3\times[-3c,-c]\subseteq\R^3\times(-\infty,-a_J)$.
So, since $V_1=V_0$ on $\R^3\times(-\infty,-a_J)$, by \lref{lem-orbits-agree}, we get
$\Phi_{2c}^{V_1}(\rho)=\Phi_{2c}^{V_0}(\rho)$.
It follows that
$(w,x,y,-c)=\Phi_{2c}^{V_0}(\rho)=\Phi_{2c}^{V_1}(\rho)=\gamma_1(2c-t_0)$.
Then $\sigma=R(w,x,y,-c)=R(\gamma_1(2c-t_0))=\bargamma_1(t_0-2c)$,
and so we have $\sigma\in\bargamma_1(\R)=\gamma_1(\R)=\Phi_\R^{V_1}(\rho)$,
as desired.
{\it End of proof of Claim 5.}

Recall that $V_1=V_0$ on $\R^3\times(-\infty,-a_J)$.
Then, by continuity, $V_1=V_0$ on $\R^3\times(-\infty,-a_J]$.
As $(W,J)\in\scrd^\#$, we get $\xi_J\in\scru_B(W,J)\subseteq\scru(W)$.
Also, we have $\Pi_4(\xi_J)=-a_J<a_J+2=c+1$, so $\xi_J\in H$.
It follows that $\xi_J\in(\scru(W))\cap H$.
Then, by Claim 2, we conclude that $\xi_J\in\scru(V_1)$.
Let $a:=a_{I_1}-a_J$.
By \lref{lem-midpoint-connection}(ii), we have
$$\Phi_{[-a,0]}^{V_0}(\xi_J)\,\,\subseteq\,\,\{(0,0,0)\}\,\times\,[-a_{I_1},-a_J].$$
Then $\Phi_{[-a,0]}^{V_0}(\xi_J)\subseteq\R^3\times(-\infty,-a_J]$.
So, as $V_1=V_0$ on $\R^3\times(-\infty,-a_J]$,
by \lref{lem-orbits-agree},
we get $\Phi_{-a}^{V_1}(\xi_J)=\Phi_{-a}^{V_0}(\xi_J)$.
By \lref{lem-midpoint-connection}(i), we have $\Phi_{-a}^{V_0}(\xi_J)=\xi_{I_1}$.
Then $\Phi_{-a}^{V_1}(\xi_J)=\Phi_{-a}^{V_0}(\xi_J)=\xi_{I_1}$.
So, since $\xi_J\in\scru(V_1)$ and since $\scru(V_1)$ is $V_1$-invariant,
we conclude that $\xi_{I_1}\in\scru(V_1)$.
So, because $\xi_{I_1}\in B(I_1)$ and because $(V_1,I_1)\in\scrd$,
it follows that
$$\xi_{I_1}\qquad\in\qquad(\scru(V_1))\,\,\cap\,\,(B(I_1))\qquad=\qquad\scru_B(V_1,I_1).$$
Then $(V_1,I_1)\in\scrd^\#$.
So, by Claim 5,
$(V_1,I_1)\in(\scrd_+)\cap(\scrd^\#)=\scrd_+^\#$.
\end{proof}

\begin{lem}\wrlab{lem-upflow-const}
Let $(W,J)\in\scrd^\#$.
Assume that $W$ is porous.
Then there exists $(V_*,I_*)\in(\scrm_*(W,J))\cap(\scrd_*)$
such that $V_*$ is porous.
\end{lem}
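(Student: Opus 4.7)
The plan is to first invoke \lref{lem-upflow-const-start} to replace $(W,J)$ by a porous $(V_1,I_1)\in(\scrm_*(W,J))\cap(\scrd_+^\#)$, then modify $V_1$ by inserting a gadget built from the family $V_*^x$ of \secref{sect-vertVF} into a thin slab immediately above $\barI_1^4$, calibrated so that the traversal time equals a constant integer $j$ on a neighborhood of $\xi_{I_*}$. Structural memberships $\scrm_*$, $\scrd_+^\#$, and porosity will then transfer automatically via \lref{lem-D0-stable}.

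Set $a_{I_*}:=a_{I_1}+200$ and $t_1:=TF_{I_1}^{V_1}(\xi_{I_1})$. Because $V_1=V_0$ on $(\R^4)\backslash(I_1^4)$, the $t$-derivative of $\Pi_4(\Phi_t^{V_1}(w,x,y,-a_{I_1}))$ at the (unique, by \lref{lem-single-crossing}) solution of $\Pi_4=a_{I_1}$ equals $1$, so the implicit function theorem gives that
\begin{equation*}
\tau(w,x,y)\,:=\,TF_{I_1}^{V_1}(w,x,y,-a_{I_1})
\end{equation*}
is $C^\infty$ on an open neighborhood $U$ of $(0,0,0)$. Choose an integer $j$ with $j-t_1\in(400,580)$ and shrink $U$ so that $j-200-\tau(U)\subseteq(200,380)\subseteq(200,C)$, the $C^\infty$ regime of the inverse $s\mapsto x_s$ built in \secref{sect-vertVF}. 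Pick a $C^\infty$ bump $\phi:\R^3\to[0,1]$ equal to $1$ on an open set $N_0\ni(0,0,0)$ with $\mathrm{supp}(\phi)\subseteq U\cap I_1^3$, and define $X:\R^3\to\R$ by
\begin{equation*}
X(w,x,y):=\phi(w,x,y)\cdot x_{j-200-\tau(w,x,y)}\text{ on }U,\qquad X:=0\text{ off }U.
\end{equation*}
Both definitions vanish on $U\backslash\mathrm{supp}(\phi)$, so $X$ is $C^\infty$ on $\R^3$; moreover $X=x_{j-200-\tau}$ on $N_0$ and $X=0$ off $\mathrm{supp}(\phi)$.

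Define $V_*:\R^4\to\R^4$ piecewise by $V_*:=V_1$ on $\barI_1^4$; $V_*(w,x,y,z):=(0,0,0,V_*^{X(w,x,y)}(z-a_{I_1}-100))$ on the slab $\R^3\times[a_{I_1},a_{I_*}]$; and $V_*:=V_0$ elsewhere. Gluing at $\partial I_1^4$ is $C^\infty$ because $V_1$ agrees with $V_0$ to all orders on $\partial I_1^4$ (the $C^\infty$ function $V_1-V_0$ vanishes on the open set $(\R^4)\backslash\barI_1^4$, hence all its partials vanish on the closure $(\R^4)\backslash I_1^4$); gluing at $z=a_{I_1}$ and $z=a_{I_*}$ is $C^\infty$ because $V_*^x(y)=1$ for $|y|\ge 96$ forces the gadget to coincide with $V_0$ in the collars $[a_{I_1},a_{I_1}+4]$ and $[a_{I_*}-4,a_{I_*}]$. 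Hence $V_*\in\scrc$ and $(V_*,I_*)\in\scrd$. Since $V_*=V_1$ on $\barI_1^4$ with $a_{I_1}<a_{I_*}$, and since the modification occurs above $z=a_{I_1}>-a_J$, we get $(V_*,I_*)\in\scrm_*(V_1,I_1)\subseteq\scrm_*(W,J)$. Because $V_*((\R^4)\backslash(I_1^4))\subseteq\{(0,0,0)\}\times[1/2,1]\subseteq\{(0,0,0)\}\times(0,\infty)$, \lref{lem-D0-stable} applies with $(V_1,I_1)$ and $(V_*,I_*)$; parts (iii)--(v) yield porosity of $V_*$ and $(V_*,I_*)\in\scrd_+^\#$.

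Finally, for $\sigma=(w,x,y,-a_{I_*})$ with $(w,x,y)\in N_0$, the $V_*$-orbit of $\sigma$ crosses the lower collar via $V_0$ in time $a_{I_*}-a_{I_1}=200$, then $I_1^4$ via $V_1$ in time $\tau(w,x,y)$ (exiting at $SU_{I_1}(\cdot)$ by $UF_{I_1}^{V_1}=SU_{I_1}$), then the upper slab via $V_*^{X(w,x,y)}$ in time exactly $\gamma(X(w,x,y))=\gamma(x_{j-200-\tau(w,x,y)})=j-200-\tau(w,x,y)$. The sum equals $j$, so $TF_{I_*}^{V_*}\equiv j$ on the open set $(N_0\times\{-a_{I_*}\})\cap\scru_B^\circ(V_*,I_*)$, which is a neighborhood of $\xi_{I_*}$ in $\scru_B^\circ(V_*,I_*)$. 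Hence $(V_*,I_*)\in\scrd_*$. The main technical obstacle is calibrating the gadget to avoid the $s=200$ boundary where $s\mapsto x_s$ fails to be $C^\infty$; the factor $\phi$ in the definition of $X$ handles this by forcing $X=0$ outside $\mathrm{supp}(\phi)$ (rather than forcing $s=200$, which would not yield a $C^\infty$ composition) while keeping $X=x_{j-200-\tau}$ on the region $N_0$ where constancy of $TF_{I_*}^{V_*}$ is required.
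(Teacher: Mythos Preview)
Your proof is correct and follows the same approach as the paper: invoke \lref{lem-upflow-const-start}, insert a time-adjusting gadget from \secref{sect-vertVF} in a slab above $\barI_1^4$, and transfer porosity and $\scrd_+^\#$ membership via \lref{lem-D0-stable}; the only real difference is that you interpolate in the $x$-parameter of $V_*^x$ directly, while the paper interpolates in the travel-time parameter $s$ via $V_\#^s=V_*^{x_s}$ (and uses a slab of width $202$ rather than $200$).

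One small slip: the containment $\scrm_*(V_1,I_1)\subseteq\scrm_*(W,J)$ is false as stated, because membership in $\scrm_*(V_1,I_1)$ only forces agreement with $V_0$ below $z=-a_{I_1}$, not below $z=-a_J$, and places no constraint on the region $[(\R^3)\backslash(\barI_1^3)]\times[-a_{I_1},-a_J)$. Your specific $V_*$, however, equals $V_0$ throughout that region (it is ``elsewhere'' in your piecewise definition), so $V_*\in\scrv(a_J)$ and $(V_*,I_*)\in\scrm_*(W,J)$ hold by direct inspection; you should simply replace the false inclusion by this observation.
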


\begin{proof}
By \lref{lem-upflow-const-start},
choose $(V_1,I_1)\in(\scrm_*(W,J))\cap(\scrd^\#_+)$ such that $V_1$ is porous.
Then $(V_1,I_1)\in\scrd_+^\#\subseteq\scrd^\#\subseteq\scrd$.
As $(V_1,I_1)\in\scrd^\#$,
we get $\xi_{I_1}\in\scru_B(V_1,I_1)$.
Then $\xi_{I_1}\in\scru_B(V_1,I_1)\subseteq\scru(V_1)$
and $\xi_{I_1}\in B_\circ(I_1)$, so
$\xi_{I_1}\in(\scru(V_1))\cap(B_\circ(I_1))=\scru_B^\circ(V_1,I_1)$.

Because $(V_1,I_1)\in\scrd$,
by \cref{cor-U-open}(iii),
$\scru_B^\circ(V_1,I_1)$ is an open subset of $B_\circ(I_1)$.
Define $\Psi:\scru_B^\circ(V_1,I_1)\to(0,\infty)$ by $\Psi(\tau)=TF_{I_1}^{V_1}(\tau)$.
By continuity of $\Psi$ at $\xi_{I_1}$,
fix an open neighborhood $N_1$
in $\scru_B^\circ(V_1,I_1)$ of~$\xi_{I_1}$ such that,
for all $\tau\in N_1$, $|(\Psi(\tau))-(\Psi(\xi_{I_1}))|<1$.
Since $\Psi(\xi_{I_1})>0$, fix an integer $m\ge1$ such that $|(\Psi(\xi_{I_1}))-m|<1$.
Then, by the Triangle Inequality, we have:
for all $\tau\in N_1$, $|(\Psi(\tau))-m|<2$.
Define $f:N_1\to\R$ by $f(\tau)=m-(\Psi(\tau))+250$.
Then $248<f<252$.

Fix an open neighborhood $N$ in~$B_\circ(I_1)$ of~$\xi_{I_1}$
such that the closure $\barN$ in $B_\circ(I_1)$ of $N$
is compact and satisfies $\barN\subseteq N_1$.
Fix an open neighborhood $N_0$ in~$B_\circ(I_1)$ of~$\xi_{I_1}$
such that the closure $\barN_0$ in $B_\circ(I_1)$ of~$N$
satisfies $\barN_0\subseteq N$.
Then $N_0\subseteq N_1$.
Fix $g:B_\circ(I_1)\to\R$ such that
\begin{itemize}
\item$(w,x,y)\mapsto g(w,x,y,-a_{I_1})\,:\,I_1^3\to\R$ \quad is $C^\infty$,
\item$0\le g\le1$ on $B_\circ(I_1)$,
\item$g=1$ on $\barN_0$ \qquad and
\item$g=0$ on $(B_\circ(I_1))\backslash N$.
\end{itemize}
Define $h:B_\circ(I_1)\to\R$ by:
\begin{itemize}
\item$\forall\tau\in N_1$, \quad $h(\tau):=[f(\tau)][g(\tau)]+[200][1-(g(\tau))]$ \qquad and
\item$\forall\tau\in(B_\circ(I_1))\backslash\barN$, \quad $h(\tau):=200$.
\end{itemize}
Then $(w,x,y)\mapsto h(w,x,y,-a_{I_1})\,:\,I_1^3\to\R$ is $C^\infty$, and
\begin{itemize}
\item[]$h=f$ on $\barN_0$ \qquad\quad and \qquad\quad $200\le h<252$.
\end{itemize}
For $s\in[200,380]$, define $V_\#^s$ as in \secref{sect-vertVF}.
For all $\tau=(w,x,y,z)\in I_1^3\times\R$,
\begin{itemize}
\item let $\tau_\#:=(w,x,y,-a_{I_1})\in B_\circ(I_1)$ \qquad and
\item let $v_\tau:=V_\#^{h(\tau_\#)}(z-a_{I_1}-101)$.
\end{itemize}
Then $\tau\mapsto v_\tau:I_1^3\times\R\to\R$ is $C^\infty$.
By the properties of $V_\#^s$,
for all $\tau\in I_1^3\times\R$, we have $1/2\le v_\tau\le1$, {\it i.e.}, $v_\tau\in[1/2,1]$.
Let
$$Q_1:=I_1^3\,\times\,(a_{I_1}+1,a_{I_1}+5),\quad Q_2:=I_1^3\,\times\,(a_{I_1}+197,a_{I_1}+201).$$
For all $\tau=(w,x,y,z)\in Q_1\cup Q_2$,
we have
$$z-a_{I_1}-101\quad\in\quad(-100,-96)\cup(96,100)\quad\subseteq\quad\R\backslash(-96,96),$$
so $v_\tau=1$.
Let $K:=\{(w,x,y)\in I_1^3\,|\,(w,x,y,-a_{I_1})\in\barN\,\}$.
Then, because $\barN$ is compact, it follows that $K$ is a compact subset of $I_1^3$.
Define $\hatI:=(a_{I_1}+1,a_{I_1}+201)$ and
$Q_3:=[(I_1^3)\backslash K]\times\hatI$.
Then, for all $\tau\in Q_3$, we have $\tau_\#\in(B_\circ(I_1))\backslash\barN$,
so $h(\tau_\#)=200$, so $v_\tau=1$.

Let $A:=I_1^3\times\hatI$
and let $Y:A\to\R^4$ be defined by $Y(\tau)=(0,0,0,v_\tau)$.
Then $Y:A\to\R^4$ is $C^\infty$ and
\begin{itemize}
\item$Y(A)\quad\subseteq\quad\{(0,0,0)\}\times[1/2,1]$ \qquad\qquad and
\item$Y=V_0$ \,\, on \,\, $Q_1\cup Q_2\cup Q_3$.
\end{itemize}
From the definitions of $A$ and $Q_1$ and $Q_2$ and $Q_3$, we have both
$$A\,\,\backslash\,\,(Q_1\,\cup\,Q_2)\qquad=\qquad I_1^3\quad\times\quad[\,\,a_{I_1}+5\,\,,\,\,a_{I_1}+197\,\,],$$
and
$A\backslash(Q_3)=K\times\hatI$.
Let $L:=K\times[a_{I_1}+5,a_{I_1}+197]$.
Then, as $K$ is compact, $L$ is a compact subset of~$A_*:=\R^3\times(a_{I_1},\infty)$.
Let $Q_4:=(A_*)\backslash A$ and $Q:=Q_1\cup Q_2\cup Q_3\cup Q_4$.
Then $(A_*)\backslash(Q_4)=A$, and so
$(A_*)\backslash Q=A\backslash(Q_1\cup Q_2\cup Q_3)=
[A\backslash(Q_1\cup Q_2)]\cap[\,A\,\backslash(Q_3)]=L$,
which is compact and is, therefore, closed in $A_*$.
Then $Q$ is open in $A_*$,
and it follows that $Q$ is open in~$\R^4$.

Since $(V_1,I_1)\in\scrd$, we see that $V_1=V_0$ on $(\R^4)\backslash(I_1^4)$.
So, because $Q_4\subseteq A_*\subseteq(\R^4)\backslash(I_1^4)$,
we see that $V_1=V_0$ on~$Q_4$.
Let $\partial A$ be the boundary in $\R^4$ of $A$.
As $A$ is open in $\R^4$, we get $(\partial A)\cap A=\emptyset$.
As $A\subseteq\R^3\times[a_{I_1}+1,a_{I_1}+201]$,
we get $\partial A\subseteq\R^3\times[a_{I_1}+1,a_{I_1}+201]$.
Then $\partial A\subseteq\R^3\times(a_{I_1},\infty))= A_*$.
Then $\partial A\subseteq(A_*)\backslash A=Q_4\subseteq Q$.

Define $V_*:\R^4\to\R^4$ by
\begin{itemize}
\item[]$V_*:=Y$ on $A$ \qquad and \qquad $V_*:=V_1$ on $(\R^4)\backslash A$.
\end{itemize}
We have $Y=V_0$ on $Q_1\cup Q_2\cup Q_3$,
and we have $Q_1\cup Q_2\cup Q_3\subseteq A$.
Then $V_*=Y=V_0$ on~$Q_1\cup Q_2\cup Q_3$.
Recall that $V_1=V_0$ on $Q_4$.
So, as $Q_4\subseteq(\R^4)\backslash A$,
we get $V_*=V_1=V_0$ on $Q_4$.
Then $V_*=V_0$ on $Q_1\cup Q_2\cup Q_3\cup Q_4$, {\it i.e.}, on $Q$.
Then $V_*$ is $C^\infty$ on~$Q$.
So, since $V_*$ is also $C^\infty$ on~$\R^4\backslash(\partial A)$,
and since $\partial A\subseteq Q$,
we see that $V_*$ is $C^\infty$ on~$\R^4$.
Since $(V_1,I_1)\in\scrd$, we get $V_1\in\scrc$,
so $V_1(\R^4)\subseteq\barI_0^4$.
So, since $Y(A)\subseteq\{(0,0,0)\}\times[1/2,1]\subseteq\barI_0^4$,
we get $V_*(\R^4)\subseteq\barI_0^4$.
Then $V_*\in\scrc$.

Let $I_*:=I_1+202I_0$.
By definition of $V_*$, we have $V_*=V_1$ on $(\R^4)\backslash A$.
Also, recall that $V_1=V_0$ on $(\R^4)\backslash(I_1^4)$.
Then, because $A\cup(I_1^4)\subseteq I_*^4$,
we see that $V_*=V_1=V_0$ on~$(\R^4)\backslash(I_*^4)$.
Thus $(V_*,I_*)\in\scrd$.
We have $I_1^4\subseteq(\R^4)\backslash A$, so $V_*=V_1$ on~$I_1^4$.
Therefore, since
$$a_{I_*}\quad=\quad a_{I_1}\,+\,202a_{I_0}\quad=\quad a_{I_1}\,+\,202\quad>\quad a_{I_1},$$
we get $(V_*,I_*)\in\scrm(V_1,I_1)$.
So, since $(V_1,I_1)\in\scrm_*(W,J)\subseteq\scrm(W,J)$,
it follows that $(V_*,I_*)\in\scrm(W,J)$.

We have $\R^3\times(-\infty,-a_J)\subseteq(\R^4)\backslash A$,
so $V_*=V_1$ on~$\R^3\times(-\infty,-a_J)$.
Because $(V_1,I_1)\in\scrm_*(W,J)$, we have $V_1\in\scrv(a_J)$.
That is, we have $V_1=V_0$ on~$\R^3\times(-\infty,-a_J)$.
Then $V_*=V_1=V_0$ on~$\R^3\times(-\infty,-a_J)$.
Then $V_*\in\scrv(a_J)$.
Then $(V_*,I_*)\in\scrm_*(W,J)$.

Since $V_1=V_0$ on $(\R^4)\backslash(I_1^4)$ and since $V_0(\R^4)=\{(0,0,0,1)\}$,
we get $V_1((\R^4)\backslash(I_1^4))=\{(0,0,0,1)\}$.
Then $V_1((\R^4)\backslash(I_1^4))=\{(0,0,0)\}\times(0,\infty)$.
Moreover, $Y(A)\subseteq\{(0,0,0)\}\times[1/2,1]\subseteq\{(0,0,0)\}\times(0,\infty)$.
Then $V_*((\R^4)\backslash(I_1^4))\subseteq\{(0,0,0)\}\times(0,\infty)$.
Because of this, and because
\begin{itemize}
\item$(V_*,I_*)\,\,\in\,\,\scrm(V_1,I_1)$,
\item$V_1$ is porous \qquad\qquad and
\item$(V_1,I_1)\,\,\in\,\,\scrd_+^\#\,\,=\,\,(\scrd_+)\,\cap\,(\scrd^\#)$,
\end{itemize}
we see, from \lref{lem-D0-stable}(ii-v), that
\begin{itemize}
\item$\scru(V_1)=\scru(V_*)$,
\item$V_*$ is porous \qquad\qquad and
\item$(V_*,I_*)\,\,\in\,\,(\scrd_+)\,\cap\,(\scrd^\#)\,\,=\,\,\scrd_+^\#$.
\end{itemize}
It remains to show,
for some integer $j\ge1$ and some open neighborhood $N_*$ in~$\scru_B^\circ(V_*,I_*)$ of $\xi_{I_*}$,
that: \quad $TF_{I_*}^{V_*}=j$ on $N_*$.

Let $N_*:=\Phi_{-202}^{V_0}(N_0)$
and let $H:=\R^3\times(-\infty,-a_{I_1}]$.
Because we have $N_0\subseteq B_\circ(I_1)\subseteq\R^3\times\{-a_{I_1}\}$,
we conclude that $\Phi_{[-202,0]}^{V_0}(N_0)\subseteq H$.
Since $V_1=V_0$ on $(\R^4)\backslash(I_1^4)$
and since $H\subseteq(\R^4)\backslash(I_1^4)$,
we see that $V_1=V_0$ on $H$.
Then, by \cref{cor-vect-agree-implies-flow-agree}, we conclude that
$\Phi_{-202}^{V_1}=\Phi_{-202}^{V_0}$ on~$N_0$.
Then $N_*=\Phi_{-202}^{V_0}(N_0)=\Phi_{-202}^{V_1}(N_0)\subseteq\Phi_\R^{V_1}(N_0)$.
Also, we have $N_0\subseteq N_1\subseteq\scru_B^\circ(V_1,I_1)\subseteq\scru(V_1)$.
So, because $\scru(V_1)$ is $V_1$-invariant,
it follows that $\Phi_\R^{V_1}(N_0)\subseteq\scru(V_1)$.
Recall that $\scru(V_1)=\scru(V_*)$.
Then we have $N_*\subseteq\Phi_\R^{V_1}(N_0)\subseteq\scru(V_1)=\scru(V_*)$.

Recall that $a_{I_1}+202=a_{I_*}$.
So, by \lref{lem-midpoint-connection}(i),
$\xi_{I_*}=\Phi_{-202}^{V_0}(\xi_{I_1})$.
Also, by \lref{lem-midpoint-connection}(iii),
$\tau\mapsto\Phi_{-202}^{V_0}(\tau):B_\circ(I_1)\to B_\circ(I_*)$ is an open map.
Then, since $N_*=\Phi_{-202}^{V_0}(N_0)$ and
since $N_0$ is an open neighborhood in $B_\circ(I_1)$ of~$\xi_{I_1}$,
we see that $N_*$ is an open neighborhood in $B_\circ(I_*)$ of $\xi_{I_*}$.
Also, $N_*\subseteq(\scru(V_*))\cap(B_\circ(I_*))=\scru_B^\circ(V_*,I_*)$.
We conclude that $N_*$ is an open neighborhood in $\scru_B^\circ(V_*,I_*)$ of $\xi_{I_*}$.

Let $j:=m+454$.
We wish to prove that $TF_{I_*}^{V_*}=j$ on $N_*$.

Fix $\rho\in N_*$.
We wish to show that $TF_{I_*}^{V_*}(\rho)=j$.
That is, we wish to show that $\Phi_j^{V_*}(DF_{I_*}^{V_*}(\rho))=UF_{I_*}^{V_*}(\rho)$.
As $\rho\in N_*\subseteq\scru_B(V_*,I_*)$, we get $DF_{I_*}^{V_*}(\rho)=\rho$.
We therefore wish to prove that $\Phi_j^{V_*}(\rho)=UF_{I_*}^{V_*}(\rho)$.
So, since $\{UF_{I_*}^{V_*}(\rho)\}=(\Phi_\R^{V_*}(\rho))\cap(T(I_*))$
and since $\Phi_j^{V_*}(\rho)\in\Phi_\R^{V_*}(\rho)$,
it suffices to prove that $\Phi_j^{V_*}(\rho)\in T(I_*)$.

Let $\nu:=\Phi_{202}^{V_0}(\rho)$.
As $N_*=\Phi_{-202}^{V_0}(N_0)$, we see that $\Phi_{202}^{V_0}(N_*)=N_0$.
Then $\nu=\Phi_{202}^{V_0}(\rho)\in\Phi_{202}^{V_0}(N_*)=N_0\subseteq B_\circ(I_1)=I_1^3\times\{-a_{I_1}\}$.
Fix $w,x,y\in I_1$ such that $\nu=(w,x,y,-a_{I_1})$.
Recall that $a_{I_1}+202=a_{I_*}$.
Then $\rho=\Phi_{-202}^{V_0}(\nu)=(w,x,y,-a_{I_1}-202)=(w,x,y,-a_{I_*})$.

{\it Claim 1:} $\nu=\Phi_{202}^{V_*}(\rho)$.
{\it Proof of Claim 1:}
We have
$$\Phi_{[0,202]}^{V_0}(\rho)\quad=\quad\{(w,x,y)\}\,\times\,[-a_{I_*},-a_{I_*}+202].$$
So, as $-a_{I_*}+202=-a_{I_1}$, we get
$\Phi_{[0,202]}^{V_0}(\rho)\subseteq\R^3\times(-\infty,-a_{I_1}]=H$.

Because $H\subseteq(\R^4)\backslash A$,
we get $V_*=V_1$ on $H$.
Recall that $V_1=V_0$ on~$H$.
Then $V_*=V_1=V_0$ on $H$.
So, as $\Phi_{[0,202]}^{V_0}(\rho)\subseteq H$,
it follows, from \lref{lem-orbits-agree},
that $\Phi_{202}^{V_*}(\rho)=\Phi_{202}^{V_0}(\rho)$.

Then $\nu=\Phi_{202}^{V_0}(\rho)=\Phi_{202}^{V_*}(\rho)$, as desired.
{\it End of proof of Claim 1.}

We have $\nu\in N_0\subseteq\scru_B^\circ(V_1,I_1)$.
Let $t_0:=\Psi(\nu)=TF_{I_1}^{V_1}(\nu)$
and $\sigma:=\Phi_{t_0}^{V_*}(\nu)$
and $\lambda:=\Phi_1^{V_*}(\sigma)$.

{\it Claim 2:} $\sigma=(w,x,y,a_{I_1})$.
{\it Proof of Claim 2:}
As $\nu\in\scru_B^\circ(V_1,I_1)$,
we have $DF_{I_1}^{V_1}(\nu)=\nu$.
So, since $t_0=TF_{I_1}^{V_1}(\nu)$,
we get $\Phi_{t_0}^{V_1}(\nu)=UF_{I_1}^{V_1}(\nu)$.
Recall that $V_*=V_1$ on $I_1^4$.
So, since $(V_1,I_1)\in\scrd$, since $V_*\in\scrc$ and since $\nu\in\scru_B^\circ(V_1,I_1)$,
by \lref{lem-agree-inside-I4},
we get $\Phi_{t_0}^{V_*}(\nu)=\Phi_{t_0}^{V_1}(\nu)$.
Because $(V_1,I_1)\in\scrd_+^\#\subseteq\scrd_+$,
we see that $UF_{I_1}^{V_1}=SU_{I_1}$ on $\scru_B(V_1,I_1)$.
Then $UF_{I_1}^{V_1}(\nu)=SU_{I_1}(\nu)$.
Since $\nu=(w,x,y,-a_{I_1})$, $SU_{I_1}(\nu)=(w,x,y,a_{I_1})$.

Then $\sigma=\Phi_{t_0}^{V_*}(\nu)=\Phi_{t_0}^{V_1}(\nu)=UF_{I_1}^{V_1}(\nu)=SU_{I_1}(\nu)=(w,x,y,a_{I_1})$, as desired.
{\it End of proof of Claim 2.}

{\it Claim 3:} $\lambda=(w,x,y,a_{I_1}+1)$.
{\it Proof of Claim 3:}
By Claim 2, $\Phi_{[0,1]}^{V_0}(\sigma)=\{(w,x,y)\}\times[a_{I_1},a_{I_1}+1]$
and $\Phi_1^{V_0}(\sigma)=(w,x,y,a_{I_1}+1)$.

Because we have $\R^3\times[a_{I_1},a_{I_1}+1]\subseteq(\R^4)\backslash A$,
it follows that $V_*=V_1$ on~$\R^3\times[a_{I_1},a_{I_1}+1]$.
Because we have $\R^3\times[a_{I_1},a_{I_1}+1]\subseteq(\R^4)\backslash(I_1^4)$,
it follows that $V_1=V_0$ on $\R^3\times[a_{I_1},a_{I_1}+1]$.
Then $V_*=V_1=V_0$ on~$\R^3\times[a_{I_1},a_{I_1}+1]$.
So, because
$$\Phi_{[0,1]}^{V_0}(\sigma)\,\,=\,\,\{(w,x,y)\}\,\times\,[a_{I_1},a_{I_1}+1]
\,\,\subseteq\,\,\R^3\,\times\,[a_{I_1},a_{I_1}+1],$$
by \lref{lem-orbits-agree},
we conclude that $\Phi_1^{V_*}(\sigma)=\Phi_1^{V_0}(\sigma)$.
Therefore we have $\lambda=\Phi_1^{V_*}(\sigma)=\Phi_1^{V_0}(\sigma)=(w,x,y,a_{I_1}+1)$.
{\it End of proof of Claim~3.}

Since $w,x,y\in I_1$, by Claim 3, $\lambda\in I_1^3\times\R$
and $\lambda_\#=(w,x,y,-a_{I_1})$.
Then $\lambda_\#=\nu\in B_\circ(I)$.
Let $s:=h(\nu)$ and $\chi:=\Phi_s^{V_*}(\lambda)$
and $\omega:=\Phi_1^{V_*}(\chi)$.

{\it Claim 4:} $\chi=(w,x,y,a_{I_1}+201)$.
{\it Proof of Claim 4:}
Because we have $s=h(\nu)\in h(B_\circ(I_1))$, we see that $200\le s<252$.
Define $b:\R\to\R$ by $b(t)=\Phi_t^{V_\#^s}(-100)$.
Then $b:\R\to\R$ is $C^\infty$ and, for all $t\in\R$,
we have  $b'(t)=V_\#^s(b(t))$, so $1/2\le b'(t)\le 1$.
Define $c:\R\to\R$ by $c(t)=[b(t)]+a_{I_1}+101$.
Then $c:\R\to\R$ is $C^\infty$ and, for all $t\in\R$,
we have $c'(t)=b'(t)$, and so $1/2\le c'(t)\le 1$.
As $b(s)=\Phi_s^{V_\#^s}(-100)=100$,
we get $c(s)=a_{I_1}+201$.
Define $\gamma:\R\to\R^4$ by $\gamma(t)=(w,x,y,c(t))$.
Then $\gamma(s)=(w,x,y,a_{I_1}+201)$.
We wish to prove that $\chi=\gamma(s)$, {\it i.e.}, that $\Phi_s^{V_*}(\lambda)=\gamma(s)$.
We will prove, for all $t\in[0,s]$, that $\gamma(t)=\Phi_t^{V_*}(\lambda)$.

Since $b(0)=-100$, we have $c(0)=a_{I_1}+1$.
By Claim 3, we know that $(w,x,y,a_{I_1}+1)=\lambda$.
Then $\gamma(0)=(w,x,y,a_{I_1}+1)=\lambda=\Phi_0^{V_*}(\lambda)$.
So, by uniqueness of solutions of ODEs, it suffices to show,
for all $t\in[0,s]$, that $\gamma'(t)=V_*(\gamma(t))$.
By continuity, it suffices to show,
for all $t\in(0,s)$, that $\gamma'(t)=V_*(\gamma(t))$.
Fix $t\in(0,s)$ and let $\mu:=\gamma(t)=(w,x,y,c(t))$.
We wish to prove that $\gamma'(t)=V_*(\mu)$.

As $\mu=(w,x,y,c(t))\in I_1^3\times\R$,
we get $\mu_\#=(w,x,y,-a_{I_1})=\nu$,
so $h(\mu_\#)=h(\nu)=s$.
As $c'\ge1/2$, by the Mean Value Theorem,
$c$ is increasing on~$\R$.
Then, since $0<t<s$, we have $c(0)<c(t)<c(s)$.
That is, $a_{I_1}+1<c(t)<a_{I_1}+201$.
So, since $w,x,y\in I_1$, we get
$\mu=(w,x,y,c(t))\in I_1^3\times(a_{I_1}+1,a_{I_1}+201)=I_1^3\times\hatI=A$.
So, since $V_*=Y$ on~$A$, it follows that $V_*(\mu)=Y(\mu)$.

We have $[c(t)]-a_{I_1}-101=b(t)$ and $b'(t)=V_\#^s(b(t))$ and $h(\mu_\#)=s$.
Then $v_\mu=V_\#^{h(\mu_\#)}([c(t)]-a_{I_1}-101)=V_\#^s(b(t))=b'(t)=c'(t)$.
Then we have $\gamma'(t)=(0,0,0,c'(t))=(0,0,0,v_\mu)=Y(\mu)=V_*(\mu)$, as desired.
{\it End of proof of Claim 4.}

{\it Claim 5:} $\omega=(w,x,y,a_{I_1}+202)$.
{\it Proof of Claim 5:}
We define $J:=[a_{I_1}+201,a_{I_1}+202]$.
By Claim~4,
$\Phi_{[0,1]}^{V_0}(\chi)=\{(w,x,y)\}\times J$ and
$\Phi_1^{V_0}(\chi)=(w,x,y,a_{I_1}+202)$.

Because $\R^3\times J\subseteq(\R^4)\backslash A$,
it follows that $V_*=V_1$ on $\R^3\times J$.
Because $\R^3\times J\subseteq(\R^4)\backslash(I_1^4)$,
it follows that $V_1=V_0$ on $\R^3\times J$.
Therefore we have $V_*=V_1=V_0$ on~$\R^3\times J$.
So, because
\begin{itemize}
\item[]$\Phi_{[0,1]}^{V_0}(\chi)\quad=\quad\{(w,x,y)\}\,\times\,J\quad\subseteq\quad\R^3\,\times\,J$,
\end{itemize}
by \lref{lem-orbits-agree},
we conclude that
$\Phi_1^{V_*}(\chi)=\Phi_1^{V_0}(\chi)$.
Thus we have $\omega=\Phi_1^{V_*}(\chi)=\Phi_1^{V_0}(\chi)=(w,x,y,a_{I_1}+202)$.
{\it End of proof of Claim 5.}

Recall that $\sigma=\Phi_{t_0}^{V_*}(\nu)$,
that $\lambda=\Phi_1^{V_*}(\sigma)$,
that $\chi=\Phi_s^{V_*}(\lambda)$
and that $\omega=\Phi_1^{V_*}(\chi)$.
We define $k:=t_0+s+2$.
Then $\omega=\Phi_k^{V_*}(\nu)$.
By Claim 1, we have $\nu=\Phi_{202}^{V_*}(\rho)$.
Then $\omega=\Phi_{k+202}^{V_*}(\rho)$.
We have $s=h(\nu)$ and $t_0=\Psi(\nu)$.
Thus, we have $k+202=t_0+s+204=(\Psi(\nu))+(h(\nu))+204$.
As $\nu\in N_0$, we get $h(\nu)=f(\nu)$.
So, because $f(\nu)=m-(\Psi(\nu))+250$,
we see that $(\Psi(\nu))+(h(\nu))=(\Psi(\nu))+(f(\nu))=m+250$.
Then $k+202=(\Psi(\nu))+(h(\nu))+204=m+250+204=m+454=j$.
Then $\Phi_j^{V_*}(\rho)=\Phi_{k+202}^{V_*}(\rho)=\omega$.
So, by Claim 5, we conclude that
$\Phi_j^{V_*}(\rho)=\omega=(w,x,y,a_{I_1}+202)$.
Moreover, $w,x,y\in I_1\subseteq I_*$ and $a_{I_1}+202=a_{I_*}$,
so $\Phi_j^{V_*}(\rho)\in I_*^3\times\{a_{I_*}\}=T_\circ(I_*)\subseteq T(I_*)$,
as desired.
\end{proof}

\section{The hyperbolic vector field $H$ on $\R^2$\wrlab{sect-start-hyp-vf}}

Let $\pi_1,\pi_2:\R^2\to\R$ be the coordinate projection
maps defined by $\pi_1(w,x)=w$ and $\pi_2(w,x)=x$.
Let $H_0:\R^2\to\R^2$ be defined by
$$H_0(w,x)\,\,=\,\,(d/dt)_{t=0}[(e^tw,e^{-t}x)]\,\,=\,\,(w,-x).$$
Then $H_0$ is complete and, for all $w,x,t\in\R$,
$\Phi_t^{H_0}(w,x)=(e^tw,e^{-t}x)$.

Let $Z:\R^2\to\R$ be the zero function defined by $Z(u)=0$.
Let $c_0:\R^2\to\R$ be a $C^\infty$ function satisfying
\begin{itemize}
\item$0<c_0<1$ on $\R^2\backslash\{(0,0)\}$,
\item$\forall u\in\R^2$, \quad $[c_0(u)]\,[H(u)]\,\,\in\,\,\barI_0^2$ \qquad and
\item$c_0$ agrees with $Z$ to all orders at $(0,0)$.
\end{itemize}
We define $H:=c_0H_0:\R^2\to\R^2$.
Then $H:\R^2\to\R^2$ is $C^\infty$ and $H(\R^2)\subseteq\barI_0^2$.
It follows that $H$ is complete.

I refer to $H_0$ and $H$ as ``hyperbolic''
because any orbit of either is contained in a level set of
the quadratic form $(w,x)\mapsto wx:\R^2\to\R$,
and because a generic level set of this quadratic form is a hyperbola.

\begin{lem}\wrlab{lem-reparam-H0-to-H}
Let $u\in\R^2$.
Then there is an increasing $C^\infty$ diffeomorphism
$g:\R\to\R$ such that, for all $t\in\R$,
$\Phi_t^H(u)=\Phi_{g(t)}^{H_0}(u)$.
\end{lem}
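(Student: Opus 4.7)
The plan is to split on whether $u=(0,0)$. If $u=(0,0)$, then $c_0(u)=0$ gives $H(u)=(0,0)=H_0(u)$, so both flows fix $u$, and taking $g:=\Id_\R$ works. For the rest, assume $u\ne(0,0)$, and write $\beta(t):=\Phi_t^H(u)$. I would first observe that, since $c_0(0,0)=0$, the origin is a fixed point of $H$; because $\beta(0)=u\ne(0,0)$, uniqueness of solutions to the ODE defining $\Phi^H$ forces $\beta(t)\ne(0,0)$ for all $t\in\R$, and consequently $H_0(\beta(t))\ne(0,0)$ throughout.

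Next, apply \lref{lem-homothetic-flows} (with $V=H_0$ and $W=c_0H_0=H$) to obtain $\beta(\R)\subseteq\Phi_\R^{H_0}(u)$. Let $\psi:\R\to\R^2$ be defined by $\psi(s)=\Phi_s^{H_0}(u)$; writing $u=(w,x)$, we have $\psi(s)=(e^sw,e^{-s}x)$, and a direct verification (using $u\ne(0,0)$) shows $\psi$ is an injective $C^\infty$ immersion that is a diffeomorphism onto its image $\psi(\R)$, the latter being either an open half-axis or an open branch of a hyperbola. The plan is then to set $g:=\psi^{-1}\circ\beta:\R\to\R$, which is automatically $C^\infty$ with $g(0)=0$ and satisfies $\Phi_t^H(u)=\psi(g(t))=\Phi_{g(t)}^{H_0}(u)$ by construction.

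Differentiating the identity $\beta=\psi\circ g$ and using $H(\beta(t))=c_0(\beta(t))H_0(\beta(t))$ together with $H_0(\beta(t))\ne(0,0)$ yields $g'(t)=c_0(\beta(t))>0$, so $g$ is strictly increasing. The main obstacle I anticipate is showing that $g$ maps onto $\R$. For this I would set $b:=\lim_{t\to\infty}g(t)\in(0,\infty]$ and rule out $b<\infty$ by contradiction: if $b<\infty$, continuity of $\psi$ gives $\beta(t)\to\psi(b)=:q$ as $t\to\infty$, with $q\in\psi(\R)$, so $q\ne(0,0)$; then, for any $s>0$, continuity of $\Phi_s^H$ yields $\Phi_s^H(q)=\lim_{t\to\infty}\Phi_s^H(\beta(t))=\lim_{t\to\infty}\beta(t+s)=q$, and since $s\mapsto\Phi_s^H(q)$ is $C^\infty$ and constant on $(0,\infty)$, differentiating at $s=0$ forces $H(q)=(0,0)$, contradicting $H(q)=c_0(q)H_0(q)\ne(0,0)$. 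The symmetric argument handles $t\to-\infty$, giving $\lim_{t\to-\infty}g(t)=-\infty$. Combined with $g'>0$, this makes $g$ the desired increasing $C^\infty$ diffeomorphism from $\R$ to $\R$.
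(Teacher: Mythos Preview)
Your proof is correct, and it takes a somewhat different route from the paper's.

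The paper constructs $g$ \emph{as the solution of a scalar ODE}: with $W(s):=c_0(\Phi_s^{H_0}(u))$, one sets $g(t):=\Phi_t^W(0)$, so that $g(0)=0$ and $g'(t)=W(g(t))>0$ automatically. Surjectivity of $g$ is then obtained by applying \cref{cor-homothetic-flows} to $W$ and the constant vector field $V\equiv 1$ on $\R$, giving $g(\R)=\Phi_\R^W(0)=\Phi_\R^V(0)=\R$. Finally the paper verifies directly that $t\mapsto\Phi_{g(t)}^{H_0}(u)$ satisfies the $H$-flow equation with the right initial value.

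By contrast, you build $g$ \emph{geometrically} as $\psi^{-1}\circ\beta$, after first invoking \lref{lem-homothetic-flows} to know that $\beta$ lands in the $H_0$-orbit $\psi(\R)$; you then handle surjectivity by a limit-point argument (a finite limit of $g$ would force a nontrivial fixed point of $\Phi^H$, contradicting $c_0>0$ off the origin). Your approach front-loads the orbit-containment lemma and avoids the auxiliary scalar flow, at the cost of needing the explicit fact that $\psi$ is an embedding; the paper's approach is slightly slicker on surjectivity (one line via \cref{cor-homothetic-flows}) but requires the final ODE verification that you get for free by construction. Both are entirely valid.
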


\begin{proof}
If $u=(0,0)$, then, for all $t\in\R$,
$\Phi_t^H(u)=(0,0)=\Phi_t^{H_0}(u)$,
and, in this case, we can define $g:\R\to\R$ by $g(t)=t$.
We may therefore assume that $u\ne(0,0)$.
Then, because $\R^2\backslash\{(0,0)\}$ is $H_0$-invariant,
it follows that $\Phi_\R^{H_0}(u)\subseteq\R^2\backslash\{(0,0)\}$.

Define $W:\R\to\R$ by $W(s)=c_0(\Phi_s^{H_0}(u))$.
Recall that $0<c_0<1$ on~$\R^2\backslash\{(0,0)\}$.
Then $0<W<1$.
Then $W:\R\to\R$ is $C^\infty$ and bounded, and, therefore, complete.
Define $g:\R\to\R$ by $g(t)=\Phi_t^W(0)$.
Then $g:\R\to\R$ is $C^\infty$ and satisfies both $g(0)=0$ and,
for all $t\in\R$, $g'(t)=W(g(t))$.
So, because $W>0$, we see that $g'>0$.
So, by the Mean Value Theorem, $g$ is increasing.
Then $g:\R\to\R$ is injective.

Define a constant function $V:\R\to\R$ by: for all $s\in\R$, $V(s)=1$.
For all $s\in\R$, $W(s)>0$, so $W(s)\ne0$.
Then, by \cref{cor-homothetic-flows}, $\Phi_\R^W(0)=\Phi_\R^V(0)$.
For all $t\in\R$, $\Phi_t^V(0)=t$, so $\Phi_\R^V(0)=\R$.
Therefore $g(\R)=\Phi_\R^W(0)=\Phi_\R^V(0)=\R$.
Then $g:\R\to\R$ is surjective.
Then $g:\R\to\R$ is injective and surjective, hence bijective.
So, as $g'>0$ and as $g:\R\to\R$ is $C^\infty$,
by the Inverse Function Theorem, $g^{-1}:\R\to\R$ is~$C^\infty$.
Then $g:\R\to\R$ is an increasing $C^\infty$ diffeomorphism.
It remains to prove, for all $t\in\R$, that $\Phi_{g(t)}^{H_0}(u)=\Phi_t^H(u)$.

Define $\gamma:\R\to\R^2$ by $\gamma(t)=\Phi_{g(t)}^{H_0}(u)$.
We wish to show, for all $t\in\R$, that $\gamma(t)=\Phi_t^H(u)$.
We have $\gamma(0)=\Phi_{g(0)}^{H_0}(u)=\Phi_0^{H_0}(u)=u=\Phi_0^H(u)$.
Therefore, by uniqueness of solutions of ODEs,
it suffices to prove, for all $t\in\R$, that
$\gamma'(t)=H(\gamma(t))$.

Differentiating the definition of~$\gamma$,
and using the Chain Rule, we see, for all~$t\in\R$,
that $\gamma'(t)=[g'(t)][H_0(\Phi_{g(t)}^{H_0}(u))]$.
Fix $t\in\R$ and let $u_1:=\gamma(t)=\Phi_{g(t)}^{H_0}(u)$.
We wish to prove that $[g'(t)][H_0(u_1)]=H(u_1)$.

By definition of $W$, we have $W(g(t))=c_0(\Phi_{g(t)}^{H_0}(u))$.
Then
$$g'(t)\quad=\quad W(\,g(t)\,)\quad=\quad c_0(\,\Phi_{g(t)}^{H_0}(u)\,)\quad=\quad c_0(u_1).$$
Then $[g'(t)][H_0(u_1)]=[c_0(u_1)][H_0(u_1)]=(c_0H_0)(u_1)=H(u_1)$.
\end{proof}

\begin{lem}\wrlab{lem-H-preserves-axes}
Both of the following are true:
\begin{itemize}
\item[(i)]$H(\R\times\{0\})\subseteq\R\times\{0\}$.
\item[(ii)]$H(\{0\}\times\R)\subseteq\{0\}\times\R$.
\end{itemize}
\end{lem}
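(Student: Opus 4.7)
The proof plan is entirely computational and elementary. Since $H = c_0 H_0$ by definition, and since $c_0$ is a scalar-valued function, for every $u \in \R^2$ we have $H(u) = [c_0(u)] \cdot [H_0(u)]$; thus $H(u)$ is a scalar multiple of $H_0(u)$. Consequently, to show that a linear subspace $L \subseteq \R^2$ is preserved in the sense $H(L) \subseteq L$, it suffices to check that $H_0(L) \subseteq L$, because $L$ is closed under scalar multiplication.

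For (i), I would fix $u \in \R \times \{0\}$, write $u = (w, 0)$, and compute $H_0(u) = (w, -0) = (w, 0) \in \R \times \{0\}$. Then $H(u) = [c_0(u)] \cdot (w, 0) = ([c_0(u)]w, 0) \in \R \times \{0\}$, as desired.

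For (ii), I would fix $u \in \{0\} \times \R$, write $u = (0, x)$, and compute $H_0(u) = (0, -x) \in \{0\} \times \R$. Then $H(u) = [c_0(u)] \cdot (0, -x) = (0, -[c_0(u)]x) \in \{0\} \times \R$, as desired.

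There is no real obstacle here; this lemma is essentially a sanity check on the definition of $H$, recording that both coordinate axes are $H_0$-invariant (hence $H$-invariant) because $H_0$ acts diagonally. The only thing to be careful about is to invoke the definition $H = c_0 H_0$ explicitly and observe that multiplication by the scalar $c_0(u)$ leaves each axis invariant.
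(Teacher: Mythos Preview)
Your proof is correct and follows essentially the same approach as the paper: both compute $H_0$ on a generic point of the axis, observe it remains on the axis, and then note that multiplying by the scalar $c_0(u)$ preserves this. The paper only writes out part (i) and declares (ii) similar, whereas you spell out both, but the argument is identical.
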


\begin{proof}
We only prove (i). The proof of (ii) is similar.

Let $w\in\R$.
We wish to prove that $H(w,0)\in\R\times\{0\}$.

Let $a:=c_0(w,0)$.
As $H_0(w,0)=(w,0)$
and $H(w,0)=a\cdot[H_0(w,0)]$,
it follows that $H(w,0)=(aw,0)\in\R\times\{0\}$, as desired.
\end{proof}

\begin{lem}\wrlab{lem-asymptotic-coords-of-H}
All of the following are true:
\begin{itemize}
\item[(i)]For all $u\in\R^2\backslash[\{0\}\times\R]$, \quad
$|\pi_1(\Phi_t^H(u))|\to\infty$ as $t\to\infty$.
\item[(ii)]For all $u\in\R^2\backslash[\R\times\{0\}]$, \quad
$|\pi_2(\Phi_t^H(u))|\to\infty$ as $t\to-\infty$.
\item[(iii)]For all $u\in\R^2$, \quad
$t\mapsto|\pi_1(\Phi_t^H(u))|:\R\to\R$ is nondecreasing.
\item[(iv)]For all $u\in\R^2$, \quad
$t\mapsto|\pi_2(\Phi_t^H(u))|:\R\to\R$ is nonincreasing.
\end{itemize}
\end{lem}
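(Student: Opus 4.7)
\medskip

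The plan is to reduce everything to the explicit formula for $\Phi_s^{H_0}$, using the reparametrization provided by \lref{lem-reparam-H0-to-H}. Fix an arbitrary $u=(w,x)\in\R^2$. By \lref{lem-reparam-H0-to-H}, choose an increasing $C^\infty$ diffeomorphism $g:\R\to\R$ such that, for all $t\in\R$, $\Phi_t^H(u)=\Phi_{g(t)}^{H_0}(u)$. Since $\Phi_s^{H_0}(w,x)=(e^s w,e^{-s}x)$, this yields the closed forms
$$\pi_1(\Phi_t^H(u))=e^{g(t)}w\qquad\hbox{and}\qquad\pi_2(\Phi_t^H(u))=e^{-g(t)}x,$$
so
$$|\pi_1(\Phi_t^H(u))|=e^{g(t)}\,|w|\qquad\hbox{and}\qquad|\pi_2(\Phi_t^H(u))|=e^{-g(t)}\,|x|.$$

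From here, (iii) and (iv) are immediate: since $g$ is increasing, the function $t\mapsto e^{g(t)}$ is nondecreasing and $t\mapsto e^{-g(t)}$ is nonincreasing; multiplying by the nonnegative constants $|w|$ and $|x|$ respectively preserves these monotonicity properties.

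For (i) and (ii), the extra input needed is that $g(t)\to\infty$ as $t\to\infty$ and $g(t)\to-\infty$ as $t\to-\infty$. This follows because $g:\R\to\R$ is an increasing bijection (so it is proper on each end). Assuming $u\notin\{0\}\times\R$ means $w\ne0$, so $|w|>0$, and therefore $e^{g(t)}|w|\to\infty$ as $t\to\infty$, giving (i). Assuming $u\notin\R\times\{0\}$ means $x\ne0$, so $|x|>0$, and therefore $e^{-g(t)}|x|\to\infty$ as $t\to-\infty$, giving (ii).

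There is no real obstacle here: once \lref{lem-reparam-H0-to-H} is applied, every claim reduces to elementary monotonicity or limit properties of the exponential function composed with the monotone bijection $g$. The only thing worth being careful about is that $g$ depends on $u$; but since each of (i)-(iv) is a statement quantified over a single $u$ at a time, choosing $g=g_u$ pointwise is sufficient.
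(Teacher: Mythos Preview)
Your proof is correct and follows essentially the same approach as the paper: both apply \lref{lem-reparam-H0-to-H} to write $\Phi_t^H(u)=\Phi_{g(t)}^{H_0}(u)=(e^{g(t)}w,e^{-g(t)}x)$ and then reduce all four statements to the monotonicity and surjectivity of the increasing diffeomorphism $g$. The paper only spells out (i) and (iii) and declares (ii) and (iv) similar, whereas you handle all four at once, but there is no substantive difference.
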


\begin{proof}
We only prove (i) and (iii). The proofs of (ii) and (iv) are similar.

Fix $u=(w,x)\in\R\times\R=\R^2$.
We wish to prove both
\begin{itemize}
\item if \,\, $w\ne0$, \,\, then \,\, $|\pi_1(\Phi_t^H(u))|\to\infty$ as $t\to\infty$ \qquad\qquad and
\item$t\,\mapsto\,|\pi_1(\Phi_t^H(u))|\,\,:\,\,\R\,\to\,\R$ \quad is nondecreasing.
\end{itemize}

By \lref{lem-reparam-H0-to-H},
let $g:\R\to\R$ be an increasing $C^\infty$ diffeomorphism
such that, for all $t\in\R$, $\Phi_t^H(u)=\Phi_{g(t)}^{H_0}(u)$.
For all $t\in\R$, we have $\pi_1(\Phi_t^H(u))=\pi_1(\Phi_{g(t)}^{H_0}(u))=e^{g(t)}w$.
We therefore wish to prove both
\begin{itemize}
\item if \,\, $w\ne0$, \,\, then \,\, $e^{g(t)}|w|\to\infty$ as $t\to\infty$ \qquad\qquad and
\item$t\,\mapsto e^{g(t)}|w|\,\,\,:\,\,\R\,\to\,\R$ \quad is nondecreasing.
\end{itemize}

Because $g:\R\to\R$ is an increasing $C^\infty$ diffeomorphism,
it follows that $g(t)\to\infty$ as $t\to\infty$.
So, if $w\ne0$, then $e^{g(t)}|w|\to\infty$ as $t\to\infty$.

Since both $g:\R\to\R$ and $t\mapsto e^t|w|:\R\to\R$ are nondecreasing,
the composite $t\mapsto e^{g(t)}|w|:\R\to\R$ is nondecreasing as well.
\end{proof}

\begin{lem}\wrlab{lem-H0-fixes-00-to-all-orders}
Let $t\in\R$. Then the map $\Phi_t^H:\R^2\to\R^2$ agrees
with the identity $\Id_2:\R^2\to\R^2$ to all orders at $(0,0)$.
\end{lem}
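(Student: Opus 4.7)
The plan is to reduce this to Corollary \ref{cor-vanishing-to-periodic}, which says that if a complete vector field $V$ agrees with $\bfzero$ to all orders at a point $\sigma$, then each flow $\Phi_t^V$ agrees with $\Id_d$ to all orders at $\sigma$. Since $H$ has already been observed to be complete (it is $C^\infty$ with bounded image in $\barI_0^2$), it suffices to check that $H$ agrees with the zero function $\bfzero:\R^2\to\R^2$ to all orders at $(0,0)$.

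For this, I would unpack the definition $H = c_0 H_0$ and use that, by assumption, $c_0$ agrees with the zero function $Z:\R^2\to\R$ to all orders at $(0,0)$. That is, every partial derivative of $c_0$, of every order, vanishes at $(0,0)$. Since $H_0(w,x)=(w,-x)$ is a polynomial vector field, each of its component partial derivatives is globally defined and bounded on any compact neighborhood of $(0,0)$. Applying the Leibniz rule componentwise to $c_0 H_0$, every partial derivative of $H$ at $(0,0)$ is a finite sum of products in each of which some partial derivative of $c_0$ at $(0,0)$ appears as a factor, and is therefore $0$. Hence every partial derivative of $H$ at $(0,0)$ vanishes, which is exactly the assertion that $H$ agrees with $\bfzero$ to all orders at $(0,0)$.

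With that verified, \cref{cor-vanishing-to-periodic}, applied with $d=2$, $V:=H$ and $\sigma:=(0,0)$, yields the desired conclusion: for all $t\in\R$, $\Phi_t^H$ agrees with $\Id_2$ to all orders at $(0,0)$.

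There is no real obstacle here; the only step requiring a moment of care is the Leibniz computation, and even that is essentially routine once one observes that the flatness of $c_0$ at $(0,0)$ is preserved under multiplication by the smooth (in fact linear) vector field $H_0$. The statement is really just a packaging of the observation that a smooth vector field that is flat at a fixed point has flows that are flat perturbations of the identity there.
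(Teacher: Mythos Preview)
Your proposal is correct and follows essentially the same approach as the paper: both show that $H=c_0H_0$ agrees with $\bfzero$ to all orders at $(0,0)$ (using the flatness of $c_0$ there), and then invoke \cref{cor-vanishing-to-periodic}. The only difference is that you spell out the Leibniz-rule justification, whereas the paper simply notes $\bfzero=ZH_0$ and concludes directly.
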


\begin{proof}
Define $\bfzero:\R^2\to\R^2$ by: for all $u\in\R^2$, $\bfzero(u)=(0,0)$.
Because $c_0$ agrees with $Z$ agree to all orders at $(0,0)$,
because $H=c_0H_0$ and because $\bfzero=ZH_0$, it follows that
$H$ agrees with $\bfzero$ to all orders at $(0,0)$.
Then, by \cref{cor-vanishing-to-periodic},
$\Phi_t^H$ agrees with  $\Id_2$ to all orders at $(0,0)$.
\end{proof}

\section{The ``racetrack'' vector field $Q$ on $\R^2$\wrlab{sect-racetrack}}

For this section, let $S:=(4I_0)\times(12I_0)$,
let $B:=(4I_0)\times\{-12\}$ and let
$T:=(4I_0)\times\{12\}$.
Let $R$ be an open subset of $\R^2$ such that
\begin{itemize}
\item$R$ is diffeomorphic to an open annulus in $\R^2$ \qquad\qquad and
\item$S\,\cup\,B\cup\,T\quad\subseteq\quad R\quad\subseteq\quad(50I_0)^2$.
\end{itemize}
Let $Q_0:\R^2\to\R^2$ be the constant map defined by $Q_0(y,z)=(0,1)$.
Let $Q:\R^2\to\R^2$ be $C^\infty$ and satisfy
\begin{itemize}
\item$Q(\R^2)\subseteq\barI_0^2$,
\item$R$ is $Q$-invariant,
\item$Q=Q_0$ on $S$ \qquad\qquad and
\item$Q=Q_0$ on $[\R^2]\,\backslash\,[(50I_0)^2]$.
\end{itemize}
Then $Q$ is $C^\infty$ and bounded, so $Q$ is complete.

Fix an integer $m>24$.
Throughout this section, we assume:
\begin{itemize}
\item$\forall v\in R$, \qquad $\Phi_m^Q(v)=v$ \qquad\qquad\qquad and
\item$\forall v\in B$, \, $\forall t\in(0,m)$, \qquad $[\Phi_t^Q(v)\in S]\Leftrightarrow[t<24]$.
\end{itemize}

I picture $R$ in the shape of a racetrack, with $B$ as the starting line,
with $T$ as a checkpoint that occurs just before the first turn of the track,
and with $S$ as the portion of the track between $B$ and $T$.
While in $S$, the runners all travel straight upward in $\R^2$, with unit speed.
Using seconds as our units of time, there exists an integer $m>24$, such that
every runner takes exactly $m$ seconds to complete one lap around the track,
no matter what the runner's starting point is.
Also, each runner starting on the starting line, $B$, spends exactly $24$ seconds in $S$,
and then doesn't return to $S$ in the following $m-24$ seconds.

\begin{lem}\wrlab{lem-follow-Q-in-and-out-of-S}
Let $v=(y,z)\in\R\times\R=\R^2$.
Assume that $v\in S$.
Then, all of the following are true:
\begin{itemize}
\item[(i)]$\forall t\in[-12,12]$, \quad $\Phi_{t-z}^Q(v)=\Phi_{t-z}^{Q_0}(v)=(y,t)$.
\item[(ii)]$\forall t\in[12,m-12]$, \quad $\Phi_{t-z}^Q(v)\in(\R^2)\backslash S$.
\item[(iii)]$\forall t\in[m-12,m+12]$, \quad $\Phi_{t-z}^Q(v)=\Phi_{t-m-z}^{Q_0}(v)=(y,t-m)$.
\end{itemize}
\end{lem}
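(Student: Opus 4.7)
The plan is to prove the three parts in order, leveraging the hypothesis that $Q$ agrees with the constant field $Q_0$ on $S$ together with the lap-time and entry-exit information for orbits starting on $B$.

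For (i), fix $v=(y,z)\in S$, so $y\in(-4,4)$ and $z\in(-12,12)$. Since $Q_0$ is the constant $(0,1)$, we have $\Phi_s^{Q_0}(v)=(y,z+s)$ for all $s\in\R$. For any $t\in[-12,12]$ and any $s$ between $0$ and $t-z$, the second coordinate $z+s$ lies between $\min\{z,t\}$ and $\max\{z,t\}$, hence in $[-12,12]$, so $\Phi_s^{Q_0}(v)\in\overline{S}$. Because $Q=Q_0$ on $S$, continuity gives $Q=Q_0$ on $\overline{S}$, and then $Q(\Phi_s^{Q_0}(v))=Q_0(\Phi_s^{Q_0}(v))$ for all such $s$. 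Apply \lref{lem-orbits-agree} (with the interval from $0$ to $t-z$) to conclude $\Phi_{t-z}^Q(v)=\Phi_{t-z}^{Q_0}(v)=(y,t)$.

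For (ii), the idea is to pull $v$ back to the starting line $B$ and invoke the racetrack hypothesis. Applying (i) at $t=-12$ gives $\Phi_{-12-z}^Q(v)=(y,-12)\in B$; call this point $v_B$. The group law yields $\Phi_{t-z}^Q(v)=\Phi_{t+12}^Q(v_B)$. For $t\in[12,m-12]$ we have $t+12\in[24,m]$. When $t+12\in[24,m)$ the hypothesis $[\Phi_{t'}^Q(v_B)\in S]\Leftrightarrow[t'<24]$ (valid for $t'\in(0,m)$) shows $\Phi_{t+12}^Q(v_B)\notin S$. When $t+12=m$, periodicity on $R$ gives $\Phi_m^Q(v_B)=v_B\in B$, and $B\cap S=\emptyset$, so again the point is outside $S$.

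For (iii), use periodicity to reduce to a case of (i). Since $v\in S\subseteq R$, the hypothesis $\Phi_m^Q(v)=v$ yields $\Phi_{t-z}^Q(v)=\Phi_{t-m-z}^Q(v)$. For $t\in[m-12,m+12]$, set $t_1:=t-m\in[-12,12]$; by (i) applied with $t_1$ in place of $t$, we get $\Phi_{t_1-z}^Q(v)=\Phi_{t_1-z}^{Q_0}(v)=(y,t_1)=(y,t-m)$. Combining gives the full chain $\Phi_{t-z}^Q(v)=\Phi_{t-m-z}^{Q_0}(v)=(y,t-m)$.

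The only nontrivial obstacle is verifying, in the application of \lref{lem-orbits-agree} inside (i), that the straight-line $Q_0$-trajectory never leaves the closed region where $Q$ is known to coincide with $Q_0$; the geometry of $S$ as the rectangle $(-4,4)\times(-12,12)$ and the restriction $t\in[-12,12]$ are exactly what makes this work. Parts (ii) and (iii) are then essentially bookkeeping via the group law and the racetrack period.
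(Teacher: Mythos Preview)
Your proof is correct and follows essentially the same approach as the paper: part (i) via \lref{lem-orbits-agree} using $Q=Q_0$ on $S$, part (ii) by pulling back to $v_B=(y,-12)\in B$ and invoking the racetrack hypothesis, and part (iii) by periodicity reducing to (i). The only cosmetic differences are that the paper works on the open $S$ and extends by continuity at the endpoints in (i), and handles the endpoint $t+12=m$ in (ii) by closedness of $(\R^2)\backslash S$ rather than by your periodicity argument; both variants are equally valid.
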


\begin{proof}
Since $(y,z)=v\in S=(4I_0)\times(12I_0)$, we get $y\in4I_0$ and $z\in12I_0$.
That is, $-4<y<4$ and $-12<z<12$.

{\it Proof of (i):}
For all $t\in\R$, $\Phi_{t-z}^{Q_0}(v)=(y,t)$.
From this and continuity, we need only show, for all $t\in(-12,12)$, that $\Phi_{t-z}^Q(v)=\Phi_{t-z}^{Q_0}(v)$.

We have $-12-z<0<12-z$, {\it i.e.}, $0\in(-12-z,12-z)$.
For all $t\in(-12-z,12-z)$, we have $t+z\in(-12,12)=12I_0$, and so
$$\Phi_t^{Q_0}(v)\quad=\quad(y,t+z)\quad\in\quad(4I_0)\,\times\,(12I_0)\quad\subseteq\quad S.$$
So, because $Q=Q_0$ on $S$,
we conclude, from \lref{lem-orbits-agree},
that, for all~$t\in(-12-z,12-z)$,
we have $\Phi_t^Q(v)=\Phi_t^{Q_0}(v)$.
Equivalently, for all~$t\in(-12,12)$,
we have $\Phi_{t-z}^Q(v)=\Phi_{t-z}^{Q_0}(v)$.
{\it End of proof of~(i).}

{\it Proof of (ii):}
Let $v_0:=\Phi_{-12-z}^Q(v)$.
Then, for all $t\in\R$, we have
$\Phi_{t+12}^Q(v_0)=\Phi_{t-z}^Q(v)$.
By (i), we have $v_0=(y,-12)$.
It follows that $v_0=(y,-12)\in(4I_0)\times\{-12\}=B$.
So, by definition of $m$, we see, for all~$t\in(0,m)$, that: \qquad
$[\Phi_t^Q(v_0)\in S]\Leftrightarrow[t<24]$.

So, for all~$t\in[24,m)$, we have
$\Phi_t^Q(v_0)\in(\R^2)\backslash S$.
So, since $(\R^2)\backslash S$ is closed,
by continuity, for all~$t\in[24,m]$, we get
$\Phi_t^Q(v_0)\in(\R^2)\backslash S$.
Equivalently, for all $t\in[12,m-12]$,
we have $\Phi_{t+12}^Q(v_0)\in(\R^4)\backslash S$,
so $\Phi_{t-z}^Q(v)=\Phi_{t+12}^Q(v_0)\in(\R^4)\backslash S$, as desired.
{\it End of proof of (ii).}

{\it Proof of (iii):}
Fix $t\in[m-12,m+12]$.
Since $\Phi_{t-m-z}^{Q_0}(v)=(y,t-m)$,
it suffices to show that $\Phi_{t-z}^Q(v)=\Phi_{t-m-z}^{Q_0}(v)$.

Since $t-m\in[-12,12]$, by (i), we see that $\Phi_{t-m-z}^Q(v)=\Phi_{t-m-z}^{Q_0}(v)$.
By definition of $m$, because $v\in S\subseteq R$, we see that $\Phi_m^Q(v)=v$.
Applying $\Phi_{t-m-z}^Q$ to this equation gives us $\Phi_{t-z}^Q(v)=\Phi_{t-m-z}^Q(v)$.
Then $\Phi_{t-z}^Q(v)=\Phi_{t-m-z}^Q(v)=\Phi_{t-m-z}^{Q_0}(v)$, as desired.
{\it End of proof of (iii).}
\end{proof}

\begin{lem}\wrlab{lem-reentry-time-to-I2}
Let $a\in(0,4]$.
Let $I:=(-a,a)$.
Let $y\in I$, $z\in12I_0$.
Let $v:=(y,z)\in\R^2$.
Then, for all $t\in[a,m-a]$,
$\Phi_{t-z}^Q(v)\in[\R^2]\backslash[I^2]$.
\end{lem}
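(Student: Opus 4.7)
The plan is to reduce to \lref{lem-follow-Q-in-and-out-of-S} by first observing that $v\in S$, then splitting the interval $[a,m-a]$ into three subintervals on which \lref{lem-follow-Q-in-and-out-of-S}(i)--(iii) describes the flow, and finally checking in each case that the result lies outside $I^2$.

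First I would verify $v\in S$: since $a\le 4$, we have $I=(-a,a)\subseteq(-4,4)=4I_0$, and by hypothesis $y\in I\subseteq4I_0$ and $z\in12I_0$, so $v=(y,z)\in(4I_0)\times(12I_0)=S$. Moreover, since $a\le 4\le 12$, we have $I^2\subseteq (4I_0)\times(4I_0)\subseteq(4I_0)\times(12I_0)=S$.

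Next I would note that $m>24$ implies $m-12>12>a$ and $a\le 4<12<m-12<m-a$, so the three intervals $[a,12]$, $[12,m-12]$, and $[m-12,m-a]$ cover $[a,m-a]$. Fix $t\in[a,m-a]$. I would handle the three cases separately. If $t\in[a,12]$, then by \lref{lem-follow-Q-in-and-out-of-S}(i), $\Phi_{t-z}^Q(v)=(y,t)$; since $t\ge a$, we have $t\notin(-a,a)=I$, so $(y,t)\notin I^2$. If $t\in[12,m-12]$, then by \lref{lem-follow-Q-in-and-out-of-S}(ii), $\Phi_{t-z}^Q(v)\in(\R^2)\backslash S$; since $I^2\subseteq S$, it follows that $\Phi_{t-z}^Q(v)\notin I^2$. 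If $t\in[m-12,m-a]$, then by \lref{lem-follow-Q-in-and-out-of-S}(iii), $\Phi_{t-z}^Q(v)=(y,t-m)$; since $t\le m-a$, we have $t-m\le -a$, so $t-m\notin I$, and therefore $(y,t-m)\notin I^2$.

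There is no serious obstacle here; the lemma is essentially a packaging of \lref{lem-follow-Q-in-and-out-of-S} together with the trivial geometric observation that $I^2\subseteq S$ when $a\le 4$. The only thing to watch carefully is the partition of $[a,m-a]$, which uses both $a\le 4$ and $m>24$ to ensure the subintervals are nondegenerate and cover the whole range.
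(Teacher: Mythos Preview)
Your proposal is correct and follows essentially the same approach as the paper: verify $v\in S$ and $I^2\subseteq S$, split $[a,m-a]$ into $[a,12]\cup[12,m-12]\cup[m-12,m-a]$, and apply parts (i)--(iii) of \lref{lem-follow-Q-in-and-out-of-S} on the respective pieces. The paper's proof is virtually identical in structure and detail.
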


\begin{proof}
Since $0<a\le4$, we have $I\subseteq4I_0$ and $[a,12]\subseteq[-12,12]$
and $[m-12,m-a]\subseteq[m-12,m+12]$.
Since $y\in I\subseteq4I_0$ and $z\in12I_0$,
we see that $v=(y,z)\in(4I_0)\times(12I_0)=S$.
Also, because $I\subseteq4I_0\subseteq12I_0$,
we get $I^2\subseteq(4I_0)\times(12I_0)=S$, so
$(\R^2)\backslash S\subseteq[\R^2]\backslash[I^2]$.

For all~$t\in[a,12]$, we have $t\ge a$, so $t\notin I$, so $(y,t)\in[\R^2]\backslash[I^2]$.
For all~$t\in[m-12,m-a]$, we have $t-m\le-a$, so $t-m\notin I$, so $(y,t-m)\in[\R^2]\backslash[I^2]$.

Combining all these observations with \lref{lem-follow-Q-in-and-out-of-S}, we conclude:
\begin{itemize}
\item[(a)]$\forall t\in[a,12]$, \qquad $\Phi_{t-z}^Q(v)=(y,t)\in[\R^2]\backslash[I^2]$.
\item[(b)]$\forall t\in[12,m-12]$,
           \qquad $\Phi_{t-z}^Q(v)\in(\R^2)\backslash S\subseteq[\R^2]\backslash[I^2]$.
\item[(c)]$\forall t\in[m-12,m-a]$,
            \qquad $\Phi_{t-z}^Q(v)=(y,t-m)\in[\R^2]\backslash[I^2]$.
\end{itemize}
Since $[a,m-a]=[a,12]\cup[12,m-12]\cup[m-12,m-a]$, by (a) and (b) and (c),
we see, for all $t\in[a,m-a]$, that $\Phi_{t-z}^Q(v)\in[\R^2]\backslash[I^2]$.
\end{proof}

We record the special cases $a=1$ and $a=4$ of \lref{lem-reentry-time-to-I2}:

\begin{cor}\wrlab{cor-reentry-time-to-I02}
Let $y\in I_0$, $z\in12I_0$.
Let $v:=(y,z)\in\R^2$.
Then, for all $t\in[1,m-1]$, we have
$\Phi_{t-z}^Q(v)\in[\R^2]\backslash[I_0^2]$.
\end{cor}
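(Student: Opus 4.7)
The plan is to invoke \lref{lem-reentry-time-to-I2} directly with the parameter choice $a:=1$. This requires me to check that the hypotheses of that lemma are satisfied with this choice, and that its conclusion matches the statement of the corollary.

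First I would observe that $a=1$ does lie in $(0,4]$, so the hypothesis $a\in(0,4]$ of \lref{lem-reentry-time-to-I2} is met. Next I would unpack what $I$ becomes in the lemma: with $a=1$, the interval $I:=(-a,a)=(-1,1)$ coincides with $I_0$, by the definition of $I_0$ in \secref{sect-notation}. Consequently $I^2=I_0^2$ and $[\R^2]\backslash[I^2]=[\R^2]\backslash[I_0^2]$, matching the conclusion of the corollary exactly.

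Then I would verify that the remaining input hypotheses of the lemma transfer without change: the corollary assumes $y\in I_0$ and $z\in12I_0$, which are precisely $y\in I$ and $z\in12I_0$ under the identification $I=I_0$; and the range of $t$ required by the lemma, namely $t\in[a,m-a]=[1,m-1]$, agrees with the range in the corollary's conclusion. Applying \lref{lem-reentry-time-to-I2} then yields, for all $t\in[1,m-1]$, that $\Phi_{t-z}^Q(v)\in[\R^2]\backslash[I_0^2]$, which is the desired conclusion.

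There is essentially no obstacle here: the corollary is explicitly labeled as the $a=1$ specialization of \lref{lem-reentry-time-to-I2}, so the only work is the bookkeeping above. If anything, the only thing worth double-checking is that the defining relation $a_{I_0}=1$ is indeed what makes $(-a,a)=I_0$ when $a=1$, but this is immediate from the conventions fixed in \secref{sect-notation}.
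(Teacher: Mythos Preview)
Your proposal is correct and follows exactly the paper's approach: the paper explicitly records this corollary as the special case $a=1$ of \lref{lem-reentry-time-to-I2}, and your bookkeeping verifying that $a=1\in(0,4]$, $I=(-1,1)=I_0$, and $[a,m-a]=[1,m-1]$ is precisely what is needed.
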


\begin{cor}\wrlab{cor-reentry-time-to-4I02}
Let $y\in4I_0$, $z\in12I_0$.
Let $v:=(y,z)\in\R^2$.
Then, for all $t\in[4,m-4]$, we have
$\Phi_{t-z}^Q(v)\in[\R^2]\backslash[(4I_0)^2]$.
\end{cor}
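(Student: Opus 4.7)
The plan is to apply \lref{lem-reentry-time-to-I2} with $a = 4$. Note that $a = 4$ satisfies $a \in (0,4]$, so the hypothesis on $a$ in \lref{lem-reentry-time-to-I2} is met. With this choice, $I := (-a,a) = (-4,4) = 4I_0$, and consequently $I^2 = (4I_0)^2$.

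Next I would verify that the hypotheses on $y$ and $z$ in \cref{cor-reentry-time-to-4I02} match those of \lref{lem-reentry-time-to-I2} under this identification. Indeed, the hypothesis $y \in 4I_0$ becomes $y \in I$, while the hypothesis $z \in 12I_0$ is the same in both statements. So $v := (y,z)$ satisfies all the hypotheses of \lref{lem-reentry-time-to-I2}.

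Finally, the conclusion of \lref{lem-reentry-time-to-I2} states that for all $t \in [a, m-a]$, we have $\Phi^Q_{t-z}(v) \in [\R^2]\backslash[I^2]$. Substituting $a = 4$ and $I^2 = (4I_0)^2$, this reads: for all $t \in [4, m-4]$, $\Phi^Q_{t-z}(v) \in [\R^2]\backslash[(4I_0)^2]$, which is precisely the desired conclusion. There is no real obstacle here — the corollary is stated explicitly as a ``special case'' of the lemma, and the proof is just the substitution $a = 4$ together with the identification $4I_0 = (-4,4)$.
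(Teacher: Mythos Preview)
Your proposal is correct and matches the paper's approach exactly: the paper explicitly records this corollary as the special case $a=4$ of \lref{lem-reentry-time-to-I2}, and your verification that $4I_0=(-4,4)$ and that the hypotheses on $y$, $z$ align is precisely what is needed.
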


\section{Porousness and $\scrp_{I_0}$\wrlab{sect-results-scrp}}
Recall, from \secref{sect-notation}, that $I_0=(-1,1)\subseteq\R$.
The notation $\scrp_I$ (for any $I\in\scri$) is also defined in \secref{sect-notation}.
The main results of this section are \lref{lem-P0-porous} and \lref{lem-P0K0-in-scrp},
which, together, show that there exists $(P_0,K_0)\in\scrp_{I_0}$
such that $P_0$ is porous.

From the perspective of dynamical systems,
porousness is generic properness, while elements of $\scrp_{I_0}$ display strong periodicity.
We are therefore interested in vector fields that combine properness and periodicity.
These two dynamical properties are in opposition to one another,
so their conflation is a challenge.
A water pump is metaphor for this challenge, because the handle of the pump
(or whatever mechanism gives power to the pump) moves in a periodic manner,
while the water flow is proper in the sense that the water moves a great distance.

Let $H$ be the hyperbolic vector field of \secref{sect-start-hyp-vf}.
Let $R$ be the racetrack of \secref{sect-racetrack},
and let $Q$ be the racetrack vector field of \secref{sect-racetrack}.
A generic orbit of $H$ is proper, while $R$ is a nonempty open set of $Q$-periodic orbits.
We will blend $H$ and $Q$ together in a number of ways.
For example, we can simply form the product, $P$, of $H$ and $Q$.
That is, we can define a function $P:\R^4\to\R^4$ by: \qquad
for all \, $\rho\,=\,(u,v)\,\in\,\R^2\times\R^2\,=\,\R^4$,
$$P(\rho)\,\,:=\,\,(\,H(u)\,,\,Q(v)\,)\,\,\in\,\,\R^2\,\times\,\R^2\,\,=\,\,\R^4.$$
The flow of the vector field represented by $P$ can be described as a water pump, as follows.
The runners, driven by $Q$, go around racetrack~$R$.
Their motion powers the pump, and
the water flows out along the hyperbolas that are the orbits of $H$.
The maps $P_*$, $P_+$ and $P_0$, defined below, are variants of $P$.
They also fit into the water pump metaphor,
and one can invent stories to help visualize them.
For example, with~$P_*$, the pump has a defect:
While the runners are inside $(3I_0)^2$, the water flow stops,
but, when the runners are outside~$(4I_0)^2$,
the pump works at full strength.
Because the runners spend less than half of their time in~$(4I_0)^2$,
the problem is intermittent, and the water still flows, albeit with occasional delays.
With the pump defined by $P_+$, the defect just described still exists,
but, in addition, there's another twist:
Once the water has flowed far enough,
the runners abandon the racetrack and
start running straight upward in~$\R^2$ at unit speed.
With $P_0$ there is yet one more feature in the dynamics:
After the runners leave the racetrack,
the water eventually ceases to flow.
We now present details.

Recall that $R\subseteq\R^2$ is the racetrack of \secref{sect-racetrack}.
As in \secref{sect-racetrack}, let
$$S:=(4I_0)\times(12I_0)\subseteq\R^2\quad
\hbox{and}\quad B:=(4I_0)\times\{-12\}\subseteq\R^2.$$
Recall, from \secref{sect-notation}, the definition of $\zeta_I$ (for any $I\in\scri$).
Define $\alpha:\R^2\to\R$ by
$\alpha(y,z)=1-[\zeta_{3I_0}(y)][\zeta_{3I_0}(z)]$.
Then
\begin{itemize}
\item$0\le\alpha\le1$,
\item$\alpha=0$ on $(\,\overline{3I_0}\,)^2$ \qquad and
\item$\alpha=1$ on $[\R^2]\backslash[(4I_0)^2]$.
\end{itemize}
Define $P_*:\R^4\to\R^4$ by: \qquad
for all \, $\rho\,=\,(u,v)\,\in\,\R^2\times\R^2\,=\,\R^4$,
$$P_*(\rho)\,\,:=\,\,(\,[\alpha(v)][H(u)]\,,\,Q(v)\,)\,\,\in\,\,\R^2\,\times\R^2\,\,=\,\,\R^4.$$
Then $P_*:\R^4\to\R^4$ is $C^\infty$ and $P_*(\R^4)\subseteq\barI_0^4$.
Then $P_*$ is complete.

Fix $C^\infty$ maps $\beta,\gamma:\R^4\to\R$ such that
\begin{itemize}
\item$0\le\beta\le1$ \quad and \quad $0\le\gamma\le1$,
\item$\beta=1$ on $(\,\overline{100I_0}\,)^4$ \quad and \quad $\beta=0$ on $\R^4\,\,\backslash\,\,[(200I_0)^4]$ \qquad and
\item$\gamma=1$ on $(\,\overline{300I_0}\,)^4$ \quad and \quad $\gamma=0$ on $\R^4\,\,\backslash\,\,[(400I_0)^4]$.
\end{itemize}
Let $J_0:=200I_0$ and let $K_0:=400I_0$.
Then $\beta=0$ on $(\R^4)\backslash(J_0^4)$ and $\beta=\gamma=0$ on $(\R^4)\backslash(K_0^4)$.
Also, $\beta=\gamma=1$ on $(\,\overline{100I_0}\,)^4$.

Let $\bfzero:\R^2\to\R^2$ be the zero map defined by $\bfzero(w,x)=(0,0)$.
Let $Q_0:\R^2\to\R^2$ be the constant map defined by $Q_0(y,z)=(0,1)$.
Note that $V_0:\R^4\to\R^4$ satisfies: \qquad for all \, $\rho\,=\,(u,v)\,\in\,\R^2\times\R^2\,=\,\R^4$,
$$V_0(\rho)\quad=\quad(\,\bfzero(u)\,,\,Q_0(v)\,)\quad\in\quad\R^2\,\times\,\R^2\quad=\quad\R^4.$$
Define $X:\R^4\to\R^4$ by: \qquad for all \, $\rho\,=\,(u,v)\,\in\,\R^2\times\R^2\,=\,\R^4$,
$$X(\rho)\quad:=\quad(\,H(u)\,,\,Q_0(v)\,)\quad\in\quad\R^2\,\times\,\R^2\quad=\quad\R^4.$$
Then $X:\R^4\to\R^4$ is $C^\infty$ and $X(\R^4)\subseteq\barI_0^4$.
Let $P_+:=\beta P_*+(1-\beta)X$.
Then $P_+:\R^4\to\R^4$ is $C^\infty$ and $P_+(\R^4)\subseteq\barI_0^4$.
Let $P_0:=\gamma P_++(1-\gamma)V_0$.
Then $P_0:\R^4\to\R^4$ is $C^\infty$ and $P_0(\R^4)\subseteq\barI_0^4$.
Then $P_0$ is complete.

Define a $C^\infty$ map $\omega:\R^4\to\R$ by:
for all $\rho=(u,v)\in\R^2\times\R^2=\R^4$,
$$\omega(\rho)\quad:=\quad[\,\gamma(\rho)\,]\,\,[\,(\alpha(v))(\beta(\rho))\,+\,1\,-\,(\beta(\rho))\,].$$
For all $\rho\in\R^4$, we define $\Omega_\rho:\R\to\R$ by
$\displaystyle{\Omega_\rho(t)\:=\int_0^t\,\left[\omega\left(\Phi_r^{P_0}(\rho)\right)\right]\,dr}$.

Following the construction in~\secref{sect-racetrack},
fix an integer $m>24$ such that
\begin{itemize}
\item$\forall v\in R$, \qquad $\Phi_m^Q(v)=v$ \qquad\qquad\qquad and
\item$\forall v\in B$, \, $\forall t\in(0,m)$, \qquad $[\Phi_t^Q(v)\in S]\Leftrightarrow[t<24]$.
\end{itemize}
Let $\displaystyle{T_\times:=m-\left[\int_{-4}^4\,(\zeta_{3I_0}(t))\,dt\right]}$.

Let $Z_0:=(\{0\}\times\R)\cup(\R\times\{0\})\subseteq\R^2$.
Let $Z:=Z_0\times\R^2\subseteq\R^4$.
Then $Z_0$ is nowhere dense in $\R^2$, and so $Z$ is nowhere dense in $\R^4$.

Define $\Pi_{12}:\R^4\to\R^2$ and $\Pi_{34}:\R^4\to\R^2$ by
$$\Pi_{12}(w,x,y,z)\,\,=\,\,(w,x)\qquad\hbox{and}\qquad\Pi_{34}(w,x,y,z)\,\,=\,\,(y,z).$$

\begin{lem}\wrlab{lem-water-never-reverses}
Let $\sigma=(u,v)\in\R^2\times\R^2=\R^4$.
Then
\begin{itemize}
\item[(i)]$\omega(\sigma)\quad\ge\quad0$ \qquad\qquad\qquad\qquad and
\item[(ii)]$\Pi_{12}(P_0(\sigma))\quad=\quad[\omega(\sigma)]\,\,[H(u)]$.
\end{itemize}
\end{lem}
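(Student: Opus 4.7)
The plan is to prove both parts by directly unwinding the nested definitions. Since $\Pi_{12}\colon\R^4\to\R^2$ is linear, it commutes with the convex combinations defining $P_0$ and $P_+$. First I would write
\[
\Pi_{12}(P_0(\sigma)) \;=\; \gamma(\sigma)\,\Pi_{12}(P_+(\sigma)) \;+\; (1-\gamma(\sigma))\,\Pi_{12}(V_0(\sigma)),
\]
then expand
\[
\Pi_{12}(P_+(\sigma)) \;=\; \beta(\sigma)\,\Pi_{12}(P_*(\sigma)) \;+\; (1-\beta(\sigma))\,\Pi_{12}(X(\sigma)).
\]

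Next I would evaluate each of the three projections by reading off the defining formulas from Section~\ref{sect-results-scrp}: $\Pi_{12}(V_0(\sigma)) = (0,0)$ since $V_0(\sigma) = (0,0,0,1)$; $\Pi_{12}(X(\sigma)) = H(u)$ by definition of $X$; and $\Pi_{12}(P_*(\sigma)) = [\alpha(v)]\,[H(u)]$ by definition of $P_*$. Substituting and factoring out $H(u)$ gives
\[
\Pi_{12}(P_0(\sigma)) \;=\; [\gamma(\sigma)]\,\bigl[\,(\alpha(v))(\beta(\sigma)) \,+\, 1 \,-\, (\beta(\sigma))\,\bigr]\,[H(u)],
\]
and the scalar coefficient here matches the defining formula for $\omega(\sigma)$. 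This proves (ii).

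For (i), I would rewrite
\[
\omega(\sigma) \;=\; [\gamma(\sigma)]\,\bigl[\,1 \,-\, (\beta(\sigma))(1-\alpha(v))\,\bigr].
\]
Using the bounds $0\le\gamma\le1$, $0\le\beta\le1$, and $0\le\alpha\le1$ stated in Section~\ref{sect-results-scrp}, the first factor is nonnegative, and the second factor is at least $1 - 1\cdot 1 = 0$. So the product is nonnegative.

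There is no real obstacle here; the lemma is pure bookkeeping. The only care required is to keep track of the algebraic identity $\alpha\beta + 1 - \beta = 1 - \beta(1-\alpha)$ and to apply $\Pi_{12}$ termwise through the two nested convex combinations. The content of the lemma is that, regardless of where $\sigma$ sits, the first-two-coordinate part of $P_0(\sigma)$ is a nonnegative scalar multiple of $H(u)$, which is the precise formal meaning of the section heading's metaphor that the water flow produced by $P_0$ never points opposite to $H$.
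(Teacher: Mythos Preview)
Your proof is correct and follows essentially the same approach as the paper's: both unwind the nested convex combinations $P_0=\gamma P_++(1-\gamma)V_0$ and $P_+=\beta P_*+(1-\beta)X$, read off $\Pi_{12}$ of each term, and collect the scalar factor in front of $H(u)$ to identify it with $\omega(\sigma)$. The only cosmetic difference is that the paper abbreviates $p=\alpha(v)$, $q=\beta(\sigma)$, $r=\gamma(\sigma)$ and verifies (i) via $pq+(1-q)\ge0$ directly, whereas you use the equivalent rewrite $1-\beta(1-\alpha)\ge0$.
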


\begin{proof}
Let $p:=\alpha(v)$ and $q:=\beta(\sigma)$ and $r:=\gamma(\sigma)$.
Then $p,q,r\in[0,1]$.
Let $s:=pq+1-q$.
Since $pq\ge0$ and $1-q\ge0$, we get $s\ge0$.
By definition of $\omega$, we have $\omega(\sigma)=r\cdot[pq+1-q]$.
That is, $\omega(\sigma)=rs$.
So, as $r\ge0$ and $s\ge0$, we get $\omega(\sigma)\ge0$,
which proves (i).

As $\Pi_{12}(P_*(\sigma))=p\cdot[H(u)]$
and $\Pi_{12}(X(\sigma))=1\cdot[H(u)]$, we get
$\Pi_{12}(P_+(\sigma))=[qp+(1-q)(1)][H(u)]=[pq+1-q][H(u)]=s\cdot[H(u)]$.
Therefore, because $\Pi_{12}(V_0(\sigma))=\bfzero(u)=0\cdot[H(u)]$,
it follows that $\Pi_{12}(P_0(\sigma))=[rs+(1-r)(0)][H(u)]=[rs][H(u)]=[\omega(\sigma)][H(u)]$,
which proves (ii).
\end{proof}

\begin{lem}\wrlab{lem-following-the-waterflow-of-P0}
Let $\rho=(u,v)\in\R^2\times\R^2=\R^4$.
Define $f:\R\to\R$ by $f(t)=\Omega_\rho(t)$.
Then all of the following are true:
\begin{itemize}
\item[(i)]$f(0)=0$.
\item[(ii)]For all $t\in\R$, we have $f'(t)=\omega(\Phi_t^{P_0}(\rho))\ge0$.
\item[(iii)]$f:\R\to\R$ is nondecreasing.
\item[(iv)]For all $t\ge0$, we have $f(t)\ge0$.
\item[(v)]For all $t\in\R$, we have $\Pi_{12}(\Phi_t^{P_0}(\rho))=\Phi_{f(t)}^H(u)$.
\end{itemize}
\end{lem}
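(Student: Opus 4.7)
The plan is to dispose of (i)--(iv) by unwinding the definition of $\Omega_\rho$ and appealing to \lref{lem-water-never-reverses}(i), and then to obtain (v) by showing that both sides satisfy the same ODE with the same initial condition. For (i), we have $f(0)=\Omega_\rho(0)=\int_0^0\,[\omega(\Phi_r^{P_0}(\rho))]\,dr=0$. For (ii), the integrand $r\mapsto\omega(\Phi_r^{P_0}(\rho))$ is continuous (even $C^\infty$), so the Fundamental Theorem of Calculus gives $f'(t)=\omega(\Phi_t^{P_0}(\rho))$, and by \lref{lem-water-never-reverses}(i), this is $\ge0$. Then (iii) follows from the Mean Value Theorem applied to $f$ with $f'\ge0$, and (iv) follows from (iii) together with (i).

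For (v), the main step, I would define $g,h:\R\to\R^2$ by
$$g(t)\,:=\,\Pi_{12}(\Phi_t^{P_0}(\rho))\qquad\hbox{and}\qquad h(t)\,:=\,\Phi_{f(t)}^H(u),$$
and show $g=h$. Both maps agree at $t=0$: $g(0)=\Pi_{12}(\rho)=u$ and, using $f(0)=0$ from (i), $h(0)=\Phi_0^H(u)=u$. Next, I would compute derivatives. For $g$, fix $t\in\R$ and set $\sigma:=\Phi_t^{P_0}(\rho)$, with components $u_t:=\Pi_{12}(\sigma)=g(t)$ and $v_t:=\Pi_{34}(\sigma)$. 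By the defining ODE of the $P_0$-flow and \lref{lem-water-never-reverses}(ii),
$$g'(t)\,\,=\,\,\Pi_{12}(P_0(\sigma))\,\,=\,\,[\omega(\sigma)]\,[H(u_t)]\,\,=\,\,[f'(t)]\,[H(g(t))],$$
where the last equality uses (ii). For $h$, the chain rule combined with the defining ODE for the $H$-flow yields
$$h'(t)\,\,=\,\,[f'(t)]\,[H(\Phi_{f(t)}^H(u))]\,\,=\,\,[f'(t)]\,[H(h(t))].$$

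Thus $g$ and $h$ are both solutions of the (nonautonomous) initial value problem $y'(t)=[f'(t)]\,[H(y(t))]$, $y(0)=u$. Since $H:\R^2\to\R^2$ is $C^\infty$ (and in particular locally Lipschitz) and $f'$ is continuous, standard uniqueness of solutions of ODEs forces $g(t)=h(t)$ for all $t\in\R$, proving (v). There is no serious obstacle here; the one thing to be aware of is that the ODE is nonautonomous, but uniqueness still applies, and the whole content of the lemma is really that \lref{lem-water-never-reverses}(ii) lets the $\Pi_{12}$-component of the $P_0$-flow be reparametrized to run along $H$-orbits, with $f=\Omega_\rho$ encoding exactly that reparametrization.
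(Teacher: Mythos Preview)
Your proof is correct and essentially identical to the paper's: the paper also dispatches (i)--(iv) via the definition of $\Omega_\rho$, the Fundamental Theorem of Calculus, \lref{lem-water-never-reverses}(i), and the Mean Value Theorem, and then proves (v) by checking that both sides satisfy the same nonautonomous ODE $y'(t)=[f'(t)][H(y(t))]$ with initial value $u$, invoking \lref{lem-water-never-reverses}(ii) for the derivative of the $\Pi_{12}$-component. The only cosmetic difference is that the paper names the time-dependent vector field $V_t(u):=[f'(t)][H(u)]$ explicitly before applying uniqueness.
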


\begin{proof}
For all $t\in\R$, we have
$\displaystyle{f(t)=\Omega_\rho(t)=\int_0^t\,\left[\omega\left(\Phi_r^{P_0}(\rho)\right)\right]\,dr}$,
so $f(0)=0$, and, by the Fundamental Theorem of Calculus,
for all $t\in\R$, we have $f'(t)=\omega(\Phi_t^{P_0}(\rho))$;
also, by \lref{lem-water-never-reverses}(i), $\omega(\Phi_t^{P_0}(\rho))\ge0$.
This proves (i) and (ii).
By (ii), $f'\ge0$,
so, by the Mean Value Theorem, $f:\R\to\R$ is nondecreasing,
which proves (iii).
Next, (iv) follows from (i) and (iii).
It remains to prove (v).
Define $\theta,\kappa:\R\to\R^2$ by
\begin{itemize}
\item[]$\theta(t)\,\,=\,\,\Pi_{12}(\,\Phi_t^{P_0}(\rho)\,)
\qquad\hbox{and}\qquad
\kappa(t)\,\,=\,\,\Phi_{f(t)}^H(u)$.
\end{itemize}
We wish to show, for all $t\in\R$, that
$\theta(t)=\kappa(t)$.

For all $t\in\R$, define $V_t:\R^2\to\R^2$ by $V_t(u)=[f'(t)][H(u)]$.
Differentiating the definition of $\kappa$,
and using the Chain Rule, we conclude, for all $t\in\R$, that
$\kappa'(t)=[f'(t)][H(\Phi_{f(t)}^H(u))]$.
Then, for all $t\in\R$, we have $\kappa'(t)=[f'(t)][H(\kappa(t))]=V_t(\kappa(t))$.
By (i), we have $f(0)=0$.
Then $\kappa(0)=u=\Pi_{12}(\rho)=\theta(0)$.
So, by uniqueness of solutions of (time-dependent) ODEs,
it suffices to show, for all $t\in\R$, that
$\theta'(t)=V_t(\theta(t))$.

Differentiating the definition of $\theta$,
and using linearity of $\Pi_{12}$, we see, for all $t\in\R$, that
$\theta'(t)=\Pi_{12}(P_0(\Phi_t^{P_0}(\rho)))$.
Fix $t\in\R$ and let $\rho_1:=\Phi_t^{P_0}(\rho)$.
We wish to show that $\Pi_{12}(P_0(\rho_1))=V_t(\theta(t))$.

Choose $u_1,v_1\in\R^2$ such that $\rho_1=(u_1,v_1)\in\R^2\times\R^2=\R^4$.
By \lref{lem-water-never-reverses}(ii),
$\Pi_{12}(P_0(\rho_1))=[\omega(\rho_1)][H(u_1)]$.
By definition of $\theta$,
we have $\theta(t)=\Pi_{12}(\rho_1)$,
so $\theta(t)=u_1$.
By definition of $V_t$, we see that $V_t(u_1)=[f'(t)][H(u_1)]$.
By (ii), we have $f'(t)=\omega(\rho_1)$.
Then $\Pi_{12}(P_0(\rho_1))\!=\![\omega(\rho_1)][H(u_1)]\!=\![f'(t)][H(u_1)]\!=\!V_t(u_1)\!=\!V_t(\theta(t))$.
\end{proof}

\begin{lem}\wrlab{lem-Pi1-Pi2-of-P0-orbits-are-monotone}
Let $\rho\in\R^4$.
Then
\begin{itemize}
\item[(i)]$t\mapsto|\Pi_1(\Phi_t^{P_0}(\rho))|:\R\to\R$ is nondecreasing \qquad and
\item[(ii)]$t\mapsto|\Pi_2(\Phi_t^{P_0}(\rho))|:\R\to\R$ is nonincreasing.
\end{itemize}
\end{lem}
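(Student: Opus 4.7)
The plan is to reduce both statements to the monotonicity of the hyperbolic flow established in \lref{lem-asymptotic-coords-of-H}, via the reparametrization identity in \lref{lem-following-the-waterflow-of-P0}(v).

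First I would observe that, writing $\rho=(u,v)$ with $u,v\in\R^2$, and with $f:\R\to\R$ defined by $f(t)=\Omega_\rho(t)$, \lref{lem-following-the-waterflow-of-P0}(v) gives
$$\Pi_{12}(\Phi_t^{P_0}(\rho))\,\,=\,\,\Phi_{f(t)}^H(u)\qquad\hbox{for all }t\in\R.$$
Since $\Pi_1=\pi_1\circ\Pi_{12}$ and $\Pi_2=\pi_2\circ\Pi_{12}$, this yields
$$|\Pi_1(\Phi_t^{P_0}(\rho))|\,\,=\,\,|\pi_1(\Phi_{f(t)}^H(u))|\qquad\hbox{and}\qquad
|\Pi_2(\Phi_t^{P_0}(\rho))|\,\,=\,\,|\pi_2(\Phi_{f(t)}^H(u))|.$$

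Next I would invoke \lref{lem-following-the-waterflow-of-P0}(iii) to conclude that $f:\R\to\R$ is nondecreasing. By \lref{lem-asymptotic-coords-of-H}(iii), the map $s\mapsto|\pi_1(\Phi_s^H(u))|$ is nondecreasing, and composing a nondecreasing function with the nondecreasing reparametrization $f$ yields a nondecreasing function, proving (i). Similarly, by \lref{lem-asymptotic-coords-of-H}(iv), $s\mapsto|\pi_2(\Phi_s^H(u))|$ is nonincreasing, and composing a nonincreasing function with the nondecreasing $f$ yields a nonincreasing function, proving (ii).

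There is no real obstacle here; the entire content of the lemma has already been packaged into \lref{lem-following-the-waterflow-of-P0} (which converts the $P_0$-flow on the first two coordinates into a time-reparametrized $H$-flow, using nonnegativity of $\omega$) and \lref{lem-asymptotic-coords-of-H} (which records the monotonicity of $|\pi_1|$ and $|\pi_2|$ along $H$-orbits, ultimately coming from the fact that $H$ is a positive scalar multiple of the diagonal hyperbolic field $H_0(w,x)=(w,-x)$). The present lemma is thus just the composition of these two facts.
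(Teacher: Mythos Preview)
Your proof is correct and follows essentially the same approach as the paper: reduce to the monotonicity of $|\pi_1|$ and $|\pi_2|$ along $H$-orbits (\lref{lem-asymptotic-coords-of-H}) via the reparametrization identity $\Pi_{12}(\Phi_t^{P_0}(\rho))=\Phi_{f(t)}^H(u)$ from \lref{lem-following-the-waterflow-of-P0}(v), together with the fact that $f=\Omega_\rho$ is nondecreasing. The paper only writes out (i) and declares (ii) similar, but the argument is otherwise identical.
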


\begin{proof}
We will only prove (i). The proof of (ii) is similar.

Choose $u,v\in\R^2$ such that $\rho=(u,v)\in\R^2\times\R^2=\R^4$.
Define $f:\R\to\R$ by $f(t)=\Omega_\rho(t)$.
By \lref{lem-following-the-waterflow-of-P0}(iii), $f:\R\to\R$ is nondecreasing.
Let $\pi_1,\pi_2:\R^2\to\R$ be the coordinate projection maps
defined by $\pi_1(w,x)=w$ and $\pi_2(w,x)=x$.
By \lref{lem-following-the-waterflow-of-P0}(v), 
for all $t\in\R$, $\Pi_{12}(\Phi_t^{P_0}(\rho))=\Phi_{f(t)}^H(u)$,
so $\Pi_1(\Phi_t^{P_0}(\rho))\,\,=\,\,\pi_1(\Phi_{f(t)}^H(u))$.
We therefore wish to prove: $t\mapsto|\pi_1(\Phi_{f(t)}^H(u))|:\R\to\R$ is nondecreasing.

By \lref{lem-asymptotic-coords-of-H}(iii),
we know that $t\mapsto|\pi_1(\Phi_t^H(u))|:\R\to\R$ is nondecreasing.
So, since $f:\R\to\R$ is also nondecreasing,
the composite $t\mapsto|\pi_1(\Phi_{f(t)}^H(u))|:\R\to\R$
is nondecreasing, as desired.
\end{proof}

\begin{lem}\wrlab{lem-following-the-runners-of-Pstar}
Let $\rho=(u,v)\in\R^2\times\R^2=\R^4$.
Then, for all $t\in\R$, we have $\Pi_{34}(\Phi_t^{P_*}(\rho))=\Phi_t^Q(v)$.
\end{lem}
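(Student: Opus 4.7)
The plan is to reduce the claim to a uniqueness-of-solutions argument, mimicking the style used in \lref{lem-following-the-waterflow-of-P0}. The key observation is that $P_*$ was defined coordinate-wise as $P_*(u,v)=([\alpha(v)][H(u)],Q(v))$, so the last two coordinates of $P_*(\rho)$ depend only on $v=\Pi_{34}(\rho)$ and equal $Q(v)$; in other words, $\Pi_{34}\circ P_*=Q\circ\Pi_{34}$. This identity is exactly the infinitesimal statement that $\Pi_{34}$ carries the $P_*$-flow to the $Q$-flow.

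Concretely, I would define $\theta,\kappa:\R\to\R^2$ by
$$\theta(t)\,=\,\Pi_{34}(\Phi_t^{P_*}(\rho))\qquad\hbox{and}\qquad\kappa(t)\,=\,\Phi_t^Q(v),$$
and show that $\theta=\kappa$. First, $\theta(0)=\Pi_{34}(\rho)=v=\Phi_0^Q(v)=\kappa(0)$, so the initial conditions agree. Next, $\kappa$ satisfies $\kappa'(t)=Q(\kappa(t))$ by definition of the $Q$-flow. For $\theta$, I would differentiate using linearity of $\Pi_{34}$ and the defining ODE for $\Phi_t^{P_*}$: for each $t\in\R$, letting $\rho_1:=\Phi_t^{P_*}(\rho)$ and writing $\rho_1=(u_1,v_1)\in\R^2\times\R^2$, we have
$$\theta'(t)\,=\,\Pi_{34}(P_*(\rho_1))\,=\,\Pi_{34}([\alpha(v_1)][H(u_1)],Q(v_1))\,=\,Q(v_1)\,=\,Q(\theta(t)).$$
Thus $\theta$ solves the same ODE as $\kappa$ with the same initial value, so uniqueness of solutions of ODEs yields $\theta=\kappa$ on all of $\R$, as desired.

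There is no real obstacle here; the only thing to be careful about is keeping the coordinate bookkeeping clean, in particular noting that $\Pi_{34}(\rho_1)=v_1$ so that the right-hand side of the ODE for $\theta$ is genuinely a function of $\theta(t)$ alone (and not of the full point $\rho_1$). This is what makes the vertical $\R^2$-factor an invariant projection for $P_*$, in contrast to $P_0$, where the extra cutoffs $\beta,\gamma$ would destroy such a clean reduction.
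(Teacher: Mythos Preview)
Your proposal is correct and follows essentially the same argument as the paper: define the projected curve, check the initial condition, differentiate using linearity of $\Pi_{34}$ and the definition of $P_*$ to see that the projection satisfies the $Q$-flow ODE, and conclude by uniqueness of solutions. The paper uses a single function $\lambda$ in place of your pair $\theta,\kappa$, but the logic is identical.
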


\begin{proof}
Define $\lambda:\R\to\R^2$
by $\lambda(t)=\Pi_{34}(\Phi_t^{P_*}(\rho))$.
We wish to show, for all $t\in\R$, that $\lambda(t)=\Phi_t^Q(v)$.
As $\lambda(0)=\Pi_{34}(\rho)=v=\Phi_0^Q(v)$,
by uniqueness of solutions of ODEs, it suffices to prove,
for all $t\in\R$, that $\lambda'(t)=Q(\lambda(t))$.
Differentiating the definition of $\lambda$, and using linearity of $\Pi_{34}$,
we get: for all $t\in\R$, $\lambda'(t)=\Pi_{34}(P_*(\Phi_t^{P_*}(\rho)))$.
Fix $t\in\R$ and let $\rho_1:=\Phi_t^{P_*}(\rho)$.
We wish to prove that $\Pi_{34}(P_*(\rho_1))=Q(\lambda(t))$.

Fix $u_1,v_1\in\R^2$ such that $\rho_1=(u_1,v_1)\in\R^2\times\R^2=\R^4$.
By definition of $\lambda$, we have $\lambda(t)=\Pi_{34}(\rho_1)$, so $\lambda(t)=v_1$.
By definition of $P_*$, we have $\Pi_{34}(P_*(\rho_1))=Q(v_1)$.
Then $\Pi_{34}(P_*(\rho_1))=Q(v_1)=Q(\lambda(t))$.
\end{proof}

\begin{lem}\wrlab{lem-P0-calculations}
All of the following are true.
\begin{itemize}
\item[(i)]$P_*=P_0$ on $(\,\overline{100I_0}\,)^4$.
\item[(ii)]$P_0=V_0$ on $(\,\overline{3I_0}\,)^4$.
\item[(iii)]$\omega=\alpha\circ\Pi_{34}$ on $(\,\overline{100I_0}\,)^4$.
\item[(iv)]$\omega=1$ on $[J_0^4]\,\backslash\,[J_0^2\times(4I_0)^2]$.
\item[(v)]$\forall\rho\in\R^2\times S$, \qquad $\Pi_{34}(P_0(\rho))=(0,1)\in\R^2$.
\item[(vi)]$\forall\rho\in[\R^4]\,\backslash\,[(200I_0)^2\times(50I_0)^2]$,
            \qquad $\Pi_{34}(P_0(\rho))=(0,1)\in\R^2$.
\item[(vii)]$Z$ is $P_0$-invariant.
\item[(viii)]$\R^2\times R$ is $P_*$-invariant.
\end{itemize}
\end{lem}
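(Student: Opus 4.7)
The plan is to unwind the definitions of $P_*$, $P_+$, $P_0$ and $\omega$ in each setting, using the support properties of $\alpha$, $\beta$, $\gamma$ and the coincidence of $Q$ with $Q_0$ on $S$. Most parts are direct computations; the only genuinely dynamical statements are (v), (vi), (vii), (viii), and these reduce to earlier lemmas.

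For (i): on $(\overline{100I_0})^4$ we have $\beta = 1$, so $P_+ = P_*$; this set also lies in $(\overline{300I_0})^4$, so $\gamma = 1$, giving $P_0 = P_+ = P_*$. For (ii): since $(\overline{3I_0})^4 \subseteq (\overline{100I_0})^4$, (i) reduces this to showing $P_* = V_0$ on $(\overline{3I_0})^4$. On this set $v \in (\overline{3I_0})^2$ so $\alpha(v) = 0$, killing the $H$-component of $P_*$; and $v \in (\overline{3I_0})^2 \subseteq (4I_0)\times(12I_0) = S$, so $Q(v) = Q_0(v) = (0,1)$, so $P_*(\rho) = (0,0,0,1) = V_0(\rho)$. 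For (iii): on $(\overline{100I_0})^4$ both $\beta$ and $\gamma$ equal $1$, so $\omega(\rho) = 1 \cdot [\alpha(v) \cdot 1 + 0] = \alpha(\Pi_{34}(\rho))$. For (iv): if $v \notin (4I_0)^2$ then $\alpha(v) = 1$, so $\omega = \gamma \cdot [\beta + 1 - \beta] = \gamma$; and $\rho \in J_0^4 = (200I_0)^4 \subseteq (\overline{300I_0})^4$ forces $\gamma(\rho) = 1$.

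For (v) and (vi): since $\Pi_{34}(V_0(\rho)) = \Pi_{34}(X(\rho)) = Q_0(v) = (0,1)$ always, unwinding the convex combinations shows $\Pi_{34}(P_0(\rho))$ lies on the segment between $(0,1)$ and $\Pi_{34}(P_*(\rho)) = Q(v)$, with weight $\beta(\rho)\gamma(\rho)$ on the latter. Hence it suffices to ensure that either $\beta(\rho)\gamma(\rho) = 0$ or $Q(v) = (0,1)$. In (v), $v \in S$ yields $Q(v) = Q_0(v) = (0,1)$ directly. In (vi), we split: if $\Pi_{34}(\rho) \notin (50I_0)^2$ then $v$ lies outside the support of $Q - Q_0$, so $Q(v) = Q_0(v) = (0,1)$; and if $\Pi_{12}(\rho) \notin (200I_0)^2$ then $\rho \notin J_0^4$, so $\beta(\rho) = 0$, making the $\beta\gamma$-weight vanish.

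For (vii): by \lref{lem-following-the-waterflow-of-P0}(v), $\Pi_{12}(\Phi_t^{P_0}(\rho)) = \Phi_{f(t)}^H(u)$ where $f = \Omega_\rho$. Since $Z = Z_0 \times \R^2$, it suffices to show $Z_0$ is $\Phi_s^H$-invariant for every $s \in \R$. \lref{lem-H-preserves-axes} gives tangency of $H$ to each axis, and uniqueness of ODE solutions (or \lref{lem-reparam-H0-to-H} comparing to $H_0$, whose explicit flow $(w,x) \mapsto (e^s w, e^{-s} x)$ visibly preserves each axis) then yields $\Phi_s^H(Z_0) \subseteq Z_0$. For (viii): by \lref{lem-following-the-runners-of-Pstar}, $\Pi_{34}(\Phi_t^{P_*}(\rho)) = \Phi_t^Q(v)$; since $R$ is $Q$-invariant, $\Phi_t^Q(v) \in R$ when $v \in R$, so $\Phi_t^{P_*}(\rho) \in \R^2 \times R$.

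The only place where one needs to be careful is (vii), where the invariance statement about the flow (rather than just the vector field) needs to be extracted from \lref{lem-H-preserves-axes}; the slickest route is to invoke \lref{lem-reparam-H0-to-H} so that the $H$-flow on a ray is reparametrized from the explicit $H_0$-flow, for which preservation of the axes is immediate. Everything else is bookkeeping on the piecewise-convex-combination structure of $P_0$.
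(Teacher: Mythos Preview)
Your proof is correct and, for parts (i)--(vi) and (viii), follows essentially the same route as the paper: unwinding the convex-combination definitions of $P_+$ and $P_0$ using the support properties of $\alpha,\beta,\gamma$ and the agreement $Q=Q_0$ on $S$ and outside $(50I_0)^2$. Your observation that $\Pi_{34}(P_0(\rho))$ is a convex combination of $(0,1)$ and $Q(v)$ with weight $\beta(\rho)\gamma(\rho)$ on the latter is a clean way to package (v) and (vi) in one stroke; the paper writes out the two convex combinations sequentially but arrives at the same case split.

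The only genuine methodological difference is in (vii). The paper argues at the level of the vector field: it shows directly that $P_0(Z')\subseteq Z'$ for $Z'=\R\times\{0\}\times\R^2$ (via \lref{lem-water-never-reverses}(ii) and \lref{lem-H-preserves-axes}), observes that $Z'$ is a linear subspace, and concludes flow-invariance; similarly for $Z''=\{0\}\times\R\times\R^2$. You instead invoke the flow formula $\Pi_{12}(\Phi_t^{P_0}(\rho))=\Phi^H_{\Omega_\rho(t)}(u)$ from \lref{lem-following-the-waterflow-of-P0} to reduce to $H$-invariance of the coordinate axes, which you then read off from the explicit $H_0$-flow via \lref{lem-reparam-H0-to-H}. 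Both are valid and non-circular (the cited lemmas precede this one). The paper's route is a bit more elementary---tangency of a vector field to a closed linear subspace immediately gives flow-invariance---while yours has the virtue of making the mechanism (the $\Pi_{12}$-projection of the $P_0$-flow is a time-change of the $H$-flow) fully explicit.
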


\begin{proof}
Let $w:=(0,1)\in\R^2$.
For all $v\in\R^2$, we have $Q_0(v)=w$.
So, for all $\rho\in\R^4$, we have $\Pi_{34}(V_0(\rho))=\Pi_{34}(X(\rho))=w$
and, moreover, by the construction of~$Q$ in \secref{sect-racetrack}, we have both
$$[\,\,\forall v\in S,\,\,Q(v)=w\,\,]\qquad\hbox{and}\qquad[\,\,\forall v\in[\R^2]\backslash[(50I_0)^2],\,\,Q(v)=w\,\,].$$

By definition of $P_*$, for all $\rho=(u,v)\in\R^2\times\R^2=\R^4$,
we have
$$P_*(\rho)\,\,=\,\,(\,[\alpha(v)][H(u)]\,,\,Q(v)\,)\,\,\in\,\,\R^2\,\times\R^2\,\,=\,\,\R^4.$$
so $\Pi_{34}(P_*(\rho))=Q(v)$.

{\it Proof of (i):}
On $(\,\overline{100I_0}\,)^4\,$,
because $\beta=1$ and $\gamma=1$, we get $P_*=P_+$ and $P_+=P_0$.
So, on $(\,\overline{100I_0}\,)^4\,$,
$P_*=P_+=P_0$.
{\it End of proof of (i).}

{\it Proof of (ii):}
Fix $\rho=(u,v)\in(\,\overline{3I_0}\,)^2\times(\,\overline{3I_0}\,)^2=(\,\overline{3I_0}\,)^4$.
We wish to show that $P_0(\rho)=(0,0,0,1)\in\R^4$.

As $v\in(\,\overline{3I_0}\,)^2$,
we have $\alpha(v)=0$.
As $v\in(\,\overline{3I_0}\,)^2\subseteq(4I_0)\times(12I_0)=S$,
we see that $Q(v)=w$.
As $\rho\in(\,\overline{3I_0}\,)^4\subseteq(\,\overline{100I_0}\,)^4$,
by (i), $P_*(\rho)=P_0(\rho)$.
Therefore $P_0(\rho)=P_*(\rho)=([\alpha(v)][H(u)],Q(v))=(0\cdot[H(u)],w)$,
and so $P_0(\rho)=(0,0,0,1)\in\R^4$, as desired.
{\it End of proof of (ii).}

{\it Proof of (iii):}
Let $\rho=(u,v)\in(\,\overline{100I_0}\,)^2\times(\,\overline{100I_0}\,)^2=(\,\overline{100I_0}\,)^4$.
We wish to show that $\omega(\rho)=\alpha(\Pi_{34}(\rho))$,
{\it i.e.}, that $\omega(\rho)=\alpha(v)$.

As $\rho\in(\,\overline{100I_0}\,)^4$,
we get $\beta(\rho)=\gamma(\rho)=1$.
Then, by definition of $\omega$,
$\omega(\rho)=[1][(\alpha(v))(1)+1-(1)]=\alpha(v)$, as desired.
{\it End of proof of (iii).}

{\it Proof of (iv):}
Let $\rho=(u,v)\in J_0^2\times J_0^2=J_0^4$
and assume that $v\notin(4I_0)^2$.
We wish to prove that $\omega(\rho)=1$.

Since $v\in[\R^2]\backslash[(4I_0)^2]$, we have $\alpha(v)=1$.
Since $\rho\in J_0^4\subseteq(\,\overline{300I_0}\,)^4$, we have $\gamma(\rho)=1$.
Then, from the definition of $\omega$, we conclude that
$\omega(\rho)=[1][(1)(\beta(\rho))+1-(\beta(\rho))]=1$, as desired.
{\it End of proof of (iv).}

{\it Proof of (v):}
Fix $u\in\R^2$ and $v\in S$.
Let $\rho=(u,v)\in\R^2\times\R^2=\R^4$.
We wish to show that $\Pi_{34}(P_0(\rho))=w$.

Because $v\in S$, we get $Q(v)=w$.
Then $\Pi_{34}(P_*(\rho))=Q(v)=w$.
So, since we have both $P_+=\beta P_*+(1-\beta)X$ and $\Pi_{34}(X(\rho))=w$,
we see that $\Pi_{34}(P_+(\rho))=w$.
So, since we have both $P_0=\gamma P_++(1-\gamma)V_0$ and $\Pi_{34}(V_0(\rho))=w$,
we see that $\Pi_{34}(P_0(\rho))=w$.
{\it End of proof of (v).}

{\it Proof of (vi):}
Fix $\rho=(u,v)\in\R^2\times\R^2=\R^4$
and assume that $\rho\notin(200I_0)^2\times(50I_0)^2$.
We wish to prove that $\Pi_{34}(P_0(\rho))=w$.
We have $\Pi_{34}(V_0(\rho))=w$,
so, because $P_0=\gamma P_++(1-\gamma)V_0$,
it suffices to prove that $\Pi_{34}(P_+(\rho))=w$.

If $u\notin J_0^2$,
then $\rho=(u,v)\notin J_0^4$, and so $\beta(\rho)=0$, and so,
since $P_+=\beta P_*+(1-\beta)X$, we conclude that $P_+(\rho)=X(\rho)$,
which yields $\Pi_{34}(P_+(\rho))=\Pi_{34}(X(\rho))=w$,
as desired.
We may therefore assume $u\in J_0^2$, {\it i.e.}, $u\in(200I_0)^2$.
So, since $(u,v)=\rho\notin(200I_0)^2\times(50I_0)^2$, we get $v\notin(50I_0)^2$.
Then $Q(v)=w$.
Then $\Pi_{34}(P_*(\rho))=Q(v)=w$.
So, since we have both $P_+=\beta P_*+(1-\beta)X$ and $\Pi_{34}(X(\rho))=w$,
we see that $\Pi_{34}(P_+(\rho))=w$, as desired.
{\it End of proof of (vi).}

Let $Z':=\R\times\{0\}\times\R^2\subseteq\R^4$
and let $Z'':=\{0\}\times\R\times\R^2\subseteq\R^4$.

{\it Claim 1:}
$P_0(Z')\subseteq Z'$.
{\it Proof of Claim 1:}
Fix $\rho\in Z'$.
We wish to prove that $P_0(\rho)\in Z'$.

Fix $u,v\in\R^2$ such that $\rho=(u,v)\in\R^2\times\R^2=\R^4$.
Let $c:=\omega(\rho)$.
By \lref{lem-water-never-reverses}(ii),
$\Pi_{12}(P_0(\rho))=c\cdot[H(u)]$.
Since $\rho\in Z'$, it follows that $u\in\R\times\{0\}$.
Then, by \lref{lem-H-preserves-axes}(i),
we have $H(u)\in\R\times\{0\}$.
Then $\Pi_{12}(P_0(\rho))=c\cdot[H(u)]\in\R\times\{0\}$,
so $P_0(\rho)\in\Pi_{12}^{-1}(\R\times\{0\})=Z'$, as desired.
{\it End of proof of Claim~1.}

{\it Claim 2:}
$P_0(Z'')\subseteq Z''$.
{\it Proof of Claim 2:}
Similiar to the proof of~Claim 1.
{\it End of proof of Claim~2.}

{\it Proof of (vii):}
Both $Z'$ and $Z''$ are vector subspaces of $\R^4$.
So, by Claim 1, $Z'$~is $P_0$-invariant,
and, by Claim 2, $Z''$ is $P_0$-invariant.
So, since $Z=(Z')\cup(Z'')$,
$Z$ is $P_0$-invariant as well.
{\it End of proof of (vii).}

{\it Proof of (viii):}
Fix $t\in\R$.
We wish to prove $\Phi_t^{P_*}(\R^2\times R)\subseteq\R^2\times R$.
Fix $\rho=(u,v)\in\R^2\times\R^2=\R^4$, assume that $v\in R$
and let $\rho_1:=\Phi_t^{P_*}(\rho)$.
We wish to show that $\rho_1\in\R^2\times R$,
{\it i.e.}, that $\Pi_{34}(\rho_1)\in R$.

By the construction of $Q$ in \secref{sect-racetrack}, $R$ is $Q$-invariant.
So, as $v\in R$, we get $\Phi_t^Q(v)\in R$.
By \lref{lem-following-the-runners-of-Pstar},
we have $\Pi_{34}(\Phi_t^{P_*}(\rho))=\Phi_t^Q(v)$.
Then $\Pi_{34}(\rho_1)=\Pi_{34}(\Phi_t^{P_*}(\rho))=\Phi_t^Q(v)\in R$.
{\it End of proof of (viii).}
\end{proof}

\begin{lem}\wrlab{lem-following-the-runners-of-P0-in-J04}
Let $\rho=(u,v)\in\R^2\times\R^2=\R^4$.
Let $J\subseteq\R$ be an interval.
Assume that $0\in J$ and that $\Phi_J^{P_0}(\rho)\subseteq(\,\overline{100I_0}\,)^4$.
Then, for all~$t\in J$, we have
$\Pi_{34}(\Phi_t^{P_0}(\rho))=\Phi_t^Q(v)$.
\end{lem}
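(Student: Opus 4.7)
The plan is to reduce this to \lref{lem-following-the-runners-of-Pstar}, which already handles the analogous statement for $P_*$ in place of $P_0$, by showing that the $P_0$-flow and the $P_*$-flow starting at $\rho$ agree on $J$.

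First, I would invoke \lref{lem-P0-calculations}(i), which gives $P_*=P_0$ on $(\,\overline{100I_0}\,)^4$. Combined with the hypothesis $\Phi_J^{P_0}(\rho)\subseteq(\,\overline{100I_0}\,)^4$, this means: for every $t\in J$, the point $\Phi_t^{P_0}(\rho)$ lies in the set on which $P_*$ and $P_0$ coincide, so $P_*(\Phi_t^{P_0}(\rho))=P_0(\Phi_t^{P_0}(\rho))$.

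Since $0\in J$ and $J$ is an interval in $\R$, I can apply \lref{lem-orbits-agree} with $V:=P_0$, $W:=P_*$, $N:=J$ and $\sigma:=\rho$ (noting that both $P_0$ and $P_*$ are complete, as remarked in \secref{sect-results-scrp}). The conclusion is that $\Phi_t^{P_*}(\rho)=\Phi_t^{P_0}(\rho)$ for every $t\in J$.

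Finally, by \lref{lem-following-the-runners-of-Pstar}, we have $\Pi_{34}(\Phi_t^{P_*}(\rho))=\Phi_t^Q(v)$ for every $t\in\R$, hence in particular for every $t\in J$. Combining this with the flow identification from the previous paragraph yields
\[
\Pi_{34}(\Phi_t^{P_0}(\rho))\;=\;\Pi_{34}(\Phi_t^{P_*}(\rho))\;=\;\Phi_t^Q(v)
\]
for all $t\in J$, as required. There is no real obstacle here — the lemma is essentially a bookkeeping consequence of the coincidence $P_*=P_0$ on $(\,\overline{100I_0}\,)^4$ together with the prior uniqueness-of-ODE solutions result \lref{lem-orbits-agree}.
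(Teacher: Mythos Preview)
Your proposal is correct and follows essentially the same approach as the paper: invoke \lref{lem-P0-calculations}(i) for $P_*=P_0$ on $(\,\overline{100I_0}\,)^4$, apply \lref{lem-orbits-agree} to identify the $P_0$- and $P_*$-flows on $J$, and then use \lref{lem-following-the-runners-of-Pstar}. The paper's proof differs only in presentation order, not in substance.
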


\begin{proof}
Fix $t\in J$.
By \lref{lem-following-the-runners-of-Pstar}, we have
$\Pi_{34}(\Phi_t^{P_*}(\rho))=\Phi_t^Q(v)$,
so it suffices to show that $\Phi_t^{P_0}(\rho)=\Phi_t^{P_*}(\rho)$.

By \lref{lem-P0-calculations}(i),
we have $P_*=P_0$ on $(\,\overline{100I_0}\,)^4\,$.
So, since $0\in J$ and $\Phi_J^{P_0}(\rho)\subseteq(\,\overline{100I_0}\,)^4$,
by \lref{lem-orbits-agree},
we get $\Phi_t^{P_0}(\rho)=\Phi_t^{P_*}(\rho)$.
\end{proof}

\begin{lem}\wrlab{lem-above-or-below-means-nostop}
Let $\rho\in\R^4$. Then both of the following are true:
\begin{itemize}
\item[(i)]Say $\Pi_4(\rho)\ge a_{J_0}$.
Then, for all $t\ge0$, $\Pi_4(\Phi_t^{P_0}(\rho))\ge a_{J_0}$.
\item[(ii)]Say $\Pi_4(\rho)\le-a_{J_0}$.
Then, for all $t\le0$, $\Pi_4(\Phi_t^{P_0}(\rho))\le-a_{J_0}$.
\end{itemize}
\end{lem}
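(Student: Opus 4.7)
The plan is to reduce both parts to a single computation showing that the flow of $P_0$ has fourth-component velocity exactly $1$ whenever $|\Pi_4(\sigma)|\ge a_{J_0}$, and then invoke a standard first-exit-time argument for forward invariance of a half-space (and its backward analog).

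The key computation: let $\sigma\in\R^4$ with $|\Pi_4(\sigma)|\ge a_{J_0}=200$. Since $J_0=200I_0=(-200,200)$, we have $\Pi_4(\sigma)\notin J_0$, so $\sigma\notin J_0^4$, and hence $\beta(\sigma)=0$ by the defining property of $\beta$. Writing $\sigma=(u,v)\in\R^2\times\R^2$, it follows that
$$P_+(\sigma)\,\,=\,\,\beta(\sigma)\,P_*(\sigma)\,+\,(1-\beta(\sigma))\,X(\sigma)\,\,=\,\,X(\sigma)\,\,=\,\,(H(u),Q_0(v)),$$
so $\Pi_4(P_+(\sigma))=\Pi_4((H(u),(0,1)))=1$. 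Also $\Pi_4(V_0(\sigma))=1$. Since $P_0=\gamma P_++(1-\gamma)V_0$ with $\gamma(\sigma)\in[0,1]$, we obtain
$$\Pi_4(P_0(\sigma))\,\,=\,\,\gamma(\sigma)\cdot 1\,+\,(1-\gamma(\sigma))\cdot 1\,\,=\,\,1.$$

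For (i), define $f:\R\to\R$ by $f(t)=\Pi_4(\Phi_t^{P_0}(\rho))$. Then $f$ is $C^\infty$, $f(0)\ge a_{J_0}$, and $f'(t)=\Pi_4(P_0(\Phi_t^{P_0}(\rho)))$. Suppose for contradiction that $f(t_1)<a_{J_0}$ for some $t_1>0$. The set $S:=\{t\in[0,t_1]\,|\,f(t)\ge a_{J_0}\}$ is closed, contains $0$, and does not contain $t_1$; let $t_0:=\sup S$. By continuity, $t_0\in S$ (so $f(t_0)\ge a_{J_0}$), and $f(t)<a_{J_0}$ for all $t\in(t_0,t_1]$, which forces $f(t_0)=a_{J_0}$. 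But then $|\Pi_4(\Phi_{t_0}^{P_0}(\rho))|=a_{J_0}$, so by the key computation $f'(t_0)=1>0$; hence $f(t_0+\varepsilon)>a_{J_0}$ for all sufficiently small $\varepsilon>0$, contradicting $t_0=\sup S$.

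For (ii), the argument is symmetric: with $\Pi_4(\rho)\le-a_{J_0}$ and $g(t):=\Pi_4(\Phi_t^{P_0}(\rho))$, if some $t_1<0$ had $g(t_1)>-a_{J_0}$, take $t_0:=\inf\{t\in[t_1,0]\,|\,g(t)\le-a_{J_0}\}$; then $g(t_0)=-a_{J_0}$ and $g'(t_0)=1>0$ forces $g(t_0-\varepsilon)<-a_{J_0}$ for small $\varepsilon>0$, contradicting the definition of $t_0$. There is essentially no obstacle here; the whole content is the convex-combination computation, and both parts follow from the fact that on the complement of $J_0^4$ the vector field $P_0$ agrees in its fourth component with the upward unit field $V_0$.
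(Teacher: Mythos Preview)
Your proof is correct and rests on the same idea as the paper's: once $|\Pi_4(\sigma)|\ge a_{J_0}$, the fourth component of $P_0(\sigma)$ equals $1$, and a last/first-exit-time contradiction finishes the job.

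The execution differs slightly. The paper does not compute $\Pi_4(P_0(\sigma))=1$ from the definitions; instead it quotes \lref{lem-P0-calculations}(vi), which gives $\Pi_{34}(P_0(\sigma))=(0,1)$ already when $|\Pi_4(\sigma)|\ge 50$. Exploiting this larger region, the paper introduces an intermediate level $b\in[50,a_{J_0})$, uses the Mean Value Theorem to locate a time $t_1$ with $\lambda'(t_1)<0$ while $\lambda(t_1)\ge b$, and derives the contradiction there. Your argument is more economical: you work directly at the boundary level $a_{J_0}$, compute $\Pi_4(P_0)=1$ by hand from $\beta=0$ outside $J_0^4$, and use a standard first-exit argument without any Mean Value Theorem or buffer. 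Both are valid; yours is a little cleaner, while the paper's version reuses an already-proved lemma rather than unpacking the convex combination again.
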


\begin{proof}
We will only prove (i). The proof of (ii) is similar.

Define $\lambda:\R\to\R$ by $\lambda(t)=\Pi_4(\Phi_t^{P_0}(\rho))$.
Fix $r\in[0,\infty)$ and assume, for a contradiction, that $\lambda(r)<a_{J_0}$.

Recall that $J_0=200I_0$, so $a_{J_0}=200a_{I_0}=200$.
Choose $b\ge50$ such that $\lambda(r)<b<a_{J_0}$.
Let $A:=\{t\in[0,\infty)|\lambda(t)\ge b\}$.
Since $\lambda(r)<b$, we have $r\notin A$.
Let $t_0:=\inf[0,\infty)\backslash A$.
Then $[0,t_0)\subseteq A$.
Moreover, for all~$\delta>0$,
we have $[t_0,t_0+\delta)\not\subseteq A$.
Let $\delta_1,\delta_2,\ldots$ be a sequence in $(0,\infty)$
such that $\delta_j\to0$ as $j\to\infty$.
For all integers $j\ge1$, choose $u_j\in[t_0,t_0+\delta_j)$
such that $u_j\notin A$.
Then $u_j\to t_0$ as $j\to\infty$, and, for all integers $j\ge1$,
we have $\lambda(u_j)<b$.
Taking the limit as $j\to\infty$,
and using continuity of $\lambda$,
we conclude that $\lambda(t_0)\le b$.
By definition of $\lambda$, $\lambda(0)=\Pi_4(\rho)$.
By assumption, $a_{J_0}\le\Pi_4(\rho)$.
Then $\lambda(t_0)\le b<a_{J_0}\le\Pi_4(\rho)=\lambda(0)$,
so $\lambda(t_0)<\lambda(0)$.
Then $t_0\ne0$.
So, since $t_0\in[0,\infty)$, we conclude that $t_0>0$.
Then, by the Mean Value Theorem, fix $t_1\in(0,t_0)$ such that
$\lambda'(t_1)=[(\lambda(t_0))-(\lambda(0))]/t_0$.
From this, because $\lambda(t_0)<\lambda(0)$ and because $t_0>0$,
it follows that $\lambda'(t_1)<0$.

Because $t_1\in(0,t_0)\subseteq[0,t_0)\subseteq A$,
we conclude that $\lambda(t_1)\ge b$.
We define $\rho_1:=\Phi_{t_1}^{P_0}(\rho)$.
Then, by definition of $\lambda$, we have $\lambda(t_1)=\Pi_4(\rho_1)$.
Therefore $\Pi_4(\rho_1)=\lambda(t_1)\ge b\ge50$,
and so $\Pi_4(\rho_1)\notin 50I_0$,
and so $\rho_1\in[\R^4]\backslash[(200I_0)^2\times(50I_0)^2]$.
Then, by \lref{lem-P0-calculations}(vi),
we see that $\Pi_{34}(P_0(\rho_1))=(0,1)\in\R^2$, and so $\Pi_4(P_0(\rho_1))=1$.

Differentiating the definition of $\lambda$, and using linearity of $\Pi_4$,
we see, for all $t\in\R$, that $\lambda'(t)=\Pi_4(P_0(\Phi_t^{P_0}(\rho)))$.
Then $\lambda'(t_1)=\Pi_4(P_0(\rho_1))$.
Then $1=\Pi_4(P_0(\rho_1))=\lambda'(t_1)<0$, contradiction.
\end{proof}

\begin{cor}\wrlab{cor-above-or-below-means-nostop}
Let $\sigma\in\R^4$. Then both of the following are true:
\begin{itemize}
\item[(i)]Say $\Pi_4(\sigma)>-a_{J_0}$.
Then, for all $t\ge0$, $\Pi_4(\Phi_t^{P_0}(\sigma))>-a_{J_0}$.
\item[(ii)]Say $\Pi_4(\sigma)<a_{J_0}$.
Then, for all $t\le0$, $\Pi_4(\Phi_t^{P_0}(\sigma))<a_{J_0}$.
\end{itemize}
\end{cor}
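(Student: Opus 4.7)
The plan is to derive each part of the corollary from the corresponding part of \lref{lem-above-or-below-means-nostop} by a straightforward contrapositive argument, exploiting the fact that the flow of $P_0$ is reversible. The hypotheses in the corollary are strict ($>$ and $<$), while the conclusions of the lemma use non-strict inequalities ($\ge$ and $\le$); the strict/nonstrict mismatch is exactly what makes the contrapositive yield the required strict conclusion.

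For part (i), I would assume for contradiction that $\Pi_4(\sigma) > -a_{J_0}$ but that there exists some $t\ge 0$ with $\Pi_4(\Phi_t^{P_0}(\sigma)) \le -a_{J_0}$. Setting $\rho := \Phi_t^{P_0}(\sigma)$, I would apply \lref{lem-above-or-below-means-nostop}(ii) to $\rho$ (whose hypothesis $\Pi_4(\rho) \le -a_{J_0}$ now holds) and conclude that for every $s\le 0$, $\Pi_4(\Phi_s^{P_0}(\rho)) \le -a_{J_0}$. Specializing to $s := -t \le 0$ gives $\Phi_s^{P_0}(\rho) = \Phi_{-t}^{P_0}(\Phi_t^{P_0}(\sigma)) = \sigma$, so $\Pi_4(\sigma) \le -a_{J_0}$, contradicting the hypothesis.

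Part (ii) is entirely symmetric: assuming $\Pi_4(\sigma) < a_{J_0}$ and some $t \le 0$ with $\Pi_4(\Phi_t^{P_0}(\sigma)) \ge a_{J_0}$, set $\rho := \Phi_t^{P_0}(\sigma)$, apply \lref{lem-above-or-below-means-nostop}(i) to obtain $\Pi_4(\Phi_s^{P_0}(\rho)) \ge a_{J_0}$ for all $s\ge 0$, and specialize to $s := -t \ge 0$ to force $\Pi_4(\sigma) \ge a_{J_0}$, a contradiction.

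There is no real obstacle here; the only thing to be careful about is the bookkeeping of signs, ensuring that $s = -t$ falls in the correct half-line so that the relevant clause of the lemma applies. This is purely a packaging step that converts the nonstrict invariance statement of the lemma into the strict ``open half-space'' version needed later.
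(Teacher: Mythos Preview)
Your proof is correct and is essentially identical to the paper's own proof: both prove (i) by assuming, for contradiction, that $\Pi_4(\Phi_t^{P_0}(\sigma))\le -a_{J_0}$ for some $t\ge 0$, setting $\rho:=\Phi_t^{P_0}(\sigma)$, applying \lref{lem-above-or-below-means-nostop}(ii), and evaluating at $-t$ to force $\Pi_4(\sigma)\le -a_{J_0}$. The paper likewise declares (ii) to be similar.
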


\begin{proof}
We will only prove (i). The proof of (ii) is similar.
Fix $t\ge0$ and let $\rho:=\Phi_t^{P_0}(\sigma)$.
Assume, for a contradiction, that $\Pi_4(\rho)\le-a_{J_0}$.

Then, by \lref{lem-above-or-below-means-nostop}(ii),
$\Pi_4(\Phi_{-t}^{P_0}(\rho))\le-a_{J_0}$.
So, as $\Phi_{-t}^{P_0}(\rho)=\sigma$,
$\Pi_4(\sigma)\le-a_{J_0}$,
contradicting the assumption that $\Pi_4(\sigma)>-a_{J_0}$.
\end{proof}

\begin{lem}\wrlab{lem-adapted-flow-goes-down}
Let $\sigma\in\Pi_3^{-1}(\R\backslash J_0)$.
Then, for all $t\in\R$, we have $\Pi_3(\Phi_t^{P_0}(\sigma))=\Pi_3(\sigma)$.
\end{lem}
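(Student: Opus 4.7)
Set $c:=\Pi_3(\sigma)$. By hypothesis $c\in\R\setminus J_0$, so $|c|\ge a_{J_0}=200$. My plan is to combine a pointwise computation showing that the third component of $P_0$ vanishes whenever $|\Pi_3|>50$ with a standard connectedness argument on $\R$ to conclude that the scalar function $f(t):=\Pi_3(\Phi_t^{P_0}(\sigma))$ is constant.

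\textbf{Key pointwise computation.} I claim: for every $\rho\in\R^4$ with $\Pi_3(\rho)\notin 50I_0$, $\Pi_3(P_0(\rho))=0$. Write $\rho=(u,v)$ with $u,v\in\R^2$. The first coordinate of $v$ is $\Pi_3(\rho)\notin 50I_0$, so $v\notin(50I_0)^2$; by the construction of $Q$ in \secref{sect-racetrack}, $Q=Q_0$ off $(50I_0)^2$, so $Q(v)=Q_0(v)=(0,1)$. Reading off the third slots from the definitions of $P_*$, $X$, and $V_0$, one gets $\Pi_3(P_*(\rho))=\pi_1(Q(v))=0$, $\Pi_3(X(\rho))=\pi_1(Q_0(v))=0$, and $\Pi_3(V_0(\rho))=0$. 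Since $P_+=\beta P_*+(1-\beta)X$ and $P_0=\gamma P_++(1-\gamma)V_0$ are pointwise convex combinations, the vanishing propagates by linearity of $\Pi_3$ to give $\Pi_3(P_+(\rho))=0$ and then $\Pi_3(P_0(\rho))=0$.

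\textbf{Connectedness argument.} The function $f$ is $C^\infty$ with $f(0)=c$ and $f'(t)=\Pi_3(P_0(\Phi_t^{P_0}(\sigma)))$. Let $U:=\{t\in\R\,|\,|f(t)|>50\}$. Then $U$ is open and $0\in U$, since $|c|\ge200>50$. For $t\in U$, $f(t)\notin[-50,50]$ and in particular $f(t)\notin 50I_0$, so the key computation gives $f'(t)=0$. Let $N$ be the connected component of $U$ containing $0$; then $N$ is an open interval and the Mean Value Theorem yields $f\equiv c$ on $N$. Write $N=(a,b)$ with $-\infty\le a<0<b\le\infty$. If $b<\infty$, continuity of $f$ together with $f\equiv c$ on $N$ gives $f(b)=c$, so $|f(b)|=|c|\ge200>50$, which puts $b\in U$; as $U$ is open, a neighborhood of $b$ lies in $U$ and, being connected to $N$, is contained in the same component as $N$, contradicting $b\notin N$. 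Hence $b=\infty$, and symmetrically $a=-\infty$, so $N=\R$ and $f\equiv c$, as required.

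\textbf{Expected obstacle.} There is no genuinely hard step---the one place requiring care is the pointwise computation, where one must verify that the vanishing of $\Pi_3$ applied to each of $P_*$, $X$, and $V_0$ survives the two successive convex combinations that build $P_0$. Once that is in hand, the rest is just an open-connected-component argument on $\R$.
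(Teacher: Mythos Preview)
Your proof is correct. The pointwise computation that $\Pi_3(P_0(\rho))=0$ whenever $\Pi_3(\rho)\notin 50I_0$ is exactly right (the paper packages essentially the same fact as \lref{lem-P0-calculations}(vi)), and your open--closed argument on the component $N$ of $U=\{t:|f(t)|>50\}$ is sound.

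The paper takes a different route for the global step. Rather than analyzing the scalar function $f(t)=\Pi_3(\Phi_t^{P_0}(\sigma))$ directly, it builds a reduced vector field $V$ on $\R^3$ by $V(\phi)=\pi(P_0(F(\phi)))$, where $F(w,x,z)=(w,x,a,z)$ embeds $\R^3$ as the hyperplane $\Pi_3^{-1}(a)$ and $\pi$ forgets the third coordinate. The curve $\gamma(t):=F(\Phi_t^V(\pi(\sigma)))$ lies in $\Pi_3^{-1}(a)$ by construction, and the paper checks $\gamma'(t)=P_0(\gamma(t))$ (using the same pointwise vanishing you proved), so uniqueness of ODE solutions gives $\gamma(t)=\Phi_t^{P_0}(\sigma)$. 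This avoids any connectedness bootstrap: because $\gamma$ is built to sit on the hyperplane, the hypothesis $\Pi_3(\gamma(t))=a\notin 50I_0$ holds for all $t$ automatically. Your approach is more elementary and stays in one dimension; the paper's approach trades the bootstrap for an auxiliary construction but gets the invariance of the whole hyperplane at once.
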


\begin{proof}
Let $a:=\Pi_3(\sigma)$.
Then $a\in\R\backslash J_0$.
Define $F,G:\R^3\to\R^4$ by
$$F(w,x,z)\,\,=\,\,(w,x,a,z)\qquad\hbox{and}\qquad G(w,x,z)\,\,=\,\,(w,x,0,z).$$
Let $X:=\Pi_3^{-1}(a)=F(\R^3)\subseteq\R^4$
and $Y:=\Pi_3^{-1}(0)=G(\R^3)\subseteq\R^4$.
Define $\pi:\R^4\to\R^3$ by $\pi(w,x,y,z)=(w,x,z)$.
For all $\tau\in X$, we have $F(\pi(\tau))=\tau$.
For all $\tau\in Y$, we have $G(\pi(\tau))=\tau$.
We wish to show, for all $t\in\R$,
that $\Pi_3(\Phi_t^{P_0}(\sigma))=a$, {\it i.e.},
that $\Phi_t^{P_0}(\sigma)\in X$.

Since $\Pi_3(\sigma)=a$, we get $\sigma\in\Pi_3^{-1}(a)=X$, so $F(\pi(\sigma))=\sigma$.
Let $\rho:=\pi(\sigma)$.
Then $F(\rho)=F(\pi(\sigma))=\sigma$.
Define $V:\R^3\to\R^3$ by $V(\phi)=\pi(P_0(F(\phi)))$.
Then $V(\R^3)\subseteq\pi(P_0(\R^4))\subseteq\pi(\,\barI_0^4\,)=\barI_0^3$.
So, since $V$ is $C^\infty$, we see that $V$ is complete.
Define $\gamma:\R\to\R^4$ by $\gamma(t)=F(\Phi_t^V(\rho))$.
Then, for all~$t\in\R$, we have $\gamma(t)\in F(\R^3)=X$.
It therefore suffices to show, for all $t\in\R$, that $\gamma(t)=\Phi_t^{P_0}(\sigma)$.
We have $\gamma(0)=F(\rho)=\sigma=\Phi_0^{P_0}(\sigma)$.
So, by uniqueness of solutions of ODEs,
it suffices to show, for all $t\in\R$, that $\gamma'(t)=P_0(\gamma(t))$.
For all $t\in\R$,
$$\gamma'(t)\,\,=\,\,[d/dt][F(\Phi_t^V(\rho))]\,\,=\,\,G([d/dt][\Phi_t^V(\rho)])\,\,=\,\,G(V(\Phi_t^V(\rho))).$$
Fix $t\in\R$ and define $\lambda:=\Phi_t^V(\rho)$.
Then we have both $\gamma'(t)=G(V(\lambda))$ and $P_0(\gamma(t))=P_0(F(\Phi_t^V(\rho)))=P_0(F(\lambda))$,
and so we wish to prove that $G(V(\lambda))=P_0(F(\lambda))$.

Let $\mu:=F(\lambda)$.
Then $\mu\in F(\R^3)=\Pi_3^{-1}(a)$, so $\Pi_3(\mu)=a$.
Then $\Pi_3(\mu)=a\in\R\backslash J_0\subseteq\R\backslash(50I_0)$,
so $\mu\in[\R^4]\backslash[(200I_0)^2\times(50I_0)^2]$.
So, by \lref{lem-P0-calculations}(vi), we have $\Pi_{34}(P_0(\mu))=(0,1)\in\R^2$,
so $\Pi_3(P_0(\mu))=0$.
Let $\nu:=P_0(\mu)=P_0(F(\lambda))$.
Then $\Pi_3(\nu)=\Pi_3(P_0(\mu))=0$,
and so $\nu\in\Pi_3^{-1}(0)=Y$.
Then $G(\pi(\nu))=\nu$.
By definition of $V$, we have $V(\lambda)=\pi(P_0(F(\lambda)))$.
That is, $V(\lambda)=\pi(\nu)$.

Then $G(V(\lambda))=G(\pi(\nu))=\nu=P_0(F(\lambda))$, as desired.
\end{proof}

\begin{cor}\wrlab{cor-Pi3inverse-of-J0-is-P0-invar}
The set $\Pi_3^{-1}(J_0)$ is $P_0$-invariant.
\end{cor}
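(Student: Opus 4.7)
The plan is to deduce this corollary from the immediately preceding Lemma \ref{lem-adapted-flow-goes-down} by a short complementation argument. The lemma says: if $\sigma \in \Pi_3^{-1}(\R \backslash J_0)$, then for all $t \in \R$, $\Pi_3(\Phi_t^{P_0}(\sigma)) = \Pi_3(\sigma)$. In particular, since $\Pi_3(\sigma) \in \R \backslash J_0$, we get $\Pi_3(\Phi_t^{P_0}(\sigma)) \in \R \backslash J_0$, so $\Phi_t^{P_0}(\sigma) \in \Pi_3^{-1}(\R \backslash J_0)$. Thus $\Phi_\R^{P_0}(\sigma) \subseteq \Pi_3^{-1}(\R \backslash J_0)$, which, by the definition in \secref{sect-notation}, means $\Pi_3^{-1}(\R \backslash J_0)$ is $P_0$-invariant.

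The remaining step is to pass from invariance of $\Pi_3^{-1}(\R \backslash J_0)$ to invariance of its complement $\Pi_3^{-1}(J_0)$. Since $P_0$ is complete, each $\Phi_t^{P_0}: \R^4 \to \R^4$ is a bijection with inverse $\Phi_{-t}^{P_0}$. So for any set $S \subseteq \R^4$, if $\Phi_t^{P_0}(S) \subseteq S$ holds for every $t \in \R$ (applied also at $-t$), then $\Phi_t^{P_0}(S) = S$ for every $t \in \R$, hence $\Phi_t^{P_0}(\R^4 \backslash S) = \R^4 \backslash S$ as well. Applying this with $S := \Pi_3^{-1}(\R \backslash J_0)$ gives that $\R^4 \backslash S = \Pi_3^{-1}(J_0)$ is $P_0$-invariant.

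There is really no obstacle here; the whole content sits in Lemma \ref{lem-adapted-flow-goes-down}. The only thing to verify cleanly is the bijectivity/complement step, which is immediate from completeness of $P_0$ (established when $P_0$ was defined in \secref{sect-results-scrp}) together with the definition of $V$-invariance.
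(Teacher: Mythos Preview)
Your proof is correct and follows essentially the same approach as the paper: use \lref{lem-adapted-flow-goes-down} to show that $\Pi_3^{-1}(\R\backslash J_0)$ is $P_0$-invariant, then pass to the complement. The paper simply asserts the complement step without comment, whereas you spell out the bijectivity argument from completeness of $P_0$; otherwise the arguments are identical.
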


\begin{proof}
For all $\sigma\in\Pi_3^{-1}(\R\backslash J_0)$,
for all $t\in\R$,
by \lref{lem-adapted-flow-goes-down},
we get $\Pi_3(\Phi_t^{P_0}(\sigma))=\Pi_3(\sigma)$,
so $\Pi_3(\Phi_t^{P_0}(\sigma))\in\R\backslash J_0$,
{\it i.e.}, $\Phi_t^{P_0}(\sigma)\in\Pi_3^{-1}(\R\backslash J_0)$.
This shows $\Pi_3^{-1}(\R\backslash J_0)$ is $P_0$-invariant.
Then $\R^4\backslash[\Pi_3^{-1}(\R\backslash J_0)]$ is also $P_0$-invariant.
So, since $\Pi_3^{-1}(J_0)=\R^4\backslash[\Pi_3^{-1}(\R\backslash J_0)]$,
we are done.
\end{proof}

\begin{lem}\wrlab{lem-never-home-fly-up}
Let $\rho\in\R^4$, $t_0\in\R$.
Then both of the following hold:
\begin{itemize}
\item[(i)]Say $\Phi_{[t_0,\infty)}^{P_0}(\rho)\subseteq[\R^4]\backslash[J_0^4]$.
Then $\displaystyle{\lim_{t\to\infty}\,\Pi_4(\Phi_t^{P_0}(\rho))=\infty}$.
\item[(ii)]Say $\Phi_{(-\infty,t_0]}^{P_0}(\rho)\subseteq[\R^4]\backslash[J_0^4]$.
Then $\displaystyle{\lim_{t\to-\infty}\,\Pi_4(\Phi_t^{P_0}(\rho))=-\infty}$.
\end{itemize}
\end{lem}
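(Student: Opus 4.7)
The plan is that both parts reduce to a single observation: whenever $\sigma\notin J_0^4$, one has $\Pi_4(P_0(\sigma))=1$, so that $\Pi_4\circ\Phi_t^{P_0}(\rho)$ grows (or decreases) linearly with unit speed along any orbit-segment that stays outside $J_0^4$.

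First I would record the inclusion $(200I_0)^2\times(50I_0)^2\subseteq J_0^4$, which is immediate from $50I_0\subseteq 200I_0=J_0$. Taking complements gives
$$[\R^4]\backslash[J_0^4]\quad\subseteq\quad[\R^4]\backslash[(200I_0)^2\times(50I_0)^2].$$
Applying \lref{lem-P0-calculations}(vi) to this larger complement, we obtain: for every $\sigma\in[\R^4]\backslash[J_0^4]$, $\Pi_{34}(P_0(\sigma))=(0,1)\in\R^2$, and in particular $\Pi_4(P_0(\sigma))=1$.

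For (i), define $f:\R\to\R$ by $f(t):=\Pi_4(\Phi_t^{P_0}(\rho))$. Differentiating and using linearity of $\Pi_4$ yields $f'(t)=\Pi_4(P_0(\Phi_t^{P_0}(\rho)))$. The hypothesis gives $\Phi_t^{P_0}(\rho)\in[\R^4]\backslash[J_0^4]$ for every $t\ge t_0$, so by the previous paragraph $f'(t)=1$ on $[t_0,\infty)$. Then by the Mean Value Theorem, $f(t)=f(t_0)+(t-t_0)$ for all $t\ge t_0$, from which $f(t)\to\infty$ as $t\to\infty$. Part (ii) is proved by the same argument with $(-\infty,t_0]$ in place of $[t_0,\infty)$: again $f'(t)=1$ throughout, so $f(t)=f(t_0)+(t-t_0)\to-\infty$ as $t\to-\infty$.

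There is no real obstacle; the essential computation has already been done in \lref{lem-P0-calculations}(vi), which pins down the last two coordinates of $P_0$ off the compact set $(200I_0)^2\times(50I_0)^2$. The only care needed is the set-theoretic inclusion required to apply that lemma to the complement of $J_0^4$.
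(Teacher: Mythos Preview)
Your proof is correct and follows essentially the same approach as the paper: both reduce to showing $\Pi_4(P_0(\sigma))=1$ whenever $\sigma\notin J_0^4$ via \lref{lem-P0-calculations}(vi) and the inclusion $(200I_0)^2\times(50I_0)^2\subseteq J_0^4$, and then integrate to get linear growth of $\Pi_4$ along the orbit. The only cosmetic difference is that you isolate the key observation once at the outset, whereas the paper verifies it inline for a fixed $t\ge t_0$.
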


\begin{proof}
We will only prove (i). The proof of (ii) is similar.

Define $\lambda:\R\to\R$ by $\lambda(t)=\Pi_4(\Phi_t^{P_0}(\rho))$.
We wish to show that $\displaystyle{\lim_{t\to\infty}\,\lambda(t)=\infty}$.
Let $c:=(\lambda(t_0))-t_0$.
Because $\displaystyle{\lim_{t\to\infty}\,(c+t)=\infty}$,
it suffices to show, for all $t\ge t_0$, that $\lambda(t)=c+t$.
Since $\lambda(t_0)=c+t_0$, it suffices to show, for all $t\ge t_0$, that $\lambda'(t)=1$.
Differentiating the definition of $\lambda$, and using linearity of $\Pi_4$,
we conclude, for all $t\in\R$, that $\lambda'(t)=\Pi_4(P_0(\Phi_t^{P_0}(\rho)))$.
Fix $t\ge t_0$ and let $\rho_1:=\Phi_t^{P_0}(\rho)$.
We wish to show that $\Pi_4(P_0(\rho_1))=1$.

Since $t\ge t_0$, $\Phi_t^{P_0}(\rho)\in\Phi_{[t_0,\infty)}^{P_0}(\rho)$.
That is, $\rho_1\in\Phi_{[t_0,\infty)}^{P_0}(\rho)$.
Then
$$\rho_1\,\,\in\,\,\Phi_{[t_0,\infty)}^{P_0}(\rho)\,\,\subseteq\,\,[\R^4]\backslash[J_0^4]
\,\,\subseteq\,\,[\R^4]\,\backslash\,[(200I_0)^2\times(50I_0)^2].$$
So, by \lref{lem-P0-calculations}(vi), we conclude that $\Pi_{34}(P_0(\rho_1))=(0,1)\in\R^2$.
Then $\Pi_4(P_0(\rho_1))=1$, as desired.
\end{proof}

\begin{lem}\wrlab{lem-water-cant-leave-box-and-return}
Let $a>0$.
Let $I:=(-a,a)\subseteq\R$.
Let $s,u\in\R$ and assume that $s<u$.
Let $\rho\in\R^4$.
Assume that $\Phi_s^{P_0}(\rho),\Phi_u^{P_0}(\rho)\in I^2\times\R^2$.
Then $\Phi_{[s,u]}^{P_0}(\rho)\subseteq I^2\times\R^2$.
\end{lem}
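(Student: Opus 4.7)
The plan is to reduce the claim coordinate-by-coordinate to the monotonicity properties established in \lref{lem-Pi1-Pi2-of-P0-orbits-are-monotone}. Recall that that lemma says, for any $\rho\in\R^4$, the function $t\mapsto|\Pi_1(\Phi_t^{P_0}(\rho))|$ is nondecreasing in $t$, while $t\mapsto|\Pi_2(\Phi_t^{P_0}(\rho))|$ is nonincreasing. The condition $\Phi_t^{P_0}(\rho)\in I^2\times\R^2$ is exactly the pair of conditions $|\Pi_1(\Phi_t^{P_0}(\rho))|<a$ and $|\Pi_2(\Phi_t^{P_0}(\rho))|<a$, so we only need to control each of these two scalar functions separately on $[s,u]$.

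First, I would handle the $\Pi_1$ coordinate. By hypothesis, $\Phi_u^{P_0}(\rho)\in I^2\times\R^2$, so $|\Pi_1(\Phi_u^{P_0}(\rho))|<a$. Since $t\mapsto|\Pi_1(\Phi_t^{P_0}(\rho))|$ is nondecreasing by \lref{lem-Pi1-Pi2-of-P0-orbits-are-monotone}(i), for any $t\in[s,u]$ we have $|\Pi_1(\Phi_t^{P_0}(\rho))|\le|\Pi_1(\Phi_u^{P_0}(\rho))|<a$, which gives $\Pi_1(\Phi_t^{P_0}(\rho))\in I$.

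Next I would handle $\Pi_2$ symmetrically. By hypothesis, $|\Pi_2(\Phi_s^{P_0}(\rho))|<a$. Since $t\mapsto|\Pi_2(\Phi_t^{P_0}(\rho))|$ is nonincreasing by \lref{lem-Pi1-Pi2-of-P0-orbits-are-monotone}(ii), for any $t\in[s,u]$ we have $|\Pi_2(\Phi_t^{P_0}(\rho))|\le|\Pi_2(\Phi_s^{P_0}(\rho))|<a$, which gives $\Pi_2(\Phi_t^{P_0}(\rho))\in I$. Combining the two coordinate estimates yields $\Phi_t^{P_0}(\rho)\in I^2\times\R^2$ for every $t\in[s,u]$, as desired.

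There is essentially no obstacle: the hyperbolic nature of $H$ (encoded in the first two coordinates of $P_0$) forces $|w|$ to grow monotonically and $|x|$ to shrink monotonically along any $P_0$-orbit, so an orbit segment that starts and ends inside the ``box'' $I\times I$ (in the first two coordinates) can never leave it. The whole argument is a direct two-line application of the previously established monotonicity lemma, with no case splits or auxiliary constructions required.
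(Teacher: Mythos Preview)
Your proof is correct and follows essentially the same approach as the paper: both reduce to the coordinate monotonicity in \lref{lem-Pi1-Pi2-of-P0-orbits-are-monotone}, bounding $|\Pi_1|$ at time $t$ by its value at $u$ and $|\Pi_2|$ at time $t$ by its value at $s$. The paper writes out only the $\Pi_1$ case and declares the $\Pi_2$ case similar, whereas you spell out both, but the argument is identical.
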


\begin{proof}
Fix $t\in[s,u]$.
We wish to prove $\Phi_t^{P_0}(\rho)\in I^2\times\R^2$.
That is, we wish to prove
both $\Pi_1(\Phi_t^{P_0}(\rho))\in I$
and $\Pi_2(\Phi_t^{P_0}(\rho))\in I$.
We will only prove the former.
The proof of the latter is similar.

Since $t\le u$, by \lref{lem-Pi1-Pi2-of-P0-orbits-are-monotone}(i),
$|\Pi_1(\Phi_t^{P_0}(\rho))|\le|\Pi_1(\Phi_u^{P_0}(\rho))|$.
Since $\Phi_u^{P_0}(\rho)\in I^2\times\R^2$,
we have $\Pi_1(\Phi_u^{P_0}(\rho))\in I$,
so $|\Pi_1(\Phi_u^{P_0}(\rho))|<a$.
Then $|\Pi_1(\Phi_t^{P_0}(\rho))|\le|\Pi_1(\Phi_u^{P_0}(\rho))|<a$,
so $\Pi_1(\Phi_t^{P_0}(\rho))\in I$, as desired.
\end{proof}

We record the special cases $a=1$ and $a=200$ of \lref{lem-water-cant-leave-box-and-return}:

\begin{cor}\wrlab{cor-I0-water-cant-leave-box-and-return}
Let $s,u\in\R$ and say $s<u$.
Let $\rho\in\R^4$.
Assume that $\Phi_s^{P_0}(\rho),\Phi_u^{P_0}(\rho)\in I_0^2\times\R^2$.
Then $\Phi_{[s,u]}^{P_0}(\rho)\subseteq I_0^2\times\R^2$.
\end{cor}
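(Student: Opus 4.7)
The plan is extremely short: this is simply the preceding lemma applied with $a=1$. Since $I_0=(-1,1)$, setting $a:=1$ in \lref{lem-water-cant-leave-box-and-return} yields $I=(-1,1)=I_0$, and the hypotheses of the corollary exactly match the hypotheses of the lemma in this special case. The conclusion $\Phi_{[s,u]}^{P_0}(\rho)\subseteq I^2\times\R^2$ becomes $\Phi_{[s,u]}^{P_0}(\rho)\subseteq I_0^2\times\R^2$, as required.

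So the entire proof consists of the sentence: \emph{Take $a:=1$ in \lref{lem-water-cant-leave-box-and-return}.} There is no obstacle and no calculation to perform; the corollary is recorded purely for the convenience of later invocations (presumably in a context where only the $a=1$ instance is needed, and where spelling it out avoids a distracting reference to the parameter $a$). The same remark applies to the subsequent $a=200$ instance, which will be proved identically by setting $a:=200$ in the same lemma.
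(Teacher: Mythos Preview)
Your proposal is correct and matches the paper's approach exactly: the paper introduces this corollary (and the next) with the sentence ``We record the special cases $a=1$ and $a=200$ of \lref{lem-water-cant-leave-box-and-return},'' and gives no further proof.
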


\begin{cor}\wrlab{cor-J0-water-cant-leave-box-and-return}
Let $s,u\in\R$ and say $s<u$.
Let $\rho\in\R^4$.
Assume that $\Phi_s^{P_0}(\rho),\Phi_u^{P_0}(\rho)\in J_0^2\times\R^2$.
Then $\Phi_{[s,u]}^{P_0}(\rho)\subseteq J_0^2\times\R^2$.
\end{cor}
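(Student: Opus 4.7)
The statement is literally the special case $a = 200$ of Lemma~\ref{lem-water-cant-leave-box-and-return}, since $J_0 = 200 I_0 = (-200,200)$, so that the interval $I := (-a,a) = (-200,200)$ appearing in that lemma coincides with $J_0$. Thus the plan is simply to invoke Lemma~\ref{lem-water-cant-leave-box-and-return} with $a := 200$ and $I := J_0$, and observe that $I^2 \times \R^2 = J_0^2 \times \R^2$.

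Concretely, I would set $a := 200$, note $a > 0$, define $I := (-a,a)$, and verify from the definition $J_0 = 200 I_0$ and the fact $I_0 = (-1,1)$ (recorded in Section~\ref{sect-notation}) that $I = J_0$. The hypotheses of Lemma~\ref{lem-water-cant-leave-box-and-return} are then exactly the hypotheses of the present corollary, and its conclusion $\Phi_{[s,u]}^{P_0}(\rho) \subseteq I^2 \times \R^2$ rewrites as $\Phi_{[s,u]}^{P_0}(\rho) \subseteq J_0^2 \times \R^2$, as required.

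There is no real obstacle here; the only thing to be careful about is the bookkeeping identifying $J_0$ with $(-200,200)$, which is immediate from the definitions. The substantive content is entirely in Lemma~\ref{lem-water-cant-leave-box-and-return}, whose proof uses Lemma~\ref{lem-Pi1-Pi2-of-P0-orbits-are-monotone} (monotonicity in absolute value of the first two coordinate projections along $P_0$-orbits); this corollary simply records the specialization that will be convenient later.
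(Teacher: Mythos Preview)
Your proposal is correct and matches the paper's own approach exactly: the paper explicitly introduces this corollary by saying ``We record the special cases $a=1$ and $a=200$ of \lref{lem-water-cant-leave-box-and-return},'' and gives no further proof. Your identification $J_0 = 200I_0 = (-200,200)$ is the only bookkeeping needed.
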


\begin{lem}\wrlab{lem-leave-home-go-up}
Let $s,u\in\R$ and assume that $s<u$.
Let $\rho\in\R^4$.
Assume that $\Phi_s^{P_0}(\rho),\Phi_u^{P_0}(\rho)\in\R^2\times J_0^2$.
Then $\Phi_{[s,u]}^{P_0}(\rho)\subseteq\R^2\times J_0^2$.
\end{lem}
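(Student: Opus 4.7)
The plan is to treat the third and fourth coordinates separately, since $\R^2 \times J_0^2$ is just $\R^2 \times J_0 \times J_0$, and to note that only the endpoints $\Phi_s^{P_0}(\rho)$ and $\Phi_u^{P_0}(\rho)$ matter, with each endpoint handling one side of the box in the fourth coordinate direction.

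For the third coordinate, I would use \cref{cor-Pi3inverse-of-J0-is-P0-invar}, which says $\Pi_3^{-1}(J_0)$ is $P_0$-invariant. Setting $\sigma_s := \Phi_s^{P_0}(\rho)$, the assumption gives $\sigma_s \in \R^2 \times J_0^2$, so in particular $\Pi_3(\sigma_s) \in J_0$, i.e., $\sigma_s \in \Pi_3^{-1}(J_0)$. Invariance then yields $\Phi_\R^{P_0}(\sigma_s) \subseteq \Pi_3^{-1}(J_0)$. Since $\Phi_t^{P_0}(\rho) = \Phi_{t-s}^{P_0}(\sigma_s)$ for every $t \in \R$, this gives $\Pi_3(\Phi_t^{P_0}(\rho)) \in J_0$ for all $t \in \R$, which is stronger than what is needed on $[s,u]$.

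For the fourth coordinate, I would invoke \cref{cor-above-or-below-means-nostop}. Writing $a_{J_0} = 200$, the hypothesis $\Pi_4(\sigma_s) \in J_0$ gives $\Pi_4(\sigma_s) > -a_{J_0}$, so part~(i) of that corollary, applied to $\sigma_s$, tells us $\Pi_4(\Phi_t^{P_0}(\sigma_s)) > -a_{J_0}$ for all $t \ge 0$; substituting $t = \tau - s$ with $\tau \in [s,u]$ yields $\Pi_4(\Phi_\tau^{P_0}(\rho)) > -a_{J_0}$. Symmetrically, setting $\sigma_u := \Phi_u^{P_0}(\rho)$, the hypothesis $\Pi_4(\sigma_u) \in J_0$ gives $\Pi_4(\sigma_u) < a_{J_0}$, and part~(ii) applied at $\sigma_u$ gives $\Pi_4(\Phi_t^{P_0}(\sigma_u)) < a_{J_0}$ for all $t \le 0$; substituting $t = \tau - u$ with $\tau \in [s,u]$ yields $\Pi_4(\Phi_\tau^{P_0}(\rho)) < a_{J_0}$. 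Together, for $\tau \in [s,u]$, we have $\Pi_4(\Phi_\tau^{P_0}(\rho)) \in (-a_{J_0}, a_{J_0}) = J_0$.

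Combining the two coordinate arguments, we conclude that $\Phi_\tau^{P_0}(\rho) \in \R^2 \times J_0 \times J_0 = \R^2 \times J_0^2$ for every $\tau \in [s,u]$, which is the claim. There is no real obstacle here; the lemma is essentially a repackaging of \cref{cor-Pi3inverse-of-J0-is-P0-invar} and \cref{cor-above-or-below-means-nostop}, with the only mild subtlety being that one must apply the "no stop" corollary at both endpoints and in opposite time directions, using the group property $\Phi_{t-s}^{P_0} \circ \Phi_s^{P_0} = \Phi_t^{P_0}$ to translate the statements back to $\rho$.
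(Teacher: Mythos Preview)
Your proof is correct and uses essentially the same ingredients as the paper's: \cref{cor-Pi3inverse-of-J0-is-P0-invar} for the third coordinate and the ``no stop'' results (\cref{cor-above-or-below-means-nostop}) for the fourth. The only difference is stylistic: you give a direct argument, applying both parts of \cref{cor-above-or-below-means-nostop} symmetrically at the two endpoints, whereas the paper argues by contradiction, first using \cref{cor-above-or-below-means-nostop}(i) at $\Phi_s^{P_0}(\rho)$ and then \lref{lem-above-or-below-means-nostop}(i) at the hypothetical escaping point to force $\Pi_4(\Phi_u^{P_0}(\rho))\ge a_{J_0}$.
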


\begin{proof}
Fix $t\in[s,u]$.
Say, for a contradiction, that
$\Phi_t^{P_0}(\rho)\notin\R^2\times J_0^2$.

Let $\sigma:=\Phi_s^{P_0}(\rho)$
and $\tau:=\Phi_t^{P_0}(\rho)$
and $\mu:=\Phi_u^{P_0}(\rho)$.
Then $\sigma\in\R^2\times J_0^2$
and $\tau\notin\R^2\times J_0^2$
and $\mu\in\R^2\times J_0^2$.
Let $q:=t-s$ and let $r:=u-t$.
Then $q\ge0$ and $r\ge0$.
Moreover, we have $\tau=\Phi_q^{P_0}(\sigma)$
and $\mu=\Phi_r^{P_0}(\tau)$.

As $\sigma\in\R^2\times J_0^2$,
we get $\Pi_3(\sigma)\in J_0$,
{\it i.e.}, $\sigma\in\Pi_3^{-1}(J_0)$.
By \cref{cor-Pi3inverse-of-J0-is-P0-invar},
the set $\Pi_3^{-1}(J_0)$ is $P_0$-invariant.
So, because $\sigma\in\Pi_3^{-1}(J_0)$,
we get $\Phi_q^{P_0}(\sigma)\in\Pi_3^{-1}(J_0)$,
{\it i.e.}, we get $\tau\in\Pi_3^{-1}(J_0)$.
Then $\Pi_3(\tau)\in J_0$.
So, since $\tau\notin\R^2\times J_0^2$,
we see that $\Pi_4(\tau)\notin J_0$.

As $\sigma\in\R^2\times J_0^2$,
we get $\Pi_4(\sigma)\in J_0=(-a_{J_0},a_{J_0})$,
so $\Pi_4(\sigma)>-a_{J_0}$.
So, by \cref{cor-above-or-below-means-nostop}(i),
$\Pi_4(\Phi_q^{P_0}(\sigma))>-a_{J_0}$,
{\it i.e.}, $\Pi_4(\tau)>-a_{J_0}$.
So, as $\Pi_4(\tau)\notin J_0=(-a_{J_0},a_{J_0})$,
we get $\Pi_4(\tau)\ge a_{J_0}$.
So, by \lref{lem-above-or-below-means-nostop}(i),
$\Pi_4(\Phi_r^{P_0}(\tau))\ge a_{J_0}$,
{\it i.e.}, $\Pi_4(\mu)\ge a_{J_0}$.
Then $\Pi_4(\mu)\notin(-a_{J_0},a_{J_0})=J_0$.
However $\mu\in\R^2\times J_0^2$, so $\Pi_4(\mu)\in J_0$,
contradiction.
\end{proof}

\begin{lem}\wrlab{lem-water-cant-leave-J0-box-and-return}
Let $s,u\in\R$ and assume that $s<u$.
Let $\rho\in\R^4$.
Assume that $\Phi_s^{P_0}(\rho),\Phi_u^{P_0}(\rho)\in J_0^4$.
Then $\Phi_{[s,u]}^{P_0}(\rho)\subseteq J_0^4$.
\end{lem}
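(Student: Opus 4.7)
The plan is to recognize this as an immediate conjunction of two facts that have already been established. Observe that $J_0^4 = J_0^2 \times J_0^2 = (J_0^2 \times \R^2) \cap (\R^2 \times J_0^2)$, so the hypothesis that $\Phi_s^{P_0}(\rho)$ and $\Phi_u^{P_0}(\rho)$ lie in $J_0^4$ automatically places both points in $J_0^2 \times \R^2$ and also in $\R^2 \times J_0^2$.

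First I would invoke Corollary~\ref{cor-J0-water-cant-leave-box-and-return} (the $a=200$ special case already recorded) to conclude that $\Phi_{[s,u]}^{P_0}(\rho) \subseteq J_0^2 \times \R^2$; this is the ``first two coordinates stay in $J_0$'' statement, which rests on the monotonicity of $|\Pi_1|$ and $|\Pi_2|$ along $P_0$-orbits coming from Lemma~\ref{lem-Pi1-Pi2-of-P0-orbits-are-monotone}.

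Next I would apply Lemma~\ref{lem-leave-home-go-up} to conclude that $\Phi_{[s,u]}^{P_0}(\rho) \subseteq \R^2 \times J_0^2$; this is the complementary ``last two coordinates stay in $J_0$'' statement, whose proof rests on the fact that $\Pi_3^{-1}(J_0)$ is $P_0$-invariant (Corollary~\ref{cor-Pi3inverse-of-J0-is-P0-invar}) combined with the one-sided monotonicity results of Lemma~\ref{lem-above-or-below-means-nostop} and Corollary~\ref{cor-above-or-below-means-nostop} that prevent $\Pi_4$ from leaving $J_0$ and then returning.

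Intersecting these two inclusions gives $\Phi_{[s,u]}^{P_0}(\rho) \subseteq (J_0^2 \times \R^2) \cap (\R^2 \times J_0^2) = J_0^4$, which is the desired conclusion. There is no real obstacle here — all the work has already been done in the two preceding lemmas, and this statement is essentially just the formal merger of their conclusions under a hypothesis that simultaneously triggers both.
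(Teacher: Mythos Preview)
Your proposal is correct and matches the paper's proof exactly: the paper also invokes \cref{cor-J0-water-cant-leave-box-and-return} and \lref{lem-leave-home-go-up} and intersects the two resulting inclusions to obtain $\Phi_{[s,u]}^{P_0}(\rho)\subseteq[J_0^2\times\R^2]\cap[\R^2\times J_0^2]=J_0^4$.
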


\begin{proof}
As $J_0^4\subseteq J_0^2\times\R^2$ and $J_0^4\subseteq\R^2\times J_0^2$,
by \cref{cor-J0-water-cant-leave-box-and-return} and \lref{lem-leave-home-go-up},
we get $\Phi_{[s,u]}^{P_0}(\rho)\subseteq[J_0^2\times\R^2]\cap[\R^2\times J_0^2]=J_0^4$.
\end{proof}

\begin{lem}\wrlab{lem-P0-porous}
The map $P_0:\R^4\to\R^4$ is porous.
\end{lem}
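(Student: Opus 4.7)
The plan is to prove the stronger statement $\R^4\setminus Z\subseteq\scru(P_0)$; since $Z$ is nowhere dense in $\R^4$, density of $\scru(P_0)$---i.e., porousness of $P_0$---then follows. First observe that $(P_0,K_0)\in\scrd$: indeed $P_0\in\scrc$ by construction, and $P_0=V_0$ off $K_0^4$ because $\gamma$ vanishes there. So \lref{lem-undeterred-criterion} applies, and it suffices, for each $\rho=(u,v)\in\R^4\setminus Z$, to show $\Pi_4(\Phi_t^{P_0}(\rho))\to\infty$ as $t\to\infty$ and $\Pi_4(\Phi_t^{P_0}(\rho))\to-\infty$ as $t\to-\infty$. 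I treat only the forward limit; the backward limit is symmetric, using the ``downward'' halves of Lemmas \ref{lem-asymptotic-coords-of-H}, \ref{lem-never-home-fly-up} and \ref{lem-above-or-below-means-nostop}. Write $u=(w,x)$; since $u\notin Z_0$, both $w\ne 0$ and $x\ne 0$.

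By the convexity of $\{t\ge 0\,|\,\Phi_t^{P_0}(\rho)\in J_0^4\}$ (\lref{lem-water-cant-leave-J0-box-and-return}), exactly one of the following holds: (A) there is $T\ge 0$ with $\Phi_{[T,\infty)}^{P_0}(\rho)\subseteq\R^4\setminus J_0^4$, or (B) there is $T'\ge 0$ with $\Phi_{[T',\infty)}^{P_0}(\rho)\subseteq J_0^4$. In Case~(A), \lref{lem-never-home-fly-up}(i) gives $\Pi_4(\Phi_t^{P_0}(\rho))\to\infty$. Case~(B) I rule out by contradiction. Without loss of generality assume $T'=0$, so $\Phi_{[0,\infty)}^{P_0}(\rho)\subseteq J_0^4$. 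Set $f(t):=\Omega_\rho(t)$; by \lref{lem-following-the-waterflow-of-P0}, $f$ is nondecreasing with $f(0)=0$ and $\Pi_{12}(\Phi_t^{P_0}(\rho))=\Phi_{f(t)}^H(u)$. Confinement to $J_0^4$ yields $|\Pi_1(\Phi_t^{P_0}(\rho))|<a_{J_0}=200$ for all $t\ge 0$; since $w\ne 0$, \lref{lem-asymptotic-coords-of-H}(i) forces $f$ to be bounded, so $\int_0^\infty\omega(\Phi_t^{P_0}(\rho))\,dt<\infty$.

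To contradict this I show the integral actually diverges. A direct computation using the definitions of $P_0$, $P_+$, $P_*$ and $X$ gives $\Pi_{34}(P_0(\rho))=\gamma\beta\,Q(\Pi_{34}(\rho))+(1-\gamma\beta)(0,1)$, and, since $Q=Q_0=(0,1)$ on $S\supseteq(\overline{3I_0})^2$, whenever $\Pi_{34}(\rho)\in(\overline{3I_0})^2$ the $\Pi_{34}$-velocity equals $(0,1)$ independently of $\beta,\gamma$: $\Pi_{34}$ moves straight up at unit speed and must exit $(\overline{3I_0})^2$ through its top face within at most $6$ seconds. In $J_0^4$ (where $\gamma=1$) one also computes $\omega=1-\beta(1-\alpha\circ\Pi_{34})\ge\alpha\circ\Pi_{34}$, with $\alpha>0$ off $(\overline{3I_0})^2$. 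Combining the bounded-dwell-in-$(\overline{3I_0})^2$ with the racetrack axioms---the $m$-periodicity of $Q$ on $R$ and the bounded dwell of $Q$-orbits in $S$---one obtains constants $M,c>0$ such that $\int_s^{s+M}\alpha(\Pi_{34}(\Phi_t^{P_0}(\rho)))\,dt\ge c$ for every $s\ge 0$, forcing $\int_0^\infty\omega\,dt=\infty$, the desired contradiction.

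The main obstacle will be this last uniform lower bound: in the transition region $J_0^4\setminus\{\beta=1\}$, the $\Pi_{34}$-dynamics is the convex combination $\beta Q+(1-\beta)Q_0$ rather than $Q$ itself, so one cannot simply invoke $Q$-periodicity. One must verify that $\Pi_{34}$ nevertheless spends a definite fraction of every time window outside $(\overline{3I_0})^2$, so that $\alpha\circ\Pi_{34}$ integrates to a positive constant on each such window. The bounded-dwell estimates of \lref{lem-reentry-time-to-I2} (and its corollaries), together with the racetrack construction yielding $T_\times>0$, are the tools for pinning this down.
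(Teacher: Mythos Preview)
Your overall strategy---confining to $J_0^4$, deducing that $f=\Omega_\rho$ is bounded, then seeking a contradiction---matches the paper. The gap is in your final step: the uniform bound $\int_s^{s+M}\alpha(\Pi_{34}(\Phi_t^{P_0}(\rho)))\,dt\ge c$ is asserted but not proved, and the tools you name (racetrack $m$-periodicity, \lref{lem-reentry-time-to-I2}, $T_\times>0$) all concern the pure $Q$-flow. As you yourself observe, in the transition region $\{\beta<1\}$ the $\Pi_{34}$-dynamics is $\beta Q+(1-\beta)Q_0$, not $Q$, so none of those statements transfer. Your proposed remedy suffers from exactly the obstacle you identify.

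The fix is simpler than you suggest and is already latent in your own computation. From $\Pi_{34}(P_0(\rho))=\gamma\beta\,Q(v)+(1-\gamma\beta)(0,1)$ and $Q=Q_0=(0,1)$ on \emph{all} of $S$ (not just on $(\overline{3I_0})^2$), one gets $\Pi_{34}(P_0(\rho))=(0,1)$ whenever $\Pi_{34}(\rho)\in S$, \emph{independently of $\beta$ and $\gamma$}; this is \lref{lem-P0-calculations}(v). Hence each visit of $\Pi_{34}$ to $(4I_0)^2\subseteq S$ has length $<8$ (the $z$-coordinate rises at unit rate through an interval of length $8$) and exits at some $(y_0,4)\in S$; the trajectory then continues straight up inside $S$ for $8$ more seconds (until $z=12$), throughout which $\Pi_{34}\notin(4I_0)^2$ and hence $\omega=1$ by \lref{lem-P0-calculations}(iv). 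This visit/gap structure gives $\int_0^\infty\omega\,dt=\infty$ directly, with no appeal to racetrack periodicity.

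The paper reaches the contradiction by a different device: instead of proving $\int_0^\infty\omega=\infty$, it applies the real-analysis \lref{lem-fn-with-growth-is-proper} (with $a=8$, $b=1$) to the bounded nondecreasing $f$, obtaining a single $r\ge0$ with $f'(r)<1$ and $f'(r+8)<1$; by \lref{lem-P0-calculations}(iv) this forces $\Pi_{34}\in(4I_0)^2$ at both times $r$ and $r+8$. A short connectedness argument, again using \lref{lem-P0-calculations}(v), then shows $\Pi_{34}$ agrees with the $Q_0$-flow on $[r,r+8]$, so its $z$-coordinate rises by $8$---impossible if both endpoints lie in $(-4,4)$. Both routes ultimately hinge on \lref{lem-P0-calculations}(v); the paper's avoids any integral estimate, while your (repaired) route is more direct but needs the visit/gap bookkeeping.
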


\begin{proof}
As $Z$ is nowhere dense in $\R^4$,
it suffices to prove
$(\R^4)\backslash Z\subseteq\scru(P_0)$.
Fix $\rho\in(\R^4)\backslash Z$.
We wish to prove that $\rho\in\scru(P_0)$.

By \lref{lem-undeterred-criterion}(c\,$\Rightarrow$a),
it suffices to prove that
$\displaystyle{\lim_{t\to\infty}\,[\Pi_4(\Phi_t^{P_0}(\rho))]=\infty}$
and that
$\displaystyle{\lim_{t\to-\infty}\,[\Pi_4(\Phi_t^{P_0}(\rho))]=-\infty}$.
We will prove the former; the proof of the latter is similar.
By \lref{lem-never-home-fly-up}(i),
it suffices to show, for some~$t_0\in\R$,
that $\Phi_{[t_0,\infty)}^{P_0}(\rho)\subseteq[\R^4]\backslash[J_0^4]$.
We therefore assume, for a contradiction, that, for all $t\in\R$,
we have $[\Phi_{[t,\infty)}^{P_0}(\rho)]\cap[J_0^4]\ne\emptyset$.

We then fix a sequence $t_1,t_2,\ldots$ in $\R$
such that $t_1<t_2<\cdots$,
such that $t_1,t_2,\ldots\to\infty$,
and such that $\Phi_{t_1}^{P_0}(\rho),\Phi_{t_2}^{P_0}(\rho),\ldots\in J_0^4$.
Then, by \lref{lem-water-cant-leave-J0-box-and-return},
we have $\Phi_{[t_1,t_2]}^{P_0}(\rho),\Phi_{[t_2,t_3]}^{P_0}(\rho),\cdots\subseteq J_0^4$.
Taking the union, $\Phi_{[t_1,\infty)}^{P_0}(\rho)\subseteq J_0^4$.
Let $\sigma:=\Phi_{t_1}^{P_0}(\rho)$.
Then $\Phi_{[0,\infty)}^{P_0}(\sigma)=\Phi_{[t_1,\infty)}^{P_0}(\rho)\subseteq J_0^4$.

By \lref{lem-P0-calculations}(vii),
we see that $Z$ is $P_0$-invariant,
and it follows that $(\R^4)\backslash Z$ is $P_0$-invariant.
So, since $\rho\in(\R^4)\backslash Z$, we have $\sigma\in(\R^4)\backslash Z$.
Choose $u,v\in\R^2$ such that $\sigma=(u,v)\in\R^2\times\R^2=\R^4$.
Since $\sigma\notin Z$, we get $u\notin Z_0$.
Then $u\notin\{0\}\times\R$.
Let $\pi_1,\pi_2:\R^2\to\R$ be the coordinate projection maps
defined by $\pi_1(w,x)=w$ and $\pi_2(w,x)=x$.
By \lref{lem-asymptotic-coords-of-H}(i), we have $|\pi_1(\Phi_t^H(u))|\to\infty$ as $t\to\infty$.
Then fix a compact set $K\subseteq\R$ such that,
\begin{itemize}
\item[$(*)$]for all $t\in[0,\infty)\backslash K$, \qquad $|\pi_1(\Phi_t^H(u))|\ge a_{J_0}$.
\end{itemize}

Define $f:\R\to\R$ by $f(t)=\Omega_\sigma(t)$.
By \lref{lem-following-the-waterflow-of-P0}(ii and iv),
for all $t\ge0$, we have both $f'(t)\ge0$ and $f(t)\ge0$.
For all $t\ge0$, we have
$\Phi_t^{P_0}(\sigma)\in\Phi_{[0,\infty)}^{P_0}(\sigma)\subseteq J_0^4$,
so $\Pi_1(\Phi_t^{P_0}(\sigma))\in J_0$,
so $|\Pi_1(\Phi_t^{P_0}(\sigma))|<a_{J_0}$.
By \lref{lem-following-the-waterflow-of-P0}(v),
for all $t\in\R$, we have $\Pi_{12}(\Phi_t^{P_0}(\sigma))=\Phi_{f(t)}^H(u)$,
so $\Pi_1(\Phi_t^{P_0}(\sigma))=\pi_1(\Phi_{f(t)}^H(u))$.
Then, for all $t\ge0$, we have
$$|\,\pi_1(\Phi_{f(t)}^H(u))\,|\quad=\quad|\,\Pi_1(\Phi_t^{P_0}(\sigma))\,|\quad<\quad a_{J_0},$$
so, by $(*)$, $f(t)\in K$.
Then $f([0,\infty))\subseteq K$.
Then, by \lref{lem-fn-with-growth-is-proper} (with $a:=8$, $b:=1$),
fix $r\ge0$ such that both $f'(r)<1$ and $f'(r+8)<1$.

By \lref{lem-following-the-waterflow-of-P0}(ii),
for all $t\in\R$, we have $f'(t)=\omega(\Phi_t^{P_0}(\sigma))$.
Let $\sigma_0:=\Phi_r^{P_0}(\sigma)$ and
let $\sigma_1:=\Phi_8^{P_0}(\sigma_0)=\Phi_{r+8}^{P_0}(\sigma)$.
Then
\begin{itemize}
\item$\sigma_0,\,\sigma_1\quad\in\quad\Phi_{[0,\infty)}^{P_0}(\sigma)\quad\subseteq\quad J_0^4$ \qquad\qquad\qquad\qquad\qquad and
\item$\omega(\sigma_0)\quad=\quad\omega(\Phi_r^{P_0}(\sigma))\quad=\quad f'(r)\quad<\quad1$ \qquad\qquad\, and
\item$\omega(\sigma_1)\quad=\quad\omega(\Phi_{r+8}^{P_0}(\sigma))\quad=\quad f'(r+8)\quad<\quad1$.
\end{itemize}

Let $S_1:=J_0^2\times(4I_0)^2\subseteq\R^4$.
By \lref{lem-P0-calculations}(iv), we have $\omega=1$ on~$[J_0^4]\backslash[S_1]$.
So, since $\omega(\sigma_0)<1$ and $\sigma_0\in J_0^4$, we see that $\sigma_0\in S_1$.
Similarly, since $\omega(\sigma_1)<1$ and $\sigma_1\in J_0^4$, we see that $\sigma_1\in S_1$.

Let $v_0:=\Pi_{34}(\sigma_0)$ and $v_1:=\Pi_{34}(\sigma_1)$.
Since $\sigma_0,\sigma_1\in S_1$, we get $v_0,v_1\in(4I_0)^2$.
Fix $y,z\in4I_0$ such that $v_0=(y,z)\in\R\times\R=\R^2$.
We have $z\in4I_0=(-4,4)$.
Then $z>-4$, so $z+8>4$.
Then $z+8\notin4I_0$.
Then $\Phi_8^{Q_0}(v_0)=(y,z+8)\notin(4I_0)^2$.
Define $\lambda:\R\to\R^2$ by $\lambda(t)=\Pi_{34}(\Phi_t^{P_0}(\sigma_0))$,
and let $A:=\{t\in[0,8]\,|\,\lambda(t)=\Phi_t^{Q_0}(v_0)\}$.

Since $v_1\in(4I_0)^2$ and $\Phi_8^{Q_0}(v_0)\notin(4I_0)^2$,
we see that $v_1\ne\Phi_8^{Q_0}(v_0)$.
Then $\lambda(8)=\Pi_{34}(\Phi_8^{P_0}(\sigma_0))=\Pi_{34}(\sigma_1)=v_1\ne\Phi_8^{Q_0}(v_0)$,
and so $8\notin A$.
Let $s:=\inf[0,8]\backslash A$.
Then $[0,s)\subseteq A$ and, moreover,
\begin{itemize}
\item[$(**)$]for all $\delta>0$, we have $[s,s+\delta)\not\subseteq A$.
\end{itemize}
As $\lambda(0)=\Pi_{34}(\sigma_0)=v_0=\Phi_0^{Q_0}(v_0)$, we get $0\in A$.
So, since $A$ is closed in~$[0,8]$ and $[0,s)\subseteq A$,
we have $s\in A$, {\it i.e.}, we have $\lambda(s)=\Phi_s^{Q_0}(v_0)$.

As  $z\in(-4,4)$ and $s\in[0,8]$,
we get $z+s\in(-4,12)\subseteq(-12,12)$,
so $z+s\in12I_0$.
Then $\Phi_s^{Q_0}(v_0)=(y,z+s)\in(4I_0)\times(12I_0)=S$.
By definition of $\lambda$, we have $\lambda(s)=\Pi_{34}(\Phi_s^{P_0}(\sigma_0))$.
Then
$$\Pi_{34}(\Phi_s^{P_0}(\sigma_0))\quad=\quad\lambda(s)\quad=\quad\Phi_s^{Q_0}(v_0)\quad\in\quad S,$$
so $\Phi_s^{P_0}(\sigma_0)\in\Pi_{34}^{-1}(S)=\R^2\times S$.
Then, because $\R^2\times S$ is open in $\R^4$, by continuity, choose $\delta_1>0$
such that $\Phi_{[s,s+\delta_1)}^{P_0}(\sigma_0)\subseteq\R^2\times S$.

{\it Claim:}
For all $t\in[s,s+\delta_1)$, we have $\lambda(t)=\Phi_t^{Q_0}(v_0)$.
{\it Proof of claim:}
Since $\lambda(s)=\Phi_s^{Q_0}(v_0)$,
by uniqueness of solutions of ODEs,
it suffices to prove, for all $t\in[s,s+\delta_1)$,
that $\lambda'(t)=Q_0(\lambda(t))$.

Differentiating the definition of $\lambda$, and using linearity of $\Pi_{34}$,
we see, for all $t\in\R$, that $\lambda'(t)=\Pi_{34}(P_0(\Phi_t^{P_0}(\sigma_0)))$.
Let $w:=(0,1)\in\R^2$.
Then, by definition of~$Q_0$, for all $\tau\in\R^4$, we have $Q_0(\tau)=w$.
Fix $t\in[s,s+\delta_1)$.
We wish to prove that $\Pi_{34}(P_0(\Phi_t^{P_0}(\sigma_0)))=w$.

We have $\Phi_t^{P_0}(\sigma_0)\in\Phi_{[s,s+\delta_1)}^{P_0}(\sigma_0)\subseteq\R^2\times S$.
Then, by \lref{lem-P0-calculations}(v),
we see that $\Pi_{34}(P_0(\Phi_t^{P_0}(\sigma_0)))=w$, as desired.
{\it End of proof of claim.}

Since $s\in A$ and $8\notin A$, we see that $s\ne8$.
So, as $s\in[0,8]$, we get $s<8$.
Let $\delta_0:=\min\{\delta_1,8-s\}$.
Then $\delta_0>0$.
Also, $s+\delta_0\le8$.
Then $[s,s+\delta_0)\subseteq[0,8]$.
As $\delta_0\le\delta_1$, by the claim,
we see, for all $t\in[s,s+\delta_0)$, that $\lambda(t)=\Phi_t^{Q_0}(v_0)$.
Then $[s,s+\delta_0)\subseteq A$, contradicting $(**)$.
\end{proof}

Recall that $\displaystyle{T_\times:=m-\left[\int_{-4}^4\,(\zeta_{3I_0}(t))\,dt\right]}$.

\begin{lem}\wrlab{lem-following-the-runners-for-m}
Let $\rho\in\R^2\times(3I_0)^2$.
Assume $\Phi_{[0,m]}^{P_0}(\rho)\subseteq(\,\overline{100I_0}\,)^4$.
Then $\Pi_{12}(\Phi_m^{P_0}(\rho))=\Phi_{T_\times}^H(\Pi_{12}(\rho))$
and $\Pi_{34}(\Phi_m^{P_0}(\rho))=\Phi_m^Q(\Pi_{34}(\rho))$.
\end{lem}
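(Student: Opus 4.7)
\medskip\noindent\textbf{Proof proposal.}
My plan is to handle the $\Pi_{34}$ (``runners'') equation first, then use it to evaluate the ``water-flow'' integral that gives the $\Pi_{12}$ equation. Write $\rho=(u,v)\in\R^2\times\R^2$ with $v=(y,z)\in(3I_0)^2$, and let $f:\R\to\R$ be defined by $f(t)=\Omega_\rho(t)$. For the $\Pi_{34}$ equation, I will invoke \lref{lem-following-the-runners-of-P0-in-J04} with $J=[0,m]$, which is permissible because the hypothesis is precisely $\Phi_{[0,m]}^{P_0}(\rho)\subseteq(\overline{100I_0})^4$. This immediately gives $\Pi_{34}(\Phi_t^{P_0}(\rho))=\Phi_t^Q(v)$ for all $t\in[0,m]$, and in particular at $t=m$.

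For the $\Pi_{12}$ equation, by \lref{lem-following-the-waterflow-of-P0}(v) we have $\Pi_{12}(\Phi_m^{P_0}(\rho))=\Phi_{f(m)}^H(u)$, so it suffices to prove $f(m)=T_\times$. By the definition of $f$ and \lref{lem-P0-calculations}(iii), since $\Phi_r^{P_0}(\rho)\in(\overline{100I_0})^4$ for $r\in[0,m]$, I get $\omega(\Phi_r^{P_0}(\rho))=\alpha(\Pi_{34}(\Phi_r^{P_0}(\rho)))=\alpha(\Phi_r^Q(v))$, using the runners equation just established. Hence $f(m)=\int_0^m\alpha(\Phi_r^Q(v))\,dr$, and the problem reduces to evaluating this integral.

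To evaluate it, I will use \lref{lem-follow-Q-in-and-out-of-S} (noting $v\in(3I_0)^2\subseteq S$) to split $[0,m]$ into three pieces: $[0,12-z]$ (in $S$, moving up), $[12-z,m-12-z]$ (outside $S$), and $[m-12-z,m]$ (back in $S$). Outside $S$ we have $\alpha=1$, because $(4I_0)^2\subseteq S$ forces $\R^2\setminus S\subseteq\R^2\setminus(4I_0)^2$. The middle piece thus contributes $m-24$. On the first piece the orbit is $(y,r+z)$ and on the third piece it is $(y,r+z-m)$; substituting $t=r+z$ and $t=r+z-m$ respectively collapses the two remaining integrals into $\int_{-12}^{12}[1-\zeta_{3I_0}(y)\zeta_{3I_0}(t)]\,dt=24-\zeta_{3I_0}(y)\int_{-4}^{4}\zeta_{3I_0}(t)\,dt$, where I use that $\zeta_{3I_0}$ vanishes outside $[-4,4]$. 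The only subtle ingredient is that $y\in 3I_0\subseteq\overline{3I_0}$ gives $\zeta_{3I_0}(y)=1$; that is exactly why the hypothesis is $(3I_0)^2$ and not, say, $(4I_0)^2$. Adding everything yields $f(m)=m-\int_{-4}^{4}\zeta_{3I_0}(t)\,dt=T_\times$, completing the proof.

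The main obstacle is purely bookkeeping: keeping straight the two substitutions in the first and third time-intervals and verifying that the orbit outside $S$ really lies outside $(4I_0)^2$; neither is deep, but both need to be written carefully, and the role of the hypothesis $v\in(3I_0)^2$ (as opposed to $v\in S$) must be flagged at the step where we set $\zeta_{3I_0}(y)=1$.
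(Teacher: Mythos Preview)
Your proposal is correct and follows the same overall strategy as the paper: invoke \lref{lem-following-the-runners-of-P0-in-J04} for the $\Pi_{34}$ equation, reduce the $\Pi_{12}$ equation via \lref{lem-following-the-waterflow-of-P0}(v) to $f(m)=T_\times$, and then use \lref{lem-P0-calculations}(iii) together with the runners equation to rewrite $f(m)=\int_0^m\alpha(\Phi_r^Q(v))\,dr$.

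The only difference is in how the integral is evaluated. You split $[0,m]$ directly into $[0,12-z]\cup[12-z,m-12-z]\cup[m-12-z,m]$ using parts (i), (ii), (iii) of \lref{lem-follow-Q-in-and-out-of-S}, observe that $(4I_0)^2\subseteq S$ so the middle piece contributes $m-24$, and combine the two end pieces by substitution into $\int_{-12}^{12}[1-\zeta_{3I_0}(t)]\,dt$. The paper instead sets $g(t)=1-\alpha(\Phi_t^Q(v))$, uses that $v\in R$ gives $\Phi_m^Q(v)=v$ so $g$ is $m$-periodic, shifts the integration interval to $[-4,m-4]$ (after a translation by $z$), kills $[4,m-4]$ via \cref{cor-reentry-time-to-4I02}, and evaluates the remaining $[-4,4]$ piece with \lref{lem-follow-Q-in-and-out-of-S}(i). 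Your route is slightly more self-contained (it avoids the periodicity observation and the corollary), while the paper's periodicity trick makes the bookkeeping of the substitutions a bit cleaner; in substance the two computations are the same.
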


\begin{proof}
Define $f:\R\to\R$ by $f(t)=\Omega_\rho(t)$.
Choose $u_0\in\R^2$ and $v_0\in(3I_0)^2$ such that $\rho=(u_0,v_0)\in\R^2\times\R^2=\R^4$.
Let $J:=[0,m]$.
Then we have $\Phi_J^{P_0}(\rho)=\Phi_{[0,m]}^{P_0}(\rho)\subseteq(\,\overline{100I_0}\,)^4$.

By \lref{lem-following-the-waterflow-of-P0}(v) and
\lref{lem-following-the-runners-of-P0-in-J04},
for all $t\in J$, we have both
$$\Pi_{12}(\Phi_t^{P_0}(\rho))=\Phi_{f(t)}^H(u_0)\qquad\hbox{and}\qquad
\Pi_{34}(\Phi_t^{P_0}(\rho))=\Phi_t^Q(v_0).$$
Therefore we have both $\Pi_{12}(\Phi_m^{P_0}(\rho))=\Phi_{f(m)}^H(u_0)=\Phi_{f(m)}^H(\Pi_{12}(\rho))$
and $\Pi_{34}(\Phi_m^{P_0}(\rho))=\Phi_m^Q(v_0)=\Phi_m^Q(\Pi_{34}(\rho))$.
We need only prove $f(m)=T_\times$.

For all $t\in J$, $\Phi_t^{P_0}(\rho)\in\Phi_J^{P_0}(\rho)\subseteq(\,\overline{100I_0}\,)^4$,
so, by \lref{lem-P0-calculations}(iii),
we conclude that $\omega(\Phi_t^{P_0}(\rho))=\alpha(\Pi_{34}(\Phi_t^{P_0}(\rho)))$.
Then, for all $t\in J$,
we have $\omega(\Phi_t^{P_0}(\rho))=\alpha(\Pi_{34}(\Phi_t^{P_0}(\rho)))=\alpha(\Phi_t^Q(v_0))$.
Then
$$f(m)\,\,=\,\,\Omega_\rho(m)\,\,=\,\,\int_0^m\,[\omega(\Phi_t^{P_0}(\rho))]\,dt\,\,=\,\,
\int_0^m\,[\alpha(\Phi_t^Q(v_0))]\,dt.$$
Define $g:\R\to\R$ by $g(t)=1-[\alpha(\Phi_t^Q(v_0))]$.
Then
$$f(m)\quad=\quad\int_0^m\,\left[1-(g(t))\right]\,dt\quad=\quad m\,\,-\,\,\left[\int_0^m\,(g(t))\,dt\right].$$

Choose $y_0,z_0\in3I_0$ such that $v_0=(y_0,z_0)\in\R\times\R=\R^2$.
Since $v_0\in(3I_0)^2\subseteq(4I_0)\times(12I_0)=S\subseteq R$,
by definition of $m$, we get $\Phi_m^Q(v_0)=v_0$.
Then, for all $t\in\R$, $\Phi_{t+m}^Q(v_0)=\Phi_t^Q(\Phi_m^Q(v_0))=\Phi_t^Q(v_0)$,
so $g(t+m)=g(t)$.
That is, $g:\R\to\R$ is $m$-periodic.
It follows that
\begin{itemize}
\item[]the integral of $g$ over any interval in $\R$ of length $m$
\end{itemize}
is equal to
\begin{itemize}
\item[]the integral of $g$ over any other interval in $\R$ of length $m$.
\end{itemize}
In particular,
$\displaystyle{\int_0^m\,(g(t))\,dt=\int_{-4-z_0}^{m-4-z_0}\,(g(t))\,dt}$.
By invariance of integration under translation,
$\displaystyle{\int_{-4-z_0}^{m-4-z_0}\,(g(t))\,dt=\int_{-4}^{m-4}\,(g(t-z_0))\,dt}$.

We have $y_0\in3I_0\subseteq4I_0$ and $z_0\in3I_0\subseteq12I_0$ and $v_0=(y_0,z_0)$.
So, for all $t\in[4,m-4]$,
by \cref{cor-reentry-time-to-4I02}, $\Phi_{t-z_0}^Q(v_0)\in(\R^2)\backslash[(4I_0)^2]$,
and so $\alpha(\Phi_{t-z_0}^Q(v_0))=1$,
and so $g(t-z_0)=1-1=0$.
We conclude that $\displaystyle{\int_4^{m-4}\,(g(t-z_0))\,dt=0}$.
Then $\displaystyle{\int_{-4}^{m-4}\,(g(t-z_0))\,dt=\int_{-4}^4\,(g(t-z_0))\,dt}$.

Recall that $\alpha:\R^2\to\R$ is defined by
$\alpha(y,z)=1-[\zeta_{3I_0}(y)][\zeta_{3I_0}(z)]$.
As $y_0\in3I_0$, we have $\zeta_{3I_0}(y_0)=1$.
For all $t\in[-4,4]$,
by \lref{lem-follow-Q-in-and-out-of-S}(i),
we have $\Phi_{t-z_0}^Q(v_0)=\Phi_{t-z_0}^{Q_0}(v_0)=(y_0,t)$,
and so
$$\alpha(\,\Phi_{t-z_0}^Q(v_0)\,)\,\,\,=\,\,\,1\,-\,[\zeta_{3I_0}(y_0)][\zeta_{3I_0}(t)]\,\,\,=\,\,\,
1\,-\,[1][\zeta_{3I_0}(t)],$$
and so $g(t-z_0)=\zeta_{3I_0}(t)$.
Then $\displaystyle{\int_{-4}^4\,(g(t-z_0))\,dt=\int_{-4}^4\,(\zeta_{3I_0}(t))\,dt}$.

Putting all these observations together, we conclude that
\begin{eqnarray*}
\int_0^m(g(t))\,dt&=&\int_{-4-z_0}^{m-4-z_0}\,(g(t))\,dt\,=\,\int_{-4}^{m-4}\,(g(t-z_0))\,dt\\
&=&\int_{-4}^4\,(g(t-z_0))\,dt\,=\,\int_{-4}^4\,(\zeta_{3I_0}(t))\,dt.
\end{eqnarray*}
Then
$\displaystyle{f(m)=m-\left[\int_0^m\,(g(t))\,dt\right]=m-\left[\int_{-4}^4\,(\zeta_{3I_0}(t))\,dt\right]=T_\times}$.
\end{proof}

\begin{lem}\wrlab{lem-P0-per}
$\Phi_m^{P_0}$ agrees with $\Id_4$ to all orders at $\xi_{I_0}$.
\end{lem}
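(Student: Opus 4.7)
The plan is to identify $\Phi_m^{P_0}$ with the explicit map $\Psi\colon(u,v)\mapsto(\Phi_{T_\times}^H(u),v)$ on a neighborhood of $\xi_{I_0}$, and then invoke \lref{lem-H0-fixes-00-to-all-orders} to conclude that $\Psi$, and hence $\Phi_m^{P_0}$, agrees with $\Id_4$ to all orders there.

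First I would pin down the $P_0$-orbit of $\xi_{I_0}$ explicitly. Recall $\xi_{I_0}=(0,0,0,-1)$, so $\Pi_{12}(\xi_{I_0})=(0,0)$ and $\Pi_{34}(\xi_{I_0})=(0,-1)\in(3I_0)^2\subseteq S\subseteq R$. Define $\gamma:\R\to\R^4$ by $\gamma(t)=((0,0),\Phi_t^Q(0,-1))$. Since $R$ is $Q$-invariant and $R\subseteq(50I_0)^2$, $\gamma(\R)\subseteq\{(0,0)\}\times R\subseteq(50I_0)^4\subseteq(\,\overline{100I_0}\,)^4$. On $(\,\overline{100I_0}\,)^4$ we have $P_0=P_*$ by \lref{lem-P0-calculations}(i), and at any point of the form $((0,0),v)$ the definition of $P_*$ combined with $H(0,0)=(0,0)$ gives $P_*((0,0),v)=((0,0),Q(v))$. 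Differentiating the definition of $\gamma$ then yields $\gamma'(t)=P_0(\gamma(t))$, so uniqueness of ODE solutions gives $\Phi_t^{P_0}(\xi_{I_0})=\gamma(t)$ for all $t\in\R$. In particular, $\Phi_{[0,m]}^{P_0}(\xi_{I_0})$ is a compact subset of the open set $(100I_0)^4$.

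Next, by continuous dependence of $\Phi_t^{P_0}$ on initial conditions together with compactness of $[0,m]$, I~would choose an open neighborhood $N$ of $\xi_{I_0}$ in $\R^4$ small enough that, for every $\rho\in N$, both $\Pi_{34}(\rho)\in(3I_0)^2$ and $\Phi_{[0,m]}^{P_0}(\rho)\subseteq(\,\overline{100I_0}\,)^4$. For each such $\rho$, the hypotheses of \lref{lem-following-the-runners-for-m} hold, and that lemma yields
$$\Phi_m^{P_0}(\rho)\,=\,(\,\Phi_{T_\times}^H(\Pi_{12}(\rho))\,,\,\Phi_m^Q(\Pi_{34}(\rho))\,).$$
Because $\Pi_{34}(\rho)\in(3I_0)^2\subseteq R$ and $\Phi_m^Q$ is the identity on $R$ (by the defining property of $m$ in \secref{sect-racetrack}), the second coordinate simplifies to $\Pi_{34}(\rho)$. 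Thus $\Phi_m^{P_0}=\Psi$ on $N$.

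Finally, I~would compare $\Psi$ with $\Id_4$ at $\xi_{I_0}$. Writing $\Psi(u,v)-\Id_4(u,v)=(\Phi_{T_\times}^H(u)-u,\,0)$, we see that $\Psi-\Id_4$ depends only on $u$ and has second component identically zero. By \lref{lem-H0-fixes-00-to-all-orders}, $\Phi_{T_\times}^H$ agrees with $\Id_2$ to all orders at $(0,0)=\Pi_{12}(\xi_{I_0})$, so every partial derivative of $\Psi-\Id_4$ at $\xi_{I_0}$ vanishes, yielding the required all-orders agreement. The main obstacle is the orbit-tracking in the first step: one must know that the orbit of $\xi_{I_0}$ sits exactly on the two-plane $\{(0,0)\}\times R$ (with no drift in the $u$-coordinate), so that \lref{lem-following-the-runners-for-m} can be applied on a genuine neighborhood. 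This is precisely where the fact $H(0,0)=(0,0)$ enters, supplying an explicit $\gamma$ that stays safely inside $(\,\overline{100I_0}\,)^4$.
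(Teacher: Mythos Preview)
Your proof is correct and follows essentially the same approach as the paper: both arguments show that the $P_0$-orbit of $\xi_{I_0}$ lies in $\{(0,0)\}\times R\subseteq(100I_0)^4$, pass to a neighborhood by continuity, apply \lref{lem-following-the-runners-for-m}, and finish using \lref{lem-H0-fixes-00-to-all-orders} together with $\Phi_m^Q=\Id_2$ on $R$. The only cosmetic difference is that the paper packages the orbit-tracking via an auxiliary vector field $Q_1(u,v)=(\bfzero(u),Q(v))$ and invariance of $\{(0,0)\}\times R$, whereas you write the flowline $\gamma$ explicitly; these are equivalent.
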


\begin{proof}
It suffices to show both of the following:
\begin{itemize}
\item[(i)]$\Pi_{12}\circ\Phi_m^{P_0}$ agrees with $\Pi_{12}$ to all orders at $\xi_{I_0}$ \qquad and
\item[(ii)]$\Pi_{34}\circ\Phi_m^{P_0}$ agrees with $\Pi_{34}$ to all orders at $\xi_{I_0}$.
\end{itemize}

Recall that $R\subseteq\R^2$ is the racetrack of \secref{sect-racetrack}
and that $\bfzero:\R^2\to\R^2$ is the zero map defined by $\bfzero(w,x)=(0,0)$.
Recall that the racetrack $R$ is an open subset of $\R^2$
and that $(4I_0)\times(12I_0)=S\subseteq R\subseteq(50I_0)^2\subseteq\R^2$.

Let $R_1:=\{(0,0)\}\times R\subseteq\R^2\times\R^2=\R^4$.
Because $(0,-1)\in S\subseteq R$ and $\xi_{I_0}=(0,0,0,-1)$,
we get $\xi_{I_0}\in R_1$.
Also, because $R\subseteq(50I_0)^2$,
we conclude that $R_1\subseteq(50I_0)^4\subseteq(100I_0)^4\subseteq(\,\overline{100I_0}\,)^4$.

Define $Q_1:\R^4\to\R^4$ by: \qquad for all $\rho=(u,v)\in\R^2\times\R^2=\R^4$,
$$Q_1(u,v)\quad=\quad(\,\,\bfzero(u)\,\,,\,\,Q(v)\,\,)\quad\in\quad\R^2\,\times\,\R^2\quad=\quad\R^4.$$
From the construction in \secref{sect-racetrack}, $R$ is $Q$-invariant,
so $R_1$ is $Q_1$-invariant.
So, since $\xi_{I_0}\in R_1$, we conclude that $\Phi_\R^{Q_1}(\xi_{I_0})\subseteq R_1$.

For $\rho=(u,v)\in\R^2\times\R^2=\R^4$,
if $\rho\in R_1$, then $u=(0,0)$, so $H_0(u)=(0,0)$,
so $H(u)=(0,0)$, so $[\alpha(v)][H(u)]=(0,0)=\bfzero(u)$, so
$$P_*(\rho)\,\,\,=\,\,\,(\,\,[\alpha(v)][H(u)]\,\,,\,\,Q(v)\,\,)\,\,\,=\,\,\,
(\,\,\bfzero(u)\,\,,\,\,Q(v)\,\,)\,\,\,=\,\,\,Q_1(\rho).$$
Thus $P_*=Q_1$ on $R_1$.
Also, because $R_1\subseteq(\,\overline{100I_0}\,)^4\,$,
we see, by \lref{lem-P0-calculations}(i),
that $P_0=P_*$ on $R_1$.
Then $P_0=P_*=Q_1$ on $R_1$.
So, as $\Phi_\R^{Q_1}(\xi_{I_0})\subseteq R_1$,
by \lref{lem-orbits-agree}, we get $\Phi_{[0,m]}^{P_0}(\xi_{I_0})=\Phi_{[0,m]}^{Q_1}(\xi_{I_0})$.
Then $\Phi_{[0,m]}^{P_0}(\xi_{I_0})=\Phi_{[0,m]}^{Q_1}(\xi_{I_0})\subseteq R_1\subseteq(100I_0)^4$.
So, by continuity, fix an open neighborhood $N_0$ in $\R^4$ of $\xi_{I_0}$
such that $\Phi_{[0,m]}^{P_0}(N_0)\subseteq(100I_0)^4$.
Let $N:=N_0\cap[\R^2\times(3I_0)^2]$.
Then $N$ is also an open neighborhood in~$\R^4$ of $\xi_{I_0}$
and, moreover, $\Phi_{[0,m]}^{P_0}(N)\subseteq\Phi_{[0,m]}^{P_0}(N_0)\subseteq(100I_0)^4$.
Then, by \lref{lem-following-the-runners-for-m}, we see, for all $\rho\in N$,
that $\Pi_{12}(\Phi_m^{P_0}(\rho))=\Phi_{T_\times}^H(\Pi_{12}(\rho))$
and that $\Pi_{34}(\Phi_m^{P_0}(\rho))=\Phi_m^Q(\Pi_{34}(\rho))$.
That is, on $N$, we have both
$$\Pi_{12}\circ\Phi_m^{P_0}\,=\,\Phi_{T_\times}^H\circ\Pi_{12}
\qquad\hbox{and}\qquad
\Pi_{34}\circ\Phi_m^{P_0}\,=\,\Phi_m^Q\circ\Pi_{34}.$$
So, since $N$ is an open  neighborhood of $\xi_{I_0}$, we have both
\begin{itemize}
\item[$(*)$]$\Pi_{12}\,\circ\,\Phi_m^{P_0}$ agrees with $\Phi_{T_\times}^H\,\circ\,\Pi_{12}$ to all orders at $\xi_{I_0}$
\qquad and
\item[$(**)$]$\Pi_{34}\,\circ\,\Phi_m^{P_0}$ agrees with $\Phi_m^Q\,\circ\,\Pi_{34}$ to all orders at $\xi_{I_0}$.
\end{itemize}

By \lref{lem-H0-fixes-00-to-all-orders}, $\Phi_{T_\times}^H$ agrees with $\Id_2$ to all orders at~$(0,0)$.
So, since $\Pi_{12}(\xi_{I_0})=(0,0)$, we see that
$\Phi_{T_\times}^H\circ\Pi_{12}$ agrees with $\Pi_{12}$ to all orders at~$\xi_{I_0}$.
So (i) follows from $(*)$.
It remains to prove (ii).

Recall that $(0,-1)\in R$.
Then $R$ is an open neighborhood in $\R^2$ of~$(0,-1)$.
By definition of $m$, for all $v\in R$, we have $\Phi_m^Q(v)=v$.
That is, $\Phi_m^Q=\Id_2$ on $R$.
Then $\Phi_m^Q$ agrees with $\Id_2$ to all orders at $(0,-1)$.
So, since $\Pi_{34}(\xi_{I_0})=(0,-1)$, we conclude that
$\Phi_m^Q\circ\Pi_{34}$ agrees with $\Pi_{34}$ to all orders at~$\xi_{I_0}$.
So (ii) follows from $(**)$.
\end{proof}

\begin{lem}\wrlab{lem-no-early-return}
Let $\tau\in B_\circ(I_0)$ and let $s\in(0,m)$.
Assume that $\Phi_s^{P_0}(\tau)\in I_0^4$.
Then $s<2$.
\end{lem}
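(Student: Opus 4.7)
The plan is to leverage the racetrack dynamics to reduce the problem to a statement about the $Q$-flow on $\R^2$, specifically \cref{cor-reentry-time-to-I02}. Write $\tau=(w,x,y,-1)$ with $w,x,y\in I_0$, and set $v:=\Pi_{34}(\tau)=(y,-1)$. Since $I_0\times\{-1\}\subseteq(4I_0)\times(12I_0)=S\subseteq R$, we have $v\in R$, so by the $Q$-invariance of $R$ (from the construction in \secref{sect-racetrack}), $\Phi_\R^Q(v)\subseteq R\subseteq(50I_0)^2$.

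The central technical step is to confine $\Phi_{[0,s]}^{P_0}(\tau)$ to the box $(\,\overline{100I_0}\,)^4$, so that \lref{lem-following-the-runners-of-P0-in-J04} applies on the entire interval $[0,s]$. First, since $\tau\in I_0^4\subseteq I_0^2\times\R^2$ and $\Phi_s^{P_0}(\tau)\in I_0^4\subseteq I_0^2\times\R^2$, \cref{cor-I0-water-cant-leave-box-and-return} forces $\Phi_{[0,s]}^{P_0}(\tau)\subseteq I_0^2\times\R^2$, so the first two coordinates are trapped in $I_0^2$. Next, let $A:=\{t\in[0,s]\,|\,\Phi_{[0,t]}^{P_0}(\tau)\subseteq(\,\overline{100I_0}\,)^4\}$. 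Then $0\in A$, and $A$ is closed in $[0,s]$ by continuity of the flow and closedness of $(\,\overline{100I_0}\,)^4$. Let $t^*:=\sup A$, so $t^*\in A$. On $[0,t^*]$, since the orbit lies in $(\,\overline{100I_0}\,)^4$, \lref{lem-following-the-runners-of-P0-in-J04} gives $\Pi_{34}(\Phi_t^{P_0}(\tau))=\Phi_t^Q(v)\in R\subseteq(50I_0)^2$, and combined with $\Pi_{12}(\Phi_t^{P_0}(\tau))\in I_0^2$ we obtain $\Phi_{[0,t^*]}^{P_0}(\tau)\subseteq I_0^2\times(50I_0)^2\subseteq(50I_0)^4$. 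Thus $\Phi_{t^*}^{P_0}(\tau)$ lies in the \emph{open} set $(100I_0)^4$; if $t^*<s$, continuity yields $\delta>0$ with $\Phi_{[t^*,t^*+\delta]}^{P_0}(\tau)\subseteq(100I_0)^4\subseteq(\,\overline{100I_0}\,)^4$, contradicting maximality of $t^*$. Hence $t^*=s$, so $\Phi_{[0,s]}^{P_0}(\tau)\subseteq(\,\overline{100I_0}\,)^4$.

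Applying \lref{lem-following-the-runners-of-P0-in-J04} with $J=[0,s]$, we get $\Pi_{34}(\Phi_s^{P_0}(\tau))=\Phi_s^Q(v)$. Since $\Phi_s^{P_0}(\tau)\in I_0^4$, we have $\Phi_s^Q(v)\in I_0^2$. Now assume, for contradiction, that $s\ge2$; then $s\in[2,m)$. Applying \cref{cor-reentry-time-to-I02} with $z=-1\in12I_0$ and rewriting $t-z=t+1$, we see that for every $s'\in[2,m]$, $\Phi_{s'}^Q(v)\in[\R^2]\backslash[I_0^2]$. In particular $\Phi_s^Q(v)\notin I_0^2$, contradicting the previous line. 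Therefore $s<2$.

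The only delicate step is the maximality argument confining the orbit to $(\,\overline{100I_0}\,)^4$; everything else is a direct citation. The key subtlety is that we have to bootstrap: a priori we only know the orbit sits in $I_0^2\times\R^2$, but once we confine it momentarily to $(\,\overline{100I_0}\,)^4$, \lref{lem-following-the-runners-of-P0-in-J04} upgrades this to confinement in the much smaller $I_0^2\times(50I_0)^2$, which is what prevents escape and closes the bootstrap.
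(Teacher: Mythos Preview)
Your proof is correct and follows the same overall scheme as the paper: first confine $\Phi_{[0,s]}^{P_0}(\tau)$ to a box small enough that \lref{lem-following-the-runners-of-P0-in-J04} applies, then project via $\Pi_{34}$ and invoke \cref{cor-reentry-time-to-I02}. One small slip: you write ``since $\tau\in I_0^4$'', but $\tau\in B_\circ(I_0)=I_0^3\times\{-1\}$ and $-1\notin I_0$, so $\tau\notin I_0^4$. This is harmless, since what you actually need (and what is true) is $\tau\in I_0^2\times\R^2$.

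The confinement arguments differ in packaging. The paper introduces $C$, the closure of $I_0^2\times R$, and shows $\Phi_{[0,s]}^{P_0}(\tau)\subseteq C$ by an inf-argument that passes through $P_*$ and the $P_*$-invariance of $\R^2\times R$ (\lref{lem-P0-calculations}(viii)). You instead bootstrap directly with $(\,\overline{100I_0}\,)^4$: once the orbit is in that box, \lref{lem-following-the-runners-of-P0-in-J04} already identifies $\Pi_{34}$ with the $Q$-flow, which stays in $R\subseteq(50I_0)^2$, and combined with the $I_0^2$-confinement of $\Pi_{12}$ this puts the orbit in $(50I_0)^4$, closing the bootstrap. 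Your route is slightly cleaner because it avoids unpacking $P_*$ and \lref{lem-P0-calculations}(viii) explicitly; the paper's route has the minor advantage that its confining set $C$ is exactly the closure of where the orbit actually lives, so the bootstrap step is a one-liner once the $P_*$-invariance is cited.
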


\begin{proof}
From the construction of $S$ and $R$ in \secref{sect-racetrack},
we have
$$(4I_0)\,\times\,(12I_0)\quad=\quad S\quad\subseteq\quad R\quad\subseteq\quad(50I_0)^2.$$
Then $I_0\times\{-1\}\subseteq(4I_0)\times(12I_0)\subseteq R$, so
$I_0^3\times\{-1\}\subseteq I_0^2\times R$.
Let $C$ be the closure in $\R^4$ of~$I_0^2\times R$.
Then $B_\circ(I_0)=I_0^3\times\{-1\}\subseteq I_0^2\times R\subseteq C$.
Then $\tau\in B_\circ(I_0)\subseteq C$.
Because $I_0^2\times R\subseteq I_0^2\times(50I_0)^2\subseteq(50I_0)^4$,
we get $C\subseteq(\,\overline{50I_0}\,)^4\subseteq(100I_0)^4$.
Then $C\cup[(100I_0)^4]=(100I_0)^4$.

By assumption, $\tau\in B_\circ(I_0)$ and $\Phi_s^{P_0}(\tau)\in I_0^4$.
Then
$$\Phi_0^{P_0}(\tau)\quad=\quad\tau\quad\in\quad
B_\circ(I_0)\quad\subseteq\quad I_0^2\,\times\,R\quad\subseteq\quad I_0^2\,\times\,\R^2$$
and $\Phi_s^{P_0}(\tau)\in I_0^4\subseteq I_0^2\times\R^2$.
So, by \cref{cor-I0-water-cant-leave-box-and-return},
$\Phi_{[0,s]}^{P_0}(\tau)\subseteq I_0^2\times\R^2$.

{\it Claim:} $\Phi_{[0,s]}^{P_0}(\tau)\subseteq C$.
{\it Proof of claim:} Define
$$A\quad:=\quad\{\,\,t\in[0,s]\,\,|\,\,\Phi_t^{P_0}(\tau)\in C\,\,\}.$$
Assume, for a contradiction, that $A\subsetneq[0,s]$.

Let $t_0:=\inf[0,s]\backslash A$.
Then $[0,t_0)\subseteq A$ and,
\begin{itemize}
\item[$(*)$]for all $\delta>0$, we have $[t_0,t_0+\delta)\not\subseteq A$.
\end{itemize}
Because $\tau\in C$, we see that $0\in A$.
So, since $A$ is closed in $[0,s]$ and since $[0,t_0)\subseteq A$,
we get $[0,t_0]\subseteq A$.
Then $[0,t_0]\subseteq A\subsetneq[0,s]$.
Then $t_0<s$.
As $t_0\in[0,t_0]\subseteq A$,
we conclude that $\Phi_{t_0}^{P_0}(\tau)\in C$.
Then $\Phi_{t_0}^{P_0}(\tau)\in C\subseteq(100I_0)^4$.
By continuity, fix $\delta_1>0$ such that
$\Phi_{[t_0,t_0+\delta_1)}^{P_0}(\tau)\subseteq(100I_0)^4$.
We have $[0,t_0]\subseteq A$, so $\Phi_{[0,t_0]}^{P_0}(\tau)\subseteq C$.
Then $\Phi_{[0,t_0+\delta_1)}^{P_0}(\tau)
=[\Phi_{[0,t_0)}^{P_0}(\tau)]\cup[\Phi_{[t_0,t_0+\delta_1)}^{P_0}(\tau)]
\subseteq C\cup[(100I_0)^4]=(100I_0)^4$.
By \lref{lem-P0-calculations}(i), we have $P_*=P_0$ on $(100I_0)^4$.
Then, by \lref{lem-orbits-agree}, we have
$\Phi_{[0,t_0+\delta_1)}^{P_*}(\tau)=\Phi_{[0,t_0+\delta_1)}^{P_0}(\tau)$.

By \lref{lem-P0-calculations}(viii),
we see that $\Phi_\R^{P_*}(\R^2\times R)=\R^2\times R$.
Also, $\tau\in I_0^2\times R\subseteq\R^2\times R$.
Let $\delta_0:=\min\{\delta_1,s-t_0\}$.
Then $\delta_0>0$ and
$\Phi_{[t_0,t_0+\delta_0)}^{P_0}(\tau)\subseteq\Phi_{[0,t_0+\delta_1)}^{P_0}(\tau)=
\Phi_{[0,t_0+\delta_1)}^{P_*}(\tau)\subseteq
\Phi_\R^{P_*}(\R^2\times R)=\R^2\times R$.

Recall that $\Phi_{[0,s]}^{P_0}(\tau)\subseteq I_0^2\times\R^2$.
Also, $[t_0,t_0+\delta_0)\subseteq[0,s]$.
Then $\Phi_{[t_0,t_0+\delta_0)}^{P_0}(\tau)\subseteq\Phi_{[0,s]}^{P_0}(\tau)\subseteq I_0^2\times\R^2$.
Then
$$\Phi_{[t_0,t_0+\delta_0)}^{P_0}(\tau)\,\,\,\subseteq\,\,\,
[I_0^2\times\R^2]\,\cap\,[\R^2\times R]\,\,\,=\,\,\,I_0^2\,\times\,R\,\,\,\subseteq\,\,\,C.$$
Then $[t_0,t_0+\delta_0)\subseteq A$, contradicting $(*)$.
{\it End of proof of claim.}

By assumption, $\tau\in B_\circ(I_0)$ and $\Phi_s^{P_0}(\tau)\in I_0^4$.
Fix $u,v\in\R^2$ such that $\tau=(u,v)\in\R^2\times\R^2=\R^4$.
By the claim, $\Phi_{[0,s]}^{P_0}(\tau)\subseteq C$.
Then $\Phi_{[0,s]}^{P_0}(\tau)\subseteq C\subseteq(100I_0\,)^4$.
Then, by \lref{lem-following-the-runners-of-P0-in-J04},
we have $\Pi_{34}(\Phi_s^{P_0}(\tau))=\Phi_s^Q(v)$.
Then $\Phi_s^Q(v)=\Pi_{34}(\Phi_s^{P_0}(\tau))\in\Pi_{34}(I_0^4)=I_0^2$.
Also, $v=\Pi_{34}(\tau)\in\Pi_{34}(B_\circ(I_0))=\Pi_{34}(I_0^3\times\{-1\})=I_0\times\{-1\}$.
So let $z:=-1$ and choose $y\in I_0$ such that $v=(y,z)\in\R\times\R=\R^2$.

Let $q:=s-1$.
Then $\Phi_{q-z}^Q(v)=\Phi_{q+1}^Q(v)=\Phi_s^Q(v)\in I_0^2$.
By \cref{cor-reentry-time-to-I02}, we know,
for all $t\in[1,m-1]$, that $\Phi_{t-z}^Q(v)\notin I_0^2$.
Then $q\notin[1,m-1]$,
so $s\notin[2,m]$.
So, since $s\in(0,m)$, we get $s<2$.
\end{proof}

\begin{lem}\wrlab{lem-P0K0-in-scrp}
We have $(P_0,K_0)\in\scrp_{I_0}$.
\end{lem}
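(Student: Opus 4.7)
The plan is to unpack the definition of $\scrp_{I_0}$ and then assemble the pieces that have already been proved in \secref{sect-results-scrp}. Write $I_0 = (-1,1)$, so $a_{I_0} = 1$, and $K_0 = 400I_0$. By \dref{sect-notation}, membership in $\scrp_{I_0}$ requires four things: (1) $(P_0,K_0) \in \scrd$; (2) $4I_0 \subseteq K_0$; (3) $P_0 \in \scrc_{I_0}$; and (4) there is an integer $m > 2$ such that $\Phi_m^{P_0}$ agrees with $\Id_4$ to all orders at $\xi_{I_0}$ and, for every $\tau \in B_\circ(I_0)$ and every $t \in (0,m)$, one has $\Phi_t^{P_0}(\tau) \in I_0^4 \Leftrightarrow t < 2$.

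I would dispose of (1)--(3) first, since these are essentially bookkeeping. The construction in \secref{sect-results-scrp} gives $P_0 : \R^4 \to \R^4$ as $C^\infty$ with image in $\barI_0^4$, so $P_0 \in \scrc$; since $\gamma = 0$ on $\R^4 \backslash K_0^4$, we have $P_0 = (0)P_+ + (1)V_0 = V_0$ there, yielding $(P_0,K_0) \in \scrd$. Inclusion $4I_0 \subseteq 400I_0 = K_0$ is obvious, and $P_0 \in \scrc_{I_0}$ (i.e., $P_0 = V_0$ on $(\overline{3I_0})^4$) is exactly \lref{lem-P0-calculations}(ii). For (4), take $m$ to be the integer from \secref{sect-racetrack}; then $m > 24 > 2 = 2a_{I_0}$. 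The ``all orders'' statement at $\xi_{I_0}$ is precisely \lref{lem-P0-per}.

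The biconditional in the second bullet is the only thing left. For the ``if'' direction, let $\tau = (w,x,y,-1) \in B_\circ(I_0)$ with $w,x,y \in I_0$. On $[0,2]$, the $V_0$-flow gives $\Phi_t^{V_0}(\tau) = (w,x,y, t-1) \in I_0^3 \times [-1,1] \subseteq (\overline{3I_0})^4$. Since $P_0 = V_0$ on $(\overline{3I_0})^4$ by \lref{lem-P0-calculations}(ii), \cref{cor-vect-agree-implies-flow-agree} gives $\Phi_t^{P_0}(\tau) = \Phi_t^{V_0}(\tau)$ for all such $t$, and the point $(w,x,y,t-1)$ lies in $I_0^4$ whenever $t \in (0,2)$. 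For the ``only if'' direction, the contrapositive is exactly \lref{lem-no-early-return}: if $\tau \in B_\circ(I_0)$, $s \in (0,m)$, and $\Phi_s^{P_0}(\tau) \in I_0^4$, then $s < 2$.

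The main obstacle was never in this final assembly but in the preceding \lref{lem-no-early-return} and \lref{lem-P0-per}, which together encode the racetrack mechanism's period-$m$ behavior and its rapid exit from the small cube $I_0^4$. Once those are in hand, the present lemma is essentially a verification against the definition.
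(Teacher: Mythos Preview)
Your proof is correct and follows essentially the same route as the paper's: verify $(P_0,K_0)\in\scrd_{I_0}^\times$ via the construction and \lref{lem-P0-calculations}(ii), invoke \lref{lem-P0-per} for the all-orders periodicity, use \lref{lem-no-early-return} for the forward implication, and check the reverse implication by comparing the $P_0$- and $V_0$-flows on $(\overline{3I_0})^4$. One small terminology slip: what you quote from \lref{lem-no-early-return} \emph{is} the ``only if'' direction itself, not its contrapositive; and the paper cites \lref{lem-orbits-agree} rather than \cref{cor-vect-agree-implies-flow-agree} for the flow comparison, but either works.
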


\begin{proof}
Recall that $P_0:\R^4\to\R^4$ is $C^\infty$ and $P_0(\R^4)\subseteq\barI_0^4$.
Then $P_0\in\scrc$.
On $(\R^4)\backslash(K_0^4)$,
we have $\gamma=0$, so $P_0=\gamma P_++(1-\gamma)V_0=V_0$.
Then $(P_0,K_0)\in\scrd$.
By \lref{lem-P0-calculations}(ii),
$P_0=V_0$ on $(\,\overline{3I_0}\,)^4$.
Then $P_0\in\scrc_{I_0}$.
So, since $4I_0\subseteq400I_0=K_0$,
we get $(P_0,K_0)\in\scrd_{I_0}^\times$.
As $m>24$, we get $m>2=2a_{I_0}$.
By \lref{lem-P0-per}, $\Phi_m^{P_0}$ agrees with $\Id_4$ to all orders at~$\xi_{I_0}$.
It remains to show, for all $\tau\in B_\circ(I_0)$, for all $t\in(0,m)$, that
$$[\,\Phi_t^{P_0}(\tau)\in I_0^4\,]\qquad\Leftrightarrow\qquad[\,t<2\,].$$
\lref{lem-no-early-return} yields $\Rightarrow$,
and it remains to prove $\Leftarrow$.
Fix $\tau\in B_\circ(I_0)$.
We wish to prove that $\Phi_{(0,2)}^{P_0}(\tau)\subseteq I_0^4$.

As $\tau\in B_\circ(I_0)=I_0^3\times\{-1\}$, we get
$\Phi_{(0,2)}^{V_0}(\tau)\subseteq I_0^3\times(-1,1)=I_0^4$ and
$$\Phi_{[0,2]}^{V_0}(\tau)\quad\subseteq\quad I_0^3\,\times\,[-1,1]\quad\subseteq\quad
\barI_0^4\quad\subseteq\quad(\,\overline{3I_0}\,)^4.$$
So, since $P_0=V_0$ on $(\,\overline{3I_0}\,)^4$,
by \lref{lem-orbits-agree}, for all $t\in[0,2]$,
we have $\Phi_t^{P_0}(\tau)=\Phi_t^{V_0}(\tau)$.
Then $\Phi_{(0,2)}^{P_0}(\tau)=\Phi_{(0,2)}^{V_0}(\tau)\subseteq I_0^4$, as desired.
\end{proof}

\section{Results about the exchange operation $\scrx_I(P,V)$\wrlab{sect-exchange-results}}

Recall, from \secref{sect-dtuflow},
the definitions of $DF_I^V$, $UF_I^V$ and $TF_I^V$.
Recall, from \secref{sect-gettoD},
the definitions of $\scrd_+$, $\scrd^\#$, $\scrd_+^\#$ and $\scrd_*$.
The definitions of~ $\scrd_I^\times$, $\scrx_I(P,v)$ and $\scrp_I$
are all found in \secref{sect-notation}.

The following fact will be used repeatedly throughout this section:
If $(V,I)\in\scrd$, if $(P,K)\in\scrd_I^\times$ and if $X=\scrx_I(P,V)$, then
\begin{itemize}
\item$4I\subseteq K$,
\item$P=V_0$ on $(\R^4)\backslash(K^4)$,
\item$X=P$ on $(\R^4)\backslash(I^4)$,
\item$V=V_0$ on $(\R^4)\backslash(I^4)$,
\item$P=V_0$ on $(\,\overline{3I}\,)^4$ \qquad and
\item$X=V$ on $(\,\overline{3I}\,)^4$,
\end{itemize}
and, consequently,
\begin{itemize}
\item$X=P=V=V_0$ on $(\R^4)\backslash(K^4)$ \qquad and
\item$X=P=V=V_0$ on $[(\,\overline{3I}\,)^4]\backslash[I^4]$.
\end{itemize}

\begin{lem}\wrlab{lem-exch-gives-mod}
Let $(V,I)\in\scrd$.
Let $(P,K)\in\scrd_I^\times$.
Let $X:=\scrx_I(P,V)$.
Then $(X,K)\in\scrm(V,I)$.
\end{lem}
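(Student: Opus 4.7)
The plan is to verify the three conditions defining membership in $\scrm(V,I)$ for the pair $(X,K)$: namely, that $(X,K) \in \scrd$, that $a_I < a_K$, and that $X = V$ on $\barI^4$.

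First I would check that $X$ is well-defined and $C^\infty$. The two pieces $(2I)^4$ and $\R^4 \backslash \barI^4$ in the definition of $\scrx_I(P,V)$ cover $\R^4$, because $\barI \subseteq 2I$ (which follows from $a_I < 2a_I$). On their overlap $(2I)^4 \backslash \barI^4$, both prescriptions agree: indeed, $(2I)^4 \backslash \barI^4 \subseteq (\overline{3I}\,)^4$, where $P = V_0$ since $P \in \scrc_I$; and $(2I)^4 \backslash \barI^4 \subseteq \R^4 \backslash I^4$, where $V = V_0$ since $(V,I) \in \scrd$. So both pieces equal $V_0$ on the overlap, giving a well-defined $C^\infty$ map $X:\R^4 \to \R^4$. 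Since $V(\R^4), P(\R^4) \subseteq \barI_0^4$, also $X(\R^4) \subseteq \barI_0^4$, so $X \in \scrc$.

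Next I would verify that $(X,K) \in \scrd$, i.e., that $X = V_0$ on $(\R^4)\backslash(K^4)$. From $4I \subseteq K$ we get $\barI \subseteq 4I \subseteq K$, so $(\R^4)\backslash(K^4) \subseteq (\R^4)\backslash(\barI^4)$. On this complement $X$ is defined to be $P$, and because $(P,K) \in \scrd$, we have $P = V_0$ on $(\R^4)\backslash(K^4)$. Thus $X = V_0$ there.

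Finally, the conditions for modification: from $4I \subseteq K$ we have $4a_I \le a_K$, so $a_I < 4a_I \le a_K$, giving $a_I < a_K$. And $\barI^4 \subseteq (2I)^4$, so on $\barI^4$ the definition yields $X = V$ directly. These are the three bullets required by the definition of $\scrm(V,I)$, so $(X,K) \in \scrm(V,I)$. There is no real obstacle here; the only point requiring a bit of care is the consistency check on the overlap of the two defining regions, which is exactly where the hypothesis $P \in \scrc_I$ (forcing $P = V_0$ on a collar around $\barI^4$) is used.
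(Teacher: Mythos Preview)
Your proof is correct and follows essentially the same approach as the paper's: verify $(X,K)\in\scrd$ via $X=P=V_0$ on $(\R^4)\backslash(K^4)$, check $a_I<4a_I\le a_K$, and observe $X=V$ on $\barI^4\subseteq(2I)^4$. You add an explicit well-definedness check for $X$ on the overlap $(2I)^4\backslash(\,\barI^4\,)$, which the paper treats as part of the definition of $\scrx_I(P,V)$ itself (via the surrounding discussion), but your argument for it is correct.
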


\begin{proof}
On $(\R^4)\backslash(K^4)$, we have $X=V_0$.
Then $(X,K)\in\scrd$.
We have $4I\subseteq K$, so $a_{4I}\le a_K$.
Then $a_I<4a_I=a_{4I}\le a_K$.
Also, we have $X=V$ on~$\barI^4$.
Then $(X,K)\in\scrm(V,I)$, as desired.
\end{proof}

\begin{lem}\wrlab{lem-make-por-per}
Let $(V,I)\in\scrd_+$
and let $(P,K)\in\scrd_I^\times$.
Assume that both~$V$ and $P$ are porous.
Let $X:=\scrx_I(P,V)$.
Then $X$ is porous.
\end{lem}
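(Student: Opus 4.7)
Since $(X,K)\in\scrd$ by \lref{lem-exch-gives-mod}, \cref{cor-U-open}(i) gives that $\scru(X)$ is open in $\R^4$, so it suffices to prove $\scru(X)$ is dense.

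The geometric picture I would exploit is that $X=V$ on $(2I)^4$ (hence on $\barI^4$) and $X=P$ on $\R^4\setminus\barI^4$, while both $V$ and $P$ reduce to $V_0$ on the annular region $(\overline{3I})^4\setminus I^4$. Because $(V,I)\in\scrd_+$, \lref{lem-UF-SU-agreement-orbit-agreement}(b) says any $V$-traversal of $\barI^4$ starting at $\tau\in\scru_B^\circ(V,I)$ exits at $SU_I(\tau)\in T_\circ(I)$, identical to what a $V_0$-traversal --- equivalently, a $P$-traversal, since $P=V_0$ on $(\overline{3I})^4$ --- would produce from $\tau$. Consequently, outside $\barI^4$ the $X$-orbit and the $P$-orbit of any common starting point trace the same locus, differing only in the internal timing of each box crossing. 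Moreover, by \lref{lem-V-V0-agreement-for-D}(ii)--(iii) applied to $(P,K)$, once $|\Pi_4|$ of a $P$-orbit exceeds $a_K$ the orbit becomes a $V_0$ straight line and escapes monotonically, so each $P$-orbit has only finitely many crossings of $\barI^4$ in each time direction.

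My plan is then: given any nonempty open $U\subseteq\R^4$, pick $\sigma\in U\cap\scru(P)$ (nonempty by porousness of $P$) such that every crossing of the forward and backward $P$-orbit of $\sigma$ with $\barI^4$ enters through a point in the open set $\scru_B^\circ(V,I)\subseteq B_\circ(I)$. For such $\sigma$, every box traversal of the $X$-orbit completes successfully by $\scrd_+$, and the $X$-orbit inherits the $\Pi_4$-surjectivity of the $P$-orbit, yielding $\sigma\in\scru(X)$.

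The main obstacle is showing such $\sigma$ are dense in $U$. Since the crossings are finite in number and depend continuously on $\sigma$, the set of good $\sigma$ is open; to see it is dense one wants $N_V:=B(I)\setminus\scru_B(V,I)$ to be nowhere dense in $B(I)$, which should follow from porousness of $V$ by a $V_0$-thickening argument (applying porousness to $\Phi_{(0,\epsilon)}^{V_0}(W)$ for any relatively open $W\subseteq B(I)$) combined with the $\scrd_+$-based identification of entry/exit horizontal coordinates. \lref{lem-nw-dense-saturation} does not apply directly to the saturation $\Phi_\R^P(N_V)$ because the $P$-flow is transverse rather than tangential to $B(I)$; a bespoke Baire-category argument exploiting finiteness of crossings and continuous dependence of crossing points on $\sigma$ is what is needed to close out the density argument.
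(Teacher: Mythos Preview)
Your geometric picture is correct, and the strategy of matching the $X$-orbit to the $P$-orbit outside $\barI^4$ (using $\scrd_+$ to ensure each box traversal exits at $SU_I$ of its entry point) is exactly what underlies the paper's argument. But you yourself identify the real obstacle --- showing that the set of $\sigma$ whose $P$-orbit only crosses $B(I)$ through $\scru_B^\circ(V,I)$ is dense --- and you do not resolve it. The ``bespoke Baire-category argument'' you gesture at is precisely the missing idea, and it is not a routine step: the number of crossings is not locally bounded, and continuous dependence of crossing points on $\sigma$ does not by itself prevent the bad set from being comeager.

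The paper avoids tracking crossings altogether by a cleaner use of \lref{lem-locally-invar-criterion} and \lref{lem-nw-dense-saturation}. It sets $Z_1:=[(\R^4)\setminus\scru(P)]\cap[(\R^4)\setminus\barI^4]$ and $Z_2:=[(\R^4)\setminus\scru(V)]\cap(2I)^4$; since $X=P$ on the open set $(\R^4)\setminus\barI^4$ and $X=V$ on the open set $(2I)^4$, \lref{lem-locally-invar-criterion} makes $Z_1,Z_2$ locally $X$-invariant, and \lref{lem-nw-dense-saturation} (applied to the $X$-flow, not the $P$-flow) gives that $Z':=\Phi_\R^X(Z_1)\cup\Phi_\R^X(Z_2)$ is meager and $X$-invariant. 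This is the step you were looking for: meagerness of the $X$-saturation comes for free from local invariance, with no finiteness of crossings required. The remainder of the proof shows $(\R^4)\setminus Z'\subseteq\scru(X)$ by a contradiction argument that, at bottom, implements your orbit-matching picture: one follows $\Phi_{[0,\infty)}^P(\rho_0)$ for $\rho_0\notin\barI^4$, and the $\scrd_+$ hypothesis is invoked at the first moment the $P$-orbit re-enters $\barI^4$ (through some $\rho'\in B_\circ(I)$, which lies in $\scru_B(V,I)$ because $\rho'\notin Z'$) to thread the $X$-orbit across the box to $SU_I(\rho')$.
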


\begin{proof}
Let
\begin{itemize}
\item$S_1:=(\R^4)\backslash(\scru(P))$, \quad $U_1:=(\R^4)\backslash(\,\barI^4\,)$\qquad and
\item$S_2:=(\R^4)\backslash(\scru(V))$, \quad $U_2:=(2I)^4$.
\end{itemize}
Then
\begin{itemize}
\item(\,$S_1$ is $P$-invariant\,) \quad and \quad (\,$X=P$ on $U_1$\,) \qquad and
\item(\,$S_2$ is $V$-invariant\,) \quad and \quad (\,$X=V$ on $U_2$\,).
\end{itemize}
Then, by \lref{lem-locally-invar-criterion},
$Z_1:=S_1\cap U_1$ and $Z_2:=S_2\cap U_2$
are both locally $X$-invariant.
Since both $V$ and $P$ are porous, by \cref{cor-porous-dense-comeager},
we see that both $S_1$ and $S_2$ are meager in $\R^4$.
Then both $Z_1$ and $Z_2$ are meager in~$\R^4$.
Then, by \lref{lem-nw-dense-saturation},
both $Z'_1:=\Phi_\R^X(Z_1)$ and $Z'_2:=\Phi_\R^X(Z_2)$ are meager in~$\R^4$.
Then $Z':=Z'_1\cup Z'_2$ is $X$-invariant and meager in~$\R^4$.

By \cref{cor-porous-dense-comeager},
we wish to show that $\scru(X)$ is comeager in $\R^4$.
It therefore suffices to prove that $(\R^4)\backslash Z'\subseteq\scru(X)$.
Fix $\rho\in\R^4$, and assume that $\rho\notin Z'$.
We wish to show that $\rho\in\scru(X)$.

{\it Claim 1:}
$\Phi_\R^X(\rho)\not\subseteq\barI^4$.
{\it Proof of Claim 1:}
Assume, for a contradiction, that $\Phi_\R^X(\rho)\subseteq\barI^4$.
Then, since $V=X$ on $\barI^4$, by \lref{lem-orbits-agree},
we get $\Phi_\R^V(\rho)=\Phi_\R^X(\rho)$.
Then $\Phi_\R^V(\rho)\subseteq\barI^4$,
so $\Pi_4(\Phi_\R^V(\rho))\subseteq\Pi_4(\,\barI^4\,)=\barI$.

We have $\rho\in\Phi_\R^X(\rho)\subseteq\barI^4\subseteq(2I)^4=U_2$.
Because $\rho\notin Z'$, we know that $\rho\notin Z'_2$,
so $\rho\notin Z_2=S_2\cap U_2$.
Then $\rho\notin S_2$.
That is, $\rho\in\scru(V)$.
Then $\R=\Pi_4(\Phi_\R^V(\rho))\subseteq\barI$, contradiction.
{\it End of proof of Claim 1.}

By Claim 1, fix $\rho_0\in\Phi_\R^X(\rho)$ such that $\rho_0\notin\barI^4$.
Since $Z'$ is $X$-invariant, since $\rho\notin Z'$ and since $\rho_0\in\Phi_\R^X(\rho)$,
we see that $\rho_0\notin Z'$.
Since $\scru(X)$ is $X$-invariant,
it suffices to show that $\rho_0\in\scru(X)$.

On $(\R^4)\backslash(K^4)$, we have $X=V_0$.
Therefore $(X,K)\in\scrd$.
Then, by \lref{lem-undeterred-criterion}(b\,$\Rightarrow$a),
it suffices to show that
$$(-\infty,-a_K)\,\cap\,[\Pi_4(\Phi_\R^X(\rho_0))]
\,\,\ne\,\,\emptyset\,\,\ne\,\,(a_K,\infty)\,\cap\,[\Pi_4(\Phi_\R^X(\rho_0))].$$
Because $\Pi_4(I^4)=I\subseteq4I\subseteq K=(-a_K,a_K)$, we get
$$(-\infty,-a_K)\,\cap\,[\Pi_4(I^4)]\quad=\quad\emptyset\quad=\quad
(a_K,\infty)\,\cap\,[\Pi_4(I^4)].$$
Let $R:=(\Phi_\R^X(\rho_0))\cup(I^4)$.
Then $\Pi_4(R)=[\Pi_4(\Phi_\R^X(\rho_0))]\cup[\Pi_4(I^4)]$, so
\begin{eqnarray*}
(-\infty,-a_K)\,\cap\,[\Pi_4(R)]&=&(-\infty,-a_K)\,\cap\,[\Pi_4(\Phi_\R^X(\rho_0))]\qquad\hbox{and}\\
(a_K,\infty)\,\cap\,[\Pi_4(R)]&=&(a_K,\infty)\,\cap\,[\Pi_4(\Phi_\R^X(\rho_0))].
\end{eqnarray*}
It therefore suffices to prove that
$$(-\infty,-a_K)\,\cap\,[\Pi_4(R)]
\quad\ne\quad\emptyset\quad\ne\quad(a_K,\infty)\,\cap\,[\Pi_4(R)].$$

We have $\rho_0\notin Z'$, so $\rho_0\notin Z'_1$, so $\rho_0\notin Z_1=S_1\cap U_1$.
So, since $\rho_0\in(\R^4)\backslash(\,\barI^4\,)=U_1$, we see that $\rho_0\notin S_1$.
That is, $\rho_0\in\scru(P)$.
Then $\Pi_4(\Phi_\R^P(\rho_0))=\R$, and it follows that
$$(-\infty,-a_K)\,\cap\,[\Pi_4(\Phi_\R^P(\rho_0))]
\,\,\ne\,\,\emptyset\,\,\ne\,\,(a_K,\infty)\,\cap\,[\Pi_4(\Phi_\R^P(\rho_0))].$$
It therefore suffices to show that $\Phi_\R^P(\rho_0)\subseteq R$.
We will only prove that $\Phi_{[0,\infty)}^P(\rho_0)\subseteq R$;
a similar argument proves that $\Phi_{(-\infty,0]}^P(\rho_0)\subseteq R$.
We define $Q:=\{t\ge0\,|\,\Phi_t^P(\rho_0)\in R\}$.
We wish to show that $Q=[0,\infty)$.
Assume, for a contradiction, that $Q\ne[0,\infty)$.

Since $Q\subsetneq[0,\infty)$,
we get $[0,\infty)\backslash Q\ne\emptyset$.
Let $t_1:=\inf\,[0,\infty)\backslash Q$.

{\it Claim 2:} $t_1>0$.
{\it Proof of Claim 2:}
Since $\rho_0\in(\R^4)\backslash(\,\barI^4\,)$,
by continuity of $\Phi_\bullet^P(\rho_0)$,
fix $\gamma>0$ such that
$\Phi_{[0,\gamma)}^P(\rho_0)\subseteq(\R^4)\backslash(I^4)$.
Then, because $X=P$ on~$(\R^4)\backslash(I^4)$,
by \lref{lem-orbits-agree},
$\Phi_{[0,\gamma)}^X(\rho_0)=\Phi_{[0,\gamma)}^P(\rho_0)$.

Then $\Phi_{[0,\gamma)}^P(\rho_0)\subseteq\Phi_\R^X(\rho_0)\subseteq R$.
It follows that $[0,\gamma)\subseteq Q$.
Therefore $t_1=\inf\,[0,\infty)\backslash Q\ge\gamma>0$.
{\it End of proof of Claim 2.}

Because $\inf[0,\infty)\backslash Q=t_1$,
we get $[0,t_1)\subseteq Q$ and, for all $\delta>0$, we have $[t_1,t_1+\delta)\not\subseteq Q$.
Because $[0,t_1)\subseteq Q$, it follows that
\begin{itemize}
\item[$(+)$]$\Phi_{[0,t_1)}^P(\rho_0)\,\,\subseteq\,\,R\,\,=\,\,(\Phi_\R^X(\rho_0))\,\cup\,(I^4)$.
\end{itemize}
Let $\rho_1:=\Phi_{t_1}^P(\rho_0)$.
For all $\delta>0$, we have $[t_1,t_1+\delta)\not\subseteq Q$,
from which it follows that $\Phi_{[t_1,t_1+\delta)}^P(\rho_0)\not\subseteq R$.
That is,
\begin{itemize}
\item[$(*)$] for all $\delta>0$, \qquad $\Phi_{[0,\delta)}^P(\rho_1)\,\,\not\subseteq\,\,R\,\,=\,\,(\Phi_\R^X(\rho_0))\,\cup\,(I^4)$.
\end{itemize}

{\it Claim 3:} $\rho_1\notin I^4$.
{\it Proof of Claim 3:}
Suppose that $\rho_1\in I^4$.
We wish to obtain a contradiction.

By continuity of $\Phi_\bullet^P(\rho_1)$,
fix $\delta>0$ such that
$\Phi_{[0,\delta)}^P(\rho_1)\subseteq I^4$.
Then $\Phi_{[0,\delta)}^P(\rho_1)\subseteq I^4\subseteq R$.
This contradicts $(*)$.
{\it End of proof of Claim 3.}

{\it Claim 4:} Let $N$ be an interval in $\R$
such that $\inf N<0$ and $0\in N$.
Assume that $\Phi_N^P(\rho_1)\subseteq(\R^4)\backslash(I^4)$.
Then $\Phi_N^P(\rho_1)\subseteq\Phi_\R^X(\rho_0)$.
{\it Proof of Claim 4:}
Because $X=P$ on $(\R^4)\backslash(I^4)$,
it follows, from \lref{lem-orbits-agree},
that, for all $t\in N$,
we have $\Phi_t^X(\rho_1)=\Phi_t^P(\rho_1)$.
Then $\Phi_N^X(\rho_1)=\Phi_N^P(\rho_1)$.
By Claim 2, $t_1>0$.
Choose $t_*\in(0,t_1]$ such that $-t_*\in N$.
Let $\rho_*:=\Phi_{-t_*}^X(\rho_1)=\Phi_{-t_*}^P(\rho_1)$.
Then $\Phi_\R^X(\rho_*)=\Phi_\R^X(\rho_1)$.
It follows that $\Phi_N^P(\rho_1)=\Phi_N^X(\rho_1)\subseteq\Phi_\R^X(\rho_1)=\Phi_\R^X(\rho_*)$.

We have $t_1-t_*\in[0,t_1)$.
So, by $(+)$, we get $\Phi_{t_1-t_*}^P(\rho_0)\in R$.
Since $\rho_1=\Phi_{t_1}^P(\rho_0)$,
we see that $\Phi_{-t_*}^P(\rho_1)=\Phi_{t_1-t_*}^P(\rho_0)$.
Then
$$\rho_*\,\,\,=\,\,\,\Phi_{-t_*}^P(\rho_1)\,\,\,=\,\,\,
\Phi_{t_1-t_*}^P(\rho_0)\,\,\,\in\,\,\,R\,\,\,=\,\,\,(\Phi_\R^X(\rho_0))\,\cup\,(I^4).$$
So, since $\rho_*=\Phi_{-t_*}^P(\rho_1)\in\Phi_N^P(\rho_1)\subseteq(\R^4)\backslash(I^4)$,
we see that $\rho_*\in\Phi_\R^X(\rho_0)$.
Then $\Phi_\R^X(\rho_*)=\Phi_\R^X(\rho_0)$.
Then $\Phi_N^P(\rho_1)\subseteq\Phi_\R^X(\rho_*)=\Phi_\R^X(\rho_0)$.
{\it End of proof of Claim 4.}

{\it Claim 5:} $\rho_1\in\barI^4$.
{\it Proof of Claim 5:}
Suppose that $\rho_1\notin\barI^4$.
We wish to obtain a contradiction.

By continuity of $\Phi_\bullet^P(\rho_1)$,
fix $\delta>0$ such that $\Phi_{(-\delta,\delta)}^P(\rho_1)\subseteq(\R^4)\backslash(I^4)$.
Then, by Claim 4, we have
$\Phi_{(-\delta,\delta)}^P(\rho_1)\subseteq\Phi_\R^X(\rho_0)$.
Then
$$\Phi_{[0,\delta)}^P(\rho_1)\quad\subseteq\quad\Phi_{(-\delta,\delta)}^P(\rho_1)\quad\subseteq\quad
\Phi_\R^X(\rho_0)\quad\subseteq\quad R.$$
This contradicts $(*)$.
{\it End of proof of Claim 5.}

{\it Claim 6:} $\rho_1\notin B_\circ(I)$.
{\it Proof of Claim 6:}
Assume, for a contradiction, that $\rho_1\in B_\circ(I)$.
Then, by \lref{lem-omnibus-V0}(v), $\Phi_{[-1,1]}^{V_0}(\rho_1)\subseteq(\,\overline{3I}\,)^4$.
Since $P=V_0$ on $(\,\overline{3I}\,)^4$,
it follows, from \lref{lem-orbits-agree},
that, for all $t\in[-1,1]$,
we have $\Phi_t^P(\rho_1)=\Phi_t^{V_0}(\rho_1)$.

Because $\rho_1\in B_\circ(I)$,
by \lref{lem-omnibus-V0}(iv),
$\Phi_{[-1,0]}^{V_0}(\rho_1)\subseteq[(\,\overline{3I}\,)^4]\backslash[I^4]$.
Then $\Phi_{[-1,0]}^P(\rho_1)=\Phi_{[-1,0]}^{V_0}(\rho_1)\subseteq[(\,\overline{3I}\,)^4]\backslash[I^4]
\subseteq(\R^4)\backslash(I^4)$.
Then, by Claim 4, we get
$\Phi_{[-1,0]}^P(\rho_1)\subseteq\Phi_\R^X(\rho_0)$.
Then $\Phi_{[-1,0]}^P(\rho_1)\subseteq\Phi_\R^X(\rho_0)\subseteq R$.

Because $\rho_1\in B_\circ(I)$,
by \lref{lem-omnibus-V0}(i), $\Phi_{(0,1]}^{V_0}(\rho_1)\subseteq I^4$.
Then
$$\Phi_{(0,1]}^P(\rho_1)\quad=\quad\Phi_{(0,1]}^{V_0}(\rho_1)\quad\subseteq\quad I^4\quad\subseteq\quad R.$$

Then $\Phi_{[0,1)}^P(\rho_1)\subseteq\Phi_{[-1,1]}^P(\rho_1)=(\Phi_{[-1,0]}^P(\rho_1))\cup(\Phi_{(0,1]}^P(\rho_1))\subseteq R$.
This contradicts $(*)$.
{\it End of proof of Claim 6.}

{\it Claim 7:} $\rho_1\in T_\circ(I)$.
{\it Proof of Claim 7:}
By Claim 5 and Claim 3, we have $\rho_1\in(\,\barI^4\,)\backslash(I^4)$.
Let $S:=(\,\barI^3\,)\backslash(I^3)$.
Then
$$\rho_1\quad\in\quad(\,\barI^4\,)\backslash(I^4)\quad=\quad(B_\circ(I))\,\cup\,(S\times\barI\,)\,\cup\,(T_\circ(I)).$$
So, by Claim 6, we have
$\rho_1\in(S\times\barI\,)\cup(T_\circ(I))$,
and it suffices to show that $\rho_1\notin S\times\barI$.
Assume, for a contradiction, that $\rho_1\in S\times\barI$.

By \lref{lem-omnibus-V0}(vi),
$\Phi_{[-1,1]}^{V_0}(\rho_1)\subseteq[(\,\overline{3I}\,)^4]\backslash[I^4]\subseteq(\,\overline{3I}\,)^4$.
Then, since $P=V_0$ on~$(\,\overline{3I}\,)^4$,
by \lref{lem-orbits-agree},
$\Phi_{[-1,1]}^P(\rho_1)=\Phi_{[-1,1]}^{V_0}(\rho_1)$.
Then $\Phi_{[-1,1]}^P(\rho_1)=\Phi_{[-1,1]}^{V_0}(\rho_1)\subseteq[(\,\overline{3I}\,)^4]\backslash[I^4]\subseteq(\R^4)\backslash(I^4)$.
Then, by Claim~4, $\Phi_{[-1,1]}^P(\rho_1)\subseteq\Phi_\R^X(\rho_0)$.
Then $\Phi_{[0,1)}^P(\rho_1)\subseteq\Phi_{[-1,1]}^P(\rho_1)\subseteq\Phi_\R^X(\rho_0)\subseteq R$.
This contradicts $(*)$.
{\it End of proof of Claim 7.}

By Claim 7 and \lref{lem-omnibus-V0}(ii),
we see that $\Phi_{(-2a_I,0)}^{V_0}(\rho_1)\subseteq I^4$.
So, since $P=V_0$ on $I^4$,
it follows, from \cref{cor-vect-agree-implies-flow-agree}, that
\begin{itemize}
\item[$(**)$]for all $t\in[-2a_I,0]$, we have $\Phi_t^P(\rho_1)=\Phi_t^{V_0}(\rho_1)$.
\end{itemize}
Then, for all $t\in(-2a_I,0)$,
$\Phi_t^P(\rho_1)=\Phi_t^{V_0}(\rho_1)\in \Phi_{(-2a_I,0)}^{V_0}(\rho_1)\subseteq I^4\subseteq\barI^4$.
Also, $\Phi_{-t_1}^P(\rho_1)=\rho_0\notin\barI^4$.
Then $-t_1\notin(-2a_I,0)$, {\it i.e.}, $t_1\notin(0,2a_I)$.
So, by Claim 2, it follows that $t_1\ge2a_I$.
Then $0\le t_1-2a_I<t_1$.

By Claim 7, $\rho_1\in T_\circ(I)$.
Let $\rho':=\Phi_{-2a_I}^{V_0}(\rho_1)$.
By \lref{lem-drop-from-top},
we have $\rho'\in B_\circ(I)$ and $\rho_1=SU_I(\rho')$.
By $(**)$, $\Phi_{-2a_I}^P(\rho_1)=\Phi_{-2a_I}^{V_0}(\rho_1)$.
Then
$$\rho'\,\,=\,\,\Phi_{-2a_I}^{V_0}(\rho_1)\,\,=\,\,\Phi_{-2a_I}^P(\rho_1)\,\,=\,\,
\Phi_{-2a_I}^P(\Phi_{t_1}^P(\rho_0))\,\,=\,\,\Phi_{t_1-2a_I}^P(\rho_0).$$
So, since $t_1-2a_I\in[0,t_1)$, by $(+)$,
we see that $\rho'\in(\Phi_\R^X(\rho_0))\cup(I^4)$.
Since $\rho'\in B_\circ(I)$ and $(B_\circ(I))\cap(I^4)=\emptyset$,
we have $\rho'\notin I^4$.
Then $\rho'\in\Phi_\R^X(\rho_0)$.
Then, because $Z'$ is $X$-invariant and $\rho_0\notin Z'$,
we see that $\rho'\notin Z'$.
Then $\rho'\notin Z'_2$, so $\rho'\notin Z_2=S_2\cap U_2$.
So, since $\rho'\in B_\circ(I)\subseteq\barI^4\subseteq(2I)^4=U_2$,
we conclude that $\rho'\notin S_2$.
That is, $\rho'\in\scru(V)$.
Then
$$\rho'\,\,\in\,\,(B_\circ(I))\,\,\cap\,\,(\scru(V))\,\,\subseteq\,\,
(B(I))\cap(\scru(V))\,\,=\,\,\scru_B(V,I).$$
So, since $(V,I)\in\scrd_+$, we get $SU_I(\rho')=UF_I^V(\rho')$.
Let $t_0:=TF_I^V(\rho')$.
We have $\rho'\in\scru_B(V,I)$, so $\rho'=DF_I^V(\rho')$.
Then $\Phi_{t_0}^V(\rho')=UF_I^V(\rho')$.
By \lref{lem-undeterred-orbit},
$\Phi_{(0,t_0)}^V(\rho')\subseteq I^4$.
Also, $X=V$ on $I^4$.
Then, by \cref{cor-vect-agree-implies-flow-agree},
$\Phi_{t_0}^X(\rho')=\Phi_{t_0}^V(\rho')$.
Then $UF_I^V(\rho')=\Phi_{t_0}^V(\rho')=\Phi_{t_0}^X(\rho')$.
Then $\rho_1=SU_I(\rho')=UF_I^V(\rho')=\Phi_{t_0}^X(\rho')\in\Phi_\R^X(\rho')$.
Also, recall that $\rho'\in\Phi_\R^X(\rho_0)$.
We conclude that $\Phi_\R^X(\rho_1)=\Phi_\R^X(\rho')=\Phi_\R^X(\rho_0)$.

By Claim 7 and \lref{lem-omnibus-V0}(iii),
$\Phi_{[0,1]}^{V_0}(\rho_1)\subseteq[(\,\overline{3I}\,)^4]\backslash[I^4]$.
So, since $X=V_0=P$ on $[(\,\overline{3I}\,)^4]\backslash[I^4]$,
by \lref{lem-orbits-agree}, we have
$$\Phi_{[0,1]}^X(\rho_1)\quad=\quad\Phi_{[0,1]}^{V_0}(\rho_1)\quad=\quad\Phi_{[0,1]}^P(\rho_1).$$
Then
$\Phi_{[0,1)}^P(\rho_1)\subseteq\Phi_{[0,1]}^P(\rho_1)=
\Phi_{[0,1]}^X(\rho_1)\subseteq\Phi_\R^X(\rho_1)=\Phi_\R^X(\rho_0)\subseteq R$.
This contradicts $(*)$.
\end{proof}

\begin{lem}\wrlab{lem-merge-is-periodic}
Let $(V,I)\in\scrd_*$.
Let $(P,K)\in\scrp_I$.
Let $X:=\scrx_I(P,V)$.
Then $(X,\xi_I)$ is periodic to all orders.
\end{lem}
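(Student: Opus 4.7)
The plan is to prove periodicity to all orders with period $n := j + (m - 2a_I)$, where $j$ is the integer witnessing $(V,I) \in \scrd_*$ and $m$ is the integer witnessing $(P,K) \in \scrp_I$. Since $j \ge 1$, $m > 2a_I$, and $a_I \in \N$, $n$ is a nonzero integer. I would factor $\Phi_n^X = \Phi_{m - 2a_I}^X \circ \Phi_j^X$ and compare the jet of this composition at $\xi_I$ with the jet of $\Phi_m^P = \Phi_{m - 2a_I}^P \circ \Phi_{2a_I}^P$ at $\xi_I$; the latter equals the jet of $\Id_4$ by the defining property of $\scrp_I$, so it suffices to prove that these two compositions have the same jet at $\xi_I$.

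I would establish four all-orders agreements, each via \lref{lem-orbits-agree-to-all-orders}. \emph{(A)} $\Phi_j^X$ agrees with $\Phi_j^V$ to all orders at $\xi_I$, since $X = V$ on the open set $(2I)^4$ and the $V$-orbit of $\xi_I$ for $t \in [0,j]$ lies in $\barI^4 \subseteq (2I)^4$ (by \lref{lem-undeterred-orbit} together with $TF_I^V(\xi_I) = j$ and $UF_I^V(\xi_I) = SU_I(\xi_I)$). \emph{(B)} $\Phi_{2a_I}^P$ agrees with $\Phi_{2a_I}^{V_0}$ to all orders at $\xi_I$: the $V_0$-orbit from $\xi_I$ for $t \in [0, 2a_I]$ lies in $(3I)^4$, so by continuity the $V_0$-orbits from an entire open neighborhood $N_\xi$ of $\xi_I$ stay in $(3I)^4$, and since $P = V_0$ on $\overline{3I}^4$, we have $\Phi_{2a_I}^P = \Phi_{2a_I}^{V_0}$ on $N_\xi$. \emph{(C)} $\Phi_{m - 2a_I}^X$ agrees with $\Phi_{m - 2a_I}^P$ to all orders at $SU_I(\xi_I) = (0,0,0,a_I)$: the $P$-orbit $\phi(s) := \Phi_s^P(\xi_I)$ avoids $I^4$ for every $s \in [2a_I, m]$, by the defining property of $\scrp_I$ combined with $\phi(2a_I) = SU_I(\xi_I) \in T(I)$ and $\phi(m) = \xi_I \in B(I)$, and at every $\rho \in (\R^4) \backslash I^4$ the fields $X$ and $P$ agree to all orders---trivially so on the open set $(\R^4) \backslash \barI^4$, and at $\rho \in \partial I^4$ because $X = V$ on $(2I)^4$ with $V$ agreeing to all orders with $V_0$ at $\rho$ (since $V = V_0$ on the open set $(\R^4) \backslash \barI^4$ whose closure contains $\rho$), while $P = V_0$ on the open neighborhood $(3I)^4$ of $\rho$. \emph{(D)} Most delicately, $\Phi_j^V$ agrees with $\Phi_{2a_I}^{V_0}$ to all orders at $\xi_I$. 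The $\scrd_*$ hypothesis yields $\Phi_j^V = SU_I = \Phi_{2a_I}^{V_0}$ on an open neighborhood $N \subseteq B_\circ(I)$ of $\xi_I$, which matches all partials at $\xi_I$ taken in the first three coordinate directions. For any partial involving the fourth coordinate, I would use that for $\rho \in N$ and $s \le 0$ one has $\rho + s(0,0,0,1) = \Phi_s^V(\rho)$ (because $V = V_0$ on $\R^3 \times (-\infty, -a_I]$), so that $\Phi_j^V(\rho + s(0,0,0,1)) = \Phi_s^V(SU_I(\rho))$; since $V$ agrees with $V_0$ to all orders at $SU_I(\rho) \in T(I)$ (by the same boundary-closure argument as in \emph{(C)}), the $s$-Taylor series at $0$ of $\Phi_s^V(SU_I(\rho))$ coincides with that of $SU_I(\rho) + s(0,0,0,1) = \Phi_{2a_I}^{V_0}(\rho + s(0,0,0,1))$, and a further tangential differentiation handles the remaining mixed partials.

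Assembled via the chain rule for jets, \emph{(A)} and \emph{(D)} give that the jet of $\Phi_j^X$ at $\xi_I$ equals the jet of $\Phi_{2a_I}^{V_0}$ at $\xi_I$, which by \emph{(B)} equals the jet of $\Phi_{2a_I}^P$ at $\xi_I$; composing with \emph{(C)} at the intermediate point $SU_I(\xi_I)$ then yields that the jet of $\Phi_n^X$ at $\xi_I$ equals the jet of $\Phi_{m-2a_I}^P \circ \Phi_{2a_I}^P = \Phi_m^P$ at $\xi_I$, hence the jet of $\Id_4$. The hard part is step \emph{(D)}: the hypothesis $TF_I^V \equiv j$ holds only on the three-dimensional face $N \subseteq B_\circ(I)$, so matching the transverse jet at $\xi_I$ requires the flatness of $V - V_0$ at the top face $T(I)$ to exactly absorb the (possibly variable-speed) dynamics of $V$ inside $I^4$---a cancellation that relies on the joint structural conditions $V \in \scrv(a_I)$ and $V = V_0$ outside $I^4$.
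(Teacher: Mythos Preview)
Your proposal is correct, and your period $n=j+(m-2a_I)$ matches the paper's. The decomposition into pieces (A)--(D), the boundary-closure argument that $V$ and $V_0$ share an infinite jet on $\partial I^4$, and the one-sided extraction of the transverse derivatives in (D) all go through as you describe. (For (D), the clean way to phrase it: on the half-space $\{s<0\}$ near $\xi_I$ the smooth function $\sigma\mapsto\Phi_j^V(\sigma)-\Phi_{2a_I}^{V_0}(\sigma)$ coincides with the smooth function $(\rho',s)\mapsto\Phi_s^V(\rho',a_I)-\Phi_s^{V_0}(\rho',a_I)$, whose full Taylor series at $((0,0,0),0)$ vanishes because the infinite jet of $V-V_0$ vanishes at $(0,0,0,a_I)$; equality on an open half-space forces equality of all partials on its closure.)

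The paper takes a different and somewhat slicker route that entirely avoids your delicate step (D). Rather than matching jets at the boundary point $\xi_I$, it flows backward to the interior point $\rho:=\Phi_{-a_I}^{V_0}(\xi_I)$ and thickens the $3$-dimensional neighborhood $N\subseteq B_\circ(I)$ into the genuinely open $4$-dimensional set $U:=\Phi_{(-2a_I,0)}^{V_0}(N)$. On $U$ the paper establishes the literal equality $\Phi_{m-2a_I+j}^X=\Phi_m^P$ (not merely jet equality), by composing equalities of flows on regions where $X$, $V_0$, $P$ actually coincide; since $U$ is open, this immediately gives agreement to all orders at $\rho$, and then periodicity transfers to $\xi_I$ along the $X$-orbit. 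Your approach is more direct (it stays at $\xi_I$) but pays for this with the transverse-jet gymnastics of (D); the paper's approach trades directness for the convenience of working exclusively with equalities of maps on open sets.
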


\begin{proof}
Since $(V,I)\in\scrd_*$,
fix an integer $j\ge1$ and an open neighborhood~$N$ in $\scru_B^\circ(V,I)$
of $\xi_I$ such that $TF_I^V=j$ on~$N$.
Then, by definition of~$UF_I^V$, we see that $UF_I^V=\Phi_j^V$ on $N$.
Because $(V,I)\in\scrd_*\subseteq\scrd_+$, it follows that $UF_I^V=SU_I$ on $\scru_B(V,I)$.
We have $P=V_0$ on~$I^4$.
By \lref{lem-omnibus-V0}(i), we have $\Phi_{(0,2a_I)}^{V_0}(B_\circ(I))=I^4$.
Then, by \cref{cor-vect-agree-implies-flow-agree},
on $B(I)$, we have $\Phi_{2a_I}^P=\Phi_{2a_I}^{V_0}$.
Also, by \lref{lem-SU-and-Phi-V0}(ii), on $B(I)$, we have $SU_I=\Phi_{2a_I}^{V_0}$.
Because $X=V$ on $I^4$,
by \lref{lem-agree-inside-I4},
$\Phi_j^X=\Phi_j^V$ on~$\scru_B^\circ(V,I)$.
We have $N\subseteq\scru_B^\circ(V,I)\subseteq\scru_B(V,I)\subseteq B(I)$.
Then, on $N$, we have
$$\Phi_j^X\,\,=\,\,\Phi_j^V\,\,=\,\,UF_I^V\,\,=\,\,SU_I\,\,=\,\,\Phi_{2a_I}^{V_0}\,\,=\,\,\Phi_{2a_I}^P.$$

For all $t\in(0,2a_I)$, let $L_t:=\Phi_{-t}^{V_0}(N)$.

{\it Claim:} Let $t\in(0,2a_I)$. Then, on $L_t$,
$\Phi_{j+2a_I}^X=\Phi_{4a_I}^{V_0}=\Phi_{4a_I}^P$.
{\it Proof of claim:}
We have $X=V_0=P$ on $[(\,\overline{3I}\,)^4]\backslash[I^4]$
and $2a_I-t\in(0,2a_I)$.
By \lref{lem-omnibus-V0}(iii), we have
$\Phi_{(0,2a_I)}^{V_0}(T_\circ(I))\subseteq[(\,\overline{3I}\,)^4]\backslash[I^4]$.
Therefore, by \lref{lem-orbits-agree},
on $T_\circ(I)$, we have $\Phi_{2a_I-t}^X=\Phi_{2a_I-t}^{V_0}=\Phi_{2a_I-t}^P$.

We have $X=V_0=P$ on $[(\,\overline{3I}\,)^4]\backslash[I^4]$.
By \lref{lem-omnibus-V0}(iv), we have
$\Phi_{(-2a_I,0)}^{V_0}(B_\circ(I))\subseteq[(\,\overline{3I}\,)^4]\backslash[I^4]$.
Also, $N\subseteq\scru_B^\circ(V,I)\subseteq B_\circ(I)$.
Then
$$\Phi_{(0,t)}^{V_0}(L_t)\,\,=\,\,\Phi_{(-t,0)}^{V_0}(N)\subseteq\,\,
\Phi_{(-2a_I,0)}^{V_0}(B_\circ(I))\,\,\subseteq\,\,[(\,\overline{3I}\,)^4]\backslash[I^4].$$
So, by \cref{cor-vect-agree-implies-flow-agree},
on $L_t$, we have $\Phi_t^X=\Phi_t^{V_0}=\Phi_t^P$.
We also have $\Phi_t^{V_0}(L_t)=N$.
Then, since $\Phi_j^X=\Phi_{2a_I}^{V_0}=\Phi_{2a_I}^P$ on $N$,
we see that, on $L_t$,
$$\Phi_j^X\circ\Phi_t^X\quad=\quad\Phi_{2a_I}^{V_0}\circ\Phi_t^{V_0}\quad=\quad\Phi_{2a_I}^P\circ\Phi_t^P.$$
By \lref{lem-SU-and-Phi-V0}(ii),
$\Phi_{2a_I}^{V_0}(B_\circ(I))=SU_I(B_\circ(I))$.
Then
$$\Phi_{2a_I}^{V_0}(\Phi_t^{V_0}(L_t))\,=\,\Phi_{2a_I}^{V_0}(N)\,\subseteq\,
\Phi_{2a_I}^{V_0}(B_\circ(I))\,=\,SU_I(B_\circ(I))\,=\,T_\circ(I).$$
So, since $\Phi_{2a_I-t}^X=\Phi_{2a_I-t}^{V_0}=\Phi_{2a_I-t}^P$ on $T_\circ(I)$,
we see that, on $L_t$,
$$\Phi_{2a_I-t}^X\circ\Phi_j^X\circ\Phi_t^X\,\,=\,\,\Phi_{2a_I-t}^{V_0}\circ\Phi_{2a_I}^{V_0}\circ\Phi_t^{V_0}
\,\,=\,\,\Phi_{2a_I-t}^P\circ\Phi_{2a_I}^P\circ\Phi_t^P.$$
That is, on $L_t$, we have $\Phi_{j+2a_I}^X=\Phi_{4a_I}^{V_0}=\Phi_{4a_I}^P$.
{\it End of proof of claim.}

Since $(P,K)\in\scrp_I$,
fix an integer $m>2a_I$ such that
 \begin{itemize}
 \item[$(*)$]$\Phi_m^P$ agrees with $\Id_4$ at $\xi_I$ to all orders \qquad and
 \item[$(**)$]for all $\tau\in B_\circ(I)$, \, for all $t\in(0,m)$,
       $$[\,\Phi_t^P(\tau)\in I^4\,]\qquad\Leftrightarrow\qquad[\,t<2a_I\,].$$
 \end{itemize}

Let $U:=\displaystyle{\bigcup_{t\in(0,2a_I)}L_t=\Phi_{(-2a_I,0)}^{V_0}(N)}$.
The claim shows that, on $U$, we have
$\Phi_{j+2a_I}^X=\Phi_{4a_I}^{V_0}=\Phi_{4a_I}^P$.

We have $P=V_0$ on $(\,\overline{3I}\,)^4$ and,
from \lref{lem-omnibus-V0}(v),
we conclude that
$\Phi_{[0,4a_I]}^{V_0}(B_\circ(I))\subseteq(\,\overline{3I}\,)^4$.
Then, by \cref{cor-vect-agree-implies-flow-agree},
for all $t\in[0,4a_I]$, on~$B(I)$, $\Phi_t^P=\Phi_t^{V_0}$.
Let $U_1:=\Phi_{(2a_I,4a_I)}^P(B_\circ(I))=\Phi_{(2a_I,4a_I)}^{V_0}(B_\circ(I))$.
We have
$$\Phi_{(0,m-4a_I)}^P(U_1)\qquad\subseteq\qquad\Phi_{(2a_I,m)}^P(B_\circ(I)),$$
and, by $(**)$, we have $\Phi_{(2a_I,m)}^P(B_\circ(I))\subseteq(\R^4)\backslash(I^4)$.
Then
$$\Phi_{(0,m-4a_I)}^P(U_1)\quad\subseteq\quad(\R^4)\backslash(I^4).$$
Also, $X=P$ on $(\R^4)\backslash(I^4)$.
Then, by \cref{cor-vect-agree-implies-flow-agree},
on $U_1$, we have $\Phi_{m-4a_I}^X=\Phi_{m-4a_I}^P$.
Recall that, on $U$, $\Phi_{j+2a_I}^X=\Phi_{4a_I}^P$.
Also,
$$\Phi_{4a_I}^{V_0}(U)\,\,=\,\,\Phi_{(2a_I,4a_I)}^{V_0}(N)\,\,\subseteq\,\,
\Phi_{(2a_I,4a_I)}^{V_0}(B_\circ(I))\,\,=\,\,U_1.$$
Then, on $U$, we have
$\Phi_{m-4a_I}^X\circ\Phi_{j+2a_I}^X=\Phi_{m-4a_I}^P\circ\Phi_{4a_I}^P$.
That is, on $U$, we have $\Phi_{m-2a_I+j}^X=\Phi_m^P$.

We have $X=V_0=P$ on $[(\,\overline{3I}\,)^4]\backslash[I^4]$.
Also, $\xi_I\in B_\circ(I)$, so, and, by \lref{lem-omnibus-V0}(iv),
we get $\Phi_{[-2a_I,0]}^{V_0}(\xi_I)\subseteq[(\,\overline{3I}\,)^4]\backslash[I^4]$.
Then, by \lref{lem-orbits-agree}, we have
$\Phi_{-a_I}^X(\xi_I)=\Phi_{-a_I}^{V_0}(\xi_I)=\Phi_{-a_I}^P(\xi_I)$.

Let $\rho:=\Phi_{-a_I}^{V_0}(\xi_I)=\Phi_{-a_I}^X(\xi_I)=\Phi_{-a_I}^P(\xi_I)$.
Because $\xi_I\in N$, it follows that $\rho\in\Phi_{(-2a_I,0)}^{V_0}(N)=U$.
By \cref{cor-U-open}(iii), $\scru_B^\circ(V,I)$ is open in $B_\circ(I)$.
So, as $N$ is open in $\scru_B^\circ(V,I)$, we see that $N$ is open in~$B_\circ(I)$.
Then, by \lref{lem-nbd-in-B-saturates-to-nbd}, $U$ is an open subset of $\R^4$.
Then $U$  is an open neighborhood of $\rho$.
So, since $\Phi_{m-2a_I+j}^X=\Phi_m^P$ on $U$, we see that
$\Phi_{m-2a_I+j}^X$ agrees to all orders with $\Phi_m^P$ at~$\rho$.
Since $\rho\in\Phi^P_\R(\xi_I)$, it follows, from $(*)$,
that $\Phi_m^P$ agrees to all orders with $\Id_4$ at~$\rho$.
Then $\Phi_{m-2a_I+j}^X$ agrees to all orders with $\Id_4$ at~$\rho$.
We have $m>2a_I$ and $j\ge1$, so $m-2a_I+j\ne0$.
Then $(X,\rho)$ is periodic to all orders.
So, as $\xi_I=\Phi_{a_I}^X(\rho)\in\Phi_\R^X(\rho)$, we see that $(X,\xi_I)$ is periodic to all orders.
\end{proof}

\section{The iteration\wrlab{sect-iteration}}

Recall, from \secref{sect-dtuflow},
the definitions of $DF_I^V$, $UF_I^V$ and $TF_I^V$.

Recall, from \secref{sect-gettoD},
the definitions of $\scrd_+$, $\scrd^\#$, $\scrd_+^\#$ and $\scrd_*$.
The definition of $\scrd_I^\times$ appears in \secref{sect-notation}.

Let $\rho\in\R^4$.
We define $\scrt_\rho:\R^4\to\R^4$ by $\scrt_\rho(\sigma)=\sigma+\rho$.
For any $V\in\scrc$,
we define $\scrt_\rho V:\R^4\to\R^4$ by $(\scrt_\rho V)(\sigma)=V(\sigma-\rho)$;
then $\scrt_\rho V\in\scrc$ and $\scru(\scrt_\rho V)=\scrt_\rho(\scru(V))$,
so: \qquad [\,$V$ is porous\,] \,\, iff \,\, [\,$\scrt_\rho V$ is porous\,].

For all $\rho\in\R^4$, we have $\scrt_\rho V_0=V_0$.
Also, for all $\rho\in\R^4$, for all $a\in\R$, for all $V\in\scrv(a)$,
we have $\scrt_\rho V\in\scrv(a+[\Pi_4(\rho)])$.

Let $J\in\scri$.
Let $\rho\in J^4$.
Then, for all $I\in\scri$, we have $\scrt_\rho(I^4)\subseteq(I+J)^4$.
Moreover, for all $(V,I)\in\scrd$, we have $(\scrt_\rho V,I+J)\in\scrd$.

For all $V\in\scrc$, for all $\rho,\sigma\in\R^4$, we have:
if $(V,\sigma)$ is periodic to all orders,
then $(\scrt_\rho V,\sigma+\rho)$ is periodic to all orders.

\begin{lem}\wrlab{lem-base-lem}
Let $(V,I)\in\scrd$.
Assume $V$ is porous.
Let $\sigma'\in\scru(V)$.
Then there exists $(V',I')\in\scrm(V,I)$
such that $V'$ is porous and
such that $(V',\sigma')$ is periodic to all orders.
\end{lem}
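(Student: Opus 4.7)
The plan is to translate the problem so that a scaled analogue of the $(P_0,K_0)$-gadget applies near $\xi_{I_0}$, invoke the porousness-to-periodicity machinery assembled in the earlier sections, and translate back to transfer periodicity to $\sigma'$. First, I would use $\sigma'\in\scru(V)$ together with \lref{lem-undeterred-criterion} to find, for a sufficiently large integer $n$, a time $t_0$ with $\sigma_B:=\Phi_{t_0}^V(\sigma')$ satisfying $\Pi_4(\sigma_B)=-n$. Write $\sigma_B=(w_B,x_B,y_B,-n)$ and set $\rho:=\sigma_B-\xi_{I_0}$ and $\tilde V:=\scrt_{-\rho}V$. Whether $\sigma'\in I^3\times\R$ (so by \lref{lem-I3-times-R-invar}, $(w_B,x_B,y_B)\in I^3$) or $\sigma'\notin I^3\times\R$ (so by \lref{lem-V-V0-agreement-for-D}(i) the $V$-orbit is a vertical line and $(w_B,x_B,y_B)=(w',x',y')$ independent of $n$), the spatial part of $\rho$ is controlled. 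Choose $L\in\scri$ sufficiently large that $L^4\supseteq I^4-\rho$; then $(\tilde V,L)\in\scrd$ and $\tilde V$ is porous. The translation rule gives $\xi_{I_0}=\Phi_{t_0}^{\tilde V}(\sigma'-\rho)\in\scru(\tilde V)$, and a direct check shows the vertical segment $\{(0,0,0,s):s\in[-a_L,-1]\}$ avoids $I^4-\rho$: in the first case because the $\Pi_4$-range of $I^4-\rho$ is $(n-1-a_I,n-1+a_I)$, which is positive for $n>a_I+1$; in the second because the spatial fibers of $I^4-\rho$ do not contain the origin. Hence $\xi_L$ is reached from $\xi_{I_0}$ by pure $V_0$-flow under $\tilde V$, so $\xi_L\in\scru_B(\tilde V,L)$ and $(\tilde V,L)\in\scrd^\#$.

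Next I would apply \lref{lem-upflow-const} to obtain $(\tilde V_*,\tilde I_*)\in\scrm_*(\tilde V,L)\cap\scrd_*$ with $\tilde V_*$ porous, then construct some $(\tilde P,\tilde K)\in\scrp_{\tilde I_*}$ with $\tilde P$ porous by redoing the program of sections \secref{sect-start-hyp-vf}--\secref{sect-results-scrp} with all spatial dimensions scaled by $a_{\tilde I_*}$ and the period rescaled accordingly. Setting $X:=\scrx_{\tilde I_*}(\tilde P,\tilde V_*)$: by \lref{lem-exch-gives-mod}, $(X,\tilde K)\in\scrm(\tilde V,L)$; by \lref{lem-make-por-per}, $X$ is porous; and by \lref{lem-merge-is-periodic}, $(X,\xi_{\tilde I_*})$ is periodic to all orders. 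Using $\tilde V_*\in\scrv(a_L)$, $\tilde V_*=\tilde V$ on $\barL^4$, and the avoidance established above, the $X$-orbit of $\xi_{\tilde I_*}$ runs straight up the vertical axis through $\xi_{I_0}$; moreover, the $\tilde V$-orbit from $\xi_{I_0}$ to $\sigma'-\rho$ is contained in $\barL^4$ where $X=\tilde V$ (for $L$ chosen large enough in terms of $z'$ and $a_I$), so $\sigma'-\rho\in\Phi_\R^X(\xi_{\tilde I_*})$ and $(X,\sigma'-\rho)$ is periodic to all orders.

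Finally, set $V':=\scrt_\rho X$ and choose $I'\in\scri$ with $I'^4\supseteq\tilde K^4+\rho$, so $(V',I')\in\scrd$ with $a_{I'}>a_I$. Since $X=\tilde V_*=\tilde V$ on $\barI^4-\rho$ and $\scrt_\rho\tilde V=V$ identically, we have $V'=V$ on $\barI^4$, so $(V',I')\in\scrm(V,I)$. Translation preserves porousness and transfers periodicity to all orders from $(X,\sigma'-\rho)$ to $(V',\sigma')$, completing the proof. The heaviest technical step is the scaled construction of $(\tilde P,\tilde K)\in\scrp_{\tilde I_*}$: \lref{lem-P0K0-in-scrp} supplies this only for $I_0$, and generalizing to $\tilde I_*$ requires rebuilding the hyperbolic vector field and racetrack at scale $a_{\tilde I_*}$ and reproving the analogues of \lref{lem-P0-porous} and \lref{lem-P0-per}.
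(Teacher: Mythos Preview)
Your strategy---translate so that the distinguished orbit passes through the center, apply \lref{lem-upflow-const} to land in $\scrd_*$, exchange with a scaled copy of $(P_0,K_0)$ via \lref{lem-make-por-per} and \lref{lem-merge-is-periodic}, then translate back---is exactly the paper's. Two remarks on execution.

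First, the step you flag as heaviest is actually a one-liner. Because every clause in the definition of $\scrp_I$ is homogeneous under the dilation $\sigma\mapsto a\sigma$, $t\mapsto at$ (and $a_{I_*}\in\N$, so the rescaled period $a_{I_*}m$ remains an integer), setting $P(\sigma):=P_0(\sigma/a_{I_*})$ and $K:=a_{I_*}K_0$ immediately yields $(P,K)\in\scrp_{I_*}$ with $P$ porous, directly from \lref{lem-P0-porous} and \lref{lem-P0K0-in-scrp}. No rebuilding of the hyperbolic field or the racetrack is required; the paper does exactly this.

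Second, the paper's translation is slightly tidier than yours. Rather than flowing $\sigma'$ to an arbitrary deep height $-n$ and translating by a vector with nonzero $\Pi_4$, the paper first enlarges to $(V,J_0)\in\scrd$ with $\sigma'\in J_0^4$, takes $\cksigma:=DF_{J_0}^V(\sigma')\in B_\circ(J_0)$ via \lref{lem-interior-dnflow}, and then translates \emph{horizontally} by $\tau_1:=\xi_{J_0}-\cksigma$ (so $\Pi_4(\tau_1)=0$). This keeps the $\scrv(a)$ bookkeeping trivial and lets the final step---transporting periodicity from $\cksigma$ back to $\sigma'$---go through \lref{lem-endpts-in-I4-implies-coincidence}, which needs only the two endpoints in the box rather than the whole orbit segment. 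Your version works too, but the bounds on $L$ and the case split on whether $\sigma'\in I^3\times\R$ become unnecessary with the paper's choice.
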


\begin{proof}
Choose $J_0\in\scri$ such that $\barI\subseteq J_0$ and such that $\sigma'\in J_0^4$.
Then $a_I<a_{J_0}$ and $(V,J_0)\in\scrd$.
Also, $\sigma'\in(\scru(V))\cap(J_0^4)=\scru_\circ(V,J_0)$.
Since $(V,J_0)\in\scrd$, it follows that $V\in\scrv(a_{J_0})$.

Let $\cksigma:=DF_{J_0}^{V}(\sigma')$.
Then, by \lref{lem-interior-dnflow}, we have $\cksigma\in B_\circ(J_0)$.
Let $\tau_1:=\xi_{J_0}-\cksigma$.
Then $\scrt_{\tau_1}(\cksigma)=\xi_{J_0}$
and $\scrt_{-\tau_1}(\xi_{J_0})=\cksigma$.

For all $\rho\in B_\circ(J_0)$, $\xi_{J_0}-\rho\in J_0^3\times\{0\}$.
Then $\tau_1\in J_0^3\times\{0\}$,
so $\tau_1\in J_0^4$ and $\Pi_4(\tau_1)=0$.
Let $W:=\scrt_{\tau_1}V$.
Then $V=\scrt_{-\tau_1}W$.
Let $J:=J_0+J_0=2J_0$.
Then $(W,J)\in\scrd$ and $W$ is porous.

Let $s_0:=a_{J_0}$.
Then $(0,0,0,-s_0)=\xi_{J_0}$ and $\Phi_{-s_0}^{V_0}(\cksigma)=\cksigma+\xi_{J_0}$.
We have $\cksigma\in B_\circ(J_0)\subseteq\R^3\times\{-a_{J_0}\}\subseteq\R^3\times(-\infty,-a_{J_0}]$.
By \lref{lem-V-V0-agreement-for-D}(iii), we have
$\Phi_{-s_0}^{V}(\cksigma)=\Phi_{-s_0}^{V_0}(\cksigma)$.
Then $\Phi_{-s_0}^{V}(\cksigma)=\cksigma+\xi_{J_0}$.

We have $\cksigma=DF_{J_0}^{V}(\sigma')\in\Phi_\R^{V}(\sigma')$,
and $\cksigma+\xi_{J_0}=\Phi_{-s_0}^{V}(\cksigma)\in\Phi_\R^{V}(\cksigma)$.
Then $\cksigma+\xi_{J_0}\in\Phi_\R^{V}(\sigma')$.
So, since $\sigma'\in\scru(V)$
and $\scru(V)$ is $V$-invariant,
we see that $\cksigma+\xi_{J_0}\in\scru(V)$.
Then $\scrt_{\tau_1}(\cksigma+\xi_{J_0})\in\scru(W)$.
So, since
$\scrt_{\tau_1}(\cksigma+\xi_{J_0})=2\xi_{J_0}=\xi_{2J_0}=\xi_J$,
we get $\xi_J\in\scru(W)$.
Then
$$\xi_J\quad\in\quad(\scru(W))\,\cap\,(B(J))\quad=\quad\scru_B(W,J).$$
Then $(W,J)\in\scrd^\#$.
By \lref{lem-upflow-const},
fix $(V_*,I_*)\in(\scrm_*(W,J))\cap(\scrd_*)$
such that $V_*$ is porous.
Then $(V_*,I_*)\in\scrd_*\subseteq\scrd$.
Also,
$$(V_*,I_*)\quad\in\quad\scrm_*(W,J)\quad\subseteq\quad\scrm(W,J),$$
so $a_J<a_{I_*}$ and $V_*=W$ on $\barJ^4$.
As $a_J<a_{I_*}$, we get $\barJ\subseteq I_*$.
Moreover, because $(V_*,I_*)\in\scrm_*(W,J)$,
it follows that $V_*\in\scrv(a_J)$.

Let $P_0$ and $K_0$ be as in \secref{sect-results-scrp}.
By \lref{lem-P0-porous}, $P_0$ is porous.
By \lref{lem-P0K0-in-scrp}, $(P_0,K_0)\in\scrp_{I_0}$
Let $K:=a_{I_*}K_0$ and
define $P:\R^4\to\R^4$ by $P(\sigma)=P_0(\sigma/a_{I_*})$.
Then $P$ is porous and $(P,K)\in\scrp_{I_*}$.

Since $(P,K)\in\scrp_{I_*}\subseteq\scrd_{I_*}^\times$,
it follows that $4I_*\subseteq K$,
so $a_{4I_*}\le a_K$.
Then $a_{I_*}<4a_{I_*}=a_{4I_*}\le a_K$.

Let $X:=\scrx_{I_*}(P,V_*)$.
Then $X=V_*$ on $I_*^4$, and, therefore, on $\barJ^4$.
Then $X=V_*=W$ on $\barJ^4$.
Since
$$(V_*,I_*)\,\,\in\,\,\scrd
\qquad\hbox{and}\qquad
(P,K)\,\,\in\,\,\scrd_{I_*}^\times,$$
we see, by \lref{lem-exch-gives-mod}, that $(X,K)\in\scrm(V_*,I_*)$.
Therefore we have $(X,K)\in\scrm(V_*,I_*)\subseteq\scrd$.
Because
$$(V_*,I_*)\,\,\in\,\,\scrd_*\,\,\subseteq\,\,\scrd_+
\qquad\hbox{and}\qquad
(P,K)\,\,\in\,\,\scrd_{I_*}^\times,$$
and because $V_*$ and $P$ are porous,
we see, by \lref{lem-make-por-per}, that $X$ is porous.
Because
$$(V_*,I_*)\,\,\in\,\,\scrd_*\qquad\hbox{and}\qquad(P,K)\,\,\in\,\,\scrp_{I_*},$$
we see, by \lref{lem-merge-is-periodic},
that $(X,\xi_{I_*})$ is periodic to all orders.

Let $V':=\scrt_{-\tau_1}X$.
Then $V'$ is porous.
Since $\tau_1\in J_0^4$, we get $-\tau_1\in J_0^4$.
Let $I':=K+J_0$.
Then $(V',I')\in\scrd$.

Because $(X,K)\in\scrm(V_*,I_*)$ and $(V_*,I_*)\in\scrm(W,J)$,
it follows that $(X,K)\in\scrm(W,J)$.
Then $X=W$ on $\barJ^4$.
Then, on $\scrt_{-\tau_1}(\,\barJ^4\,)$, we have
$$V'\,\,=\,\,\scrt_{-\tau_1}X\,\,=\,\,\scrt_{-\tau_1}W\,\,=\,\,V.$$
We have $\scrt_{\tau_1}(J_0^4)\subseteq(J_0+J_0)^4=J^4\subseteq\barJ^4$,
so $J_0^4\subseteq\scrt_{-\tau_1}(\,\barJ^4\,)$.
Therefore $V'=V$ on $J_0^4$.
So, since $\barI\subseteq J_0$, we see that $V'=V$ on $\barI^4$.
Also,
$$a_I<a_{J_0}<2a_{J_0}=a_{2J_0}=a_J<a_{I_*}<a_K<a_K+a_{J_0}=a_{K+J_0}=a_{I'}.$$
Then $(V',I')\in\scrm(V,I)$.
It remains only to prove that $(V',\sigma')$ is periodic to all orders.

Because $V_*\in\scrv(a_J)$,
we have $V_*=V_0$ on $\R^3\times(-\infty,-a_J]$.
Since $X=\scrx_{I_*}(P,V_*)$, we have $X=V_*$ on $\barI_*^4$.
Then
\begin{itemize}
\item[(i)]$X=V_*=V_0$ on $\{(0,0,0)\}\times[-a_{I_*},-a_J]$.
\end{itemize}

We have $W=\scrt_{\tau_1}V$ and $\Pi_4(\tau_1)=0$.
So, since $V\in\scrv(a_{J_0})$, we have $W\in\scrv(a_{J_0})$ as well.
That is, $W=V_0$ on $\R^3\times(-\infty,-a_{J_0}]$.
Then, because $X=W$ on $\barJ^4$,
we see that
\begin{itemize}
\item[(ii)]$X=W=V_0$ on $\{(0,0,0)\}\times[-a_J,-a_{J_0}]$.
\end{itemize}

Let $s_1:=a_{I_*}-a_{J_0}$.
By \lref{lem-midpoint-connection}(i), $\Phi_{s_1}^{V_0}(\xi_{I_*})=\xi_{J_0}$.
By (i) and~(ii), we have $X=V_0$ on $\{(0,0,0)\}\times[-a_{I_*},-a_{J_0}]$.
By \lref{lem-midpoint-connection}(ii),
$\Phi_{[0,s_1]}^{V_0}(\xi_{I_*})\subseteq\{(0,0,0)\}\times[-a_{I_*},-a_{J_0}]$.
Then, by \lref{lem-orbits-agree}, we have
$\Phi_{s_1}^X(\xi_{I_*})=\Phi_{s_1}^{V_0}(\xi_{I_*})$.
Then $\Phi_{s_1}^X(\xi_{I_*})=\xi_{J_0}$.
Then $\xi_{J_0}\in\Phi_\R^X(\xi_{I_*})$, so,
because $(X,\xi_{I_*})$ is periodic to all orders,
we see that $(X,\xi_{J_0})$ is periodic to all orders.
Then, because $\scrt_{-\tau_1}X=V'$ and $\scrt_{-\tau_1}(\xi_{J_0})=\cksigma$,
we conclude that $(V',\cksigma)$ is periodic to all orders.

Since $\cksigma=DF_{J_0}^{V}(\sigma')$, it follows that $\sigma'\in\Phi_\R^{V}(\cksigma)$.
So choose $s_0\in\R$ such that $\Phi_{s_0}^{V}(\cksigma)=\sigma'$.
Recall that $V'=V$ on $J_0^4$.
Also, $(V,J_0)\in\scrd$ and $J_0\subseteq2J_0=J$.
Then $(V,J)\in\scrd$.
Also,
$$\cksigma\in B_\circ(J_0)\subseteq\barJ_0^4\subseteq(2J_0)^4=J^4
\quad\hbox{and}\quad
\Phi_{s_0}^{V}(\cksigma)=\sigma'\in J_0^4\subseteq J^4.$$
Then, by \lref{lem-endpts-in-I4-implies-coincidence},
$\Phi_{s_0}^{V'}(\cksigma)=\Phi_{s_0}^{V}(\cksigma)$.
It follows that
$$\sigma'\quad=\quad\Phi_{s_0}^{V}(\cksigma)\quad=\quad\Phi_{s_0}^{V'}(\cksigma)\quad\in\quad\Phi_\R^{V'}(\cksigma).$$
So, since $(V',\cksigma)$ is periodic to all orders,
it follows that $(V',\sigma')$ is periodic to all orders as well.
\end{proof}

\section{Results about cyclic groups\wrlab{sect-res-cyc}}

\begin{lem}\wrlab{lem-cyc-precpt-or-closed}
Let $C$ be a cyclic subgroup of a locally compact (Hausdorff) topological group $P$.
Then either $C$ is a closed subset of $P$
or the closure in $P$ of $C$ is compact.
\end{lem}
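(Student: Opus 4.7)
Let $H := \overline{C}$ be the closure of $C$ in $P$. Then $H$ is a closed subgroup of $P$, hence locally compact Hausdorff in its subspace topology, and $H$ is abelian, since the closure of an abelian subgroup in a Hausdorff topological group is abelian. The problem therefore reduces to: assuming $C \ne H$, show that $H$ is compact.

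Carry out two preliminary reductions. First, $C$ must be infinite, because finite subsets of Hausdorff spaces are closed, which would contradict $C \ne \overline C$; in particular $g$ has infinite order. Second, $H$ is not discrete. If $\{e\}$ were open in $H$, pick a symmetric open neighborhood $V$ of $e$ in $P$ with $(V \cdot V) \cap H = \{e\}$. For any $x$ in the closure of a discrete subgroup $D$ of $P$, any two $h_1, h_2 \in xV \cap D$ satisfy $h_1^{-1} h_2 \in (V \cdot V) \cap D = \{e\}$, so $xV \cap D$ is a singleton $\{h_0\}$; a Hausdorff separation argument then forces $x = h_0 \in D$, showing discrete subgroups of $P$ are closed. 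Applied to $C$, this would contradict $C \ne H$.

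The key analytic input is the claim that every open neighborhood $U$ of $e$ in $H$ contains $g^k$ for infinitely many $k \in \Z$. I would pick a compact symmetric neighborhood $V$ of $e$ in $H$ with $V \cdot V \subseteq U$; since $H$ is not discrete, the nonempty interior of $V$ is infinite (a finite open set in a Hausdorff space would isolate $e$), and density of $C$ in $H$ yields infinitely many distinct $n \in \Z$ with $g^n \in V$. The differences $g^{n - n_0}$ (for a fixed $n_0$) then lie in $V \cdot V \subseteq U$ and are all distinct because $g$ has infinite order.

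Finally, to conclude compactness, fix a compact symmetric neighborhood $K$ of $e$ in $H$ and set $A := \{k \in \Z : g^k \in K\}$. The plan is to show $A$ is syndetic, i.e., that there is a finite $F \subseteq \Z$ with $A + F = \Z$; granting this, every $g^n$ lies in $K \cdot g^F$, so $C \subseteq K \cdot g^F$ and $H = \overline C \subseteq K \cdot g^F$ is a finite union of compact translates, hence compact. The principal obstacle is precisely the syndeticity of $A$: it must be extracted from the accumulation of $C$ at $e$ established above, ruling out arbitrarily long runs of consecutive integers outside $A$. An alternative, heavier route is to invoke the structure theorem for compactly generated locally compact abelian groups applied to the open subgroup $L := \bigcup_{n \ge 1} K^n$ of $H$: writing $L \cong \R^a \times \Z^b \times K_0$ with $K_0$ compact, density of $C$ in $H$ forces $a = b = 0$, so $L$ is compact; a short check (the case $H/L \cong \Z$ would give $H \cong L \times \Z$ and render $C$ closed) rules out that quotient, so $H/L$ is finite and $H$ is a finite union of compact translates of $L$, hence compact.
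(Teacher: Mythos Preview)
Your alternative route via the structure theorem for compactly generated locally compact abelian groups is correct in outline and is essentially how the paper proceeds, though the paper compresses it into a single citation: it invokes Rudin's Theorem~2.3.2 (\emph{Fourier Analysis on Groups}), which says that a monothetic locally compact group is either compact or topologically isomorphic to~$\Z$. Since $H=\overline{C}$ is monothetic, if it is not compact it is discrete, and then $C$, as a subset of a discrete space, is closed in $H$ and hence in $P$. Your reductions to a nondiscrete $H$ and your alternative route amount to re-deriving this dichotomy from the compactly-generated structure theorem; the two arguments are of the same depth. (A couple of details in your sketch want expanding: ruling out $b\ge 1$ in $L\cong\R^a\times\Z^b\times K_0$ is a separate step from ruling out $H/L\cong\Z$, and the splitting $H\cong L\times\Z$ needs the observation that extensions of $\Z$ by an open subgroup split topologically.)

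The main route, however, has a real gap at exactly the point you flag. Accumulation of powers of $g$ at $e$ does not by itself rule out arbitrarily long gaps in $A=\{k\in\Z:g^k\in K\}$. In fact, for a fixed compact neighborhood $K$, syndeticity of $A$ is equivalent to $H$ being covered by finitely many translates of $K$, which (since $K$ is compact and $C$ is dense) is equivalent to compactness of $H$---the very conclusion you are after. So proving syndeticity directly requires input of the same strength as the monothetic dichotomy, and no shortcut from mere nondiscreteness is available. Your preliminary steps (infinite order of $g$, nondiscreteness of $H$, closedness of discrete subgroups, infinitely many $k$ with $g^k\in U$) are all fine, but they do not bridge this gap.
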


\begin{proof}
Let $\barC$ denote the closure in $P$ of $C$.
Give $\barC$  the relative topology inherited from $P$.
Assume that $\barC$ is noncompact.
We wish to show that $C$ is closed in $P$.

By Theorem 2.3.2, p.~39 of \cite{rud:fouriergps},
since $\barC$ is monothetic and noncompact,
it follows that $\barC$ is discrete.
Then every subset of $\barC$ is closed, and
in particular, $C$ is closed in $\barC$.
So, since $\barC$ is closed in $P$,
we see that $C$ is closed in $P$.
\end{proof}

\begin{lem}\wrlab{lem-cyc-lift}
Let $P$ and $Q$ be locally compact (Hausdorff) topological groups
and let $h:P\to Q$ be a surjective (continuous) homomorphism.
Let $Q_0$ be an infinite cyclic closed subgroup of $Q$.
Then there exists an infinite cyclic closed subgroup $P_0$ of $P$
such that $h(P_0)=Q_0$.
\end{lem}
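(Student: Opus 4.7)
The plan is as follows. Fix a generator $q_0$ of $Q_0$, so that $Q_0=\{q_0^n:n\in\Z\}$. By surjectivity of $h$, choose $p_0\in P$ with $h(p_0)=q_0$, and set $P_0:=\langle p_0\rangle=\{p_0^n:n\in\Z\}$. I will show that this $P_0$ is the desired subgroup by verifying in turn: (a) $h(P_0)=Q_0$, (b) $P_0$ is infinite cyclic, and (c) $P_0$ is closed in $P$.

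Parts (a) and (b) are immediate from the homomorphism property. For (a), $h(P_0)=\{h(p_0^n):n\in\Z\}=\{q_0^n:n\in\Z\}=Q_0$. For (b), if $p_0^n=1_P$ for some $n\ne0$, then $q_0^n=h(p_0^n)=1_Q$, contradicting the assumption that $Q_0$ is infinite cyclic with generator $q_0$.

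The heart of the argument is (c). Apply \lref{lem-cyc-precpt-or-closed} to the cyclic subgroup $P_0$ of the locally compact group $P$: either $P_0$ is already closed in $P$, in which case we are finished, or else the closure $\overline{P_0}$ in $P$ is compact. In the latter case, continuity of $h$ makes $h(\overline{P_0})$ a compact (hence closed) subset of the Hausdorff group $Q$, and this set contains $h(P_0)=Q_0$. Since $Q_0$ is closed in $Q$ and is contained in the compact set $h(\overline{P_0})$, we conclude that $Q_0$ itself is compact. I then aim to derive a contradiction from compactness of $Q_0$: as an infinite cyclic group, $Q_0$ is countably infinite; by the Baire category theorem applied to the compact Hausdorff space $Q_0$, some singleton $\{q\}\subseteq Q_0$ must have nonempty interior in $Q_0$. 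By translation invariance within the topological group $Q_0$, every singleton in $Q_0$ is open, so $Q_0$ is discrete; but a discrete compact space is finite, contradicting infinitude of $Q_0$. Hence the ``closure compact'' alternative is impossible, and $P_0$ is closed.

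The main obstacle is articulating the Baire-category step cleanly to rule out $Q_0$ being compact; the rest of the argument is essentially bookkeeping around the preceding lemma.
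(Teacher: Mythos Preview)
Your proof is correct and follows essentially the same approach as the paper: choose a preimage $p_0$ of a generator, set $P_0=\langle p_0\rangle$, and combine \lref{lem-cyc-precpt-or-closed} with a Baire-category argument showing $Q_0$ cannot be compact. The only cosmetic difference is the order of the steps: the paper first argues (via Baire, using that $Q_0$ is closed hence locally compact) that $Q_0$ is discrete and therefore noncompact, then deduces $\overline{P_0}$ is noncompact from $h(\overline{P_0})=Q_0$, and finally invokes \lref{lem-cyc-precpt-or-closed}; you instead invoke the dichotomy first and run Baire inside the hypothetical compact case.
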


\begin{proof}
Let $q_0$ be a generator of $Q_0$.
Since $h:P\to Q$ is surjective, fix $p_0\in P$ such that $h(p_0)=q_0$.
Let $P_0$ be the cyclic subgroup of~$P$ generated by $p_0$.
Then $P_0$ is a cyclic subgroup of $P$ and $h(P_0)=Q_0$.
Since $Q_0$ is infinite and since $h(P_0)=Q_0$, it follows that $P_0$ is infinite as well.
It remains to prove that $P_0$ is a closed subset of $P$.

Let $\barP_0$ be the closure in $P$ of $P_0$.
Because $Q_0$ is closed in $Q$ and because $h:P\to Q$ is continuous,
it follows that $h^{-1}(Q_0)$ is closed in~$P$.
So, since $P_0\subseteq h^{-1}(Q_0)$,
we conclude that $\barP_0\subseteq h^{-1}(Q_0)$,
{\it i.e.}, that $h(\,\barP_0)\subseteq Q_0$.
Then $Q_0=h(P_0)\subseteq h(\,\barP_0)\subseteq Q_0$.
Then $h(\,\barP_0)=Q_0$.

Give $Q_0$ its relative topology inherited from $Q$.
Then $Q_0$ is a countable, locally compact, Hausdorff topological space.
Every point in a $T_1$ topological space is either open or nowhere dense.
Then, by the Baire Category Theorem, $Q_0$ has an open point.
Then $Q_0$ is a topological group with an open point, so $Q_0$ is discrete.
So, since $Q_0$ is infinite, $Q_0$ is noncompact.
Then, since $h(\,\barP_0\,)=Q_0$, we see that $\barP_0$ is also noncompact.
Then, by \lref{lem-cyc-precpt-or-closed},
$P_0$ is a closed subset of $P$.
\end{proof}

\begin{lem}\wrlab{lem-noncpt-center-gives-inf-cyclic}
If $G$ is a connected real Lie group whose center $Z(G)$ is noncompact,
then $Z(G)$ has an infinite cyclic closed subgroup.
\end{lem}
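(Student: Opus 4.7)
Let $A := Z(G)$. Since $G$ is connected, $A$ is the kernel of $\Ad\colon G \to \GL(\Lg)$, hence a closed abelian Lie subgroup of $G$, and its identity component $A^\circ$ is a connected abelian Lie group, therefore isomorphic to $\R^a \times \T^b$ for some integers $a,b \ge 0$. The plan is to split into cases according to whether $a \ge 1$.

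In the first case ($a \ge 1$), the first $\R$-factor of $A^\circ$ contains $\Z$ as an infinite cyclic closed subgroup: $\Z$ is closed in $\R$, which is closed in $A^\circ = \R^a \times \T^b$, which is closed in $A$ (identity components of topological groups are always closed). So this $\Z$ serves as the required subgroup of $A$. In the second case ($a = 0$), we have $A^\circ \cong \T^b$ compact; since $A$ is noncompact and $A^\circ$ is open in $A$ (the identity component of a Lie group is open), the quotient $A/A^\circ$ is an infinite discrete abelian group.

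To finish the second case, the plan is to exhibit an infinite cyclic closed subgroup of $A/A^\circ$ and then invoke \lref{lem-cyc-lift}, applied to the quotient homomorphism $A \to A/A^\circ$, to lift it to an infinite cyclic closed subgroup of $A$. For this, the decisive step is to argue that $A/A^\circ$ is finitely generated as an abelian group: then, being infinite and finitely generated, it must contain an element of infinite order, and the subgroup generated is cyclic of infinite order and automatically closed in the discrete group $A/A^\circ$.

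To see that $A/A^\circ$ is finitely generated, I observe that $G/A^\circ \to G/A = \Ad(G)$ is a normal covering map with deck group $A/A^\circ$ (one is quotienting by the discrete normal subgroup $A/A^\circ$ of $G/A^\circ$), so $A/A^\circ$ is a quotient of $\pi_1(\Ad(G))$. Since $\Ad(G)$ is a connected real Lie group, its fundamental group is finitely generated abelian: via the Iwasawa decomposition, $\Ad(G)$ deformation retracts onto a maximal compact subgroup, and the fundamental group of a compact Lie group is finitely generated. Hence $A/A^\circ$ is finitely generated abelian, as required. The main obstacle is this last finite-generation step, which invokes Lie-theoretic input (finite generation of $\pi_1$ of a connected Lie group) beyond what has been developed self-contained earlier in the paper; all the other steps are direct applications of \lref{lem-cyc-lift} and the structure theorem for connected abelian Lie groups.
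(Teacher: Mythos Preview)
Your proof is correct and splits into the same two cases as the paper (identity component $Z(G)^\circ$ noncompact versus compact), but handles them in the opposite order and with different tools. For the noncompact-$Z(G)^\circ$ case, you invoke the structure theorem $Z(G)^\circ \cong \R^a \times \T^b$ and simply take $\Z \subseteq \R$; the paper instead passes to $Z(G)^\circ/K$ for $K$ maximal compact, uses \lref{lem-cyc-precpt-or-closed} to find a closed infinite cyclic subgroup there, and then lifts via \lref{lem-cyc-lift}. Your route here is shorter and avoids \lref{lem-cyc-precpt-or-closed} entirely. For the compact-$Z(G)^\circ$ case, both arguments need finite generation of $Z(G)/Z(G)^\circ$ before applying \lref{lem-cyc-lift}: the paper cites this as an external lemma (Lemma~3.1 of \cite{ao:prolgpactsfree}), while you supply a self-contained argument realizing $Z(G)/Z(G)^\circ$ as the deck group of the covering $G/Z(G)^\circ \to G/Z(G)$ and hence as a quotient of $\pi_1(G/Z(G))$. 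Your version is thus more self-contained overall, at the cost of importing the deformation-retract-onto-maximal-compact theorem for connected Lie groups (which holds in general, though ``Iwasawa decomposition'' is strictly the name only in the semisimple case).
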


\begin{proof}
Let $Z:=Z(G)$ and let $Z_\circ$ be the identity component of $Z$.
Let $Z_*:=Z/(Z_\circ)$ have the discrete topology.

By Lemma 3.1 of \cite{ao:prolgpactsfree},
$Z_*$ is finitely generated.
If $Z_*$ is infinite,
then the Structure Theorem for Finitely Generated Abelian Groups
implies that $Z_*$ contains an infinite cyclic subgroup,
so, by \lref{lem-cyc-lift}, we are done.
We therefore assume that $Z_*=Z/(Z_\circ)$ is finite.
Then, since $Z$ is noncompact, we see that $Z_\circ$ is noncompact.

By Theorem~1 of \S2.21.1
on p.~104 of \cite{mg:toptransgps},
let $K$ be a maximal compact subgroup of $Z_\circ$.
Let $A:=(Z_\circ)/K$ have its quotient topology
from the canonical map $Z_\circ\to A$.
Because $Z_\circ$ is noncompact, $K\ne Z_\circ$,
so fix $a\in A\backslash\{1_A\}$.
Let $C$ be the cyclic subgroup of $A$ generated by $a$.
Let $\barC$ be the closure in $A$ of $C$.
By maximality of $K$, any nontrivial closed subgroup of $A$ is noncompact,
so $\barC$ is noncompact.
It follows, from \lref{lem-cyc-precpt-or-closed},
that $C$ is a closed subgroup of $A$.
Moreover, as $\barC$ is noncompact,
we know that $C$ is infinite.
Thus $C$ is an infinite cyclic closed subgroup of $A=(Z_\circ)/K$.
By \lref{lem-cyc-lift}, there exists an infinite cyclic closed subgroup $C_1$ of $Z_\circ$.
As $Z_\circ$ is an open subgroup of $Z$, $Z_\circ$ is a closed subset of $Z$.
Then $C_1$ is a closed subset of $Z$.
\end{proof}

\section{The counterexamples\wrlab{sect-counterex}}

Note that \tref{thm-induction-gives-ctrx} below applies
when $G$ is discrete and isomorphic to the additive group $\Z$.
\lref{lem-noncpt-center-gives-inf-cyclic} shows that \tref{thm-induction-gives-ctrx} also applies
when $G$ is a connected real Lie group whose center $Z(G)$ is noncompact.
Also note, by (ii) below that, if
$$\hbox{either}\qquad G\hbox{ is connected }\qquad\hbox{or}\qquad G=Z,$$
then $M$ is connected.

\begin{thm}\wrlab{thm-induction-gives-ctrx}
Let $G$ be a real Lie group.
Assume the center~$Z(G)$ of~$G$
admits an infinite cyclic closed subgroup $Z$.
Then there is a $C^\infty$ manifold~$M$ and
a fixpoint rare $C^\infty$ action of $G$ on $M$
such that:
\begin{itemize}
\item[(i)]for any integer $k\ge0$,
there is a dense subset $D$ of $F_kM$ such that,
for all $\delta\in D$, the stabilizer $\Stab_Z(\delta)$ is infinite \qquad and
\item[(ii)]the number of connected components of $M$ and $G/Z$ are equal.
\end{itemize}
\end{thm}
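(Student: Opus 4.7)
The plan is to iterate \lref{lem-base-lem} to construct a porous vector field $V_\infty$ on $\R^4$ with a dense set of points periodic to all orders, and then induce the resulting $\Z$-action on $\R^4$ to a $G$-action on $M:=(G\times\R^4)/Z$.

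First I would fix a countable basis $\{U_n\}_{n\ge 1}$ of $\R^4$ and, starting from $(V_0,I_0)$ (which lies in $\scrd$ and is porous since $\scru(V_0)=\R^4$), recursively build $(V_n,I_n)\in\scrd$ with $V_n$ porous. At step $n+1$, I would pick $\sigma_{n+1}\in U_{n+1}\cap\scru(V_n)$, which is nonempty by density of $\scru(V_n)$, and apply \lref{lem-base-lem} to obtain $(V_{n+1},I_{n+1})\in\scrm(V_n,I_n)$ with $V_{n+1}$ porous and $(V_{n+1},\sigma_{n+1})$ periodic to all orders; by enlarging $I_{n+1}$ within $\scrm$ as needed I would also ensure $a_{I_n}\to\infty$. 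Since $V_{n+1}=V_n$ on $\overline{I_n}^4$ and every compact subset of $\R^4$ lies in $\overline{I_n}^4$ for some $n$, the pointwise limit $V_\infty(\sigma):=\lim_n V_n(\sigma)$ is well-defined and $C^\infty$, takes values in $\barI_0^4$, and is therefore complete.

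Two properties of $V_\infty$ must then be verified. The first, persistence of periodicity, is straightforward: by \lref{lem-trap-per-orb} the $V_n$-orbit of $\sigma_n$ is trapped inside $I_n^4$, an open set on which $V_\infty\equiv V_n$, so \lref{lem-orbits-agree-to-all-orders} transports periodicity to $V_\infty$ and $D_0:=\{\sigma_n\}_{n\ge 1}$ is a dense set of points periodic to all orders for $V_\infty$. The main obstacle is porousness of $V_\infty$, which I would establish by a Baire-category argument: for each $m$, the set of $\sigma$ whose forward $V_\infty$-orbit is eventually contained in $\overline{I_m}^4$ is meager, because once trapped the flow coincides with the $V_m$-flow by \lref{lem-orbits-agree}, and the $4$th-coordinate-bounded $V_m$-orbits form a meager set since $\scru(V_m)$ is open dense (by \cref{cor-porous-dense-comeager}). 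Combining this with its backward analogue and intersecting over $m$ yields a comeager set of $\sigma$ whose $V_\infty$-orbits escape every $\overline{I_m}^4$ in both time directions. For such $\sigma$, once the orbit exits $\overline{I_m}^4$ through $T(I_m)$ (respectively $B(I_m)$), \lref{lem-V-V0-agreement-for-D}(ii)--(iii) applied in the appropriate $V_{m'}$ for $m'\ge m$, together with \cref{cor-single-crossing-refined}, prevent the $4$th coordinate from returning below $a_{I_m}$ (respectively above $-a_{I_m}$), so $|\Pi_4|\to\infty$ in both time directions and \lref{lem-undeterred-criterion}(c\,$\Rightarrow$a) gives $\sigma\in\scru(V_\infty)$.

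Finally, with $V_\infty$ porous I would define a $\Z$-action on $\R^4$ by $n\cdot\sigma:=\Phi_n^{V_\infty}(\sigma)$; this action is fixpoint rare because every $u\in\scru(V_\infty)$ has $\Pi_4$-unbounded orbit and hence cannot satisfy $\Phi_n^{V_\infty}(u)=u$ for any $n\ne 0$. Writing $Z=\langle z_0\rangle$, let $Z$ act on $G\times\R^4$ by $z_0\cdot(g,\sigma):=(gz_0^{-1},\Phi_1^{V_\infty}(\sigma))$; since $Z$ is closed and discrete the action is free and proper, so $M:=(G\times\R^4)/Z$ is a $C^\infty$ manifold and left multiplication by $G$ on the first factor descends to a $C^\infty$ action. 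Centrality of $Z$ makes the $G$-action fixpoint rare: elements of $G\setminus Z$ have no fixed points in any fiber, while elements of $Z\setminus\{1\}$ inherit fixpoint-rareness from the $\Z$-action on $\R^4$. The bundle $M\to G/Z$ has connected fiber $\R^4$, giving (ii); for (i), each $\sigma_n\in D_0$ admits some $m\ne 0$ with $\Phi_m^{V_\infty}$ agreeing with $\Id_4$ to all orders at $\sigma_n$, so every lift of $\sigma_n$ to $F_kM$ has $m$ in its $Z$-stabilizer, and density of such lifts in $F_kM$ follows from density of $D_0$ in $\R^4$ together with surjectivity of $F_kM\to M$.
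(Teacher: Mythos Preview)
Your iterative construction of $V_\infty$, the persistence of periodicity (via \lref{lem-trap-per-orb} and \lref{lem-orbits-agree-to-all-orders}), and the induction to $M=G\times_Z\R^4$ all match the paper. The one substantive divergence is your detour through porousness of $V_\infty$, and here there is a genuine gap.

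Your argument that ``once the orbit exits $\overline{I_m}^4$ through $T(I_m)$, \lref{lem-V-V0-agreement-for-D}(ii)--(iii) \ldots\ prevent the $4$th coordinate from returning below $a_{I_m}$'' does not go through: those lemmas, and \cref{cor-single-crossing-refined}, are stated for a deterrence system $(V,I)\in\scrd$, meaning $V=V_0$ outside a single box $I^4$. The limit $V_\infty$ is not a deterrence system --- outside $I_m^4$ it equals $V_{m+1}$, not $V_0$ --- so nothing prevents the $V_\infty$-orbit from re-entering lower $\Pi_4$-levels as it passes through $I_{m+1}^4\setminus I_m^4$. Escaping every $\overline{I_m}^4$ in forward time could in principle happen via the first three coordinates with $\Pi_4$ bounded, so the step ``$|\Pi_4|\to\infty$'' is unjustified. (Whether $V_\infty$ is porous at all is not addressed in the paper and is not obvious.)

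The paper sidesteps this entirely: it never claims $V_\infty$ is porous. Instead it sets $X:=\bigcap_{k\ge1}\scru(V_k)$, which is dense by Baire since each $\scru(V_k)$ is open and dense, and proves directly that no $\sigma\in X$ is $V_\infty$-periodic. The argument is short: if $\Phi_t^{V_\infty}(\sigma)=\sigma$ with $t\ne0$ then $\Phi_\R^{V_\infty}(\sigma)=\Phi_{[0,t]}^{V_\infty}(\sigma)$ is compact, hence contained in some $I_j^4$; since $V_\infty=V_j$ on $I_j^4$, \lref{lem-orbits-agree} gives $\Phi_\R^{V_j}(\sigma)=\Phi_\R^{V_\infty}(\sigma)\subseteq I_j^4$, so $\Pi_4(\Phi_\R^{V_j}(\sigma))\subseteq I_j\ne\R$, contradicting $\sigma\in\scru(V_j)$. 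This is exactly what fixpoint-rareness of the $\Z$-action requires, and your meagerness machinery is not needed.
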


\begin{proof}
We have $(V_0,I_0)\in\scrd$.
Also, $\scru(V_0)=\R^4$, so $V_0$ is porous.

Let $|\bullet|:\R^4\to[0,\infty)$ be a norm on $\R^4$.
Let $\{\omega_1,\omega_2,\omega_3,\ldots\}$ be a countable dense subset of $\R^4$.
Choose $\sigma_1\in\R^4$ such that $|\sigma_1-\omega_1|<1$.
Then $\sigma_1\in\R^4=\scru(V_0)$.

By \lref{lem-base-lem},
choose $(V_1,I_1)\in\scrm(V_0,I_0)$
such that $V_1$ is porous and
such that $(V_1,\sigma_1)$ is periodic to all orders.
Because $V_1$ is porous, $\scru(V_1)$ is dense in $\R^4$,
so fix $\sigma_2\in\scru(V_1)$ such that $|\sigma_2-\omega_2|<1/2$.

By \lref{lem-base-lem},
choose $(V_2,I_2)\in\scrm(V_1,I_1)$
such that $V_2$ is porous and
such that $(V_2,\sigma_2)$ is periodic to all orders.
Because $V_2$ is porous, $\scru(V_2)$ is dense in $\R^4$,
so fix $\sigma_3\in\scru(V_2)$ such that $|\sigma_3-\omega_3|<1/3$.

By \lref{lem-base-lem},
choose $(V_3,I_3)\in\scrm(V_2,I_2)$
such that $V_3$ is porous and
such that $(V_3,\sigma_3)$ is periodic to all orders.
Because $V_3$ is porous, $\scru(V_3)$ is dense in $\R^4$,
so fix $\sigma_4\in\scru(V_3)$ such that $|\sigma_4-\omega_4|<1/4$.

Continuing yields a countable dense subset
$\{\sigma_1,\sigma_2,\sigma_3,\ldots\}$ of~$\R^4$,
and a sequence $(V_1,I_1),(V_2,I_2),(V_3,I_3),\ldots$ in $\scrd$.
For each integer $j\ge1$,
\begin{itemize}
\item$a_{I_{j+1}}>a_{I_j}$,
\item$V_{j+1}=V_j$ on $I_j^4$,
\item$V_j$ is porous \qquad and
\item$(V_j,\sigma_j)$ is periodic to all orders.
\end{itemize}

We have $a_{I_1}<a_{I_2}<a_{I_3}<\cdots$ and $a_{I_1},a_{I_2},a_{I_3},\ldots\in\N$.
It follows both that $I_1\subseteq I_2\subseteq I_3\subseteq\cdots$
and that $I_1\cup I_2\cup I_3\cup\cdots=\R$.
Then $I_1^4\subseteq I_2^4\subseteq I_3^4\subseteq\cdots$
and $I_1^4\cup I_2^4\cup I_3^4\cup\cdots=\R^4$.
Define $V_\infty\in\scrc$ by the rule:
For all integers $j\ge1$, $V_\infty=V_j$ on $I_j^4$.
Let $\displaystyle{X:=\bigcap_{k=1}^\infty(\scru(V_k))}$.

{\it Claim 1:} For all $t\in\R\backslash\{0\}$,
for all $\sigma\in X$, we have
$\Phi_t^{V_\infty}(\sigma)\ne\sigma$.
{\it Proof of Claim~1:}
Let $t\in\R\backslash\{0\}$ and let $\sigma\in X$.
Assume, for a contradiction, that we have $\Phi_t^{V_\infty}(\sigma)=\sigma$.

Since $\Phi_\R^{V_\infty}(\sigma)=\Phi_{[0,t]}^{V_\infty}(\sigma)$,
we see that $\Phi_\R^{V_\infty}(\sigma)$ is compact.
Then, since $I_1^4\subseteq I_2^4\subseteq I_3^4\subseteq\cdots$
and $I_1^4\cup I_2^4\cup I_3^4\cup\cdots=\R^4$,
fix an integer $j\ge1$ such that
$\Phi_\R^{V_\infty}(\sigma)\subseteq I_j^4$.
We have $V_j=V_\infty$ on $I_j^4$.
Then, by \lref{lem-orbits-agree},
$\Phi_\R^{V_j}(\sigma)=\Phi_\R^{V_\infty}(\sigma)$.
Then
$\Pi_4(\Phi_\R^{V_j}(\sigma))=\Pi_4(\Phi_\R^{V_\infty}(\sigma))\subseteq\Pi_4(I_j^4)=I_j$.
However, because $\sigma\in X\subseteq\scru(V_j)$,
it follows that $\Pi_4(\Phi_\R^{V_j}(\sigma))=\R$.
Thus we have $\R\subseteq I_j$, contradiction.
{\it End of proof of Claim~1.}

{\it Claim 2:} For all integers $j\ge1$, $(V_\infty,\sigma_j)$ is periodic to all orders.
{\it Proof of Claim 2:}
Fix an integer $j\ge1$.
We wish to show that $(V_\infty,\sigma_j)$ is periodic to all orders.

Because $(V_j,I_j)\in\scrd$ and $(V_j,\sigma_j)$ is periodic,
we conclude, from \lref{lem-trap-per-orb},
that $\Phi_\R^{V_j}(\sigma_j)\subseteq I_j^4$.
For all $\tau\in I_j^4$,
because $V_\infty$ and $V_j$ agree on $I_j^4$,
which is an open neighborhood of~$\tau$,
it follows that they agree to all orders at $\tau$.
So, for all $t\in\R$,
$V_\infty$ and $V_j$ agree to all orders at $\Phi_t^{V_j}(\sigma_j)$.
Then, by \lref{lem-orbits-agree-to-all-orders}, for all $t\in\R$,
$\Phi_t^{V_\infty}:\R^4\to\R^4$ and $\Phi_t^{V_j}:\R^4\to\R^4$ agree to all orders at $\sigma_j$.
So, as $(V_j,\sigma_j)$ is periodic to all orders,
it follows $(V_\infty,\sigma_j)$~is periodic to all orders as well.
{\it End of proof of Claim~2.}

Because $Z$ is infinite cyclic,
it follows that $Z$ is isomorphic to the additive discrete group $\Z$.
Let $f:Z\to\Z$ be an isomorphism.
Define a $Z$-action on $\R^4$ by:
for all $z\in Z$, for all $\sigma\in\R^4$, $z\sigma=\Phi_{f(z)}^{V_\infty}(\sigma)$.
Let $M:=G\times_Z\R^4$.
Because the $Z$-action on~$\R^4$ is $C^\infty$,
it follows that the $G$-action on $M$ is $C^\infty$ as well.
By construction, $M$ is (the total space of) a fiber bundle over $G/Z$ with fiber $\R^4$,
so, because $\R^4$ is connected,
$M$ has the same number of connected components as does $G/Z$.

By \cref{cor-U-open}(i), $\scru(V)$ is an open subset of $\R^4$.
For all integers $k\ge1$, $V_k$ is porous.
Then $\scru(V_k)$ is a dense open subset of~$\R^4$.
Then, because
$\displaystyle{X=\bigcap_{k=1}^\infty(\scru(V_k))}$,
we see, by the Baire Category Theorem,
that $X$ is dense in~$\R^4$.
By Claim 1, for all $z\in Z\backslash\{1_Z\}$, for all $\sigma\in X$,
we have $\Phi_{f(z)}^{V_\infty}(\sigma)\ne\sigma$,
{\it i.e.}, we have $z\sigma\ne\sigma$.
Thus the $Z$-action on $\R^4$ is fixpoint rare.
Then the $G$-action on $M$ is also fixpoint rare.

Let $p:G\times\R^4\to G\times_Z\R^4=M$ be the canonical map.
Define an injection $\iota:\R^4\to M$ by $\iota(\sigma)=p(1_G,\sigma)$.
Let $\Sigma:=\iota(\{\sigma_1,\sigma_2,\sigma_3,\ldots\})$.
Because $\{\sigma_1,\sigma_2,\sigma_3,\ldots\}$ is dense in $\R^4$, $A:=G\Sigma$ is dense in $M$.

Fix an integer $k\ge0$,
and let $\pi:=\pi_k^M:F_kM\to M$ be the $k$th order frame bundle of $M$.
Let $D:=\pi^{-1}(A)$.
Since $A$ is dense in $M$, and since $\pi:F_kM\to M$ is open,
it follows that $D$ is dense in $F_kM$.
Fix $\delta\in D$.
We wish to show that $\Stab_Z(\delta)$ is infinite.

Let $S:=\Stab_G(\delta)$.
Then $\Stab_Z(\delta)=S\cap Z$.
We therefore wish to show that $S\cap Z$ is infinite.

Since $\pi(\delta)\in A=G\Sigma$,
fix $g_0\in G$, $\tau_0\in\Sigma$ such that $\pi(\delta)=g_0\tau_0$.
Let $\delta_0:=g_0^{-1}\delta$.
Then $\pi(\delta_0)=\tau_0$,
{\it i.e.}, $\delta_0\in\pi^{-1}(\tau_0)$.
Let $S_0:=\Stab_G(\delta_0)$.
Then $S=g_0S_0g_0^{-1}$.
As $Z\subseteq Z(G)$, we have $Z=g_0Zg_0^{-1}$.
Then $S\cap Z=g_0(S_0\cap Z)g_0^{-1}$,
so it suffices to show that $S_0\cap Z$ is infinite.

Recall that $\Id_4:\R^4\to\R^4$ is the identity map defined by $\Id_4(\sigma)=\sigma$.
Let $\Id:M\to M$ be the identity map defined by $\Id(\rho)=\rho$.

Since $\tau_0\in\Sigma=\iota(\{\sigma_1,\sigma_2,\sigma_3,\ldots\})$, fix an integer $j\ge1$
such that $\tau_0=\iota(\sigma_j)$.
By Claim~2, fix $n_0\in\Z\backslash\{0\}$
such that $\Phi_{n_0}^{V_\infty}:\R^4\to\R^4$ agrees with the identity $\Id_4:\R^4\to\R^4$
to all orders at $\sigma_j$.

Let $z_0:=f^{-1}(n_0)$.
Then $z_0\in Z\backslash\{1_Z\}$,
and, for all $\sigma\in\R^4$, we have $z_0\sigma=\Phi_{n_0}^{V_\infty}(\sigma)$.
Then the map $\sigma\mapsto z_0\sigma:\R^4\to\R^4$
is equal to $\Phi_{n_0}^{V_\infty}:\R^4\to\R^4$,
and therefore agrees with $\Id_4:\R^4\to\R^4$ to all orders at $\sigma_j$.
Then, since $Z\subseteq Z(G)$ and since $\iota(\sigma_j)=\tau_0$,
it follows that the map $\rho\mapsto z_0\rho:M\to M$
agrees with $\Id:M\to M$ at $\tau_0$ to all orders.
In particular, $\rho\mapsto z_0\rho:M\to M$
agrees with $\Id:M\to M$ at $\tau_0$ to order $k$.
Then, for all $\rho\in\pi^{-1}(\tau_0)$, we have $z_0\rho=\rho$.
So, since $\delta_0\in\pi^{-1}(\tau_0)$, we get $z_0\delta_0=\delta_0$.
That is, $z_0\in\Stab_Z(\delta_0)=S_0\cap Z$.
Let $C_0$ be the cyclic subgroup of $S_0\cap Z$ generated by~$z_0$.
Every nontrivial subgroup of an infinite cyclic group is infinite,
so $C_0$ is infinite.
So, because $C_0\subseteq S_0\cap Z$,
it follows that $S_0\cap Z$ is infinite, as desired.
\end{proof}


\bibliography{list}

\end{document}